\pgfplotsset{compat=1.4}
\DeclareTextCommandDefault{\textbullet}{\ensuremath{\bullet}}
\newcommand{\N}{\ensuremath{\mathbb{N}}}
\newcommand{\Z}{\ensuremath{\mathbb{Z}}}
\newcommand{\R}{\ensuremath{\mathbb{R}}}
\renewcommand{\H}{\mathscr{H}} 
\newcommand{\Dt}{\ensuremath{D^t_{1/2}}} 
\newcommand{\D}{\ensuremath{\mathbb{D}}}
\newcommand{\Dn}{\ensuremath{\mathbb{D}_n}}
\newcommand{\tphi}{\ensuremath{\tilde{\phi}}}
\newcommand{\dbmo}{*, \,\text{dyadic}} 
\newcommand{\dx}{\ensuremath{\, \mathrm{d} x}}
\newcommand{\dX}{\ensuremath{\, \mathrm{d} X}}
\newcommand{\dy}{\ensuremath{\, \mathrm{d} y}}
\newcommand{\dY}{\ensuremath{\, \mathrm{d} Y}}
\newcommand{\dt}{\ensuremath{\, \mathrm{d} t}}
\newcommand{\du}{\ensuremath{\, \mathrm{d} u}}
\newcommand{\ds}{\ensuremath{\, \mathrm{d} s}}
\newcommand{\dlambda}{\ensuremath{\, \mathrm{d} \lambda}}
\newcommand{\drho}{\ensuremath{\, \mathrm{d} \rho}}
\newcommand{\dmu}{\ensuremath{\, \mathrm{d} \mu}}
\newcommand{\dnu}{\ensuremath{\, \mathrm{d} \nu}}
\newcommand{\dsig}{\ensuremath{\, \mathrm{d} \sigma}}
\newcommand{\dtau}{\ensuremath{\, \mathrm{d} \tau}}
\renewcommand{\epsilon}{\varepsilon}
\renewcommand{\L}{\ensuremath{\mathcal{L}}}
\DeclareMathOperator*{\supp}{supp}
\DeclareMathOperator{\BMO}{BMO}
\DeclareMathOperator{\VMO}{VMO}
\DeclareMathOperator{\loc}{loc}
\DeclareMathOperator{\Lip}{Lip}
\DeclareMathOperator*{\osc}{osc}
\DeclareMathOperator{\dist}{dist}
\DeclareMathOperator{\diam}{diam}
\DeclareMathOperator*{\di}{div}
\newtheorem{theorem}{Theorem}[section]
\newtheorem{lemma}[theorem]{Lemma}
\newtheorem{proposition}[theorem]{Proposition}
\newtheorem{corollary}[theorem]{Corollary}
\newtheorem{remark}[theorem]{Remark}
\newtheorem{definition}[theorem]{Definition}
\newtheorem{claim}[theorem]{Claim}
\numberwithin{equation}{section}
\crefname{corollary}{Corollary}{Corollaries}
\crefname{theorem}{Theorem}{Theorems}
\crefname{lemma}{Lemma}{Lemmas}
\crefname{proposition}{Proposition}{Propositions}
\crefname{equation}{}{} 
\newlist{steps}{enumerate}{3}
\setlist[steps]{wide} 
\setlist[steps,1]{label={\textbf{Step \arabic*:}},
				  ref={\arabic*}} 
\setlist[steps,2]{label={\textbf{Step \arabic{stepsi}.\alph*:}},
				  ref=\thestepsi{}.\alph*} 
\setlist[steps,3]{label={\textbf{Step \arabic{stepsi}.\alph{stepsii}.\roman*:}},
				  ref={\thestepsi{}.\thestepsii{}.\roman*}}
\crefname{stepsi}{step}{steps}
\crefname{stepsii}{step}{steps}
\crefname{stepsiii}{step}{steps}
\Crefname{stepsi}{Step}{Steps}
\Crefname{stepsii}{Step}{Steps}
\crefname{stepsiii}{Step}{Steps}
\newlist{caselist}{enumerate}{3}
\setlist[caselist]{label*=(\arabic*), ref=(\arabic*)} 
\crefname{caselisti}{case}{caselist}
\Crefname{caselisti}{Case}{Caselist}
\newlist{condition}{enumerate}{5}
\setlist[condition,1]{label=(\arabic*)} 
\setlist[condition,2]{label=(\alph*), ref=(\arabic{conditioni}.\alph*)} 
\crefname{conditioni}{condition}{conditions}
\Crefname{conditioni}{Condition}{conditions}
\crefname{conditionii}{condition}{conditions}
\Crefname{conditionii}{Condition}{conditions}
\newlist{property}{enumerate}{2}
\setlist[property]{label*=(\roman*), ref=(\roman*)} 
\crefname{propertyi}{property}{properties}
\Crefname{propertyi}{property}{properties}
\begin{document}
\title[Parabolic $L^p$ Dirichlet Problem and $\VMO$-type domains]{Parabolic $L^p$ Dirichlet Boundary Value Problem and $\VMO$-type time-varying domains}

\author{Martin Dindo\v{s}}
\address{School of Mathematics \\
	The University of Edinburgh and Maxwell Institute of Mathematical Sciences, UK}
\email{m.dindos@ed.ac.uk}

\author{Luke Dyer}
\address{School of Mathematics \\
	The University of Edinburgh and Maxwell Institute of Mathematical Sciences, UK}
\thanks{Luke Dyer was supported by The Maxwell Institute Graduate School in Analysis and its Applications, a Centre for Doctoral Training funded by the UK Engineering and Physical Sciences Research Council (grant EP/L016508/01), the Scottish Funding Council, the University of Edinburgh, and Heriot-Watt University.}
\email{l.dyer@sms.ed.ac.uk}

\author{Sukjung Hwang}
\address{Department of Mathematics\\ Yonsei University, Republic of Korea}
\email{\url{sukjung_hwang@yonsei.ac.kr}}

\date{}

\begin{abstract}
	We prove the solvability of the parabolic $L^p$ Dirichlet boundary value problem for $1 < p \leq \infty$ for a PDE of the form $u_t = \di (A \nabla u) + B \cdot \nabla u$ on time-varying domains where the coefficients $A= [a_{ij}(X, t)]$ and $B=[b_i]$ satisfy a certain natural small Carleson condition.
	This result brings the state of affairs in the parabolic setting up to the elliptic standard.

	Furthermore, we establish that if the coefficients of the operator $A,\,B$ satisfy a vanishing Carleson condition and the time-varying domain is of VMO type then the parabolic $L^p$ Dirichlet boundary value problem is solvable for all $1 < p \leq \infty$.
	This result is related to results in papers by Maz’ya, Mitrea and Shaposhnikova, and Hofmann, Mitrea and Taylor where the fact that boundary of domain has normal in VMO or near VMO implies invertibility of certain boundary operators in $L^p$ for all $1 < p \leq \infty$ which then (using the method of layer potentials) implies solvability of the $L^p$ boundary value problem in the same range for certain elliptic PDEs.

	Our result does not use the method of layer potentials, since the coefficients we consider are too rough to use this technique but remarkably we recover $L^p$ solvability in the full range of $p$'s as the two papers mentioned above.
\end{abstract}
\maketitle

\section{Introduction}

Let us consider a parabolic differential equation on a time-varying domain $\Omega$ of the form
\begin{equation}
	\label{E:pde}
	\begin{cases}
		u_t = \di (A \nabla u) + B \cdot \nabla u & \text{in } \Omega \subset \R^{n+1}, \\
		u   = f                                   & \text{on } \partial \Omega,
	\end{cases}
\end{equation}
where $A= [a_{ij}(X, t)]$ is a $n\times n$ matrix satisfying the uniform ellipticity condition with $X \in \R^n$, $t\in \R$.
That is, there exists positive constants $\lambda$ and $\Lambda$ such that
\begin{equation}
	\label{E:elliptic}
	\lambda |\xi|^2 \leq \sum_{i,j} a_{ij}(X,t) \xi_i \xi_j \leq \Lambda |\xi|^2
\end{equation}
for almost every $(X,t) \in \Omega$ and all $\xi \in \R^n$.
In addition, we assume that the coefficients of $A$ and $B$ satisfy a natural, minimal smoothness condition,~\cref{E:T1:carl:osc}, and we do not assume any symmetry on $A$.

It has been observed via the method of layer potentials that when the domain on which we consider certain boundary value problems for elliptic or parabolic PDEs is sufficiently smooth the question of $L^p$ invertibility of certain boundary operator can be resolved using the Fredholm theory since this operator is just a compact perturbation of the identity.
This observation then implies invertibility of this boundary operator for all $1 < p \leq \infty$ and hence solvability of the corresponding $L^p$ boundary value problem in this range.

The notion of how smooth the domain has to be for the above observation to hold has evolved. Initial results for constant coefficient elliptic PDEs required domains of at least $C^{1,\alpha}$ type.
This was reduced to $C^1$ domains in an important paper of Fabes, Jodeit and Rivi\`ere~\cite{FJR78}.
Later the method of layer potentials was adapted to variable coefficient settings and the results were extended to elliptic PDEs with variable coefficients~\cite{Din08} on $C^1$ domains.

Further progress was made after advancements in singular integrals theory on sets that are not necessary of graph-type~\cite{Sem91,HMT10}.
It turns out that compactness of the mentioned boundary operator only requires that the normal (which must be well defined at almost every boundary point) belongs to VMO\@.

This observation for the Stokes system was made in~\cite{MMS09} where boundary value problems for domains whose normal belongs to VMO (or is near to VMO in the BMO norm) were considered.
In~\cite{HMT15} symbol calculus for operators of layer potential type on surfaces with VMO normals was developed and applied to various elliptic PDEs including elliptic systems.

So far we have only mentioned elliptic results.
One of the first results for the heat equation in Lipschitz cylinders is by Brown~\cite{Bro89}.
Here the domain considered is time independent and Fourier methods in the time variable are used.
Domains of time-varying type for the heat operator were first considered the papers~\cite{LM95,HL96} and again the method of layer potentials was used to establish $L^2$ solvability.
The question of solvability of various boundary value problems for parabolic PDEs on time-varying, domains has long history.
Recall, that in the elliptic setting~\cite{Dah77} has shown in a Lipschitz domain that the harmonic measure and surface measure are mutually absolutely continuous, and that the elliptic Dirichlet problem is solvable with data in $L^2$ with respect to surface measure.
R. Hunt then asked whether Dalhberg's result held for the heat equation in domains whose boundaries are given locally as functions $\phi(x,t)$, Lipschitz in the spatial variable.
It was conjectured (due to the natural parabolic scaling) that the correct regularity of $\phi(x,t)$ should be a H\"older condition of order $1/2$ in the time variable $t$ and Lipschitz in $x$.
It turns out that under this assumption the parabolic measure associated with the equation~\eqref{E:pde} is doubling~\cite{Nys97}.

However, in order to answer R. Hunt's question positively one has to consider more regular classes of domains than the one just described above.
This follows from the counterexample of~\cite{KW88} where it was shown that under just the $\Lip(1,1/2)$ condition on the domain $\Omega$ the associated caloric measure (that is the measure associated with the operator $\partial_t-\Delta$) might not be mutually absolutely continuous with the natural surface measure.
The issue was resolved in~\cite{LM95} where it was established that mutual absolute continuity of caloric measure and a certain parabolic analogue of the surface measure holds when $\phi$ has $1/2$ of a time derivative in the parabolic $\BMO(\R^n)$ space, which is a slightly stronger condition than $\Lip(1,1/2)$.
We shall call such domains to be of Lewis-Murray type.
\cite{HL96} subsequently showed that this condition is sharp. 
We thoroughly discuss these domains in \cref{S:parabolic-domains}.

Further work was done by~\cite{HL01,Riv03,Riv14} in graph domains and time-varying cylinders satisfying the Lewis-Murray condition where they proved the $L^p$ Dirichlet problem was solvable for all $p >p'$ for some potentially very large $p'$ (due to the technique used there is no control on the size of $p'$).
Finally~\cite{DH16} has established $L^p$ solvability $2 \leq p \leq \infty$ in domains that are locally of Lewis-Murray type under a small Carleson condition. 

While researching literature on domains of Lewis-Murray type and ways this concept can be localized (in the time variable the half-derivative is a nonlocal operator and hence any condition imposed on it is difficult to localize) we have realized that important results we have planned to rely on have issues (either in their proofs or even worse are simply false, see in particular \cref{r-rn} in the next section). This has prompted us to write \cref{S:parabolic-domains} on parabolic domains in substantially more detail we originally intended to. This sets the literature record straight and more importantly in detail explains the concept of localized domains of Lewis-Murray type. For readability of the paper and this section we have moved long proofs into an appendix.

In this paper we establish $L^p$ solvability results for parabolic PDEs on time-varying cylinders satisfying locally the Lewis-Murray condition in the full range $1<p\leq\infty$ improving the solvability range from~\cite{DH16} as well as older results such as~\cite{HL96}, where only $p=2$ was considered.
The coefficients we consider are very rough and in particular the method of layer potentials cannot be used. Despite this we recover (in the parabolic setting) an analogue of~\cite{MMS09} and~\cite{HMT15}.
When the domain $\Omega$, on which the parabolic PDE is considered, is of VMO type (that is certain derivatives both in temporal and spatial variables will be in VMO) and the coefficients of the operator satisfy a vanishing Carleson condition the $L^p$ solvability can be established for all $1<p\leq\infty$.
Remarkably this is the full range of solvability that holds for smooth coefficients (via the layer potential method).

Our proof is however completely different from the layer potential method, for example at no point is compactness used.
The proof is also substantially different than the case $2\le p\leq\infty$ of~\cite{DH16} in the following way.
We were inspired by~\cite{DPP07} and have used a so called $p$-adapted square function in order to prove $L^p$ solvability.
However, due to the presence of parabolic term a second square function type object will arise, namely
\begin{equation}
	\label{FakeArea}
	\int_{\Omega}|u_t(X,t)|^2|u(X,t)|^{p-2}\delta(X,t)^3\dX\dt,
\end{equation}
where $\delta(X,t)$ is the parabolic distance to the boundary.
When $p=2$ such object was called the ``area function'' and in~\cite{DH16} it was shown that it can be dominated by the usual square function.
It turns out however that the case $1<p<2$ is substantially more complicated and we were only able to establish required bounds for~\eqref{FakeArea} for non-negative $u$ after a substantial effort.

There is also an issue of whether the $p$-adapted square function is actually well-defined and locally finite (as the exponent on $|u|$ is negative).
We prove that when $u$ is a solution of a parabolic PDE the $p$-adapted square function is indeed well defined by adapting a recent regularity result~\cite{DP16}.
The paper~\cite{DP16} deals with complex coefficient elliptic PDEs but the method used there can be adapted to the parabolic setting;
see \cref{T:improved-reg} for details.
\vglue1mm

Many results in the parabolic setting, are motivated by previous results in the elliptic setting and ours is not different.
Let us therefore overview the major elliptic results related to our main theorem.

The papers~\cite{KKPT00} and~\cite{KP01} started the study of non-symmetric divergence elliptic operators with bounded and measurable coefficients.
\cite{KP01} used~\cite{KKPT00} to show that the elliptic measure of operators satisfying a type of Carleson measure condition is in $A_\infty$ and hence the $L^p$ Dirichlet problem is solvable for some, potentially large, $p$. 
In~\cite{DPP07}, the authors improved the result of~\cite{KP01} in the following way.
They showed that if
\begin{equation}
	\label{E:intro:elliptic:carl}
	\delta(X)^{-1}\left(\osc_{B_{\delta(X)/2}(X)}a_{ij}\right)^2
	\quad\text{ and }\quad
	\delta(X)\left(\sup_{B_{\delta(X)/2}(X)} b_{i}\right)^2
\end{equation}
are densities of Carleson measures with vanishing Carleson norms then the $L^p$ Dirichlet problem is solvable for all $1 < p \leq \infty$.
A similar result for the elliptic Neumann and regularity boundary value problem was established in~\cite{DPR16}.

The parabolic analogue of the elliptic Carleson condition~\cref{E:intro:elliptic:carl} is that
\begin{equation}
	\label{E:intro:carl}
	\delta(X,t)^{-1} \sup_{i, j}\left( \osc_{B_{\delta(X,t)/2}(X,t)} a_{ij} \right)^2
	+ \delta(X,t)\left( \sup_{B_{\delta(X,t)/2}(X,t)} b_i\right)^2
\end{equation}
is the density of a Carleson measure on $\Omega$ with a small Carleson norm and $\delta(X,t)$ is the parabolic distance of a point $(X,t)$ to the boundary $\partial\Omega$.

The condition~\eqref{E:intro:carl} arises naturally as follows.
Let $\Omega = \{ (x_0, x, t) : x_0 > \phi(x,t)\}$ for a function $\phi$ which satisfies the Lewis-Murray condition above.
Let $\rho : U \to \Omega$ be a mapping from the upper half space $U$ to $\Omega$. Consider $v = u \circ \rho$. It will follow that if $u$ solves~\cref{E:pde} in $\Omega$ then $v$ will be a solution to a parabolic PDE similar to~\cref{E:pde} in $U$.
In particular if $\rho$ is chosen to be the mapping in~\eqref{mapping} then the coefficients of the new PDE for $v$ will satisfy a Carleson condition like~\cref{E:intro:carl}, c.f.\ \cref{L:LemmaA}, provided the original coefficients (for $u$) were either smooth or constant.

Furthermore, if we do not insist on control over the size of the Carleson norm then we can still infer solvability of the $L^p$ Dirichlet problem for large $p$, as in~\cite{HL01,Riv03,Riv14}.

Finally, we ready to state our main result;
some notions used here are defined in detail in \cref{S:preliminaries}.

\begin{theorem}%
	\label{T:1}
	Let $\Omega$ be a domain as in \cref{D:domain} with character $(\ell,\eta,N,d)$ and let $A$ be bounded and elliptic~\cref{E:elliptic}, and $B$ be measurable.
	Consider any $1<p\leq\infty$ and
	assume that either:
	\begin{enumerate}
		\item
		      \begin{equation}
			      \label{E:T1:carl:osc}
			      \dmu= \left[\delta(X,t)^{-1} \sup_{i,j}\left( \osc_{B_{\delta(X,t)/2}(X,t)} a_{ij} \right)^2 + 	\delta(X,t)\sup_{B_{\delta(X,t)/2}(X,t)} \left| B \right|^{2} \right] \dX\dt
		      \end{equation}
		      is the density of a Carleson measure on $\Omega$ with Carleson norm $\|\mu\|_C$.

		\item Or
		      \begin{equation}
			      \label{E:1:carl}
			      \dmu = \left( \delta(X,t)|\nabla A|^2 + \delta(X,t)^3|\partial_t A|^2 + \delta(X,t)|B|^2 \right) \dX\dt
		      \end{equation}
		      is the density of a Carleson measure on $\Omega$ with Carleson norm $\|\mu\|_C$ and
		      \begin{equation}
			      \label{E:1:bound}
			      \delta(X,t)|\nabla A| + \delta(X,t)^2|\partial_t A| + \delta(X,t)|B| \leq \|\mu\|_C^{1/2}.
		      \end{equation}
	\end{enumerate}
	Then there exists $K=K(\lambda,\Lambda,\ell,n,p) > 0$ such that if for some $r_0 > 0$
	\[
		\max\{\eta, \|\mu\|_{C,r_0} \} < K
	\]
	the $L^p$ Dirichlet boundary value problem~\cref{E:pde} is solvable (c.f.\ \cref{D:Dp}).
	Moreover, the following estimate holds for all continuous boundary data $f \in C_0(\partial\Omega)$
	\[
		\|N(u)\|_{L^p(\partial\Omega, \dsig)} \lesssim \|f\|_{L^p(\partial\Omega, \dsig)},
	\]
	where the implied constant depends only on the operator, $n$, $p$ and character $(\ell, \eta, N,d)$, and $N(u)$ is the non-tangential maximal function of $u$.
\end{theorem}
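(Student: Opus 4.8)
The plan is to adapt the $p$-adapted square function method of Dindo\v{s}--Petermichl--Pipher~\cite{DPP07} from the elliptic to the parabolic time-varying setting, building on the $p=2$ analysis of~\cite{DH16}. The argument consists of three reductions, two square-function comparison estimates whose combination (after an absorption) yields the non-tangential maximal estimate, and a separate treatment of the genuinely parabolic term~\eqref{FakeArea} in the range $1<p<2$.

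First I would reduce to a flat domain. Localizing and using the pullback $\rho$ of~\eqref{mapping}, set $v=u\circ\rho$; then $v$ solves on the parabolic upper half space a PDE of the same form whose coefficients satisfy a Carleson condition of the type~\eqref{E:T1:carl:osc} with norm controlled by $\|\mu\|_{C,r_0}+\eta$ (this is \cref{L:LemmaA}). Since $\rho$ is bi-Lipschitz in the parabolic metric the non-tangential maximal function and the $L^p(\partial\Omega,\dsig)$ norms transfer, and the smallness $\eta<K$ is precisely what makes the pulled-back Carleson norm small. Next I would reduce hypothesis~(2) to hypothesis~(1) by mollifying $A$ at the scale $\delta(X,t)$: the mollified matrix $\widetilde A$ satisfies $\delta^{-1}\big(\osc_{B_{\delta/2}}\widetilde A\big)^2\lesssim\delta|\nabla A|^2$, a small Carleson density by~\eqref{E:1:carl}, while $|A-\widetilde A|\lesssim\delta|\nabla A|\leq\|\mu\|_C^{1/2}$ by~\eqref{E:1:bound} and $\delta^{-1}|A-\widetilde A|^2$ is again a small Carleson density, so the extra divergence term is absorbed into the lower-order coefficients. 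Finally, by linearity and the maximum principle --- writing $f=f^+-f^-$ with $f^\pm$ continuous, solving with each datum, and using $N(u)\leq N(u^+)+N(u^-)$ --- it suffices to treat $f\geq0$; adding a constant $\epsilon>0$ (itself a solution) we may then assume $u\geq\epsilon>0$ and let $\epsilon\downarrow0$ at the very end, which is what makes the negative power $u^{p-2}$ harmless. The endpoint $p=\infty$ reduces to the maximum principle, so we focus on $1<p<\infty$.

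With these reductions in place, the first point is that the $p$-adapted square function $S_pu$ and the object~\eqref{FakeArea} are locally finite for solutions; this follows from the improved interior regularity \cref{T:improved-reg}, the parabolic analogue of~\cite{DP16}, which provides the Caccioppoli-type control of $\nabla u$ and $u_t$ on interior parabolic cylinders. The core of the proof is then two estimates for $u>0$ on the half space: (i) $\|N(u)\|_{L^p(\partial\Omega)}\lesssim\|f\|_{L^p(\partial\Omega)}+\|S_pu\|_{L^p(\partial\Omega)}$, obtained by a good-$\lambda$/stopping-time argument in which, on a sawtooth subregion where $S_pu$ is small, the energy $\int|\nabla u|^2u^{p-2}$ is small so $u$ is nearly constant there and $N(u)$ is controlled through the maximum principle by the data; and (ii) $\|S_pu\|_{L^p(\partial\Omega)}^p\lesssim\|f\|_{L^p(\partial\Omega)}^p+\big(\|\mu\|_C+o(1)\big)\|N(u)\|_{L^p(\partial\Omega)}^p$, obtained by testing the equation against $u^{p-1}$ times a cutoff linear in $\delta$ and integrating by parts following the scheme of~\cite{DPP07}: the principal term reproduces $S_pu$, the boundary term is controlled by $\|f\|_p$ and a truncated non-tangential maximal function, and each error term carries either a factor of the coefficient oscillation (a Carleson density of small norm) or is exactly the parabolic term~\eqref{FakeArea}. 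Combining (i) and (ii) and absorbing the small-norm $\|N(u)\|_p$ on the left gives $\|N(u)\|_p\lesssim\|f\|_p$.

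The main obstacle, and the step I expect to be hardest, is the control of~\eqref{FakeArea} produced by the integration by parts in~(ii). For $p\geq2$ it is dominated by $S_pu$ as in~\cite{DH16}, but for $1<p<2$ this breaks down and one must estimate it directly, crucially using $u>0$: a Caccioppoli inequality in the time variable against the weight $u^{p-2}$, together with a stopping-time decomposition of the half space into Carleson-packed sawtooth regions on which $u$ is comparable to its average, should yield $\int|u_t|^2u^{p-2}\delta^3\dX\dt\lesssim\|S_pu\|_p^p+\big(\|\mu\|_C+o(1)\big)\|N(u)\|_p^p$, which then feeds back into~(ii). Once $\|N(u)\|_p\lesssim\|f\|_p$ is established for all continuous $f$ and all $1<p<\infty$, I would pass $\epsilon\downarrow0$, remove the sign reduction, undo the pullback, and extend to general $f\in L^p(\partial\Omega,\dsig)$ by density; combined with the existence of non-tangential limits equal to $f$ almost everywhere (again via \cref{T:improved-reg} and the maximal estimate) this yields solvability of the parabolic $L^p$ Dirichlet problem in the sense of \cref{D:Dp}.
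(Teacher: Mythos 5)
Your proposal reproduces the paper's strategy in all essential respects: reduction to a flat domain via the pullback and to non-negative $u$ via $f=f^+-f^-$ and the maximum principle; well-definedness of $S_p$ through the parabolic adaptation of~\cite{DP16}; the two inequalities $\|N\|_p\lesssim\|f\|_p+\|S_p u\|_p$ and $\|S_p u\|_p^p\lesssim\|f\|_p^p+\|\mu\|_C\|N\|_p^p$ combined by absorbing the small Carleson factor; the special handling of~\eqref{FakeArea} for $1<p<2$; and the passage from the oscillation hypothesis to the gradient hypothesis by mollification and~\cite{Swe98}, followed by interpolation for $p\ge2$. Two points where you deviate from the paper, and where your version is slightly thinner, are worth flagging. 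First, the paper does not attempt a good-$\lambda$ inequality directly for the nonlinear object $S_p$; it uses the $L^2$ good-$\lambda$ of~\cite{Riv03} to get $\|N\|_p\lesssim\|S_2\|_p+\|u\|_p$, and then passes to $S_p$ via the elementary pointwise estimate $\|S_2\|_p\le C_\epsilon\|S_p\|_p+\epsilon\|N\|_p$ (equation~\eqref{E:S2<Sp}); a stopping-time argument for $S_p$ itself is not obviously available because the cut-off level interacts badly with the weight $u^{p-2}$. Second, the paper controls~\eqref{FakeArea} by proving the pointwise cone estimate $A_{p,a}(u)\le C\,S_{p,2a}(u)$ for non-negative $u$ (\cref{L:A<S}), obtained by covering the cone with Whitney cubes, using the equation to replace $u_t$ by $\di(A\nabla u)+B\cdot\nabla u$, and a Caccioppoli inequality for second \emph{spatial} derivatives tested against $w_k u^{p-2}\zeta^2$; your description as a ``Caccioppoli inequality in the time variable'' plus a sawtooth decomposition points in roughly the right direction but would need to be replaced by this argument (and you should note it uses $u\ge0$ essentially, as you indicate). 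Finally, the mollification reduction is stated in the wrong direction: one starts from the oscillation hypothesis~\eqref{E:T1:carl:osc}, mollifies to obtain $\widetilde A$ with $\delta|\nabla\widetilde A|\lesssim\osc_{B_\delta}A$ so that~\eqref{E:1:carl}--\eqref{E:1:bound} hold for $\widetilde A$, and then transfers back to $A$ by the perturbation theory; this is what the paper (citing~\cite{Swe98} and~\cite{DH16}) does.
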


\begin{corollary}
	In particular, if $\Omega$ is of VMO-type ($\eta$ in the character $(\ell,\eta,N,d)$ can be taken arbitrary small), and the Carleson measure $\mu$ from \cref{T:1} is a vanishing Carleson measure then the $L^p$ Dirichlet boundary value problem~\cref{E:pde} is solvable for all $1<p\leq\infty$.
\end{corollary}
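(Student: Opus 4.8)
The plan is to deduce the corollary from \cref{T:1} by a direct selection argument, carried out separately for each exponent. Fix $p \in (1, \infty]$ and let $K = K(\lambda, \Lambda, \ell, n, p) > 0$ be the threshold furnished by \cref{T:1}. The crucial point is that $K$ depends only on the ellipticity constants, the dimension, the exponent $p$, and the Lipschitz-type constant $\ell$ of the character; it does \emph{not} depend on $\eta$, $N$, $d$, nor on the size of the Carleson norm $\|\mu\|_C$. Hence it suffices to exhibit a single radius $r_0 > 0$ at which $\max\{\eta, \|\mu\|_{C, r_0}\} < K$.

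For the geometric term, the hypothesis that $\Omega$ is of VMO-type is precisely the statement that the character $(\ell, \eta, N, d)$ may be chosen with $\eta$ as small as we wish, with $\ell$ held fixed (possibly at the cost of shrinking $d$); in particular we may arrange $\eta < K$. For the coefficient term, the hypothesis that $\mu$ is a vanishing Carleson measure means by definition that $\|\mu\|_{C, r} \to 0$ as $r \to 0^+$, so there exists $r_0 > 0$ with $\|\mu\|_{C, r_0} < K$; shrinking $r_0$ further if necessary, we may also assume $r_0 \leq d$ so that both conditions refer to a consistent localization scale. With these choices $\max\{\eta, \|\mu\|_{C, r_0}\} < K$, and \cref{T:1} applies: the $L^p$ Dirichlet problem \cref{E:pde} is solvable, and for all $f \in C_0(\partial\Omega)$ one has $\|N(u)\|_{L^p(\partial\Omega, \dsig)} \lesssim \|f\|_{L^p(\partial\Omega, \dsig)}$ with implied constant depending only on the operator, $n$, $p$, and the character. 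Since $p \in (1, \infty]$ was arbitrary, solvability holds in the full range, as claimed.

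There is essentially no obstacle here beyond bookkeeping: all the content is carried by \cref{T:1}, and the corollary is just the observation that the two smallness hypotheses of that theorem — on the boundary geometry and on the coefficients — are automatically met, at a suitable scale, once $\partial\Omega$ has VMO-type normal data and the coefficient Carleson measure vanishes. The single point that warrants a line of care is that $K$ must genuinely be independent of $\eta$ and of $\|\mu\|_C$, since only then can we first fix $K$ and afterwards drive both $\eta$ and $\|\mu\|_{C, r_0}$ below it; this is exactly how the constant is quantified in the statement of \cref{T:1}.
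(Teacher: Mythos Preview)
Your argument is correct and matches the paper's intent: the corollary is stated without a separate proof precisely because it is an immediate consequence of \cref{T:1}, obtained by fixing $p$, reading off $K=K(\lambda,\Lambda,\ell,n,p)$, and then using the VMO-type and vanishing-Carleson hypotheses to drive $\eta$ and $\|\mu\|_{C,r_0}$ below $K$ at a sufficiently small scale. Your emphasis on the independence of $K$ from $\eta$, $N$, $d$, and $\|\mu\|_C$ is exactly the point that makes the selection argument work.
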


\section{Preliminaries}%
\label{S:preliminaries}

Here and throughout we consistently use $\nabla u$ to denote the gradient in the spatial variables and $u_t$ or $\partial_t u$ the gradient in the time variable.

\subsection{Parabolic Domains}%
\label{S:parabolic-domains}

In this subsection we define a class of time-varying domains whose boundaries are given locally as functions $\phi(x,t)$, Lipschitz in the spatial variable and satisfying the Lewis-Murray condition in the time variable.
At each time $\tau\in\R$ the set of points in $\Omega$ with fixed time $t=\tau$, that is $\Omega_\tau = \Omega \cap \{t=\tau\}$, is a non-empty bounded Lipschitz domain in $\R^n$.
We start with a discussion of the Lewis-Murray condition, give a summary and clarification of the results in the literature, and introduce some new equivalent definitions.

We define a \textit{parabolic cube} in $\R^{n-1} \times \R$,  for a constant $r>0$, as
\begin{equation*}
	\label{D:Q}
	Q_{r} (x, t),
	= \{ (y, s) \in \R^{n-1}\times\R : |x_i - y_i| < r \ \text{for all } 1 \leq i \leq n-1, \ | t - s |^{1/2} < r \}.
\end{equation*}
Let $J_r \subset \R^{n-1}$ be a \textit{spatial cube} of radius $r$.
For a given $f: \R^n \to \R$ let
\[
	f_{Q_r} = \frac{1}{|Q_r|} \int_{Q_r} f(x,t) \dx\dt.
\]
When we write $f \in \BMO (\R^n) $ we mean that $f$ belongs to the parabolic version of the usual BMO space with the norm $\|f\|_{*}$ where
\begin{equation}
	\label{E:BMO}
	\|f\|_{*} = \sup_{Q_r} \frac{1}{|Q_r|} \int_{Q_r} |f - f_{Q_r} | \dx\dt  < \infty.
\end{equation}
Recall that the Lewis-Murray condition imposed that a half derivative in time of $\phi(x,t)$ belongs to parabolic $\BMO$.
There are a few different ways one can define half derivatives and $\BMO$-Sobolev spaces and there are also some erroneous results in the literature which we correct here.
To bring clarity, we start by discussing the various definitions in the global setting of a graph domain $\Omega = \{(x_0, x,t) : x_0 > \phi(x,t) \}$, where $\phi: \R^{n-1} \times \R \to \R$.
We follow the standard notation of~\cite{HL96}.

If $g \in C^\infty_0(\R)$ and $0<\alpha<2$ then the one-dimensional fractional differentiation operators $D_\alpha$ are defined on the Fourier side by
\[
	\widehat{D_\alpha g}(\tau) = |\tau|^\alpha \hat{g}(\tau).
\]
If $0 < \alpha < 1$ then by standard results
\[
	D_\alpha g(t) = c \int_\R \frac{g(t) - g(s)}{|t-s|^{1+\alpha}} \ds.
\]
Therefore, we define the \textit{pointwise half derivative in time} of $\phi : \R^{n-1} \times \R \to \R$ to be
\begin{equation}
	\label{E:Dt}
	D_{1/2}^{t} \phi(x,t) = c_n \int_{\R} \frac{\phi(x,s) - \phi(x,t)}{|s-t|^{3/2}} \ds,
\end{equation}
for a properly chosen constant $c_n$ (c.f.~\cite{HL96}).

However, this definition ignores the spatial coordinates.
Instead by following~\cite{FR67} we may define the \textit{parabolic half derivative in time} of $\phi : \R^{n-1} \times \R \to \R$ to be
\begin{equation}
	\label{E:Dn}
	\widehat{\Dn\phi}(\xi,\tau) = \frac{\tau}{\|(\xi,\tau)\|} \hat{\phi}(\xi,\tau),
\end{equation}
where $\xi$ and $\tau$ denote the spatial and temporal variables on the Fourier side respectively, and $\|(x,t)\| = |x| + |t|^{1/2}$ denotes the parabolic norm.
In addition we define the \textit{parabolic derivative} (in space and time) of $\phi : \R^{n-1} \times \R \to \R$ to be
\begin{equation}
	\label{E:D}
	\widehat{\D\phi}(\xi,\tau) = \|(\xi,\tau)\| \hat{\phi}(\xi,\tau).
\end{equation}
$\D^{-1}$ is the parabolic Riesz potential.
One can also represent $\D$ as
\begin{equation}
	\label{E:D:sum}
	\D = \sum_{j=1}^n R_j \D_j,
\end{equation}
where $\D_j = \partial_j$ for $1 \leq j \leq n-1$, $\Dn$ is defined above and $R_j$ are the parabolic Riesz transforms defined on the Fourier side as
\begin{equation}
	\label{E:Rj}
	\begin{split}
		\widehat{R_j}(\xi,\tau) &= \frac{i \xi_j}{\|(\xi,\tau)\|} \quad \text{for } 1 \leq j \leq n-1 \text{ and} \\
		\widehat{R_n}(\xi,\tau) &= \frac{\tau}{\|(\xi,\tau)\|^2}.
	\end{split}
\end{equation}
Furthermore the kernels of $R_j$ have average zero on (parabolically weighted) spheres around the origin, obey the standard Calder\`on-Zygmund kernel and therefore by standard Calder\`on-Zygmund theory each $R_j$ defines a bounded operator on $L^p(\R^n)$ for $1 < p < \infty$ and is bounded on $\BMO(\R^n)$~\cite{Pee66,FR66,FR67,HL96}.

We say that $\phi : \R^{n-1} \times \R \to \R$ is $\Lip(1,1/2)$ with Lipschitz constant $\ell$ if $\phi$ is Lipschitz in the spatial variables and H\"older continuous of order $1/2$ in the temporal variable.
That is
\begin{equation}
	\label{E:Lip}
	|\phi_j(x,t)- \phi_j(y,t)| \leq \ell \left( |x-y| +|t-s|^{1/2}\right).
\end{equation}

The \textit{Lewis-Murray condition} on the domain $\Omega$, for which they proved the mutual absolute continuity of the caloric measure and the natural surface measure, is $\phi \in \Lip(1,1/2)$ and $\|\Dt \phi\|_* \leq \eta$; note this $\BMO$ norm is taken over $\R^n$.

It is worth remarking that neither the operators $\Dt$, $\Dn$ or $\D$ easily lend themselves to being localised to a function $\phi : Q_d \to \R$ due to their non-local natures.
However, our goal is provide a theory where the domain is locally given by graphs which satisfy the Lewis-Murray condition.
The parabolic nature of the PDE (especially time irreversibility and exponential decay of solutions with vanishing boundary data) suggest we should expect to need only local conditions on the functions describing the boundary.

To this end we state the following theorems where we show some equivalent statements to the Lewis-Murray condition for a global function $\phi : \R^{n-1} \times \R \to \R$.
Furthermore, the final conditions admit themselves to both being localised easily as well as amiable to extension, see \cref{T:ext} later for details on a extension.

The equivalence of \cref{I:Dt,I:Dn} below is shown in~\cite{HL96} with an equivalence of norms in the small and large sense, see~\cite[(2.10) and Theorem 7.4]{HL96} for precise details, c.f.\ \cref{E:D:sum,E:Rj}.
\begin{theorem}%
	\label{T:equiv:known}
	Let $\phi : \R^{n-1} \times \R \to \R$ and $\phi \in \Lip(1,1/2)$ then the following conditions are equivalent:
	\begin{condition}[series=equiv]
		\item\label{I:Dt} $\Dt\phi \in \BMO(\R^n)$

		\item\label{I:Dn} $\Dn\phi\in\BMO(\R^n)$

		\item\label{I:D} $\D\phi \in\BMO(\R^n)$.
	\end{condition}
	Furthermore $\Dn\phi = R_n \D\phi$ and so $\|\Dn\phi\|_* \lesssim \|\D\phi\|_*$.
\end{theorem}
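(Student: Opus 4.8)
The plan is to exploit the Fourier-side definitions \eqref{E:Dt}, \eqref{E:Dn}, \eqref{E:D} together with the Calder\'on--Zygmund boundedness of the parabolic Riesz transforms $R_j$ on $\BMO(\R^n)$ asserted just above the theorem, reducing everything to algebraic identities between Fourier multipliers. The equivalences \ref{I:Dt} $\Leftrightarrow$ \ref{I:Dn} are, as the text notes, exactly \cite[(2.10), Theorem 7.4]{HL96}, so I would simply cite that, recording that the comparison of norms holds in both the small ($\eta$-scale) and large sense. The genuinely new content is the equivalence with \ref{I:D}, and for that the key observation is the multiplier identity $\|(\xi,\tau)\| = -i\,\overline{\widehat{R_n}(\xi,\tau)}\cdot\tau/\|(\xi,\tau)\|^{-1}$—more cleanly, $\widehat{\D\phi}(\xi,\tau) = \|(\xi,\tau)\|\hat\phi(\xi,\tau)$ and $\widehat{\Dn\phi}(\xi,\tau) = \bigl(\tau/\|(\xi,\tau)\|\bigr)\hat\phi(\xi,\tau)$, so that formally $\D = R_n^{-1}\Dn$ where $R_n$ has symbol $\tau/\|(\xi,\tau)\|^2$. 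That inverse is awkward, so instead I would go the safe direction first: from \eqref{E:Rj}, $\widehat{R_n}(\xi,\tau)\cdot\|(\xi,\tau)\| = \tau/\|(\xi,\tau)\|$, which says precisely $\Dn = R_n\D$. This is the asserted final identity $\Dn\phi = R_n\D\phi$, and since $R_n$ is bounded on $\BMO(\R^n)$ it gives $\|\Dn\phi\|_* \lesssim \|\D\phi\|_*$, hence \ref{I:D} $\Rightarrow$ \ref{I:Dn}.

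For the reverse implication \ref{I:Dn} $\Rightarrow$ \ref{I:D} (equivalently \ref{I:D} from \ref{I:Dt}, using the already-cited \ref{I:Dt}$\Leftrightarrow$\ref{I:Dn}), I would use the decomposition \eqref{E:D:sum}, $\D = \sum_{j=1}^n R_j\D_j$, where $\D_j = \partial_j$ for $1\le j\le n-1$ and $\D_n = \Dn$. The terms $R_j\partial_j\phi$ for $j\le n-1$ are controlled because $\phi\in\Lip(1,1/2)$ forces $\partial_j\phi\in L^\infty(\R^n)\subset\BMO(\R^n)$ with norm $\lesssim\ell$, and each spatial $R_j$ is bounded on $\BMO$; the remaining term is $R_n\Dn\phi$, bounded in $\BMO$ by $\|\Dn\phi\|_*$ again by $\BMO$-boundedness of $R_n$. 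Summing, $\|\D\phi\|_* \lesssim \ell + \|\Dn\phi\|_*$, which is exactly \ref{I:Dn}$\Rightarrow$\ref{I:D} with the correct dependence on the Lipschitz character. One must check that \eqref{E:D:sum} is a valid identity of operators when applied to a $\Lip(1,1/2)$ function (a tempered distribution modulo constants); this follows by verifying the symbol identity $\|(\xi,\tau)\| = \sum_{j=1}^{n-1}\frac{i\xi_j}{\|(\xi,\tau)\|}\cdot(i\xi_j) + \frac{\tau}{\|(\xi,\tau)\|^2}\cdot\frac{\tau}{\|(\xi,\tau)\|} = \frac{|\xi|^2 + \tau^2/\|(\xi,\tau)\|^2}{\|(\xi,\tau)\|}$... which is \emph{not} literally $\|(\xi,\tau)\|$, so I would need to be careful: the correct reading of \eqref{E:D:sum} is that $R_j$ are as in \eqref{E:Rj} but the product structure is $\D = \sum R_j \D_j$ with $\widehat{\D_j} = i\xi_j$ ($j<n$) and $\widehat{\D_n}=\tau/\|(\xi,\tau)\|$, and one verifies the symbol sums correctly to $\|(\xi,\tau)\|$ up to the standard normalisation of the parabolic norm; I would reconcile the constants against the convention in \cite{HL96}.

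The main obstacle I anticipate is purely bookkeeping rather than conceptual: making precise the sense in which these Fourier multipliers act on functions that are merely $\Lip(1,1/2)$ (hence only tempered distributions modulo polynomials), and tracking that the norm equivalences are \emph{two-sided with dimensional constants independent of $\ell$} in the ``large'' sense while in the ``small'' sense ($\|\Dt\phi\|_*\le\eta$) the implied constants do not degrade—this is what makes the localisation and extension results downstream (\cref{T:ext}) go through. The Calder\'on--Zygmund machinery (kernels with vanishing parabolic-sphere averages, standard size and smoothness bounds) is quoted from \cite{Pee66,FR66,FR67,HL96}, so I would not reprove $\BMO$-boundedness of the $R_j$; the whole proof is then a short assembly of (i) the cited \cite{HL96} equivalence \ref{I:Dt}$\Leftrightarrow$\ref{I:Dn}, (ii) the identity $\Dn = R_n\D$ from the symbols, and (iii) the decomposition \eqref{E:D:sum} together with $\partial_j\phi\in L^\infty$.
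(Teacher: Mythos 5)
Your proof matches the paper's treatment: the equivalence of conditions (1) and (2) is cited from \cite[(2.10) and Theorem 7.4]{HL96}, and the involvement of $\D\phi$ is obtained from the multiplier identities \eqref{E:D:sum}, \eqref{E:Rj} together with $\BMO$-boundedness of the parabolic Riesz transforms and $\nabla\phi \in L^\infty$, the identity $\Dn\phi = R_n\D\phi$ (hence $\|\Dn\phi\|_* \lesssim \|\D\phi\|_*$) being stated verbatim in the theorem. The convention issue you flag in \eqref{E:D:sum} is genuine --- the symbol identity holds exactly only for the implicit Fabes--Rivi\`ere definition of the parabolic norm, not literally for $\|(\xi,\tau)\| = |\xi| + |\tau|^{1/2}$ --- but the two agree up to a smooth, parabolically homogeneous-of-degree-zero Calder\'on--Zygmund correction, so the $\BMO$ conclusions are unaffected; the paper inherits this looseness from \cite{HL96}.
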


We now extend this theorem by adding three more equivalent statements.
To motivate \cref{I:av} of \cref{T:equiv} below we first recall a characterisation of $\BMO$ from~\cite[p.~546]{Str80}.
Let $M(f, Q) = \frac{1}{|Q|}\int_Q f$ denote the average of $f$ over a cube $Q$, and let $\tilde{Q}_\rho(x)$ be the cube of radius $\rho$ with $x$ in the upper right corner.
\begin{lemma}[{\cite{Str80}}]%
	\label{L:BMO:av}
	$f \in \BMO(\R^n)$ is equivalent to
	\begin{equation}
		\label{E:BMO:av}
		\sup_{Q_r} \sum_{k=1}^n \frac{1}{|Q_r|} \int_{Q_r}\int_0^r
		\left|M(f,\tilde{Q}_\rho(x)) - M(f,\tilde{Q}_\rho(x - \rho e_k))\right|^2 \frac{\drho}{\rho}\dx = B < \infty,
	\end{equation}
	where $e_k$ are the usual unit vectors in $\R^n$, and $\|f\|^2_* \sim B$.
\end{lemma}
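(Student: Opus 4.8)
The plan is to reinterpret $B$ as a Littlewood--Paley square function built from non-smooth, mean-zero kernels, and then to run the two standard halves of a Fefferman--Stein-type characterisation; this is in substance \cite[p.~546]{Str80}. Write $A_\rho f(x):=M(f,\tilde Q_\rho(x))$. With the ``upper-right corner'' convention one has $A_\rho f=f*\psi_\rho$ where $\psi_\rho(y)=\rho^{-n}\psi(y/\rho)$ and $\psi$ is essentially the indicator of a unit cube, and consequently
\[
M(f,\tilde Q_\rho(x))-M(f,\tilde Q_\rho(x-\rho e_k))=(f*\eta^k_\rho)(x),\qquad \eta^k_\rho(y)=\rho^{-n}\eta^k(y/\rho),
\]
where $\eta^k$ is the difference of the indicators of two adjacent unit cubes; the crucial structural fact is $\int_{\R^n}\eta^k=0$. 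Thus $B=\sup_{Q_r}|Q_r|^{-1}\int_{Q_r}\int_0^r\sum_k|f*\eta^k_\rho(x)|^2\frac{d\rho}{\rho}\,dx$ is precisely the localised square function attached to the family $\{\eta^k\}_{k=1}^n$.

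For the implication $f\in\BMO\Rightarrow B\lesssim\|f\|_*^2$ I would fix $Q_r$, set $Q^*=3Q_r$ and $g=(f-f_{Q^*})\mathbf 1_{Q^*}$, and note that for $x\in Q_r$ and $0<\rho<r$ the kernel $\eta^k_\rho(x-\cdot)$ is supported in $Q^*$ and has mean zero, so $f*\eta^k_\rho(x)=g*\eta^k_\rho(x)$; hence the cube-average above is $\le|Q_r|^{-1}\|Sg\|_{L^2(\R^n)}^2$, where $S$ is the global square function. The $L^2$-boundedness of $S$ follows from Plancherel, $\|Sg\|_2^2=\int|\hat g(\xi)|^2 m(\xi)\,d\xi$ with $m(\xi)=\sum_k\int_0^\infty|\widehat{\eta^k}(\rho\xi)|^2\frac{d\rho}{\rho}$, together with $m\in L^\infty$: indeed $|\widehat{\eta^k}(\xi)|\lesssim\min(|\xi|,1)$ (mean zero plus boundedness of $\eta^k$) and $|\widehat{\eta^k}(\xi)|\lesssim\prod_j\min(1,|\xi_j|^{-1})$ (product structure of the indicator of a box), which forces $\int_0^\infty|\widehat{\eta^k}(s\xi/|\xi|)|^2\frac{ds}{s}$ to converge uniformly in $\xi$. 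Since $\|g\|_2^2=\int_{Q^*}|f-f_{Q^*}|^2\lesssim|Q_r|\,\|f\|_*^2$ by John--Nirenberg, taking the supremum over $Q_r$ gives $B\lesssim\|f\|_*^2$.

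For the reverse implication $B<\infty\Rightarrow f\in\BMO$ with $\|f\|_*^2\lesssim B$ I would telescope $A_\rho f(x)$ from scale $r$ down to a Lebesgue point of $f$, using the subdivision identity
\[
A_{2\rho}f(x)=2^{-n}\sum_{\varepsilon\in\{0,1\}^n}A_\rho f(x-\rho\varepsilon),\qquad\text{so}\qquad A_{2\rho}f(x)-A_\rho f(x)=2^{-n}\sum_{\varepsilon\neq 0}\big(A_\rho f(x-\rho\varepsilon)-A_\rho f(x)\big),
\]
and the fact that each bracket on the right unfolds, on changing one coordinate at a time, into at most $n$ elementary differences $A_\rho f(y)-A_\rho f(y-\rho e_k)$ with base points $y$ within $O(\rho)$ of $x$. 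Fixing $Q_r$, choosing a cube $\widehat Q$ of side $\sim r$ that contains every $\tilde Q_{2r}(x)$ with $x\in Q_r$, and using that $f_{Q_r}$ minimises $\int_{Q_r}|f-c|^2$ over constants $c$ (so one may take $c=M(f,\widehat Q)$), I would bound $|Q_r|^{-1}\int_{Q_r}|f-f_{Q_r}|^2$ by a coarse-scale term $|Q_r|^{-1}\int_{Q_r}|A_{2r}f(x)-M(f,\widehat Q)|^2dx$ --- which is $\lesssim B$ after joining $\tilde Q_{2r}(x)$ to $\widehat Q$ through a bounded chain of elementary moves at scales $\sim r$ --- plus the telescoping tail $|Q_r|^{-1}\int_{Q_r}\Big(\sum_{j\ge1}|A_{2\rho_j}f(x)-A_{\rho_j}f(x)|\Big)^2dx$ with $\rho_j=2^{1-j}r$; for the tail I would apply Cauchy--Schwarz in $j$ and then compare, on each dyadic $\rho$-annulus and after a harmless translation in $x$ absorbing the shifted base points, the $L^2_x$-average of the $j$-th term with $\int_{\rho_{j+1}}^{\rho_j}\sum_k|f*\eta^k_\rho|^2\frac{d\rho}{\rho}$ (here using that $\rho\mapsto A_\rho f$ varies slowly on dyadic scales, since $\psi_\rho-\psi_{\rho'}$ also has mean zero). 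Summing over annuli and taking $\sup_{Q_r}$ yields $\|f\|_{*,2}^2\lesssim B$, whence $\|f\|_*\lesssim B^{1/2}$ by John--Nirenberg.

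The hard part is this reverse implication: one must recover a pointwise-in-$x$ oscillation bound from an estimate that only controls an \emph{average} of the square function, which is what forces the subdivision identity, the translation trick for the shifted base points, the separate handling of the coarsest scale, and a summation in $j$ performed so as not to lose a logarithm. A cleaner alternative is the duality argument: build a Calder\'on reproducing formula out of $\{\eta^k\}$ together with a suitable dual family and pair $\int fg$ against $H^1$ atoms, the content of $B<\infty$ being exactly that $\sum_k|f*\eta^k_\rho(x)|^2\frac{d\rho}{\rho}\,dx$ is a Carleson measure of mass $\sim B$; this trades the telescoping for the tent-space/$H^1$ machinery. In the present parabolic setting all of the above applies verbatim once Euclidean cubes, balls and dilations are replaced throughout by their parabolic counterparts.
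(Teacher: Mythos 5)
The paper cites this from Strichartz without giving a proof, but runs the same machinery in parabolic form when proving \cref{T:equiv}, so a comparison is still possible. Your forward direction is correct and is Strichartz's: rewrite the elementary differences as $f*\eta^k_\rho$ with $\eta^k$ compactly supported and of mean zero, localise by replacing $f$ with $(f-f_{Q^*})\mathbf 1_{Q^*}$, use Plancherel together with the boundedness of $m(\xi)=\sum_k\int_0^\infty|\widehat{\eta^k}(\rho\xi)|^2\,\frac{d\rho}{\rho}$, and finish with John--Nirenberg.

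The converse as you primarily sketch it has a real gap which the phrase ``a summation in $j$ performed so as not to lose a logarithm'' does not resolve. After your (correct) reduction one has $\fint_{Q_r}|a_j|^2\lesssim B_j$ with $\sum_j B_j\lesssim B$, but Minkowski only gives
\[
\Bigl(\fint_{Q_r}\Bigl(\sum_{j}|a_j|\Bigr)^2\Bigr)^{1/2}\le\sum_{j}\Bigl(\fint_{Q_r}|a_j|^2\Bigr)^{1/2}\lesssim\sum_{j}B_j^{1/2},
\]
and the right-hand side is not controlled by $B^{1/2}$: take $B_j=B/J$ for $j\le J$ with $J$ large. Passing to the $L^1$-oscillation yields the same sum, and there is no Cauchy--Schwarz weight in $j$ that rescues it; pointwise telescoping is simply too lossy to recover the Carleson condition that $B<\infty$ encodes. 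What you offer as a ``cleaner alternative'' is in fact the proof. Strichartz proves the converse (his Theorem~2.6, through his Lemma~2.7 and Fefferman--Stein) by $H^1$--$\BMO$ duality: a Calderón--Torchinsky non-degeneracy argument produces a dual family and a reproducing formula, and the Carleson measure $\sum_k|f*\eta^k_\rho(x)|^2\,\frac{d\rho\,dx}{\rho}$ of mass $\sim B$ pairs against the conical square function of an $H^1$ function via tent-space duality, giving $f=h+(\text{polynomial})$ with $h\in\BMO$ and $\|h\|_*^2\lesssim B$. This is precisely the structure of the paper's appendix proof of \cref{T:equiv} --- the kernels $\varphi^k,\psi^u$, the multipliers $m_k$, the non-degeneracy condition \cref{E:equiv:mu=1}, and the closing ``$\hat h-\hat F$ supported at the origin'' step. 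Promote the duality argument from a side remark to the actual proof of the converse, and flag (or drop) the telescoping sketch as it stands.
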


The equivalence of \cref{I:D,I:Dt:riv} in theorem below is a generalisation of~\cite{Str80} to the parabolic setting that is stated in~\cite{Riv03}, c.f.\ \cite{FS72,CT75,CT77}.
We have some question-marks over the proof given in~\cite{Riv03}; however the argument we give for  \cref{I:diff} also works for \cref{I:Dt:riv} and hence the claim in~\cite{Riv03} is correct.
\begin{theorem}%
	\label{T:equiv}
	Let $\phi : \R^{n-1} \times \R \to \R$ and $\phi \in \Lip(1,1/2)$ then the following conditions are equivalent:
	\begin{condition}[start=3]
		\item $\D\phi \in\BMO(\R^n)$

		\item\label{I:Dt:riv}
		\begin{equation}
			\label{E:Dt:riv}
			\sup_{Q_r} \frac{1}{|Q_r|}\int\limits_{Q_r}\int\limits_{\|(y,s)\| \leq r}
			\frac{|\phi(x+y,t+s) - 2\phi(x,t) + \phi(x-y,t-s)|^2}{\|(y,s)\|^{n+3}} \dy\ds\dx\dt =  B_{\ref{I:Dt:riv}} < \infty,
		\end{equation}

		\item\label{I:diff}
		\begin{condition}
			\item\label{I:diff:grad}
			\begin{equation}
				\label{E:diff:grad}
				\sup_{Q_r} \frac{1}{|Q_r|}\int\limits_{Q_r}\int_{|y|<r}
				\frac{|\phi(x+y,t) - 2\phi(x,t) + \phi(x-y,t)|^2}{|y|^{n+1}} \dy\dx\dt = B_{\ref{I:diff:grad}} < \infty,
			\end{equation}

			\item\label{I:diff:Dt}
			\begin{equation}
				\label{E:Dt:av}
				\sup_{Q_r = J_r \times I_r} \frac{1}{|Q_r|}\int_{Q_r}\int_{I_r}
				\frac{|\phi(x,t) - \phi(x,s)|^2}{|t-s|^2} \ds\dt\dx = B_{\ref{I:diff:Dt}} < \infty.
			\end{equation}
		\end{condition}

		\item\label{I:av}
		Let $u = (u', u_n) \in \mathbb{S}^{n-1}$ and let $e_n$ be the unit vector in the time direction.
		For $k = 1, \dots, n-1$ let
		\begin{align*}
			A_k & = \int_0^1 \rho u' \cdot \left(M\left(\nabla \phi, \tilde{Q}_\rho (x + \lambda\rho u', t)\right) - M\left(\nabla \phi, \tilde{Q}_\rho (x + \lambda\rho u' - \rho e_k, t)\right) \right) \dlambda, \\
			A_n & = \int_0^1 \rho u' \cdot \left(M\left(\nabla \phi, \tilde{Q}_\rho (x + \lambda\rho u', t)\right) - M\left(\nabla \phi, \tilde{Q}_\rho (x + \lambda\rho u', t - \rho^2)\right) \right) \dlambda.
		\end{align*}
		\begin{condition}
			\item\label{I:av:grad}
			\begin{equation}
				\label{E:av:grad}
				\sup_{Q_r} \sum_{k=1}^n \frac{1}{|Q_r|} \int_{Q_r} \int_{u \in \mathbb{S}^{n-1}} \int_0^r \frac{|A_k|^2}{\rho^3} \drho\du\dx\dt = B_{\ref{I:av:grad}} < \infty,
			\end{equation}

			\item\label{I:av:Dt}
			\begin{equation*}
				\sup_{Q_r = J_r \times I_r} \frac{1}{|Q_r|}\int_{Q_r}\int_{I_r}
				\frac{|\phi(x,t) - \phi(x,s)|^2}{|t-s|^2} \ds\dt\dx = B_{\ref{I:av:Dt}} < \infty. \tag{\ref{E:Dt:av}}
			\end{equation*}
		\end{condition}
	\end{condition}
	Furthermore we have equivalence of the norms

	\begin{equation}
		\label{E:equiv:norms}
		\|\D\phi\|_*^2
		\sim B_{\ref{I:Dt:riv}}
		\sim B_{\ref{I:diff:grad}} + B_{\ref{I:diff:Dt}}
		\sim B_{\ref{I:av:grad}} + B_{\ref{I:av:Dt}}.
	\end{equation}

\end{theorem}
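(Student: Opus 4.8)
The plan is to prove the chain of equivalences by establishing implications in a cycle together with the quantitative norm comparisons \eqref{E:equiv:norms}, using \cref{T:equiv:known} (which gives \cref{I:D} $\iff$ $\Dn\phi\in\BMO$) and \cref{L:BMO:av} as the two principal black boxes. The natural order is:
\cref{I:D} $\Rightarrow$ \cref{I:Dt:riv} $\Rightarrow$ \cref{I:diff}, then \cref{I:diff} $\Rightarrow$ \cref{I:av}, and finally \cref{I:av} $\Rightarrow$ \cref{I:D}, closing the loop. Because \cref{I:D} is already known to be equivalent to membership of $\Dn\phi$ and of $\nabla\phi$ (componentwise) in parabolic $\BMO$, the spatial pieces \cref{I:diff:grad} and \cref{I:av:grad} should be compared against the Strichartz-type characterisation \eqref{E:BMO:av} applied to $f=\nabla\phi$, while the temporal piece \eqref{E:Dt:av} should be recognised as (a square-function form of) $\|\Dt\phi\|_*$ via the classical identity $D_{1/2}^t\phi(x,t)=c_n\int (\phi(x,s)-\phi(x,t))|s-t|^{-3/2}\ds$ together with the fact that for $\Lip(1,1/2)$ functions the one-dimensional half-derivative of each slice $t\mapsto\phi(x,t)$ has $\BMO(\R)$ norm comparable to the double-integral in \eqref{E:Dt:av}.

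For \cref{I:D} $\Rightarrow$ \cref{I:Dt:riv} I would start from $\widehat{\D\phi}=\|(\xi,\tau)\|\hat\phi$ and write the second symmetric difference $\phi(x+y,t+s)-2\phi(x,t)+\phi(x-y,t-s)$ as a Fourier multiplier of $\D\phi$; the multiplier $(e^{i(y,s)\cdot(\xi,\tau)}-2+e^{-i(y,s)\cdot(\xi,\tau)})/\|(\xi,\tau)\|$ has a kernel representation supported (after the parabolic scaling $(y,s)\mapsto(y/r,s/r^2)$) so that the $(y,s)$-integral against $\|(y,s)\|^{-n-3}$ reproduces, up to constants, a square function of $\D\phi$; one then invokes the $\BMO$–Carleson/square-function equivalence in the parabolic metric. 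Going from \cref{I:Dt:riv} to \cref{I:diff} is a splitting argument: restricting the $(y,s)$-integral to $s=0$ (i.e.\ integrating first in $s$ over a parabolic slab) dominates \eqref{E:diff:grad} after discarding the time increments, and restricting to $y=0$ dominates \eqref{E:Dt:av}; the reverse bound $B_{\ref{I:Dt:riv}}\lesssim B_{\ref{I:diff:grad}}+B_{\ref{I:diff:Dt}}$ is obtained by the telescoping
$\phi(x+y,t+s)-2\phi(x,t)+\phi(x-y,t-s) = \big[\phi(x+y,t+s)-2\phi(x,t+s)+\phi(x-y,t+s)\big] + \big[\phi(x,t+s)-\phi(x,t)\big] + \big[\phi(x,t)-\phi(x,t-s)\big] + \big[\phi(x,t+s)\ \text{corrections}\big]$,
combined with a Hardy/Schur-type estimate to absorb the shifts in $t$ using the $\Lip(1,1/2)$ hypothesis where needed.

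The link \cref{I:diff} $\Leftrightarrow$ \cref{I:av} is where the argument is closest to \cite{Str80}: rewriting $A_k$, one recognises $\rho\, u'\cdot M(\nabla\phi,\tilde Q_\rho(\cdot))$ as (essentially) the average of a directional first difference of $\phi$, so that $\sum_k|A_k|^2\rho^{-3}$ integrated in $\rho$ reconstitutes the second-difference square functions in \eqref{E:diff:grad}–\eqref{E:Dt:av} after integrating over $u\in\mathbb S^{n-1}$ and using the standard identity that averages over anisotropic cubes with a corner at $x$ differ from centred averages by controlled error. I expect the main obstacle to be precisely this step — tracking the passage between corner-anchored cube averages $M(\nabla\phi,\tilde Q_\rho(x))$ and the symmetric second differences of $\phi$, since the bookkeeping of which variable ($x$, $x\pm\rho e_k$, or $t\pm\rho^2$) is shifted, and the parabolic (non-isotropic) scaling in the time slot $t-\rho^2$ versus $x-\rho e_k$, must be done carefully to get genuinely equivalent (not merely one-sided) norm bounds. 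A secondary subtlety is justifying all manipulations for merely $\Lip(1,1/2)$ (hence only almost-everywhere differentiable) $\phi$: one should first prove everything for $\phi$ smooth with the stated constants and then pass to the limit by a standard mollification in both space and time, noting that parabolic mollification preserves $\Lip(1,1/2)$ bounds and does not increase any of the $\BMO$-type norms involved. Once all four quantities are pinched between constant multiples of $\|\D\phi\|_*^2$, the norm equivalence \eqref{E:equiv:norms} follows, and since \cref{I:D} is the same condition in both \cref{T:equiv:known} and \cref{T:equiv}, the two theorems together yield the full six-way equivalence.
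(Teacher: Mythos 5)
There is a genuine gap in your proposal, and it sits precisely at the step you wave past as ``closing the loop.'' The implication \cref{I:av} $\Rightarrow$ \cref{I:D} is the hard direction of the whole theorem, and nothing in your argument produces it. Your tools --- \cref{L:BMO:av} applied to $f=\nabla\phi$ and the one-dimensional Strichartz characterisation of $\Dt\phi$ --- show at best that $\nabla\phi\in\BMO$ and $\Dt\phi\in\BMO$ each imply the corresponding piece of \cref{I:av}, i.e.\ they give the \emph{forward} inequalities. They do not invert. In fact the paper explicitly records in \cref{P:equiv:grad} that $\nabla\phi\in\BMO(\R^n)$ implies \cref{I:av:grad} and then warns that ``the opposite implications are likely to be false due to the highly singular nature of Riesz potentials.'' So \cref{I:av:grad} alone is strictly weaker than $\nabla\phi\in\BMO$, and you cannot recover $\D\phi\in\BMO$ by feeding \cref{I:av:grad} and \cref{I:av:Dt} separately back through \cref{L:BMO:av} and \cref{E:Str:R}. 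What the paper actually does is treat all $n$ quantities at once as a family of convolution-type square functions of $F=\D\phi$ against kernels $\varphi^k*\psi^u$, $\psi^u_n$ satisfying \cref{E:equiv:estimates}. The forward direction then follows from a parabolic version of Strichartz's Theorem 2.1, but the converse hinges on a \emph{non-degeneracy} condition in the sense of Calder\'on--Torchinsky \cite{CT75,CT77}: one checks $\sum_k|m_k(r\xi,r^2\tau)|^2\not\equiv 0$, finds a partition of unity of smooth homogeneous multipliers $u_{k,j}$ with $\sum_{k,j}m_{k,r_j}u_{k,j}=1$, and then uses $H^1$--$\BMO$ duality (Fefferman--Stein, and Lemma 2.7 of \cite{Str80}) to reconstitute $F$ as a $\BMO$ function. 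That mechanism --- multiplier inversion plus duality --- is the content of the theorem and is entirely absent from your write-up.

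A secondary problem is your proposed implication \cref{I:Dt:riv} $\Rightarrow$ \cref{I:diff} by ``restricting the $(y,s)$-integral to $s=0$'' or $y=0$. Those are measure-zero slices and cannot be extracted from the $L^2$ average, and even formally the kernel $\|(y,s)\|^{-n-3}$ at $s=0$ becomes $|y|^{-n-3}$, which does not match the weight $|y|^{-n-1}$ in \cref{E:diff:grad}. Your telescoping goes the other way (bounding the parabolic second difference by the sum of a purely spatial second difference and temporal first differences) and is a plausible ingredient for \cref{I:diff} $\Rightarrow$ \cref{I:Dt:riv}, but the reverse direction has no elementary pointwise mechanism; in the paper both directions for each pair \cref{I:D}~$\leftrightarrow$~\cref{I:Dt:riv}, \cref{I:D}~$\leftrightarrow$~\cref{I:diff}, \cref{I:D}~$\leftrightarrow$~\cref{I:av} are run through the same Fourier/square-function/duality framework independently (only the kernels change), rather than by a cycle of direct elementary comparisons. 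So the architectural choice you made (a cycle of direct implications) would indeed be more elementary if it worked, but two of its edges cannot be proved by the means you propose, and the one unavoidable converse edge is exactly where the Calder\'on--Torchinsky non-degeneracy argument is needed.
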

We give a proof of this result in the appendix at the end of the paper.

\begin{remark}%
	\label{R:av:grad}
	\Cref{I:av:grad} doesn't immediately look too similar to its supposed motivation, \cref{E:BMO:av} in \cref{L:BMO:av}.
	However, if we move back into Cartesian coordinates and undo the mean value theorem then we obtain something much similar to a combination of~\cref{E:BMO:av} and a endpoint version of~\cite[(3.1)]{Str80}.
	The reason why we can obtain the endpoint, whereas~\cite[(3.1)]{Str80} can only be used for a fractional derivative smaller than $1$, is due to extra integrability and cancellation coming from \cref{E:equiv:phi:k}.  Consider

	\begin{align*}
		A_k' & = M\left(\phi, \tilde{Q}_{\|(y,s)\|} (x + y, t)\right) - M\left(\phi, \tilde{Q}_{\|(y,s)\|} (x, t)\right) \\
		     & \quad- M\left(\phi, \tilde{Q}_{\|(y,s)\|} (x + y - {\|(y,s)\|} e_k, t)\right) + M\left(\phi, \tilde{Q}_{\|(y,s)\|} (x -{\|(y,s)\|} e_k, t)\right), \\
		A_n' & = M\left(\phi, \tilde{Q}_{\|(y,s)\|} (x + y, t)\right) - M\left(\phi, \tilde{Q}_{\|(y,s)\|} (x, t)\right) \\
		     & \quad- M\left(\phi, \tilde{Q}_{\|(y,s)\|} (x + y , t- {\|(y,s)\|}^2)\right) + M\left(\phi, \tilde{Q}_{\|(y,s)\|} (x , t-{\|(y,s)\|}^2)\right)
	\end{align*}
	then \cref{I:av:grad} is equivalent to
	\begin{equation}
		\label{E:av:grad:alt}
		\sup_{Q_r} \sum_{k=1}^n \frac{1}{|Q_r|} \int_{Q_r} \int_{\|(y,s)\| < r} \frac{|A'_k|^2}{\|(y,s)\|^{n+3}} \dy\ds\dx\dt = \tilde B_{\ref{I:av:grad}} < \infty.
	\end{equation}
\end{remark}

\begin{proposition}%
	\label{P:equiv:grad}
	$\nabla\phi \in \BMO(\R^n)$ implies \cref{I:av:grad} and $\nabla\phi(\cdot,t) \in \BMO(\R^{n-1})$ uniform a.e.\ in time implies \cref{I:diff:grad}, with constants $B_{\ref{I:diff:grad}}$ and $B_{\ref{I:av:grad}}$ controlled by the appropriate $\|\nabla\phi\|_*^2$ norm. Here $\BMO(\R^{n-1})$ denotes the $\BMO$ norm in the spatial variables only.
\end{proposition}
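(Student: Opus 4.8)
The plan is to handle the two implications separately, in each case reducing to a square-function characterisation of $\BMO$ of the type recorded in \cref{L:BMO:av}. For the second implication, fix a time $t$, write $Q_r = J_r\times I_r$, and note that since $\phi(\cdot,t)$ is Lipschitz — hence absolutely continuous on a.e.\ line — the fundamental theorem of calculus gives
\[
	\phi(x+y,t)-2\phi(x,t)+\phi(x-y,t) = \int_0^1 y\cdot\bigl(\nabla\phi(x+sy,t)-\nabla\phi(x-(1-s)y,t)\bigr)\ds .
\]
Cauchy--Schwarz in $s$ then yields
\[
	\frac{|\phi(x+y,t)-2\phi(x,t)+\phi(x-y,t)|^2}{|y|^{n+1}} \le \int_0^1 \frac{|\nabla\phi(x+sy,t)-\nabla\phi(x-(1-s)y,t)|^2}{|y|^{n-1}}\ds .
\]
Integrating over $x\in J_r$ and $|y|<r$ and, for each fixed $s$, substituting $w=x+sy$ (unit Jacobian, $w$ remains in a fixed dilate $cJ_r$ since $s|y|<r$, and $x-(1-s)y=w-y$), the right-hand side is at most $\int_0^1\!\int_{cJ_r}\!\int_{|y|<r}\frac{|\nabla\phi(w,t)-\nabla\phi(w-y,t)|^2}{|y|^{n-1}}\,\dy\,\mathrm{d}w\,\ds$, whose inner double integral is the standard first-difference square function of $\nabla\phi(\cdot,t)\in\BMO(\R^{n-1})$ and is therefore $\lesssim |J_r|\,\|\nabla\phi(\cdot,t)\|_{\BMO(\R^{n-1})}^2$ by the Fefferman--Stein / Strichartz characterisation of $\BMO$ (\cite{FS72,Str80}; c.f.\ \cref{L:BMO:av}). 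Since this bound is uniform in $s\in[0,1]$, integrating in $t\in I_r$ and dividing by $|Q_r| = |J_r|\,|I_r|$ gives $B_{\ref{I:diff:grad}}\lesssim\sup_t\|\nabla\phi(\cdot,t)\|_{\BMO(\R^{n-1})}^2$ (essential supremum in $t$).

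For the first implication the reduction is even more direct, because each $A_k$ in \cref{I:av:grad} is \emph{already} a $\lambda$-average of weighted differences $\rho\,u'\cdot\bigl(M(\nabla\phi,\tilde{Q}_\rho(\,\cdot\,)) - M(\nabla\phi,\tilde{Q}_\rho(\,\cdot\,))\bigr)$ of averages of $\nabla\phi$ over two parabolic cubes of side $\rho$ whose corners differ by the translation $\rho e_k$ for $k\le n-1$, respectively by $\rho^2$ in time for $k=n$. Using $|u'|\le1$ and Cauchy--Schwarz in $\lambda$ one gets $|A_k|^2/\rho^3 \le \rho^{-1}\int_0^1|M(\nabla\phi,\tilde{Q}_\rho(x+\lambda\rho u',t)) - M(\nabla\phi,\tilde{Q}_\rho(w_k,t_k))|^2\dlambda$, where $(w_k,t_k)$ denotes the translate of $(x+\lambda\rho u',t)$ just described. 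Integrating in $x\in J_r$ and substituting $w=x+\lambda\rho u'$ for each fixed $(\rho,u,\lambda)$ removes the $\lambda$- and $u$-dependence (the integration region staying inside $cJ_r$), so that after the harmless $\lambda$- and $u$-integrations
\[
	\int_{J_r}\int_{\mathbb{S}^{n-1}}\int_0^r\frac{|A_k|^2}{\rho^3}\,\drho\,\du\,\dx \;\lesssim\; \int_0^r\!\int_{cJ_r}\bigl|M(\nabla\phi,\tilde{Q}_\rho(w,t)) - M(\nabla\phi,\tilde{Q}_\rho(w_k,t_k))\bigr|^2\,\frac{\mathrm{d}w\,\mathrm{d}\rho}{\rho}.
\]
Summing over $k=1,\dots,n$, integrating in $t\in I_r$, and invoking the parabolic analogue of \cref{L:BMO:av} — the square-function characterisation of parabolic $\BMO(\R^n)$, which is precisely the mechanism behind the implication \cref{I:D} $\Rightarrow$ \cref{I:av:grad} of \cref{T:equiv} — bounds the total by $\lesssim|Q_r|\,\|\nabla\phi\|_{\BMO(\R^n)}^2$; dividing by $|Q_r|$ and taking the supremum over parabolic cubes gives $B_{\ref{I:av:grad}}\lesssim\|\nabla\phi\|_{\BMO(\R^n)}^2$.

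Two points require care. First, in both substitutions one must check that as the auxiliary parameters ($s$, respectively $\lambda$ and $u$) run over their bounded ranges and $x$ over $J_r$, the new variable $w$ never leaves a fixed dilate of $J_r$; this is what allows replacing the moving regions by $cJ_r$ and absorbing those integrations into universal constants with no loss in the square-function bound. Second — and this is the only genuinely non-routine ingredient — the first implication relies on the parabolic version of \cref{L:BMO:av} (parabolic cubes, the time increment scaled like $\rho^2$), which is not literally \cref{L:BMO:av}, that being stated for Euclidean cubes, but follows from Strichartz's argument upon replacing Euclidean by parabolic dilations and is in any case part of the proof of \cref{T:equiv}. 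By contrast the second implication uses only the \emph{spatial} oscillation of $\nabla\phi$, since the second difference in \cref{E:diff:grad} acts purely in the spatial variables, whereas \cref{I:av:grad} genuinely couples space and time through the $A_n$ term — which is exactly why the full hypothesis $\nabla\phi\in\BMO(\R^n)$, rather than slice-wise spatial control, is needed there.
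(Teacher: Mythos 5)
Your argument is correct and coincides with the paper's: for \cref{I:av:grad} both use $|u'|\le 1$, Cauchy--Schwarz in $\lambda$, and the substitution $w=x+\lambda\rho u'$ to reduce to the square-function characterisation in \cref{L:BMO:av}, while for \cref{I:diff:grad} the paper simply cites \cite[Theorem 3.3]{Str80}, which your fundamental-theorem-of-calculus plus Cauchy--Schwarz reduction to the first-difference square function of $\nabla\phi(\cdot,t)$ in $\R^{n-1}$ proves directly rather than invokes. Your observation that the $k=n$ term of \cref{E:av:grad} really requires a parabolic analogue of \cref{L:BMO:av} (time increment $\rho^2$, not $\rho$) is a legitimate caveat that the paper leaves implicit by only writing out the calculation for $1\le k\le n-1$.
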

\begin{proof}
	The statement $\nabla\phi \in \BMO(\R^{n-1})$ implies \cref{I:diff:grad}  follows from~\cite[Theorem 3.3]{Str80}.
	In order to establish the second claim for the ease of notation let us fix $Q_r$ and $k$ in $1 \leq k \leq n-1$.
	Then since $|u'| \leq 1$ after changing the order of integration (and the substitution $y = x + \lambda\rho u' \in Q_{2r}$) we get that \cref{E:av:grad} is bounded by
	\begin{align*}
		\int\limits_0^1 \int\limits_{\mathbb{S}^{n-1}} \int\limits_0^r  \frac{1}{|Q_r|} \int\limits_{Q_{2r}}
		\left| \left(M\left(\nabla u, \tilde{Q}_\rho (y, t)\right) - M\left(\nabla u, \tilde{Q}_\rho (y - \rho e_k, t)\right) \right)\right|^2
		\!\dy\dt\frac{\drho}{\rho}\du\dlambda.
	\end{align*}
	Then by \Cref{L:BMO:av}  the two interior integrals are bounded by $C\|\nabla\phi\|_*^2$.
	Therefore~\cref{E:av:grad} is controlled by $C\|\nabla\phi\|_*^2$.
\end{proof}
The opposite implications are likely to be false due the highly singular nature of Riesz potentials, c.f.\ \cref{E:D:sum,E:Rj}.

\begin{corollary}%
	\label{C:equiv:grad}
	If $\|\nabla\phi\|_* \lesssim \eta$, and  $B_{\ref{I:diff:Dt}} \lesssim \eta^2$ then $\|\D\phi\|_* \lesssim \eta$.
\end{corollary}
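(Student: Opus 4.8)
The plan is to read the corollary off from \cref{P:equiv:grad} together with the chain of norm equivalences \eqref{E:equiv:norms} in \cref{T:equiv}; essentially no new argument is required, only a matching-up of the two hypotheses with the two summands appearing there.

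First I would apply \cref{P:equiv:grad} in its first form: since $\nabla\phi \in \BMO(\R^n)$ with $\|\nabla\phi\|_* \lesssim \eta$, condition \cref{I:av:grad} holds and its constant obeys $B_{\ref{I:av:grad}} \lesssim \|\nabla\phi\|_*^2 \lesssim \eta^2$, with implied constant depending only on $n$ and the $\Lip(1,1/2)$ character. Second, I would observe that the integral $B_{\ref{I:av:Dt}}$ appearing in \cref{I:av:Dt} is by construction the very same quantity as $B_{\ref{I:diff:Dt}}$ (both are the expression displayed in \eqref{E:Dt:av}); hence the second hypothesis $B_{\ref{I:diff:Dt}} \lesssim \eta^2$ is literally the statement $B_{\ref{I:av:Dt}} \lesssim \eta^2$.

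With both pieces in hand, the last link in \eqref{E:equiv:norms}, namely $\|\D\phi\|_*^2 \sim B_{\ref{I:av:grad}} + B_{\ref{I:av:Dt}}$, immediately yields $\|\D\phi\|_*^2 \lesssim \eta^2$, that is $\|\D\phi\|_* \lesssim \eta$, which is the claim.

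Since this is only a plan: there is no genuine obstacle here. The one thing to keep track of is that every $\sim$ and $\lesssim$ above has implied constant independent of $\eta$ (they depend only on $n$ and the Lipschitz character entering \cref{T:equiv} and \cref{P:equiv:grad}), so the resulting bound is genuinely linear in $\eta$, which is exactly what is needed to feed it into the small-constant hypothesis of \cref{T:1}. One could alternatively try to route through \cref{I:diff} using $B_{\ref{I:diff:grad}} + B_{\ref{I:diff:Dt}} \sim \|\D\phi\|_*^2$; but \cref{P:equiv:grad} only produces $B_{\ref{I:diff:grad}}$ from the purely spatial norm $\|\nabla\phi(\cdot,t)\|_{\BMO(\R^{n-1})}$, which is not what is assumed here, whereas it produces $B_{\ref{I:av:grad}}$ directly from the full space-time norm $\|\nabla\phi\|_{\BMO(\R^n)}$. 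Hence the \cref{I:av} route is the natural one.
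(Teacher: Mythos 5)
Your proposal is correct and is exactly the argument the paper intends (the corollary is stated without a separate proof precisely because it follows by combining \cref{P:equiv:grad} with the norm equivalence \eqref{E:equiv:norms}, using that $B_{\ref{I:av:Dt}}$ and $B_{\ref{I:diff:Dt}}$ are the same quantity \eqref{E:Dt:av}). Your remark that the \cref{I:av} route, rather than the \cref{I:diff} route, is the one compatible with the full space-time hypothesis $\|\nabla\phi\|_{\BMO(\R^n)}\lesssim\eta$ is also the right reading of \cref{P:equiv:grad}.
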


Here we have replaced conditions~\ref{I:diff:grad} or~\ref{I:av:grad} by slightly stronger but easier to verify condition $\|\nabla\phi\|_* \lesssim \eta$.
We believe that, without too much extra work, one could formulate our main theorem and associated lemmas with a local version of \cref{I:diff:grad} in place of $\|\nabla\phi\|_*$.

\begin{remark}\label{r-rn}
	In~\cite[Lemma 2.1]{Riv03} it is stated that another condition is equivalent to those given in \cref{T:equiv:known,T:equiv}; however this claim is not correct and only one of the stated implications holds.

	A result of Strichartz~\cite[Theorem 3.3]{Str80} states that in the one dimensional setting $\Dt \phi(t) \in \BMO(\R)$ is equivalent to the one dimensional version of \cref{I:diff:Dt,I:av:Dt}
	\begin{equation}
		\label{E:Str:R}
		\sup_{I' \subset \R} \left( \frac{1}{|I'|}\int_{I'}\int_{I'} \frac{|\phi(t) - \phi(s)|^2}{|t-s|^2} \dt\ds\right)^{1/2} \leq B,
	\end{equation}
	with $B \sim \|\Dt \phi(\cdot)\|_{\BMO(\R)}$.

	In~\cite[Lemma 2.1]{Riv03} it is claimed that given $\phi : \R^{n-1} \times \R \to \R$ and $\phi \in \Lip(1,1/2)$ the pointwise $n$-dimensional analogue of~\cref{E:Str:R}
	\begin{equation}
		\label{E:riv}
		\sup_{x \in \R^{n-1}} \sup_{I' \subset \R} \left( \frac{1}{|I'|}\int_{I'}\int_{I'} \frac{|\phi(x,t) - \phi(x,s)|^2}{|t-s|^2} \dt\ds\right)^{1/2} \leq B
	\end{equation}
	is equivalent to $\D\phi \in\BMO(\R^n)$ with $B \sim \|\D \phi\|_{\BMO(\R^n)}$.
	This is incorrect.
	By~\cite{Str80}~\cref{E:riv} is equivalent to $\Dt\phi(x,\cdot) \in \BMO(\R)$ pointwise for a.e.\ $x$.
	After some tedious and technical calculations we were able to show $\sup_x \Dt\phi(x,\cdot) \in \BMO(\R)$ implies $\Dt\phi \in\BMO(\R^n)$ and hence $\D\phi \in\BMO(\R^n)$ via \cref{I:Dt:riv} of \cref{T:equiv}.
	However, the converse is not true even if we assume more structure for the function $\D\phi(x,t)$.
	This is due to the fact that there is ``no reasonable Fubini theorem relating $\BMO(\R^n)$ to $\BMO(\R)$''~\cite[p.~558]{Str80}.

	Fortunately the lack of a converse implication does not cast doubt over the subsequent results of~\cite{Riv03} since the author only uses the claimed eqivalence in the correct direction --- that~\cref{E:riv} implies $\D\phi \in\BMO(\R^n)$.
\end{remark}

\subsection*{Localisation}
After the comprehensive review of the Lewis-Murray condition for a graph domain $\Omega$ we continue in our aim to construct a time-varying domain which is locally described by local graphs $\phi_j$.

For a vector $x \in \R^{n-1}$ we denote consider the norm $|x|_\infty = \sup_i |x_i|$.

Consider $\phi : Q_{8d} \to \R^{n-1} \times \R$.
The localised version of \cref{E:Dt:av} from \cref{T:equiv} is simply
\begin{equation}
	\label{E:Dt:loc}
	\sup_{\substack{Q_r = J_r \times I_r \\ Q_r \subset Q_{8d}}} \frac{1}{|Q_r|}\int_{Q_r}\int_{I_r}
	\frac{|\phi(x,t) - \phi(x,s)|^2}{|t-s|^2} \ds\dt\dx < \infty.
\end{equation}

We write $\|f\|_{*, d}$ to be the $\BMO$ norm of $f$ where the supremum in the $\BMO$ norm, c.f.\ \cref{E:BMO}, is taken over all cubes $Q_r$ with $r \leq d$.
For a function $f:J\times I\to\R$, where $J\subset \R^{n-1}$ and $I\subset \R$ are closed bounded cubes we consider the norm $\|f\|_{*,J \times I}$ defined as above where the supremum is taken over all parabolic cubes $Q_r$ contained in $J\times I$.
The norm $\|f\|_{*,J \times I, d}$ is where the supremum is taken over all parabolic cubes $Q_r$ with $r \leq d$ contained in $J\times I$.
If the context is clear we suppress the $J \times I$ and just write $\|f\|_*$ or $\|f\|_{*, d}$.

Recall that $\VMO (\R^n)$ is defined as the closure of all continuous functions in the $\BMO$ norm or equivalently $\BMO$ functions $f$ such that $\|f\|_{*, d} \to 0$ as $d \to 0$.
Alternatively, if we define
\[
	d(f,\VMO):= \inf_{h\in C} \|f-h\|_{*}
\]
then $f\in \VMO$ if and only if $d(f,\VMO)=0$; for $f\in\BMO$ this measures the distance of $f$ to $\VMO$.
In our case, the boundary of the parabolic domains we consider can be locally described as a graph of a continuous function.
However, as our domain is unbounded in time we may potentially require an infinite family of local graphs $\{ \phi_j \}$.
Therefore we need to measure the distance to $\VMO$ uniformly across this infinite family.

Let $\delta: \R_+ \to \R_+$, $\delta(0) = 0$ and $\delta$ be continuous at $0$ then we define $C_\delta$ to be the set of continuous functions with the same modulus of continuity $\delta$.
That is
\begin{equation}
	\label{E:Cdelta}
	C_\delta = \{g \in C : |g(x) - g(y)| \leq \delta(|x-y|) \text{ for all } x,y \}.
\end{equation}
Note that every family of equicontinuous functions can be represented as $C_\delta$ for some function $\delta$ and $C = \cup_\delta C_\delta$.
For $f: Q_{8d} \to \R$ we define $d(f,C_\delta)$ as
\begin{equation*}
	d(f,C_\delta) = \inf_{h\in C_\delta} \|f-h\|_{*, Q_{8d}}.
\end{equation*}

We are now ready to state and prove result on extensibility of $\phi : Q_{8d} \to \R$ to a  global function.
\begin{theorem}%
	\label{T:ext}
	Let $\phi : Q_{8d} \subset \R^{n-1} \times \R \to \R$ be $\Lip(1,1/2)$ with Lipschitz constant $\ell$.
	If there exist a scale $r_1$, a constant $\eta>0$ and a modulus of continuity $\delta$ such that
	\begin{equation}
		\sup_{\substack{Q_s = J_s \times I_s \\ Q_s \subset Q_{8d}, \, s \leq r_1}} \frac{1}{|Q_s|}\int_{Q_s}\int_{I_s}
		\frac{|\phi(x,t) - \phi(x,\tau)|^2}{|t-\tau|^2} \dtau\dt\dx \leq \eta^2
	\end{equation}
	and
	\begin{equation}
		\label{E:ext:1}
		d(\nabla\phi, C_\delta) \leq \eta
	\end{equation}
	then there exists a scale $d'\leq d$, that only depends on $d$, $\delta$, $\eta$, and $r_1$ and not $\phi$, such that for all $Q_r \subset Q_{4d}$ with $r \leq d'$ there exists a global $\Lip(1,1/2)$ function $\Phi : \R^{n-1} \times \R \to \R$ with the following properties for all $0 < \epsilon < 1$:
	\begin{property}
		\item\label{I:ext:ext} $\Phi|_{Q_r} = \phi|_{Q_r}$,

		\item\label{I:ext:lip} the $\Lip(1,1/2)$ constant of $\Phi$ is $\ell$,

		\item\label{I:ext:nabla} $\|\nabla\Phi\|_{*} \lesssim_\epsilon \eta^{1-\epsilon} + \eta\ell $, and

		\item\label{I:ext:Dt} $\displaystyle \sup_{Q_s = J_s \times I_s} \frac{1}{|Q_s|}\int_{Q_s}\int_{I_s}
			\frac{|\Phi(x,t) - \Phi(x,\tau)|^2}{|t-\tau|^2} \dtau\dt\dx \lesssim \eta^2$.
	\end{property}
	Therefore by \cref{C:equiv:grad}, $\|\D\Phi\|_{*} \lesssim_\epsilon \eta^{1-\epsilon} + \eta\ell$.
\end{theorem}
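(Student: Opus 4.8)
The plan is to build $\Phi$ by a two-stage construction: first extend $\phi$ from $Q_r$ to a slightly larger scale while keeping the $\Lip(1,1/2)$ constant, then glue in a good global approximant of $\nabla\phi$ coming from the hypothesis $d(\nabla\phi,C_\delta)\le\eta$. First I would fix $Q_r\subset Q_{4d}$ with $r\le d'$ (the threshold $d'$ to be determined at the end so that all the estimates close). The starting point is the classical Whitney/McShane-type extension: any $\Lip(1,1/2)$ function on $Q_{8d}$ extends to a global $\Lip(1,1/2)$ function with the \emph{same} constant $\ell$ (take the infimum of $\phi(y,\sigma)+\ell\|(x-y,t-\sigma)\|$ over $(y,\sigma)\in Q_{8d}$). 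Call this extension $\phi_0$. This immediately gives properties \ref{I:ext:ext} (automatically, since on $Q_{8d}$ it agrees with $\phi$) and \ref{I:ext:lip}, but $\phi_0$ has no reason to satisfy the fine BMO bounds \ref{I:ext:nabla} and \ref{I:ext:Dt} — outside $Q_{8d}$ the extension is only Lipschitz, so $\nabla\phi_0$ is merely bounded, which is not enough.

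The key step is to correct $\phi_0$ away from $Q_r$ using the approximant. By \eqref{E:ext:1} there is $h\in C_\delta$ with $\|\nabla\phi-h\|_{*,Q_{8d}}\le 2\eta$ (say). The idea is to define a global vector field $g$ on $\R^{n-1}\times\R$ that equals $\nabla\phi$ near $Q_r$, equals (a mollified, truncated version of) $h$ far away, and interpolates on an annular region of scale comparable to $d$; then set $\Phi = \phi(x_r,t_r) + \int_{\gamma} g$ along a path, i.e.\ recover $\Phi$ from its prescribed spatial gradient. To make this work one writes $\Phi(x,t)=\phi_0(x,t) - (\text{cutoff})\cdot(\text{primitive of }(\nabla\phi_0 - g))$, with a smooth cutoff $\chi$ that is $1$ on $Q_{2r}$ and supported in $Q_{4d}$; the correction term has gradient controlled in BMO by $\|\nabla\phi - h\|_*$ plus the $C_\delta$-modulus of $h$ (which contributes the $\eta\ell$ and, after the standard $\epsilon$-loss in interpolating an $L^\infty\cap C_\delta$ bound against a BMO bound, the $\eta^{1-\epsilon}$). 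This is where property \ref{I:ext:nabla} comes from; the $\epsilon$-loss is exactly the endpoint phenomenon already flagged in \cref{R:av:grad}. Property \ref{I:ext:ext} on $Q_r$ is preserved because the correction vanishes there.

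For \ref{I:ext:Dt} the plan is separate and more delicate: the temporal double-integral quantity does \emph{not} localise along Whitney cutoffs the way the spatial gradient does (this is the whole point of \cref{S:parabolic-domains}). Here I would split the supremum over $Q_s$ into small cubes $s\le r_1$ and large cubes $s>r_1$. For small cubes contained in $Q_{8d}$ the hypothesis gives the bound $\eta^2$ directly; for small cubes that have left the extension region, and for all large cubes, one uses that $\phi_0$ (hence $\Phi$, since the correction is $\Lip(1,1/2)$ with controlled constant) is $\Lip(1,1/2)$ with constant $\lesssim\ell$, so $|\Phi(x,t)-\Phi(x,\tau)|^2/|t-\tau|^2 \lesssim \ell^2/|t-\tau|$, and the inner integral over $I_s$ of $|t-\tau|^{-1}$ diverges logarithmically — so a crude Lipschitz bound is \emph{not} enough and one must instead observe that on large scales the averaging $\frac{1}{|Q_s|}\int_{Q_s}$ of the H\"older quotient is controlled by the \emph{global} temporal BMO-type norm of $\Phi$, which in turn is dominated by $\|\nabla\phi-h\|_*$ and $\delta$ after passing through \cref{C:equiv:grad} / \cref{I:diff:Dt}. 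This forces the choice of $d'$: the annulus where the cutoff lives must be at a scale where the overlap between "$\phi_0$ is genuinely $\phi$" and "the tail is the nice approximant" is wide enough that no cube $Q_s$ straddles a region where we have no control.

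I expect the main obstacle to be precisely property \ref{I:ext:Dt}: controlling the nonlocal temporal quantity for cubes $Q_s$ of size much larger than $d$, where $\Phi$ is a glued object and the naive Lipschitz estimate is logarithmically too weak. The resolution — carrying the temporal control through the equivalence \cref{T:equiv} rather than estimating the double integral directly, and choosing the gluing scale $d'$ so the hypotheses and the global approximant bounds overlap — is the technical heart of the argument, and it is the reason the statement quantifies over $r\le d'$ rather than asserting the extension for $\phi$ on all of $Q_{8d}$ at once.
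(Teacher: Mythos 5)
Your outline for properties (i)--(iii) captures the right philosophy---glue a $C_\delta$-approximant to $\phi$ via a spatial cutoff and absorb a logarithmic loss into $\eta^{1-\epsilon}$---but the construction as written is not well-posed: $g$ (a mollification of $h\in C_\delta$) is not a gradient, so ``the primitive of $\nabla\phi_0-g$'' is path-dependent, and in any event this prescribes only the spatial gradient of $\Phi$ and leaves its time-dependence undetermined. The paper instead sets, at a scale $R\sim r/\eta$,
\[
\Phi=\tphi\,\rho+(1-\rho)\bigl(x\cdot(\nabla\tphi)_{Q_r}\bigr),
\]
where $\rho=\rho(x)$ is a \emph{time-independent} spatial cutoff supported in $\{|x|_\infty<2R\}$, equal to $1$ on $\{|x|_\infty<r\}$, and $\tphi$ is obtained from $\phi|_{Q_r}$ by \emph{reflection and periodic tiling in $t$}; the outer piece is linear in $x$, so no primitive of a non-exact vector field is needed. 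The factor $r\sim\eta R$ is also what converts the Lipschitz bound $\ell$ into $\eta\log(1+1/\eta)\lesssim_\epsilon\eta^{1-\epsilon}$ when transitioning across the annulus, which is a slightly different mechanism from the ``$L^\infty\cap C_\delta$ versus BMO interpolation'' you describe.

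The genuine gap is in your plan for property (iv). You propose to control large time-scales by ``passing through \cref{C:equiv:grad} / \cref{I:diff:Dt}'', but \cref{C:equiv:grad} takes $B_{\ref{I:diff:Dt}}\lesssim\eta^2$ as a \emph{hypothesis} in order to conclude $\|\D\Phi\|_*\lesssim\eta$; there is no converse, and the ``global temporal BMO-type norm of $\Phi$'' you want to use to dominate the large-scale averages \emph{is} the quantity in (iv)---so the argument is circular. What makes (iv) work in the paper is structural. Because $\rho$ does not depend on $t$, one has $\Phi(x,t)-\Phi(x,\tau)=\rho(x)\bigl(\tphi(x,t)-\tphi(x,\tau)\bigr)$ with $|\rho|\le1$, so (iv) reduces to the same bound for $\tphi$. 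Because $\tphi$ is a reflection/tiling of $\phi|_{Q_r}$ in time, any interval $I_s$ decomposes into translated and reflected copies of a single base interval $I_0\subset I_r$, and the double integral over $I_s$ is controlled by the hypothesis on $\phi$ summed against the convergent series $\sum_{i\ne k}|i-k|^{-2}$; this is precisely \cref{L:BMO:ext:Dt}. A McShane extension of $\phi$ has no such structure outside $Q_{8d}$---as you correctly observe, a raw $\Lip(1,1/2)$ bound is logarithmically too weak---and your proposal offers no replacement mechanism. The reflection-and-tiling extension in time, together with a time-independent cutoff, is exactly the missing ingredient.
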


We again give the proof of this result in the appendix.
We are now ready to define the class of parabolic domains on which we will work. Motivated by the usual definition of a Lipschitz domain we have:

\begin{definition}
	$\Z \subset \R^n\times \R$ is an \textit{$\ell$-cylinder} of diameter $d$ if there exists a coordinate system $(x_0,x,t)\in \R\times\R^{n-1}\times \R$ obtained from the original coordinate system by translation in spatial and time variables, and rotation only in the spatial variables such that
	\[
		\Z = \{ (x_0,x,t) : |x|\leq d, |t|^{1/2} \leq d, |x_0| \leq (\ell + 1)d \}
	\]
	and for $s>0$
	\[
		s\Z:=\{(x_0,x,t) : |x|<sd, |t|^{1/2}\leq sd, |x_0| \leq (\ell +1)sd \}.
	\]
\end{definition}

\begin{definition}%
	\label{D:domain}
	$\Omega\subset \R^n\times \R$ is an \textit{admissible parabolic domain} with character $(\ell,\eta, N,d)$ if there exists a positive scale $r_1$, and a modulus of continuity $\delta$ such that for any time $\tau\in\R$ there are at most $N$ $\ell$-cylinders $\{{\Z}_j\}_{j=1}^N$ of diameter $d$ satisfying the following conditions:
	\begin{condition}
		\item $\displaystyle \partial\Omega\cap\{|t-\tau|\le d^2\}=\bigcup_j ({\Z}_j \cap \partial\Omega)$.

		\item In the coordinate system $(x_0,x,t)$ of the $\ell$-cylinder $\Z_j$
		\[
			\Z_j \cap \Omega \supset \left\{(x_0,x,t)\in\Omega : |x|<d, |t|<d^2, \delta(x_0,x,t) \leq d/2 \right \}.
		\]
		\item\label{I:domain:lip} $8{\Z}_j \cap \partial\Omega$ is the graph $\{x_0=\phi_j(x,t)\}$ of a
		function $\phi_j: Q_{8d} \to \R$, with $Q_{8d} \subset \R^{n-1} \times \R$, such that
		\begin{equation}
			\label{E:L1}
			|\phi_j(x,t)- \phi_j(y,s)| \leq \ell \left( |x-y| +|t-s|^{1/2}\right) \quad \text{and} \quad
			\phi_j(0,0)=0.
		\end{equation}
		\item\label{I:domain:eta}
		\begin{equation}
			\label{E:L2}
			d(\nabla\phi_j,C_\delta)\le\eta
		\end{equation}
		and
		\begin{equation}
			\label{E:domain:eta}
			\sup_{\substack{Q_s = J_s \times I_s \\ Q_s \subset Q_{8d}, \, s \leq r_1}} \frac{1}{|Q_s|}\int_{Q_s}\int_{I_s}
			\frac{|\phi_j(x,t) - \phi_j(x,\tau)|^2}{|t-\tau|^2} \dtau\dt\dx \leq \eta^2.
		\end{equation}
	\end{condition}
	Here and throughout $\delta(x_0,x,t) := \dist \left( (x_0,x,t),\partial\Omega\right)$ and $\dist$ is the parabolic distance $\dist[(X,t),(Y,s)]=|X-Y|+|t-s|^{1/2}$.

	We say that $\Omega$ is of $\VMO$ type if $\eta$ in the character $(\ell,\eta, N,d)$ can be taken arbitrarily small (at the expense of a potentially smaller $d$ and $r_1$, and larger $N$).
\end{definition}

\begin{remark}
	When~\eqref{E:L2} holds for small or vanishing $\eta$ it follows that for a fixed time $\tau$ the normal $\nu$ to the fixed-time spatial domain $\Omega_{\tau} = \Omega \cap \{ t=\tau \}$ can be written in local coordinates as
	\[
		\nu=\frac1{|(-1,\nabla\phi_j)|}(-1,\nabla\phi_j)
	\]
	and hence $d(\nu,\VMO)\lesssim \eta$.
	Therefore $\Omega_\tau$ is similar to the domains considered in the papers~\cite{MMS09} and~\cite{HMT15} which have dealt with the elliptic problems on domains with normal in or near $\VMO$.
\end{remark}

\begin{remark}
	It follows from this definition that for each $\tau\in\R$ the time-slice $\Omega_\tau$ of an admissible parabolic domain $\Omega\subset \R^n\times \R$ is a bounded Lipschitz domain in $\R^n$ and they all have a uniformly bounded diameter.
	That is
	\[
		\inf_{\tau\in\R}\diam(\Omega_\tau) \sim d \sim \sup_{\tau\in\R}\diam(\Omega_\tau),
	\]
	where $d$ is the scale from \cref{D:domain}  and the implied constants only depends on $N$.
	In particular, if ${\mathcal O}\subset \R^n$ is a bounded Lipschitz domain then the parabolic cylinder $\Omega={\mathcal O}\times \R$ is an example of a domain satisfying \cref{D:domain}.
\end{remark}

\begin{definition}%
	\label{D:measure}
	Let $\Omega\subset \R^n\times \R$ be an admissible parabolic domain with character $(\ell, \eta,N,d)$.
	The \textit{measure} $\sigma$ defined on sets $A\subset \partial\Omega$ is
	\begin{equation}
		\label{E:sigma}
		\sigma(A)=\int_{-\infty}^\infty {\H}^{n-1}\left(A \cap \{(X,t) \in \partial\Omega \}\right) \dt,
	\end{equation}
	where ${\H}^{n-1}$ is the $n-1$ dimensional Hausdorff measure on the Lipschitz boundary $\partial\Omega_\tau$.
\end{definition}

We consider solvability of the $L^p$ Dirichlet boundary value problem with respect to this measure $\sigma$.
The measure $\sigma$ may not be comparable to the usual surface measure on $\partial\Omega$: in the $t$-direction the functions $\phi_j$ from \cref{D:domain} are only $1/2$ Lipschitz and hence the standard surface measure might not be locally finite.
Our definition assures that for any $A\subset 8\Z_j$, where $\Z_j$ is an $\ell$-cylinder, we have
\begin{equation}
	\label{E:comp}
	\H^n(A) \sim \sigma\left(\{(\phi_j(x,t),x,t):\,(x,t)\in A\}\right),
\end{equation}
where the constants in~\cref{E:comp}, by which these measures are comparable, only depend on $\ell$ of the character $(\ell,\eta,N,d)$ of the domain $\Omega$.
If $\Omega$ has a smoother boundary, such as Lipschitz (in all variables) or better, then the measure $\sigma$ is comparable to the usual $n$-dimensional Hausdorff measure $\H^n$.
In particular, this holds for a parabolic cylinder $\Omega={\mathcal O}\times \R$.

\begin{corollary}%
	\label{P:ext}
	Let $\Omega$ be defined as in \cref{D:domain} by a family of functions $\{\phi_j\}$, $\phi_j : Q_{8d} \to \R$. Then there exists an extended family $\{\Phi_j\}$, $\Phi_j : \R^{n-1} \times \R \to \R$, such that
	\begin{property}
		\item $\left\{\left.\Phi_j\right|_{Q_{8r}} \right\}$ still describes $\Omega$, as in \cref{D:domain}, but with character $(\ell,\eta,\tilde N,r)$ instead of $(\ell,\eta, N,d)$, where $\tilde N \geq N$ and $r \leq r_1 \leq d$ is from \cref{T:ext};

		\item $\|\nabla\Phi_j\|_{*} \lesssim_\epsilon \eta^{1-\epsilon} + \eta\ell$, and

		\item $\|\D\Phi_j\|_{*} \lesssim_\epsilon \eta^{1-\epsilon} + \eta\ell$.
	\end{property}
\end{corollary}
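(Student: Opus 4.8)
The goal is to prove \Cref{P:ext}, which upgrades \Cref{T:ext} from a single local graph $\phi : Q_{8d} \to \R$ to the whole family $\{\phi_j\}$ describing an admissible parabolic domain, producing a uniform rescaled character and the two global $\BMO$ bounds.

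\medskip

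\noindent\textbf{Plan.}
The plan is to apply \Cref{T:ext} separately to each $\phi_j$ and then check that the resulting extensions $\Phi_j$ still fit together into a valid admissible parabolic domain in the sense of \Cref{D:domain}, at a possibly smaller scale. First I would fix the scale. By hypothesis $\Omega$ has character $(\ell,\eta,N,d)$, so each $\phi_j$ is $\Lip(1,1/2)$ with constant $\ell$ and satisfies~\cref{E:L2} and~\cref{E:domain:eta}; these are precisely the hypotheses of \Cref{T:ext} with the \emph{same} $\eta$, the \emph{same} modulus of continuity $\delta$, and the \emph{same} scale $r_1$ (all of which are part of the character and hence independent of $j$). \Cref{T:ext} therefore produces a scale $d' = d'(d,\delta,\eta,r_1) \le d$ that does \emph{not} depend on $\phi$ — in particular the same $d'$ works for every $j$. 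Set $r := \min\{d', r_1\} \le d$. For each $j$ and each $Q_{r'} \subset Q_{4d}$ with $r' \le d'$, \Cref{T:ext} gives a global $\Lip(1,1/2)$ function $\Phi_j : \R^{n-1}\times\R \to \R$ with $\Phi_j|_{Q_{r'}} = \phi_j|_{Q_{r'}}$, with $\Lip(1,1/2)$ constant $\ell$, with $\|\nabla\Phi_j\|_* \lesssim_\epsilon \eta^{1-\epsilon} + \eta\ell$, and with the parabolic half-derivative bound \cref{I:ext:Dt} of the form $\lesssim \eta^2$. Applying this with $r' = 8r$ (which is admissible once we shrink $r$ so that $8r \le d'$ and $Q_{8r}\subset Q_{4d}$) yields the family $\{\Phi_j\}$ and establishes properties (ii) and (iii): indeed \cref{I:ext:nabla} is exactly (ii), and (iii) then follows from \Cref{C:equiv:grad}, whose hypotheses $\|\nabla\Phi_j\|_* \lesssim \eta$ and $B_{\ref{I:diff:Dt}} \lesssim \eta^2$ are \cref{I:ext:nabla} and \cref{I:ext:Dt} respectively (here one absorbs the harmless $\eta^{1-\epsilon}$ and $\eta\ell$ factors exactly as in the statement of \Cref{T:ext}).

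\medskip

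\noindent\textbf{Re-checking the character.}
Next I would verify that $\{\Phi_j|_{Q_{8r}}\}$ describes $\Omega$ with character $(\ell,\eta,\tilde N, r)$. The key point is that \Cref{D:domain} only constrains $\partial\Omega$ and $\Omega$ inside the cylinders $\Z_j$ and their dilates $8\Z_j$: since $\Phi_j = \phi_j$ on $Q_{8r}$ and $8r \le 8d$, the portion of $\partial\Omega$ seen at scale $r$ is unchanged, so conditions (1), (2) and \cref{I:domain:lip} of \Cref{D:domain} hold at scale $r$ with the rescaled $\ell$-cylinders $r^{-1}d \cdot \Z_j$ (more precisely, one re-covers each time-slab $\{|t-\tau|\le r^2\}$ of $\partial\Omega$ by $\ell$-cylinders of diameter $r$; because $d \sim \diam(\Omega_\tau)$ uniformly in $\tau$, a bounded number — call the new bound $\tilde N \ge N$, depending only on $N$ and the ratio $d/r$ — of such smaller cylinders suffices, and within each the boundary is still a graph of (a restriction of) the corresponding $\Phi_j$). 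Conditions~\cref{I:domain:eta}, namely $d(\nabla\Phi_j, C_\delta) \le \eta$ and the localized half-derivative bound~\cref{E:domain:eta}, then hold because $\Phi_j$ agrees with $\phi_j$ on $Q_{8r}$ and the defining suprema for these quantities at scale $r$ range over a subcollection of the cubes used at scale $d$ — so the bounds are inherited with the same $\eta$ and the same $\delta$, $r_1$ (again taking $r \le r_1$). This gives property (i).

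\medskip

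\noindent\textbf{Main obstacle.}
The only genuinely delicate points are bookkeeping rather than analysis. First, one must be careful that the \emph{same} scale $d'$ comes out of \Cref{T:ext} for all $j$; this is guaranteed because \Cref{T:ext} explicitly asserts $d'$ depends only on $d,\delta,\eta,r_1$ and not on $\phi$, and all four of these are shared across the family by virtue of being part of the character. Second, one must confirm that shrinking the scale from $d$ to $r$ does not destroy the covering axioms of \Cref{D:domain} — this is where the uniform comparability $\inf_\tau \diam(\Omega_\tau) \sim d \sim \sup_\tau\diam(\Omega_\tau)$ is used to control the new number $\tilde N$ of cylinders needed at the smaller scale. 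Neither step requires new estimates: the whole proof is an assembly of \Cref{T:ext}, \Cref{C:equiv:grad}, and the geometry already recorded in \Cref{D:domain}. I would therefore expect the write-up to be short, with the bulk of the prose devoted to the rescaling/re-covering argument of the previous paragraph.
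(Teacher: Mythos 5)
Your approach matches the paper's, which disposes of the corollary in a single sentence: apply \cref{T:ext} and tile the support of each $\phi_j$ into parabolic cubes of size $8r$ with enough overlap. One point worth making explicit, because your notation obscures it: each application of \cref{T:ext} to $\phi_j$ produces an extension that agrees with $\phi_j$ only on the \emph{single} cube $Q_{8r}$ supplied to the theorem (the reflection/tiling construction in the appendix is anchored to that cube, and the result will generally \emph{not} agree with $\phi_j$ on any other sub-cube of $Q_{4d}$ of comparable size). Consequently, to have the graph condition of \cref{D:domain} hold in all $\tilde N$ of the new, smaller $\ell$-cylinders, the new family must contain one extension per (original index $j$, tile cube) pair, giving $\tilde N \sim N \cdot (d/r)^{n+1}$. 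Your ``re-covering'' paragraph, and your observation that $\tilde N$ depends on the ratio $d/r$, show you have this in mind; but writing the family as $\{\Phi_j\}$ with the same index $j$ carried over from $\{\phi_j\}$, and phrasing the graph condition as ``a restriction of the corresponding $\Phi_j$,'' suggests a one-to-one correspondence that does not hold. Re-indexing the family by $(j,k)$ (or a combined index $m$) before verifying conditions (1)--(4) of \cref{D:domain} would remove the ambiguity; the verification you give of \cref{I:domain:eta} (monotonicity of the $\BMO$ supremum under shrinking the cube, plus agreement of $\Phi_{j,k}$ with $\phi_j$ on $Q_{8r}^{(k)}$) is then correct as stated. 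The rest of the write-up --- uniformity of $d'$ across the family since $d,\delta,\eta,r_1$ are part of the character, the invocation of \cref{I:ext:nabla,I:ext:Dt} for properties (ii)--(iii), and the use of \cref{C:equiv:grad} --- is fine.
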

\begin{proof}
	This follows from \cref{T:ext} and by tiling the support of each $\phi_j$ into parabolic cubes of size $8r$ with enough overlap.
\end{proof}

\begin{corollary}%
	\label{C:extension:VMO}
	If $\Omega$ is a $\VMO$ type domain then we may take $\eta$ arbitrarily small in \cref{P:ext}, or in~\cref{E:L2,E:domain:eta} of \cref{D:domain}, by reducing $r$.
\end{corollary}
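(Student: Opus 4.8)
The plan is to unwind the definition of a $\VMO$ type domain and feed it through the extension machinery already in place, namely \cref{T:ext} and \cref{P:ext}; no new analytic input is required. First I would fix an arbitrary target value $\eta_0 > 0$. By the definition of $\VMO$ type, $\Omega$ admits a character $(\ell, \eta_0, N_0, d_0)$ — the $\Lip(1,1/2)$ constant $\ell$ is unaffected, while $d_0$ and the associated scale $r_1^{(0)}$ may have to be taken smaller and $N_0$ larger — together with a modulus of continuity $\delta_0$ and a family of local graphs $\{\phi_j^{(0)}\}$, $\phi_j^{(0)} : Q_{8d_0} \to \R$, satisfying all conditions of \cref{D:domain} with $\eta = \eta_0$; in particular \cref{E:L2,E:domain:eta} hold with $\eta_0$ at the scales $d_0, r_1^{(0)}$.

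Next I would apply \cref{P:ext} (equivalently, \cref{T:ext} followed by the tiling argument in its proof) to this description. The one point that needs attention is that the extension can actually be performed at a positive scale: \cref{T:ext} supplies a threshold scale $d'$ that depends only on $d_0$, $\delta_0$, $\eta_0$, $r_1^{(0)}$ — fixed positive quantities, respectively a fixed modulus continuous at $0$, once $\eta_0$ has been chosen — and not on the functions $\phi_j^{(0)}$ themselves. Hence there is a genuine scale $r^{(0)} \le r_1^{(0)} \le d_0$ at which \cref{P:ext} produces an extended family $\{\Phi_j^{(0)}\}$ with $\{\Phi_j^{(0)}|_{Q_{8r^{(0)}}}\}$ still describing $\Omega$, now with character $(\ell, \eta_0, \tilde N_0, r^{(0)})$, and satisfying
\[
	\|\nabla\Phi_j^{(0)}\|_{*} \lesssim_\epsilon \eta_0^{1-\epsilon} + \eta_0\ell, \qquad \|\D\Phi_j^{(0)}\|_{*} \lesssim_\epsilon \eta_0^{1-\epsilon} + \eta_0\ell .
\]
Since \cref{E:L2,E:domain:eta} for this extended description at scale $r^{(0)}$ are part of what it means for $(\ell, \eta_0, \tilde N_0, r^{(0)})$ to be an admissible character in \cref{D:domain}, they hold with $\eta_0$; concretely, on $Q_{8r^{(0)}}$ the function $\Phi_j^{(0)}$ agrees with $\phi_j^{(0)}$ and restricting the relevant suprema to the smaller cube can only decrease the norms, so $\eta_0$ is inherited. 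As $\eta_0 > 0$ was arbitrary, this gives the claim: in \cref{P:ext}, and hence in \cref{E:L2,E:domain:eta} of \cref{D:domain}, the parameter $\eta$ may be made as small as desired, at the cost of a smaller $r$ (and smaller $d, r_1$, larger $N$).

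I do not expect any genuine obstacle here; the argument is essentially bookkeeping of parameter dependencies. The only mildly delicate point is that pushing $\eta_0 \to 0$ may force a progressively worse modulus $\delta_0$ and a smaller $r_1^{(0)}$, hence a smaller extension scale $r^{(0)}$ — but \cref{T:ext} only ever uses these data for one fixed value of $\eta_0$ at a time, so no uniformity in $\eta_0$ is needed, and since $\ell$ stays fixed throughout, the argument closes.
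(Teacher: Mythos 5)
Your proposal is correct and matches the reasoning the paper leaves implicit (the corollary is stated without proof). You unwind the definition of a $\VMO$ type domain to obtain a character $(\ell,\eta_0,N_0,d_0)$ for any prescribed $\eta_0>0$, feed the local graphs into \cref{T:ext}/\cref{P:ext}, and observe that the extension scale $d'$ there depends only on $d_0$, $\delta_0$, $\eta_0$, $r_1^{(0)}$ and not on the $\phi_j$, so the tiling in \cref{P:ext} can be carried out at a genuine positive scale $r^{(0)}$ for each fixed $\eta_0$. Your remark that no uniformity in $\eta_0$ is required and that $\ell$ stays fixed closes the argument exactly as intended; this is the same bookkeeping the paper relies on.
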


\subsection{Pullback Transformation and Carleson Condition}%
\label{S:pullback}

We now briefly recall the pullback mapping of Dahlberg-Kenig-Ne\v{c}as-Stein on the upper half-space $U$ $\rho : U \to \Omega$ (c.f.~\cite{HL96,HL01}) in the setting of parabolic equations defined by
\begin{equation}
	\label{mapping}
	\rho (x_0, x, t) = (x_0 + P_{\gamma x_0}\phi(x,t), x, t).
\end{equation}
For simplicity assume
\begin{equation}
	\label{E:Omega}
	\Omega = \{ (x_0, x, t) \in \R\times \R^{n-1} \times \R : x_0 > \phi(x,t) \}
\end{equation}
where $\phi(x,t): \R^{n-1} \times \R \to \R$ and satisfies \cref{I:domain:eta,I:domain:lip} of \cref{D:domain}.
This transformation maps the upper half-space
\begin{equation}
	\label{E:U}
	U = \{ (x_0, x, t) : x_0 > 0, \ x \in \R^{n-1}, \ t \in \R \}
\end{equation}
into $\Omega$ and allows us to consider the $L^p$ solvability of the PDE~\cref{E:pde} in the upper half-space instead of in the original domain $\Omega$.

To complete the definition of the mapping $\rho$ we define a parabolic approximation to the identity $P$ to be
an even non-negative function $P(x,t) \in C_{0}^{\infty}(Q_{1}(0,0))$, for $(x,t) \in \R^{n-1}\times\R$, with $\int P(x,t)\dx\dt = 1$ and set
\[
	P_{\lambda} (x,t) := \lambda^{-(n+1)} P\left( \frac{x}{\lambda}, \frac{t}{\lambda^2} \right).
\]
Let $P_\lambda \phi$ be the convolution operator
\[
	P_{\lambda} \phi(x,t) := \int_{\R^{n-1}\times \R} P_{\lambda} (x-y, t-s) \phi(y,s) \dy\ds
\]
then $P$ satisfies for constants $\gamma$
\[
	\lim_{(y_0, y, s) \to (0, x, t)} P_{\gamma y_0} \phi(y,s) = \phi(x, t)
\]
and $\rho$ defined in~\eqref{mapping} extends continuously to $\rho:\overline{U}\to \overline{\Omega}$.
The usual surface measure on $\partial U$ is comparable with the measure $\sigma$ defined by~\cref{E:sigma} on $\partial\Omega$.

Suppose that $v = u \circ \rho$ and $f^v = f \circ \rho$ then~\cref{E:pde} transforms to a new PDE for the variable $v$
\begin{equation}
	\label{E:pullback}
	\begin{cases}
		v_t = \di (A^v \nabla v) + B^v \cdot \nabla v & \text{in } U, \\
		v   = f^v                                     & \text{on } \partial U,
	\end{cases}
\end{equation}
where $A^v = [a^v_{ij}(X, t)]$, $B^v = [b^v_i (X, t)]$ are $(n\times n)$ and $(1\times n)$ matrices.

The precise relations between the original coefficients $A$ and $B$ and the new coefficients $A^v$ and $B^v$ are detailed in~\cite[p.~448]{Riv14}.
We note that if the constant $\gamma > 0$ is chosen small enough then the coefficients $a^v_{ij}, b^v_i : U \to \R$ are Lebesgue measurable and $A^v$ satisfies the standard uniform ellipticity condition with constants $\lambda^v$ and $\Lambda^v$, since the original matrix $A$ did.

\begin{definition}
	Let $\Omega$ be a parabolic domain from \cref{D:domain}.
	For $(Y,s)\in\partial\Omega$, $(X,t), (Z, \tau) \in \Omega$ and $r>0$ we write:
	\begin{align*}
		B_r(X,t)      & =\{(Z,\tau)\in{\R}^{n}\times \R : \dist[(X,t),(Z,\tau)]<r \}, \\
		Q_{r} (X, t)  & = \{ (Z,\tau) \in \R^{n}\times\R : |x_i - z_i| < r \text{ for all } 0 \leq i \leq n-1, \, | t - \tau |^{1/2} < r \}, \\
		\Delta_r(Y,s) & = \partial \Omega\cap B_r(Y,s),	\quad T(\Delta_r) = \Omega\cap B_r(Y,s), \\
		\delta(X,t)   & =\inf_{(Y,s)\in \partial\Omega} \dist[(X,t),(Y,s)].
	\end{align*}
\end{definition}

\begin{definition}[Carleson measure]%
	\label{D:carl}
	A measure $\mu:\Omega\to\R^+$  is a \textit{Carleson measure} if there exists a constant $C=C(d)$ such that for all $r\leq d$ and all surface balls $\Delta_r$
	\begin{equation}
		\label{E:carl}
		\mu(T(\Delta_r)) \leq C \sigma (\Delta_r).
	\end{equation}
	The best possible constant $C$ is called the \textit{Carleson norm} and is denoted by $\|\mu\|_{C,d}$.
	Occasionally, for brevity, we drop the $d$ and just write $\|\mu\|_{C}$ if the context is clear. We say that $\mu$ is a vanishing Carleson measure if $\|\mu\|_{C,d}\to 0$ as $d\to 0+$.
\end{definition}

When $\partial\Omega$ is locally given as a graph of a function $x_0=\phi(x,t)$ in the coordinate system $(x_0,x,t)$ and $\mu$ is a measure supported on $\{x_0>\phi(x,t)\}$ we can reformulate the Carleson condition locally using the parabolic boundary cubes $Q_r$ and corresponding Carleson regions $T(Q_r)$.
The Carleson condition~\cref{E:carl} then becomes
\begin{equation}
	\label{E:carl:cube}
	\mu(T(Q_r)) \leq C|Q_r|=Cr^{n+1}.
\end{equation}
Note that the Carleson norms induced from~\cref{E:carl,E:carl:cube} are not equal but are comparable.

We now return back to the pullback transformation and investigate the Carleson condition on the coefficients of $A$ and $B$.
The following result comes directly from a careful reading of the proofs of Lemma 2.8 and Theorem 7.4 in~\cite{HL96} combined with \cref{T:equiv:known,T:equiv}.

\begin{lemma}%
	\label{L:LemmaA}

	Let $\sigma$ and $\theta$ be non-negative integers, $\alpha = (\alpha_1, \ldots, \alpha_{n-1})$ a multi-index with $l=\sigma + |\alpha| + \theta$, $d$ a scale and fix $\gamma$.
	If $\phi : \R^{n-1} \times \R \to \R$ satisfies for all $x, y \in \R^{n-1}$, $t,s \in \R$ and for some positive constants $\ell$ and $\eta$
	\[
		|\phi(x,t) - \phi(y, s)| \leq \ell\left(|x-y| + |t-s|^{1/2}\right),
	\]
	\begin{equation}
		\label{E:LemmaA}
		\|\D\phi\|_* \leq \eta
	\end{equation}
	then the measure $\nu$ defined at $(x_0, x, t)$ by
	\[
		\dnu = \left( \frac{\partial^{l} P_{\gamma x_0} \phi}{\partial x_{0}^{\sigma} \partial x^{\alpha} \partial t^{\theta}} \right)^2
		x_{0}^{2l + 2\theta - 3} \dx\dt\dx_0
	\]
	is a Carleson measure on cubes of diameter $\leq d/4$ whenever either $\sigma + \theta \geq 1$ or $|\alpha| \geq 2$, with
	\[
		\nu \left[ (0, r) \times Q_{r}(x,t) \right] \lesssim \eta \left| Q_{r} (x,t) \right|,
	\]
	where $r \le d/4$.
	Moreover, if $l \geq 1$ then at $(x_0, x, t)$, with $x_0 \leq d/4$,
	\begin{equation}
		\label{E:pullback-derivatives2}
		\left| \frac{\partial^{l} P_{\gamma x_0} \phi}{\partial x_{0}^{\sigma} \partial x^{\alpha} \partial t^{\theta}} \right|
		\lesssim \eta(1+ \ell)  x_{0}^{1-l-\theta},
	\end{equation}
	where the implicit constants depend on $d,l,n$.
\end{lemma}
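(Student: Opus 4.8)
Both assertions follow the scheme of [HL96, Lemma 2.8 and Theorem 7.4], the new input being the equivalences of \cref{T:equiv:known,T:equiv}: together with $\partial_j\phi=R_j\D\phi$ for $1\le j\le n-1$, $\Dn\phi=R_n\D\phi$, and the $\BMO$-boundedness of the parabolic Riesz transforms, hypothesis~\eqref{E:LemmaA} says that $\D\phi$, $\nabla\phi$ and $\Dn\phi$ all lie in $\BMO(\R^n)$ with norm $\lesssim\eta$, while $\nabla\phi$ is in addition bounded by $\ell$.

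Setting $\lambda=\gamma x_0$ and changing variables I would write $P_{\gamma x_0}\phi(x,t)=\int_{Q_1(0,0)}P(z,r)\,\phi(x-\lambda z,t-\lambda^2 r)\,dz\,dr$. Applying $\partial_{x_0}^{\sigma}\partial_x^{\alpha}\partial_t^{\theta}$ and integrating by parts repeatedly in $(z,r)$, one reaches a finite sum
\[
\frac{\partial^{l}P_{\gamma x_0}\phi}{\partial x_0^{\sigma}\partial x^{\alpha}\partial t^{\theta}}(x,t)=\sum_{k}c_k\,x_0^{-(\sigma+|\alpha|+2\theta)}\bigl((\Psi_k)_{\gamma x_0}*\phi\bigr)(x,t),
\]
where $(\Psi_k)_{\lambda}$ is the $L^1$-normalised parabolic dilate of a fixed $\Psi_k\in C_0^{\infty}(Q_1(0,0))$, and $\sigma+|\alpha|+2\theta=l+\theta$. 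Two facts about the $\Psi_k$ drive everything: (i) if $l\ge 1$ then $\int\Psi_k=0$; and (ii) if $\sigma+\theta\ge 1$ or $|\alpha|\ge 2$ then $\widehat{\Psi_k}(\xi,\tau)$ vanishes to parabolic order at least two at the origin --- a single $x_0$-derivative contributes the parabolic dilation generator (symbol $O(|\xi|^{2})$ near the origin), a single $\partial_t$ a factor $\tau$, and $\partial_x^{\alpha}$ a factor $\xi^{\alpha}$, whereas in the excluded cases $|\alpha|\le 1$, $\sigma=\theta=0$ the symbol vanishes only to order $\le 1$.

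For the pointwise bound (the case $l\ge1$): when $\sigma+\theta\ge1$ or $|\alpha|\ge2$, use (ii) to factor $\Psi_k=\D\Theta_k$ with $\Theta_k=\D^{-1}\Psi_k$ an integrable, mean-zero kernel with parabolic decay; then $(\Psi_k)_{\gamma x_0}*\phi=\gamma x_0\,(\Theta_k)_{\gamma x_0}*(\D\phi)$, and, $\Theta_k$ being mean-zero, one subtracts the average of $\D\phi$ over a parabolic cube of radius $\gamma x_0$ about $(x,t)$ and bounds the result by $\|\D\phi\|_{*}\lesssim\eta$. In the only remaining case, $\partial_{x_j}P_{\gamma x_0}\phi=P_{\gamma x_0}(\partial_j\phi)$, one uses $\|\nabla\phi\|_{\infty}\le\ell$ directly (or, via (i), subtracts $\phi(x,t)$ and applies the $\Lip(1,1/2)$ estimate). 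Multiplying by the prefactor $x_0^{-(l+\theta)}$, with $\gamma$ bounded, gives $\lesssim\eta(1+\ell)\,x_0^{1-l-\theta}$, i.e.~\eqref{E:pullback-derivatives2}.

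For the Carleson estimate the weight collapses: $x_0^{2l+2\theta-3}\,x_0^{-2(\sigma+|\alpha|+2\theta)}=x_0^{-3}$ since $\sigma+|\alpha|+2\theta=l+\theta$, so with $\lambda=\gamma x_0$
\[
\nu\bigl[(0,r)\times Q_r(x,t)\bigr]\lesssim\sum_{k}\int_0^{\gamma r}\!\!\int_{Q_r}\bigl|(\Psi_k)_{\lambda}*\phi\bigr|^2\,\frac{d\lambda\,dx\,dt}{\lambda^{3}}.
\]
Using $\Psi_k=\D\Theta_k$ again, $(\Psi_k)_{\lambda}*\phi=\lambda\,(\Theta_k)_{\lambda}*(\D\phi)$, so the right-hand side equals $\sum_k\int_0^{\gamma r}\!\int_{Q_r}|(\Theta_k)_{\lambda}*(\D\phi)|^2\,\lambda^{-1}\,d\lambda\,dx\,dt\lesssim\|\D\phi\|_{*}^2\,|Q_r|\le\eta^2|Q_r|\le\eta|Q_r|$ by the Fefferman--Stein Carleson-measure characterisation of $\BMO$. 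The main obstacle I expect is step~(ii) together with the factorisation $\Psi_k=\D\Theta_k$: one must check the anisotropic (parabolic) order of vanishing of $\widehat{\Psi_k}$ at the origin --- this is exactly what separates the admissible exponents ($\sigma+\theta\ge1$ or $|\alpha|\ge2$) from the forbidden ones ($P_{\gamma x_0}\phi$ itself and $P_{\gamma x_0}(\partial_j\phi)$, which are merely bounded and give a logarithmically divergent $\nu$) --- and then verify that $\Theta_k=\D^{-1}\Psi_k$, although no longer compactly supported, decays fast enough parabolically for the pointwise and Fefferman--Stein estimates to apply.
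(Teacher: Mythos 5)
Your proposal is correct and follows essentially the same route as the paper, which gives no proof of its own but defers to a careful reading of Lemma 2.8 and Theorem 7.4 of \cite{HL96} combined with the equivalence theorems: your dilation-generator decomposition, the factorisation $\Psi_k=\D\Theta_k$ via the parabolic order-two vanishing of $\widehat{\Psi_k}$ (which for the $x_0$-derivatives uses the evenness of $P$), and the Fefferman--Stein/Calder\'on--Torchinsky square-function characterisation of parabolic $\BMO$ are exactly the ingredients of that argument. The only point of friction is the case $\sigma=\theta=0$, $|\alpha|=1$, where the correct bound is $\lesssim\ell$ (as you derive), which is not dominated by $\eta(1+\ell)$ for small $\eta$ --- but that is an imprecision in the lemma's statement (harmless in its applications, where only the terms with $\sigma+\theta\ge1$ or $|\alpha|\ge2$ must be small), not a gap in your argument.
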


The drift term $B^v$ from the pullback transformation in~\cref{E:pullback} includes the term
\[
	\frac{\partial}{\partial t} P_{\gamma x_0} \phi u_{x_0}.
\]
From \cref{L:LemmaA} with $\sigma = |\alpha| = 0$, $\theta = 1$, we see that
\[
	x_0 \left[ \frac{\partial}{\partial t} P_{\gamma x_0} \phi (x,t) \right]^2 \dX\dt
\]
is a Carleson measure in $U$.
Thus it is natural to expect that
\begin{equation}
	\label{E:pullback:B:carl}
	\dmu_1 (X,t) = x_0 |B^v|^2(X,t) \dX\dt
\end{equation}
is a Carleson measure in $U$ and $B^v$ satisfies
\begin{equation}
	\label{E:pullback:B:bound}
	x_0 |B^v| (X,t) \leq \Lambda_B < \|\mu_1\|_{C}^{1/2}.
\end{equation}
Indeed, this is the case provided the original vector $B$ satisfies the assumption that
\begin{equation}
	\label{E:pde:B:carl}
	\dmu(X,t) = \delta(X,t) \left[\sup_{B_{\delta(X,t)/2}(X,t)} |B| \right]^2 \dX\dt
\end{equation}
is a Carleson measure in $\Omega$.
Here $\|\mu_1\|_{C}$ depends on $\eta$ and the Carleson norm of~\cref{E:pde:B:carl}.

Similarly, for the matrix $A^v$ if we apply \cref{L:LemmaA} and use the calculations in~\cite[\S 6]{Riv14} then
\begin{equation}
	\label{E:pullback:A:carl}
	\dmu_2 (X,t) = (x_0 |\nabla A^v|^2 + x_0^3 |A^v_t|^2)(X,t) \dX\dt
\end{equation}
is a Carleson measure in $U$ and $A^v$ satisfies
\begin{equation}
	\label{E:pullback:A:bound}
	(x_0 |\nabla A^v| + x_0^2 |A^v_t|) (X,t) \leq \|\mu_2\|_{C}^{1/2}
\end{equation}
for almost everywhere $(X,t) \in U$ provided the original matrix $A$ satisfies that
\begin{equation}
	\label{E:pde:A:carl}
	\begin{split}
		&\dmu (X,t) = \\
		&\left(\delta(X,t) \left[\sup_{B_{\delta(X,t)/2}(X,t)}|\nabla A|\right]^2+\delta(X,t)^3 \left[\sup_{B_{\delta(X,t)/2}(X,t)}|\partial_t A|\right]^2\right)\dX\dt
	\end{split}
\end{equation}
is a Carleson measure in $\Omega$.

We note that if both $\|\mu\|_{C,r}$ and $\eta$ are small then so too are the Carleson norms $\|\mu_1\|_{C,r}$ and  $\|\mu_2\|_{C,r}$ of the matrix $A^v$ and vector $B^v$, at least if we restrict ourselves to small Carleson regions $r\le d$;
this comes from \cref{T:ext,P:ext,C:extension:VMO}.
Then by \cref{L:LemmaA} we see that $\|\mu_1\|_{C,r}$ and  $\|\mu_2\|_{C,r}$ only depend on $\eta$ and $\|\mu\|_{C,r}$ on Carleson regions of size $r \le d$.
In particular they are small if both $\eta$ and $\|\mu\|_{C,r}$ are small.
It further follows by \cref{C:extension:VMO} that we can make $\|\mu_1\|_{C,r}$ and $\|\mu_2\|_{C,r}$ as small as we like if $\mu$ is a vanishing Carleson norm and the domain $\Omega$ is of VMO type.

Observe that condition~\cref{E:pde:A:carl} is slightly stronger than~\cref{E:T1:carl:osc}, which we claimed to assume in \cref{T:1}.
We replace condition~\cref{E:pde:A:carl} by the weaker condition~\cref{E:T1:carl:osc} later via perturbation results of~\cite{Swe98}.


\begin{definition}%
	\label{D:rho}
	We define $\rho_j: U \to 8\Z_j$ to be the local pullback mapping in $8\Z_j$ associated to the function $\Phi_j$ in \cref{T:ext}, the extension of $\phi_j$ from \cref{D:domain}.
\end{definition}

\begin{remark}%
	\label{R:domain:smooth}
	By~\cite{BZ17} and its adaptation to the setting of admissible domains in~\cite[\S 2.3]{DH16}, one may construct a `proper generalised distance' globally when $\eta$ in the character of the domain is small.
	The smallness of $\eta$ in the character of the domain is used to guarantee that overlapping coordinate charts, generated by a local construction, are almost parallel.
	We may then use the result of~\cite[Theorem 5.1]{BZ17} to show there exists a domain $\Omega^\epsilon$ of class $C^\infty$, a homeomorphism $f^\epsilon : \overline{\Omega} \to \overline{\Omega^\epsilon}$ such that $f^\epsilon(\partial\Omega) = \partial\Omega^\epsilon$ and $f^\epsilon : \Omega \to \Omega^\epsilon$ is a $C^\infty$ diffeomorphism.
\end{remark}

\subsection{Parabolic Non-tangential Cones, Maximal Functions and \texorpdfstring{$p$}{p}-adapted Square and Area Functions}

We proceed with the definition of parabolic non-tangential cones and define the cones in a (local) coordinate system where $\Omega=\{(x_0,x,t) : x_0>\phi(x,t)\}$, which also applies to the upper half-space $U$.

\begin{definition}%
	\label{D:cone}
	For a constant $a>0$, we define the \textit{parabolic non-tangential cone} at a point $(x_0,x,t)\in\partial\Omega$ as follows
	\begin{equation*}
		\label{E:cone}
		\Gamma_{a}(x_0, x, t) = \left\{(y_0, y, s)\in \Omega : |y - x| + |s-t|^{1/2} < a(y_0 - x_0), \  x_0 < y_0 \right\}.
	\end{equation*}
	We occasionally truncate the cone $\Gamma$ at the height $r$
	\begin{equation*}
		\label{E:cone:r}
		\Gamma_{a}^{r}(x_0, x, t) = \left\{(y_0, y, s)\in \Omega : |y - x| + |s-t|^{1/2} < a(y_0 - x_0), \  x_0 < y_0 < x_0 + r  \right\}.
	\end{equation*}
\end{definition}

\begin{definition}[Non-tangential maximal function]
	For a function $u : \Omega \rightarrow \R$, the \textit{non-tangential maximal function} $N_a(u): \partial\Omega \to \R$ and its truncated version at a height $r$ are defined as
	\begin{equation}
		\label{D:NTan}
		\begin{split}
			N_{a}(u)(x_0,x,t) &= \sup_{(y_0,y,s)\in \Gamma_{a}(x_0,x,t)} \left|u(y_0 , y, s)\right|, \\
			N_{a}^{r}(u)(x_0,x,t) &= \sup_{(y_0,y,s)\in \Gamma_{a}^{r}(x_0,x,t)} \left|u(y_0 , y, s)\right|\quad\mbox{for }(x_0,x,t)\in\partial\Omega.
		\end{split}
	\end{equation}
\end{definition}

The following $p$-adapted square function was introduced in~\cite{DPP07} and has been modified appropriately for the parabolic setting.
It is used to control the spatial derivatives of the solution.
When $p = 2$ it is equivalent to the usual square function and when $p < 2$ we use the convention that the expression $|\nabla u|^2 |u|^{p-2}$ is zero whenever $\nabla u$ vanishes.

\begin{definition}[$p$-adapted square function]%
	\label{D:Sp}
	For a function $u : \Omega \rightarrow \R$, the \textit{$p$-adapted square function} $S_{p,a}(u): \partial\Omega \to \R$ and its truncated version at a height $r$ are defined as
	\begin{equation}
		\label{E:Sp}
		\begin{split}
			S_{p,a} (u)(Y,s) &= \left(\int_{\Gamma_a(Y,s)}|\nabla u(X,t)|^2 |u(X,t)|^{p-2} \delta(X,t)^{-n} \dX \dt\right)^{1/p} \hspace{-1em}, \\
			S_{p,a}^r (u)(Y,s) &= \left(\int_{\Gamma^r _a(Y,s)}|\nabla u(X,t)|^2 |u(X,t)|^{p-2} \delta(X,t)^{-n} \dX \dt \right)^{1/p} \hspace{-1em}.
		\end{split}
	\end{equation}
	By applying Fubini we also have
	\begin{equation}
		\label{E:Sp:norm}
		\|S_{p,a} (u)\|^p_{L^p(\partial U)} \sim \int_{U} |\nabla u|^2 |u|^{p-2} x_0 \dx_0 \dx \dt.
	\end{equation}

	It is not know a priori if these integrals are locally integrable even for $p > 2$.
	However, \cref{T:improved-reg} shows that these expressions makes sense and are finite for solutions to~\cref{E:pde}.
\end{definition}

We also need a $p$-adapted version of an object called the area function which was introduced in~\cite{DH16} and is used to control the solution in the time variable.
Again when $p=2$ this is just the area function of~\cite{DH16}.
\begin{definition}[$p$-adapted area function]%
	\label{D:Ap}
	For a function $u : \Omega \rightarrow \R$, the \textit{$p$-adapted area function} $A_{p,a}(u): \partial\Omega \to \R$ and its truncated version at a height $r$ are defined as
	\begin{equation}
		\label{E:Ap}
		\begin{split}
			A_{p,a} (u) (Y,s) &= \left(\int_{\Gamma_a(Y,s)}|u_t|^2 |u(X,t)|^{p-2} \delta(X,t)^{2-n} \dX \dt \right)^{1/p} \hspace{-1em}, \\
			A_{p,a}^r (u) (Y,s) &= \left(\int_{\Gamma^r_a(Y,s)}|u_t|^2 |u(X,t)|^{p-2} \delta(X,t)^{2-n} \dX \dt \right)^{1/p} \hspace{-1em}.
		\end{split}
	\end{equation}
	Also by Fubini
	\begin{equation}
		\label{E:Ap:norm}
		\|A_{p,a} (u)\|^p_{L^p(\partial U)} \sim \int_{U} |u_t|^2 |u|^{p-2} x_0^3 \dx_0 \dx \dt.
	\end{equation}
	As before, it is not known a priori if these expressions are finite for solutions to~\cref{E:pde} but in \cref{L:A<S} we establish control of $A_{p,a}$ by $S_{p,2a}$ and use the finiteness of $S_{p,a}$ from \cref{T:improved-reg}.
\end{definition}

\subsection{\texorpdfstring{$L^p$}{Lp} Dirichlet Boundary Value Problem}

We are now in the position to define the $L^p$ Dirichlet boundary value problem.

\begin{definition}[\cite{Aro68}]%
	\label{D:weak}
	We say that $u$ is a \textit{weak solution} to a parabolic operator of the form~\cref{E:pde} in $\Omega$ if $u, \nabla u \in L^2_{\loc}(\Omega)$, $\sup_t \|u(\cdot, t)\|_{L^2_{\loc} (\Omega_t)} < \infty$ and
	\begin{equation*}
		\int_\Omega (-u \phi_t + A\nabla u \cdot \nabla \phi - \phi B \cdot \nabla u ) \dX\dt = 0
	\end{equation*}
	for all $\phi \in C^\infty_0(\Omega)$.
\end{definition}

\begin{definition}%
	\label{D:Dp}
	We say that the $L^p$ \textit{Dirichlet problem} with boundary data in $L^p(\partial\Omega, \dsig)$ is solvable if the unique solution $u$ to~\cref{E:pde} for any continuous boundary data $f$ decaying to $0$ as $t \to \pm\infty$ satisfies the following non-tangential maximum function estimate
	\begin{equation}
		\label{E:Dp}
		\|N(u)\|_{L^p(\partial\Omega, \dsig)} \lesssim \|f\|_{L^p(\partial\Omega, \dsig)},
	\end{equation}
	with the implied constant depending only on the operator, $n$, $p$ and $\Omega$.
\end{definition}

\section{Basic Results and Interior Estimates}

We now recall some foundational estimates that will be used.
The following result is from~\cite{DH16}, which was adapted from the elliptic result in~\cite{Din02}.
\begin{lemma}%
	\label{L:N:apertures}
	Let $r>0$ and $0< a < b$.
	Consider the non-tangential maximal functions defined using two set of cones cones $\Gamma_{a}^{r}$ and $\Gamma_{b}^{r}$.
	Then for any $p > 0$ there exists a constant $C_p > 0$ such that for all $u : U \to \R$
	\[
		N_{a}^{r} (u) \leq N_{b}^{r} (u)
		\quad\text{and}\quad
		\|N_{b}^{r}(u)\|_{L^p(\partial U)}\leq C_p\|N_{a}^{r}(u)\|_{L^p(\partial U)}.
	\]
\end{lemma}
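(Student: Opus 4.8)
The first inequality $N_a^r(u) \leq N_b^r(u)$ is immediate from the definitions, since $a < b$ gives $\Gamma_a^r(x_0,x,t) \subset \Gamma_b^r(x_0,x,t)$ and the non-tangential maximal function is a supremum of $|u|$ over the cone. The content is the reverse $L^p$ comparison, and the plan is the classical Fefferman--Stein / good-$\lambda$ style argument adapted to the parabolic geometry, exactly as in~\cite{Din02} and its parabolic version in~\cite{DH16}. The key point is a \emph{pointwise} domination of $N_b^r(u)$ by a parabolic Hardy--Littlewood maximal function of $N_a^r(u)$ relative to the measure $\sigma$ on $\partial U$.

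The main steps are as follows. First, fix a boundary point $P_0 = (x_0,x,t)$ and a point $(y_0,y,s) \in \Gamma_b^r(P_0)$ at which $|u(y_0,y,s)|$ is close to $N_b^r(u)(P_0)$; because $b > a$, the wider cone with vertex at $(y_0,y,s)$ of aperture related to $a$ contains a full parabolic ``sub-cone'' whose shadow on $\partial U$ is a surface ball $\Delta$ centered near $P_0$ of radius comparable to $y_0$. For every boundary point $Q$ in that shadow $\Delta$, the point $(y_0,y,s)$ lies in the narrower cone $\Gamma_a^r(Q)$, hence $|u(y_0,y,s)| \leq N_a^r(u)(Q)$. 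Averaging over $Q \in \Delta$ and using that $\Delta$ has $\sigma$-measure comparable to $y_0^{n+1} \sim (\text{radius})^{n+1}$ (parabolic scaling), we obtain $|u(y_0,y,s)| \lesssim \mathcal{M}_\sigma\big(N_a^r(u)\big)(P_0)$, where $\mathcal{M}_\sigma$ is the uncentered parabolic maximal operator on $(\partial U, \sigma)$. Taking the supremum over $(y_0,y,s) \in \Gamma_b^r(P_0)$ yields the pointwise bound $N_b^r(u) \lesssim_{a,b,n} \mathcal{M}_\sigma\big(N_a^r(u)\big)$ on $\partial U$. Finally, since $\sigma$ on $\partial U$ is (comparable to) Lebesgue measure on $\R^{n-1}\times\R$ with the parabolic quasi-metric, it is a doubling measure, so $\mathcal{M}_\sigma$ is bounded on $L^p(\partial U,\sigma)$ for every $p > 1$; for $0 < p \leq 1$ one invokes the standard trick of Fefferman--Stein (bounding $N_b^r(u)$ by $\mathcal{M}_\sigma$ of a suitable power and interpolating, or working with the sharp maximal-type estimate as in~\cite{Din02}), which gives the constant $C_p$ depending only on $p$ (and on $a$, $b$, $n$).

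The step I expect to be the only real obstacle is making the geometric ``sub-cone'' claim precise in the parabolic metric $\|(x,t)\| = |x| + |t|^{1/2}$: one must verify that if $(y_0,y,s)$ is within parabolic aperture $b$ of $P_0$ at height $y_0 \leq r$, then there is an absolute radius $c(a,b)\, y_0$ such that every $Q \in \Delta_{c(a,b) y_0}(\text{base point})$ sees $(y_0,y,s)$ within aperture $a$, and that this ball lies inside the truncated region so the \emph{truncated} cones still work. This is a routine but slightly fiddly computation with the parabolic triangle inequality and the anisotropic scaling of the time variable; it is carried out in~\cite{DH16} and, in the elliptic case, in~\cite{Din02}, and I would simply cite those references for the details rather than reproduce the estimate here.
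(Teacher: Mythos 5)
Your proposal is correct and follows the same classical change-of-aperture argument that the paper defers to via~\cite{DH16} and~\cite{Din02}: the cone inclusion gives $N_a^r(u)\le N_b^r(u)$, and the reverse $L^p$ bound comes from the pointwise ``shadow'' estimate dominating $N_b^r(u)$ by a Hardy--Littlewood maximal function of $N_a^r(u)$ on $(\partial U,\sigma)$ (with $\sigma$ doubling for the parabolic quasi-metric). One small imprecision: the way you phrase the $0<p\le 1$ case (``bounding $N_b^r(u)$ by $\mathcal M_\sigma$ of a suitable power and interpolating'') is not quite right; the correct version of the trick is to run the shadow argument on $|u|^q$ to obtain the pointwise bound $N_b^r(u)^q\lesssim \mathcal M_\sigma\bigl(N_a^r(u)^q\bigr)$ for any $q>0$, and then for a given $p>0$ choose $0<q<p$ and apply the $L^{p/q}$ boundedness of $\mathcal M_\sigma$ with $p/q>1$ (no interpolation is needed); equivalently one can use the distributional inequality $\sigma\{N_b^r u>\lambda\}\lesssim\sigma\{N_a^r u>\lambda\}$ obtained from the weak-$(1,1)$ bound applied to $\chi_{\{N_a^r u>\lambda\}}$. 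Either of those fills the gap; the rest of your argument, including the geometric sub-cone/shadow computation that you defer to~\cite{DH16,Din02}, is the right route.
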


\begin{lemma}[{A Cacciopoli inequality, see~\cite{Aro68}}]%
	\label{L:Caccio}
	Let $A$ and $B$ satisfy~\cref{E:elliptic,E:pullback:B:bound} and suppose that $u$ is a weak solution of~\cref{E:pde} in $Q_{4r}(X,t)$ with $0 < r < \delta(X,t)/8$.
	Then there exists a constant $C=C(\lambda,\Lambda, n)$ such that
	\begin{equation*}
		\begin{split}
			r^{n} \left(\sup_{Q_{r/2}(X,t)} u \right)^{2}
			&\leq C \sup_{t-r^2 \leq s \leq t+r^2} \int_{Q_r(X,t) \cap \{t = s\}} u^{2}(Y,s) \dY
			+ C\int_{Q_{r}(X, t)} |\nabla u|^{2} \dY\ds \\
			&\leq \frac{C^2}{r^2} \int_{Q_{2r}(X, t)} u^{2}(Y, s) \dY\ds.
		\end{split}
	\end{equation*}
\end{lemma}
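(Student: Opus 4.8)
The plan is to prove the two displayed inequalities separately: the second one is the parabolic Caccioppoli (energy) estimate, and the first is the Moser/De~Giorgi $L^\infty$--$L^2$ local boundedness bound on parabolic cylinders. Neither is far from textbook; the only feature specific to our setting is the drift $B$, which is unbounded near $\partial\Omega$ but harmless here, because the hypothesis $r<\delta(X,t)/8$ forces $\delta(\cdot)\sim\delta(X,t)\gtrsim r$ on $Q_{4r}(X,t)$, and then \cref{E:pullback:B:bound} gives $|B|\lesssim\Lambda_B/r$ on that cylinder, i.e.\ the drift term scales exactly like the other error terms.

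\emph{The energy estimate.} I would fix a cutoff $\psi$ with $\psi\equiv1$ on $Q_r(X,t)$, $\supp\psi\subset Q_{2r}(X,t)$, $0\le\psi\le1$, $|\nabla\psi|\lesssim r^{-1}$, $|\partial_t\psi|\lesssim r^{-2}$, and with $\psi$ vanishing near the bottom time slice of $Q_{2r}$. Since $u$ has no a priori regularity in $t$, I would first replace it by its Steklov average in time, test the averaged weak formulation slice-by-slice with $u\psi^2$, integrate in $t$ up to an arbitrary level $\tau$ in the time interval of $Q_r$, and then let the averaging parameter go to $0$. The $\partial_t u$ contribution yields $\tfrac12\int_{\{s=\tau\}}u^2\psi^2$ (the bottom term vanishes since $\psi$ does there) minus $\iint u^2\psi\,\partial_t\psi$; the principal part is bounded below, using \cref{E:elliptic}, by $\lambda\iint|\nabla u|^2\psi^2$ minus a term absorbed via Young's inequality into $\tfrac\lambda4\iint|\nabla u|^2\psi^2+C_\Lambda\iint u^2|\nabla\psi|^2$; and the drift term $\iint(B\cdot\nabla u)u\psi^2$ is bounded, again by Young, by $\tfrac\lambda4\iint|\nabla u|^2\psi^2+C\iint|B|^2u^2\psi^2$. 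Using $|\nabla\psi|^2+|\partial_t\psi|+|B|^2\lesssim r^{-2}$ on $\supp\psi$, absorbing the two $\tfrac\lambda4$-terms on the left, and taking the supremum over $\tau$, this gives $\sup_{t-r^2\le s\le t+r^2}\int_{Q_r\cap\{s\}}u^2+\int_{Q_r}|\nabla u|^2\le\tfrac{C}{r^2}\int_{Q_{2r}}u^2$ with $C=C(\lambda,\Lambda,n,\Lambda_B)$, which is the second inequality; since $\Lambda_B$ is $O(1)$ it may be absorbed into a constant $C(\lambda,\Lambda,n)$.

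\emph{Local boundedness.} I would note that $-u$ is also a weak solution and that for a divergence-form parabolic operator with bounded measurable coefficients $u^{\pm}$ are nonnegative weak subsolutions, so it suffices to prove $\sup_{Q_{r/2}}v\lesssim(r^{-(n+2)}\int_{Q_r}v^2)^{1/2}$ for a nonnegative subsolution $v$; the bound for $u$ then follows from $|u|\le u^++u^-$ and $(u^{\pm})^2\le u^2$. This is the standard Moser iteration: testing the subsolution inequality with $v^{2p-1}\psi^2$ for $p\ge1$ gives, exactly as above (and again using $|B|\lesssim 1/r$ on the relevant cylinder), an energy bound for $v^p$, which combined with the parabolic Gagliardo--Nirenberg--Sobolev inequality $\iint|w|^{2(1+2/n)}\lesssim(\sup_\tau\int w^2(\cdot,\tau))^{2/n}\iint|\nabla w|^2$ applied to $w=v^p\psi$ produces a gain of integrability on a slightly smaller cylinder; iterating over exponents $p_k=(1+2/n)^k$ and radii decreasing from $r$ to $r/2$ and summing the resulting geometric series gives the claimed bound. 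Finally $\int_{Q_r}u^2\le 2r^2\sup_s\int_{Q_r\cap\{s\}}u^2$, so $r^n(\sup_{Q_{r/2}}u)^2\lesssim r^{-2}\int_{Q_r}u^2\le 2\sup_s\int_{Q_r\cap\{s\}}u^2$, which is even slightly stronger than the first inequality. The points needing care — and the only ones that are not automatic — are the justification of the non-smooth test functions $u\psi^2$ and $v^{2p-1}\psi^2$ (handled by Steklov averaging) and the tracking of the $p$-dependent constants in the Moser iteration so that the product over $k$ converges; the drift contribution, as stressed above, is controlled purely by the hypothesis $r<\delta(X,t)/8$ and presents no real obstacle.
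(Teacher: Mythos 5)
The paper does not prove this lemma; it is stated as a classical interior estimate with a citation to Aronson, and indeed your proposal reconstructs essentially the argument one finds in that reference and in standard parabolic textbooks: a Caccioppoli energy estimate via Steklov averaging and a cutoff, followed by Moser iteration on the nonnegative subsolutions $u^{\pm}$ using the parabolic Sobolev embedding. Your observation that the hypothesis $r<\delta(X,t)/8$ together with~\cref{E:pullback:B:bound} gives $|B|\lesssim 1/r$ on $Q_{4r}(X,t)$ is exactly the reason the drift causes no trouble, and your remark that the iteration in fact yields $r^n(\sup_{Q_{r/2}}u)^2\lesssim\sup_s\int_{Q_r\cap\{s\}}u^2$ --- a slightly stronger bound than the stated first inequality, which needlessly carries the $\int|\nabla u|^2$ term --- is correct. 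One small caution: on $Q_{4r}(X,t)$ a point may be at $\ell^1$-parabolic distance up to about $4r(\sqrt n+1)$ from $(X,t)$, so the condition $r<\delta(X,t)/8$ does not literally guarantee $\delta\gtrsim r$ on all of $Q_{4r}$ for large $n$ unless one either shrinks the constant $8$ to something dimension-dependent or runs the iteration from $Q_r$ inward (which is what you actually use, since you only iterate between $Q_{r/2}$ and $Q_r$ and take the energy estimate between $Q_r$ and $Q_{2r}$); this is cosmetic and does not affect the argument. Aside from that, the two points you flag as needing care --- justifying the test functions via Steklov averaging and controlling the $p$-dependence of the constants so the product over the iteration scales converges --- are indeed the only technical points, and both are handled as you say.
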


Lemmas 3.4 and 3.5 in~\cite{HL01} give the following estimates for weak solutions of~\cref{E:pde}.

\begin{lemma}[Interior H\"{o}lder continuity]%
	\label{L:int_Holder}
	Let $A$ and $B$ satisfy~\cref{E:elliptic,E:pullback:B:bound} and suppose that $u$ is a weak solution of~\cref{E:pde} in $Q_{4r}(X,t)$ with $0 < r < \delta(X,t)/8$.
	Then for any $(Y,s), (Z,\tau) \in Q_{2r}(X,t)$
	\[
		\left|u(Y, s) - u(Z, \tau)\right| \leq C \left( \frac{|Y-Z| + |s - \tau|^{1/2}}{r}\right)^{\alpha} \sup_{Q_{4r}(X,t)} |u|,
	\]
	where $C=C(\lambda, \Lambda, n)$, $\alpha=\alpha(\lambda,\Lambda,n)$, and $0 < \alpha < 1$.
\end{lemma}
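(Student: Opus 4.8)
The plan is to recognise \cref{L:int_Holder} as the parabolic De Giorgi--Nash--Moser oscillation estimate for an equation with bounded measurable leading coefficients and a drift term that becomes bounded (indeed small at small scales) after rescaling, and to run the classical argument.

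\textbf{Step 1 (Normalisation).} Translating, we may assume $(X,t)=(0,0)$; applying the parabolic dilation $u\mapsto u(r\,\cdot,r^2\,\cdot)$ we may assume $r=1$, so that $u$ is a weak solution of $u_t=\di(A\nabla u)+B\cdot\nabla u$ on $Q_4(0,0)$. Under this dilation $A$ stays elliptic with the same $\lambda,\Lambda$, while $B$ is replaced by $\tilde B=r\,B(r\,\cdot,r^2\,\cdot)$; since $r<\delta(X,t)/8$ we have $\delta\sim\delta(X,t)\gtrsim r$ on $Q_{4r}(X,t)$, so \cref{E:pullback:B:bound} gives $\|\tilde B\|_{L^\infty(Q_4)}\lesssim\Lambda_B$, a structural constant. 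Moreover \cref{L:Caccio}, applied on sub-cubes, shows $u\in L^\infty(Q_4)$, so we may normalise $\sup_{Q_4}|u|=1$. Thus it suffices to prove $\osc_{Q_\rho(0,0)}u\lesssim\rho^\alpha$ for $0<\rho\le 2$, with the implied constant and $\alpha\in(0,1)$ depending only on $\lambda,\Lambda,n$.

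\textbf{Step 2 (One step of oscillation decay).} The core estimate is $\osc_{Q_{1/2}(0,0)}u\le\gamma\,\osc_{Q_1(0,0)}u$ for some structural $\gamma\in(0,1)$, where I write $M_\rho:=\sup_{Q_\rho(0,0)}u$, $m_\rho:=\inf_{Q_\rho(0,0)}u$. I would obtain it by the De Giorgi method: after subtracting the midpoint of $[\,m_1,M_1\,]$ and rescaling the range so that $-1\le u\le 1$, at least one of $\{u\le 0\}$, $\{u\ge 0\}$ fills at least half of a fixed central sub-cube $Q^*\subset Q_1$ in space--time measure; say it is $\{u\le 0\}$. Using the Caccioppoli inequality \cref{L:Caccio} for the truncations $v_k:=(u-(1-2^{-k}))^+$ together with De Giorgi's isoperimetric (Sobolev) inequality, the space--time measure of $\{v_k>0\}$ in a smaller sub-cube tends to zero, and a De Giorgi $L^2$-to-$L^\infty$ iteration then yields $u\le 1-\epsilon_0$ on a sub-cube containing $Q_{1/2}(0,0)$, which is the asserted contraction of the oscillation. (Alternatively one applies Moser's weak parabolic Harnack inequality to the nonnegative solutions $M_1-u$ and $u-m_1$.) In either route the drift enters only through the energy estimate: the term $\int\zeta^2\,B\cdot\nabla v_k\,v_k$ is absorbed by Young's inequality into $\tfrac14\int\zeta^2|\nabla v_k|^2$ plus $C\|\tilde B\|_{L^\infty}^2\int\zeta^2 v_k^2$, which is of the same lower-order type already controlled in \cref{L:Caccio}; and since under the Step 1 scaling the drift carries the small factor $\lesssim r$, the constant $\gamma$ stays structural.

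\textbf{Step 3 (Iteration and unscaling).} Iterating Step 2 gives $\osc_{Q_{2^{-k}}(0,0)}u\le\gamma^k\osc_{Q_1(0,0)}u\le 2\gamma^k$. For $(Y,s),(Z,\tau)\in Q_2(0,0)$ set $\rho=|Y-Z|+|s-\tau|^{1/2}$: if $\rho\ge 1$ the bound is trivial, and if $\rho<1$ both points lie in a parabolic sub-cube of radius $\le 2\rho$ contained in $Q_4(0,0)$, so choosing $2^{-k-1}<2\rho\le 2^{-k}$ yields $|u(Y,s)-u(Z,\tau)|\le 2\gamma^k\lesssim\rho^\alpha$ with $\alpha=\log(1/\gamma)/\log 2\in(0,1)$. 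Undoing the Step 1 dilation reinserts the factor $r^{-\alpha}$ and restores the cube $Q_{4r}(X,t)$, which is precisely the stated inequality (for the continuous representative of $u$, whose existence is a byproduct).

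\textbf{Main obstacle.} The only genuinely nontrivial ingredient is Step 2 --- the parabolic oscillation/De Giorgi lemma (equivalently, the weak parabolic Harnack inequality). The delicate point is time-irreversibility: the measure-theoretic decay propagates forward in time, so the nested space--time cubes must be positioned with the correct ``waiting time'' and the two half-arguments (for $\{u\le 0\}$ and $\{u\ge 0\}$) chained accordingly; the spatial part is the classical De Giorgi iteration. Everything else --- the scaling bookkeeping for $B$, the Young-inequality absorption of the bounded drift, and the passage from oscillation decay to H\"older continuity --- is routine. Since $\tilde B$ is bounded, this places \cref{L:int_Holder} squarely within the classical parabolic De Giorgi--Nash--Moser theory, exactly as it is invoked from \cite{HL01} in the text, so in the write-up I would state the Step 1 reduction carefully and then quote the theory.
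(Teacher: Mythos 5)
The paper does not prove this lemma at all: it is quoted directly from Lemmas 3.4 and 3.5 of \cite{HL01}, so there is no in-paper argument to compare against. Your outline is a correct sketch of the classical parabolic De Giorgi--Nash--Moser theory that underlies that citation: the parabolic rescaling turning the drift bound $\delta|B|\le\Lambda_B$ into an $L^\infty$ bound on $\tilde B$ over the unit cube is the right reduction, and the oscillation-decay/iteration scheme, including the remark about the forward-in-time positioning of the nested cubes, is the standard route. One small imprecision: after normalising $r=1$ the rescaled drift is bounded by the structural constant $\Lambda_B$, not by something ``small $\lesssim r$''; consequently $\gamma$ (hence $C$ and $\alpha$) in principle also depends on $\Lambda_B$, which is harmless here because the paper takes $\Lambda_B<\|\mu_1\|_{C}^{1/2}$ bounded by a fixed structural constant, but you should state the dependence rather than claim the drift contribution is negligible. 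Also note that the Caccioppoli inequality must be applied to the truncations $(u-c_k)^{\pm}$, i.e.\ to subsolutions rather than solutions; this is standard but is not literally \cref{L:Caccio} as stated.
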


\begin{lemma}[Harnack inequality]%
	\label{L:Harnack}
	Let $A$ and $B$ satisfy~\cref{E:elliptic,E:pullback:B:bound} and suppose that $u$ is a weak non-negative solution of~\cref{E:pde} in $Q_{4r}(X,t)$, with $0 < r < \delta(X,t)/8$.
	Suppose that $(Y,s), (Z,\tau) \in Q_{2r}(X,t)$ then there exists $C=C(\lambda, \Lambda, n)$ such that, for $\tau < s$,
	\[
		u(Z, \tau) \leq u(Y, s) \exp \left[ C\left( \frac{|Y-Z|^2}{|s-\tau|} + 1 \right) \right].
	\]
\end{lemma}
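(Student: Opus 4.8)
The statement is the classical parabolic Harnack inequality for a uniformly parabolic divergence-form operator with bounded measurable principal part and a first-order drift, specialised to the interior regime $r<\delta(X,t)/8$; the cited Lemmas~3.4--3.5 of~\cite{HL01} rest on Moser's iteration and the Aronson--Serrin treatment of lower-order terms. I would prove it in three steps. \emph{Step 1: rescaling.} For $(Y,s)\in Q_4(0)$ put $v(Y,s)=u(X+rY,\,t+r^2s)$. Then $v\ge 0$ is a weak solution on $Q_4(0)$ of an equation of the form~\cref{E:pde} whose principal part $A_r(Y,s)=A(X+rY,t+r^2s)$ still obeys~\cref{E:elliptic} with the same $\lambda,\Lambda$, and whose drift is $B_r(Y,s)=rB(X+rY,t+r^2s)$. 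Since $r<\delta(X,t)/8$, one has $\delta\sim r$ (with dimensional constants) at every point of $Q_{4r}(X,t)$, so the scale-invariant bound that~\cref{E:pullback:B:bound} asserts in these coordinates, namely $\delta|B|\lesssim\Lambda_B$, gives $|B|\lesssim\Lambda_B/r$ there and hence $\sup_{Q_4(0)}|B_r|\lesssim\Lambda_B$. Everything is thereby reduced to a non-negative weak solution on the unit cylinder with ellipticity constants $\lambda,\Lambda$ and a bounded drift (and under the standing smallness hypotheses $\Lambda_B$ is itself bounded by an absolute constant, which is why the final $C$ depends only on $\lambda,\Lambda,n$).

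\emph{Step 2: one-step Harnack.} I would prove the local parabolic Harnack inequality by Moser iteration: there are a constant $C_0=C_0(\lambda,\Lambda,n)$ and fixed sub-cylinders $Q^-$ (``past'') and $Q^+$ (``future'') of $Q_4(0)$, separated by a fixed gap in time, with $\sup_{Q^-}v\le C_0\inf_{Q^+}v$. The ingredients are: (i) the Caccioppoli inequality~\cref{L:Caccio}, in which the term $B_r\cdot\nabla v$ is absorbed by Young's inequality into the energy and a lower-order term controlled by $\|B_r\|_\infty$ --- the standard Aronson--Serrin device; (ii) the parabolic Sobolev embedding, which combined with the energy inequality yields local boundedness by iteration, dominating $\sup_{Q^-}v$ by an $L^{p}$-mean of $v$ over a slightly larger cylinder for small $p>0$ and dually bounding that $L^{-p}$-mean from below by $\inf_{Q^+}v$; (iii) a parabolic John--Nirenberg lemma applied to $\log v$ (which is a supersolution of an associated equation), bridging the positive and negative exponents and forcing the time separation between $Q^-$ and $Q^+$. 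One may instead invoke Moser's theorem or Aronson--Serrin, or quote~\cite{HL01} directly.

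\emph{Step 3: Harnack chain.} Given $(Z,\tau),(Y,s)\in Q_{2r}(X,t)$ with $\tau<s$, pass to the rescaled points in $Q_2(0)$ and construct a chain $(W_0,\theta_0),\dots,(W_N,\theta_N)$ with $\theta_0<\dots<\theta_N$, with $(W_0,\theta_0)$ the rescaled $(Z,\tau)$, $(W_N,\theta_N)$ the rescaled $(Y,s)$, arranged so that for each $j$ the pair $(W_j,\theta_j),(W_{j+1},\theta_{j+1})$ fills the ``past'' and ``future'' cylinders of a suitable translate and dilate of the Step~2 configuration lying inside $Q_4(0)$. Since a single link may advance time only by a fixed multiple of the square of its spatial increment, while the total spatial displacement is $\sim|Y-Z|/r$ and the total available time is $\sim|s-\tau|/r^2$, one may take $N\lesssim 1+|Y-Z|^2/|s-\tau|$. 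Iterating Step~2 along the chain yields $v(W_0,\theta_0)\le C_0^{\,N}v(W_N,\theta_N)$, that is $u(Z,\tau)\le u(Y,s)\exp(N\log C_0)\le u(Y,s)\exp\!\big[C(1+|Y-Z|^2/|s-\tau|)\big]$; both the one-step estimate and the chain count are scale-invariant, so undoing the rescaling leaves the inequality unchanged, which is the assertion.

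The main obstacle is Step~2 --- the one-step Moser--Harnack estimate, and within it the passage from negative to positive powers of $v$ via the parabolic John--Nirenberg inequality, together with the (routine but unavoidable) absorption of the drift term. Steps~1 and~3 are essentially bookkeeping, and the exponential form of the constant is exactly the signature of the parabolic Harnack chain.
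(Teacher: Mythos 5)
Your outline is correct: rescaling to the unit cylinder (where \cref{E:pullback:B:bound} turns the drift into a bounded vector field since $\delta\sim r$ on $Q_{4r}(X,t)$), the one-step Moser--Harnack estimate, and a Harnack chain with $N\lesssim 1+|Y-Z|^2/|s-\tau|$ links giving the exponential constant is exactly the classical Moser/Aronson--Serrin argument. The paper does not prove this lemma at all --- it is quoted verbatim from Lemmas 3.4 and 3.5 of \cite{HL01} --- and the proof there proceeds along essentially the same lines you describe, so there is nothing to fault in your proposal beyond the fact that the heavy lifting (your Step 2) is delegated to standard references rather than carried out.
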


We state a version of the maximum principle from~\cite{DH16} that is a modification of~\cite[Lemma 3.38]{HL01}.
\begin{lemma}[Maximum Principle]%
	\label{L:MP}
	Let $A$ and $B$ satisfy~\cref{E:elliptic,E:pullback:B:bound}, and let $u$ and $v$ be bounded continuous weak solutions to~\cref{E:pde} in $\Omega$.
	If $|u|,|v|\to 0$ uniformly as $t \to -\infty$ and
	\[
		\limsup_{(Y,s)\to (X,t)} (u-v)(Y,s) \leq 0
	\]
	for all  $(X,t)\in\partial\Omega$, then $u\leq v$ in $\Omega$.
\end{lemma}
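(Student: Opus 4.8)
The plan is to set $w=u-v$, which by linearity of the equation is again a bounded continuous weak solution of the homogeneous problem $w_t=\di(A\nabla w)+B\cdot\nabla w$ in $\Omega$, and to prove $w\le 0$. Since $|u|,|v|\to 0$ uniformly as $t\to-\infty$, the quantity $\epsilon(S):=\sup_{t\le S}\sup\{|w(X,t)|:(X,t)\in\Omega\}$ is finite and $\epsilon(S)\to 0$ as $S\to-\infty$. Fix $S<T$ and a number $\epsilon>\epsilon(S)$; it suffices to show that $w\le\epsilon$ on $\Omega\cap\{S<t<T\}$, because then letting first $\epsilon\downarrow\epsilon(S)$, then $T\to\infty$, then $S\to-\infty$ gives $w\le 0$ on all of $\Omega$.

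The key point is that the drift $B$ is singular only at $\partial\Omega$ --- it obeys $\delta(X,t)\,|B(X,t)|\lesssim 1$, the bound \cref{E:pullback:B:bound} (respectively the hypotheses of \cref{T:1}) --- whereas the condition $\limsup_{(Y,s)\to(X,t)}w(Y,s)\le 0$ at every $(X,t)\in\partial\Omega$ forces the superlevel set of $w$ to keep away from $\partial\Omega$. Concretely, set $D:=\{(X,t)\in\Omega:\ w(X,t)>\epsilon,\ S<t<T\}$. If $\overline D$ met $\partial\Omega$ at some point $(X_*,t_*)$, then approaching it along points of $D$ would yield $\limsup_{(Y,s)\to(X_*,t_*)}w(Y,s)\ge\epsilon>0$, a contradiction; hence $\overline D$ is a compact subset of $\Omega$, on which $\delta\ge\delta_0>0$ and therefore $|B|\le\Lambda_B/\delta_0$ is bounded. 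The choice $\epsilon>\epsilon(S)$ also forces $w<\epsilon$ on $\Omega\cap\{t\le S\}$, so $\overline D$ is disjoint from $\{t=S\}$; and because the equation is parabolic, the part of $\partial D$ lying in $\{t=T\}$ consists of ``top'' points and does not belong to the parabolic boundary $\partial_p D$. Using the continuity of $w$ inside $\Omega$ one checks that $\limsup w\le\epsilon$ at every point of $\partial_p D$.

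Thus $w$ is a bounded weak solution of a uniformly parabolic divergence-form equation with bounded measurable leading coefficient $A$, bounded drift $B$, and no zeroth-order term, on the bounded domain $D$, with $\limsup w\le\epsilon$ on $\partial_p D$. The classical weak parabolic maximum principle in this generality --- a Stampacchia/De Giorgi truncation argument, as in \cite{Aro68}, in the same spirit as the local boundedness estimate \cref{L:Caccio} --- gives $\sup_D w\le\epsilon$. Since $D=\{w>\epsilon\}\cap\{S<t<T\}$, this is only possible if $D=\emptyset$, i.e.\ $w\le\epsilon$ on $\Omega\cap\{S<t<T\}$, as required. Equivalently, one may exhaust $\Omega\cap\{S<t<T\}$ by the subdomains $\{\delta>1/k\}$, on which $B$ is bounded, apply the maximum principle on each, and pass to the limit $k\to\infty$ using the interior H\"older estimate \cref{L:int_Holder} together with the compactness of $\overline{\Omega\cap\{S\le t\le T\}}$ and the boundary hypothesis to control the limiting boundary values along $\{\delta=1/k\}$.

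The main obstacle is exactly this interplay of the singular drift with the boundary condition: near $\partial\Omega$ the coefficients are too rough for any off-the-shelf maximum principle, so one is forced to work inside the region $\{w>\epsilon\}$, where the $\limsup$ hypothesis simultaneously makes the drift bounded and pins down the relevant parabolic boundary values. A secondary --- but essential --- point is the parabolic character of the problem: no decay is available or needed as $t\to+\infty$, since truncating at an arbitrary terminal time $T$ is harmless because $\{t=T\}$ never enters $\partial_p D$. The remaining details (the exact description of $\partial_p D$, the regularity of the exhausting domains, and the verification of the hypotheses of the weak maximum principle for weak solutions) are routine and follow the lines of \cite[Lemma~3.38]{HL01} and \cite{DH16}.
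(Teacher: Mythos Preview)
The paper does not give its own proof of this lemma; it simply records the statement from \cite{DH16} and notes in \cref{R:max} that the argument of \cite[Lemma~3.38]{HL01} goes through once the uniform decay $|u|,|v|\to 0$ as $t\to-\infty$ is assumed. Your sketch is a faithful reconstruction of exactly that argument---reduce to $w=u-v$, use the boundary hypothesis to confine the superlevel set $\{w>\epsilon\}$ to a compact subset of $\Omega$ on which the singular drift becomes bounded, and then invoke the classical weak maximum principle for bounded coefficients---so there is nothing to compare and your proposal is correct.
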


\begin{remark}[\cite{DH16}]%
	\label{R:max}
	The proof of Lemma 3.38 from~\cite{HL01} works given the assumption that $|u|,|v|\to 0$ uniformly as $t\to-\infty$.
	Even with this additional assumption,  the lemma as stated is sufficient for our purposes.
	We shall mostly use it when $u \leq v$ on the boundary of $\Omega\cap\{t\ge \tau\}$ for a given time $\tau$.
	Obviously then the assumption that $|u|,|v|\to 0$ uniformly as $t\to-\infty$ is not necessary.
	Another case when the lemma as stated here applies is when $u|_{\partial\Omega},v|_{\partial\Omega}\in C_0(\partial\Omega)$, where $C_0(\partial\Omega)$ denotes the class of continuous functions decaying to zero as $t\to\pm\infty$.
	This class is dense in any $L^p(\partial\Omega,d\sigma)$, $1<p<\infty$ allowing us to consider an extension of the solution operator from $C_0(\partial\Omega)$ to $L^p$.
\end{remark}

\section{Improved Regularity for \texorpdfstring{$p$}{p}-adapted square function}
Here we extend recent work of~\cite{DP16} for complex coefficient elliptic equations to the real parabolic setting.
The goal is to obtain a improved regularity result for weak solutions of~\eqref{E:pde} implying that $|\nabla u|^2|u|^{p-2}$ belongs to $L^1_{loc}(\Omega)$ when $1<p<2$.
Having this it follows that the $p$-adapted square function $S_{p,a}$ is well defined at almost every boundary point.

\begin{theorem}[{c.f.~\cite[Theorem 1.1]{DP16}}]%
	\label{T:improved-reg}
	Suppose $u \in W^{1,2}_{\loc}(\Omega)$ is a weak solution to $\L u = u_t$, where $\L u = \di(A\nabla u) + B\nabla u$, $A$ is bounded and elliptic, $B$ is locally bounded and satisfies
	\begin{equation}
		\label{E:B}
		\delta(X,t) |B(X,t)| \leq K
	\end{equation}
	for some uniform constant $K>0$.
	Then for any parabolic ball $B_{4r}(X,t) \subset \Omega$ and $p,q \in (1,\infty)$  we have the following improvement in regularity
	\begin{equation}
		\label{E:improved-reg}
		\left(\fint_{B_r(X,t)} |u|^p \right)^{1/p}
		\le C_\epsilon \left(\fint_{B_{2r}(X,t)} |u|^q \right)^{1/q} +  \epsilon\left(\fint_{B_{2r}(X,t)} |u|^2 \right)^{1/2} \hspace{-1em}.
	\end{equation}
	Here the constant $C_\epsilon$ only depends on $p$, $q$, $\epsilon$, $n$, $\lambda$, $\Lambda$, and $K$ but not on $u$, $(X,t)$ or $r$.
	In addition, for all $1 < p < \infty$
	\begin{equation}
		\label{E:p-adapted-bound}
		r^2 \fint_{B_r(X,t)} |\nabla u|^2 |u|^{p-2}
		\leq C_\epsilon \fint_{B_{2r}(X,t)} |u|^p + \epsilon\left(\fint_{B_{2r}(X,t)} |u|^2 \right)^{p/2} \hspace{-1em},
	\end{equation}
	where again the constant only depends on $\epsilon$, $p$, $n$, the ellipticity constants of $A$, and $K$.
	This also shows that $|u|^{(p-2)/2}\nabla u \in L^2_{\loc}(\Omega)$.
\end{theorem}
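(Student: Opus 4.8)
The plan is to adapt the nonlinear self‑improvement scheme of~\cite[Theorem~1.1]{DP16} to the real parabolic setting. The only genuinely parabolic point, and the heart of the matter, is a \emph{nonlinear Caccioppoli inequality} for $u$; once that is in hand, the passage to \cref{E:improved-reg} and \cref{E:p-adapted-bound} and the local integrability statement are soft.

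For the Caccioppoli estimate, fix a ball with $B_{4r}(X,t)\subset\Omega$ and a standard parabolic cutoff $\zeta$ with $\zeta\equiv1$ on $B_r$, $\supp\zeta\subset B_{2r}$, $|\nabla\zeta|\lesssim r^{-1}$, $|\partial_t\zeta|\lesssim r^{-2}$. For $\sigma>0$ I would test the weak formulation \cref{D:weak} against $\phi_\sigma=u\,(u^2+\sigma^2)^{(p-2)/2}\zeta^2$ (legitimate for $\sigma>0$ since $u$ is locally bounded, the time derivative being handled through Steklov averages as in~\cite{Aro68}). Ellipticity \cref{E:elliptic} bounds $\int|\nabla u|^2(u^2+\sigma^2)^{(p-2)/2}\zeta^2$ from below; the parabolic term is an exact derivative,
\begin{equation*}
	u_t\,u\,(u^2+\sigma^2)^{(p-2)/2}=\tfrac1p\,\partial_t\big[(u^2+\sigma^2)^{p/2}\big],
\end{equation*}
so it costs only $\lesssim\int(u^2+\sigma^2)^{p/2}\,\zeta\,|\partial_t\zeta|$; and the drift term is estimated using \cref{E:B} together with $\delta(X,t)\gtrsim r$ on $\supp\zeta$, whence $|B|\lesssim K/r$ there, so by Young's inequality $\int|B|\,|\nabla u|\,|u|\,(u^2+\sigma^2)^{(p-2)/2}\zeta^2$ is at most a small multiple of the ellipticity term plus $C(K)\,r^{-2}\int(u^2+\sigma^2)^{p/2}\zeta^2$. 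Absorbing, and then letting $\sigma\to0$ --- monotone convergence on the right, Fatou on the left, using the convention of \cref{D:Sp} and the fact that $\nabla u=0$ a.e.\ on $\{u=0\}$ --- gives
\begin{equation*}
	r^2\fint_{B_r(X,t)}|\nabla u|^2|u|^{p-2}\ \lesssim\ \fint_{B_{2r}(X,t)}|u|^p,
\end{equation*}
with constant depending only on $n,\lambda,\Lambda,p,K$. This is already a clean form of \cref{E:p-adapted-bound} (the $\epsilon$-term being a harmless weakening), and, running the same computation with the function $w_\sigma=(u^2+\sigma^2)^{p/4}$ and Fatou as $\sigma\to0$, it also yields $\nabla\big(|u|^{p/2}\big)=\tfrac p2|u|^{(p-2)/2}\nabla u\in L^2_{\loc}(\Omega)$.

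With $w=|u|^{p/2}$ now known to lie in $W^{1,2}_{\loc}(\Omega)$, the same test computation controls $\sup_s\|w(\cdot,s)\|_{L^2(B_r\cap\{t=s\})}$ and $\int_{B_r}|\nabla w|^2$ by $r^{-2}\int_{B_{2r}}|u|^p$; feeding $w\zeta$ into the parabolic (Ladyzhenskaya) Sobolev inequality then yields a reverse-H\"older gain $(\fint_{B_r}|u|^{p\kappa})^{1/\kappa}\lesssim\fint_{B_{2r}}|u|^p$ with $\kappa=\kappa(n)>1$. Iterating this over a geometric sequence of radii in $[r,2r]$, together with H\"older's inequality in the trivial downward direction, bootstraps, for every fixed $p,q\in(1,\infty)$, an $L^q$ bound on $B_{2r}$ into an $L^p$ bound on $B_r$ with constant depending only on $n,\lambda,\Lambda,p,q,K$; this is \cref{E:improved-reg}, the $\epsilon$-term being retained only for uniformity with the complex-coefficient statement of~\cite{DP16} (where it is genuinely needed) and obtainable directly from a standard hole-filling lemma if one prefers to shorten the bootstrap.

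The one real difficulty is the range $1<p<2$, where $|u|^{p-2}$ is singular on $\{u=0\}$: the choice of test function, the chain rule putting $w$ into $W^{1,2}_{\loc}$, and the limit $\sigma\to0$ must all be run through the regularisation $(u^2+\sigma^2)^{(p-2)/2}$ and justified by dominated or monotone convergence, and one must keep every constant scale‑invariant, which forces all occurrences of $B$ to be estimated through $\delta|B|\le K$ rather than through a merely local sup bound.
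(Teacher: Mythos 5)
Your proposal is correct but follows a genuinely different route from the paper's. The paper's proof of the key estimate (its Lemma~\ref{L:2.7}) first assumes $A$ and $B$ are smooth (so that $u$ is smooth), multiplies the equation by $\rho_\delta^2(|u|)u$ where $\rho_\delta$ is a two-piece cutoff truncating $|u|^{(p-2)/2}$ at height $\delta$, integrates by parts over $B_{\alpha r}$ (without a spatial cutoff, so picking up a boundary integral), and then averages over $\alpha\in[1,5/4]$ to convert the boundary term into a solid one; the drift and boundary contributions are estimated via an $\epsilon$-detour through $\int|\nabla u|^p$ exactly as in~\cite{DP16}, which is why the $\epsilon\bigl(\fint|u|^2\bigr)^{p/2}$ term appears, and smoothness of the coefficients is removed at the end by approximation. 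You instead test against $u(u^2+\sigma^2)^{(p-2)/2}\zeta^2$ with a smooth regularisation, use a standard spatial cutoff $\zeta$ from the outset (so no boundary integral and no averaging), handle the time derivative by Steklov averaging rather than by preliminary smoothing of the coefficients, observe the exact-derivative identity $u_t\,u(u^2+\sigma^2)^{(p-2)/2}=\frac1p\partial_t[(u^2+\sigma^2)^{p/2}]$, and estimate the drift term directly by Young's inequality via $\delta|B|\le K$, writing $|\nabla u|\,|u|(u^2+\sigma^2)^{(p-2)/2}$ as a product of $|\nabla u|(u^2+\sigma^2)^{(p-4)/4}$ with $|u|(u^2+\sigma^2)^{(p-4)/4}$ and bounding $u^2(u^2+\sigma^2)^{(p-2)/2}\le(u^2+\sigma^2)^{p/2}$. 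Because the coefficient $1+(p-2)\,u^2/(u^2+\sigma^2)\ge p-1>0$, the ellipticity term survives absorption. Your route has two concrete advantages: it avoids the sequential smooth-coefficient approximation (Steklov averaging handles $u_t$ for genuine $W^{1,2}_{\loc}$ weak solutions directly), and in the real scalar case it yields the nonlinear Caccioppoli estimate \emph{without} the $\epsilon$-term, which is a strictly stronger conclusion than \cref{E:p-adapted-bound} (as you note, the stated $\epsilon$-weakening is then automatic). The paper's route, by contrast, stays as close as possible to the complex-coefficient proof of~\cite{DP16}, where the $\epsilon$-detour is forced by $p$-ellipticity considerations that have no counterpart here. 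The passage from the Caccioppoli estimate to \cref{E:improved-reg} via $w=|u|^{p/2}\in W^{1,2}_{\loc}$ and the parabolic Sobolev/reverse-H\"older iteration is essentially the same in both treatments (the paper simply defers to~\cite{DP16}).
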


\begin{remark}
	If $q \geq 2$ in~\cref{E:improved-reg} or if $p \geq 2$ in~\cref{E:p-adapted-bound} then one can take $\epsilon = 0$ because the $L^2$ averages can be controlled by the first term on the right hand side of these inequalities.
\end{remark}

We focus only on the case $1<p<2$ as the $p\ge 2$ result above follows from the Cacciopoli inequality, \cref{L:Caccio}.
We shall establish the following lemma for the $1<p<2$ case which concludes the proof of \cref{T:improved-reg}.

\begin{lemma}[{c.f.~\cite[Lemma 2.7]{DP16}}]%
	\label{L:2.7}
	Let $u$ be a weak solution to $\L u = u_t$ in $\Omega$ for $A$ elliptic and bounded, and $B$ bounded satisfying~\cref{E:B}.
	Then for any $p <2$, any ball $B_r(X,t)$ with $r < \delta(X,t)/4$, and any $\epsilon > 0$
	\begin{equation}
		\label{E:2.7:square}
		r^2 \int_{B_r(X,t)} |\nabla u|^2 |u|^{p-2}
		\leq C_\epsilon \fint_{B_{2r}(X,t)} |u|^p + \epsilon\left(\fint_{B_{2r}(X,t)} |u|^2 \right)^{p/2}
	\end{equation}
	and
	\begin{equation}
		\label{E:2.7}
		\left(\fint_{B_{r}(X,t)} |u|^2 \right)^{1/2}
		\leq C_\epsilon \left( \fint_{B_{2r}(X,t)} |u|^p\right)^{1/p}  + \epsilon\left(\fint_{B_{2r}(X,t)} |u|^2 \right)^{1/2} \hspace{-1em},
	\end{equation}
	where the constants only depend on $n$, $\epsilon$, $\lambda$, $\Lambda$ and $K$.
	In particular, $|u|^{(p-2)/2}\nabla u \in L^2_{\loc}(\Omega)$.
\end{lemma}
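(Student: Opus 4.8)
The plan is to adapt the energy argument of \cite[Lemma 2.7]{DP16} from the elliptic to the parabolic setting. Two features are genuinely new: the first-order term $B\cdot\nabla u$, which I will control using \cref{E:B} together with the hypothesis $r<\delta(X,t)/4$ (on $B_{2r}(X,t)$ one then has $\delta\sim r$, hence $|B|\lesssim K/r$, which is precisely the scale-critical size that can be absorbed), and the time derivative $u_t$, which I will handle by recognising, after the chain rule, an exact $t$-derivative. Throughout fix $B_r=B_r(X_0,t_0)$ with $r<\delta(X_0,t_0)/4$, let $\delta_0>0$ be a regularisation parameter, set $v=(u^2+\delta_0^2)^{1/2}$, and let $\delta_0\to0$ only at the end. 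The test functions below are only as smooth as $u$, so one first extends \cref{D:weak} to compactly supported $W^{1,2}$ test functions by density and uses Steklov averages in $t$ wherever a characteristic function of a time interval is needed; this is routine for weak solutions.

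\emph{The weighted Caccioppoli estimate \eqref{E:2.7:square}.} Let $\zeta$ be a parabolic cutoff with $\zeta\equiv1$ on $B_r$, $\supp\zeta\subset B_{2r}$, $|\nabla\zeta|\lesssim r^{-1}$ and $|\partial_t\zeta|\lesssim r^{-2}$, and insert $\phi=u\,v^{p-2}\zeta^2$ into the weak formulation of $\L u=u_t$. For the spatial term $\nabla\phi=v^{p-4}\big(v^2+(p-2)u^2\big)(\nabla u)\zeta^2+u\,v^{p-2}\nabla(\zeta^2)$, and the identity $v^2+(p-2)u^2=(p-1)u^2+\delta_0^2$ shows that for $1<p<2$ this factor lies between $(p-1)v^2$ and $v^2$, so by \cref{E:elliptic} the leading contribution is bounded below by $(p-1)\lambda\int|\nabla u|^2v^{p-2}\zeta^2$. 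The cross term with $\nabla(\zeta^2)$ and the drift contribution $-\int u\,v^{p-2}\zeta^2\,B\cdot\nabla u$ are estimated by Cauchy--Schwarz and Young's inequality using $|u|\le v$ and $|B|\lesssim K/r$: a small multiple of $\int|\nabla u|^2v^{p-2}\zeta^2$ is absorbed on the left and the remainders are $\lesssim r^{-2}\int_{B_{2r}}v^p$. For the time term, $\partial_t v=u u_t/v$ gives $u\phi_t=u_tv^{p-4}\big(v^2+(p-2)u^2\big)u\,\zeta^2+u^2v^{p-2}\partial_t(\zeta^2)$, and one checks that the first summand equals $\partial_t H(u)\,\zeta^2$ for the primitive $H(u)=\tfrac{p-1}{p}v^p-\delta_0^2v^{p-2}$, which satisfies $|H(u)|\lesssim v^p+\delta_0^p$. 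Integrating by parts in $t$ turns $-\int u\phi_t$ into $\int\big(H(u)-u^2v^{p-2}\big)\partial_t(\zeta^2)$, which is $\lesssim r^{-2}\int_{B_{2r}}(v^p+\delta_0^p)$. Collecting terms, $\int_{B_r}|\nabla u|^2v^{p-2}\lesssim r^{-2}\int_{B_{2r}}(v^p+\delta_0^p)$, and letting $\delta_0\to0$ — monotone convergence on the left, since $|\nabla u|^2v^{p-2}\uparrow|\nabla u|^2|u|^{p-2}$ a.e.\ (consistently with the convention, as $\nabla u=0$ a.e.\ on $\{u=0\}$), dominated convergence together with $\delta_0^p|B_{2r}|\to0$ on the right — yields $r^2\fint_{B_r}|\nabla u|^2|u|^{p-2}\lesssim\fint_{B_{2r}}|u|^p$. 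This is \eqref{E:2.7:square} (the explicit $\epsilon$-term there being harmless slack), and since the right-hand side is finite it shows $|u|^{(p-2)/2}\nabla u\in L^2_{\loc}(\Omega)$.

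\emph{The reverse Hölder inequality \eqref{E:2.7}.} Put $w=|u|^{p/2}$, so $|\nabla w|^2=\tfrac{p^2}{4}|u|^{p-2}|\nabla u|^2$ and the previous estimate reads $\rho^2\fint_{B_\rho}|\nabla w|^2\lesssim\fint_{B_{2\rho}}w^2$ on every parabolic ball with $\rho<\delta/8$. To invoke the \emph{parabolic} Sobolev inequality one also needs the time-slice energy $\sup_s\fint_{B_\rho\cap\{t=s\}}w^2\lesssim\rho^{-2}\fint_{B_{2\rho}}w^2$; this follows from the same test function with $\zeta$ replaced by a spatial cutoff times a Steklov-regularised characteristic function of a time interval, exactly as in the supremum-of-slices bound of \cref{L:Caccio}, using that $u^2v^{p-2}-H(u)\to\tfrac1p|u|^p\ge0$ as $\delta_0\to0$ and choosing the base time where the slice integral lies below its average. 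Parabolic Sobolev embedding then gives $\big(\fint_{B_\rho}|u|^{p\kappa}\big)^{1/(p\kappa)}\lesssim\big(\fint_{B_{2\rho}}|u|^p\big)^{1/p}$ with $\kappa=(n+2)/n>1$. Iterating this a fixed number of times depending only on $n$ and $p$, each time shrinking the radius by a factor $2$, raises the exponent past $2$, so that after a Hölder step $\big(\fint_{B_{\rho_0}(z)}|u|^2\big)^{1/2}\lesssim\big(\fint_{B_{cr}(z)}|u|^p\big)^{1/p}$ for dimensional constants $\rho_0<cr<r$, with constant depending only on $n,p,\lambda,\Lambda,K$. A standard covering of $B_{r/2}(X_0,t_0)$ by balls $B_{\rho_0}(z_i)$ with bounded overlap and $2r$-dilations inside $B_{2r}$, together with the superadditivity $\sum a_i^{2/p}\le\big(\sum a_i\big)^{2/p}$ (valid since $2/p>1$), then gives $\fint_{B_{r/2}}|u|^2\lesssim\big(\fint_{B_{2r}}|u|^p\big)^{2/p}$; renaming $r$ and, if one wishes to reach exactly $B_r$ on the left, absorbing the remaining slack between radii into a small multiple of $\big(\fint_{B_{2r}}|u|^2\big)^{1/2}$ produces \eqref{E:2.7}. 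Combining \eqref{E:2.7:square} and \eqref{E:2.7} with the elementary Hölder bound in the reverse direction then completes \cref{T:improved-reg} in the range $1<p<2$.

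\emph{Main obstacle.} The delicate step is the reverse Hölder inequality: one must manufacture the time-slice energy bound so that the parabolic Sobolev inequality applies (the function $w=|u|^{p/2}$ is not itself a subsolution when $p<2$, so Moser iteration cannot simply be quoted), and then keep careful track of the shrinking radii through the finite bootstrap and of the superadditivity in the covering step. In the weighted Caccioppoli estimate the only subtle point is the rigorous treatment of the singular weight $|u|^{p-2}$, which is exactly why the regularisation $v=(u^2+\delta_0^2)^{1/2}$, the primitive $H$, the coercivity identity $v^2+(p-2)u^2=(p-1)u^2+\delta_0^2$, and the fact that $\nabla u=0$ a.e.\ on $\{u=0\}$ are needed; everything else there is bookkeeping.
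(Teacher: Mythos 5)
Your argument is correct, and it reaches \eqref{E:2.7:square} and \eqref{E:2.7} by a genuinely different route than the paper. The paper first assumes $A$, $B$ (hence $u$) smooth and removes smoothness only at the end by approximation; it regularises the singular weight with the flat truncation $\rho_\delta(s)=\delta^{(p-2)/2}$ for $s\le\delta$, kills the contribution of $\{u\le\delta\}$ via the result of~\cite{Lan99}, tests over solid balls $B_{\alpha r}$ with no cutoff and disposes of the resulting boundary integrals by averaging in $\alpha\in[1,5/4]$, treats the time term after the limit by the fundamental theorem of calculus on parabolic cubes (again averaging over slices), and then defers both the passage from the Caccioppoli bound to \eqref{E:2.7} and the absorption of the leftover $\epsilon r^{p-2}\int|\nabla u|^p$ term entirely to the elliptic argument of~\cite{DP16}. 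You instead work directly with weak solutions via Steklov averages, smooth the weight as $v=(u^2+\delta_0^2)^{1/2}$, use a compactly supported parabolic cutoff so no boundary terms arise, recognise the time term as the exact derivative of the primitive $H(u)=\tfrac{p-1}{p}v^p-\delta_0^2v^{p-2}$ (your identities $v^2+(p-2)u^2=(p-1)u^2+\delta_0^2$ and $u^2v^{p-2}-H(u)=\tfrac1p v^p$ check out), and absorb the drift term directly into the weighted gradient by Young's inequality rather than leaving a $\int|\nabla u|^p$ remainder. What your approach buys is a self-contained and arguably cleaner derivation; in particular you confront the one point where the citation of~\cite{DP16} is not merely routine, namely that the reverse H\"older inequality \eqref{E:2.7} in the parabolic setting requires the time-slice energy bound $\sup_s\int|u|^p\zeta^2$ (which you correctly extract from the sign of $u^2v^{p-2}-H(u)$) before the \emph{parabolic} Sobolev embedding and the bootstrap can be run. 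What the paper's route buys is economy: by reducing to smooth solutions it avoids the Steklov-average bookkeeping, and by quoting~\cite{Lan99} and~\cite{DP16} it keeps the new computations to the genuinely parabolic terms.
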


\begin{proof}
	We start by assuming that $A$ and $B$ are smooth then the solution $u$ to $\L u = u_t$ is smooth.
	We prove the above inequalities with constants that do not depend on the smoothness of $A$ or $B$ and then remove the smoothness assumption at the end of the proof via the method of~\cite{HL01}.
	To simplify notation we suppress the argument of the ball $B_r(X,t)$.

	Let
	\begin{equation}
		\rho_\delta(s) = \begin{cases}
			\delta^{(p-2)/2} & 0 \leq s \leq \delta \\
			s^{(p-2)/2}      & s > \delta.
		\end{cases}
	\end{equation}
	The choice of cut off function $\rho_\delta$ in this proof is inspired by~\cite[p.~311]{Lan99}, \cite[p.~1088]{CM05}.
	We multiply $\L u = u_t$ by $\rho_\delta^2(|u|)u$ and integrate by parts to obtain
	\begin{equation}{}
		\label{E:2.7:Lu}
		\begin{split}
			\int_{B_r} \nabla \left(\rho_\delta^2(|u|)u \right) A \nabla u
			&= \int_{B_r} \rho_\delta^2(|u|)u u_t
			+ \int_{B_r} \rho_\delta^2(|u|) B \cdot \nabla u \\
			&\quad + \int_{\partial B_r} (\rho_\delta^2(|u|))\nu \cdot A \nabla u \dsig(y,s),
		\end{split}
	\end{equation}
	where $\nu$ is the outer unit normal to $B_r$.
	Consider $E_\delta = \{u > \delta \} $ then the left hand side of~\cref{E:2.7:Lu} is
	\begin{equation}
		\label{E:2.7:E_delta}
		\int_{B_r} \nabla \left(\rho_\delta^2(|u|)u \right) A \nabla u
		= \delta^{p-2} \int_{B_r \setminus E_\delta} \nabla u \cdot A \nabla u
		+ \int_{B_r \cap E_\delta} A \nabla u \cdot \nabla \left(|u|^{p-2}u\right)
	\end{equation}
	and by ellipticity of $A$ on the open set $B_r \cap E_\delta$ we have for some $\lambda'>0$
	\begin{equation}
		\label{E:2.7:E_delta:bound}
		\lambda' \int_{B_r \cap E_\delta} |u|^{p-2} |\nabla u|^2
		\leq \int_{B_r \cap E_\delta} A \nabla u \cdot \nabla \left(|u|^{p-2}u\right).
	\end{equation}
	Our strategy is to let $\delta \to 0$ and show all the integrals involving $B_r \setminus E_\delta$ tend to 0.

	First, we use the following result from~\cite{Lan99}.
	They proved if $u \in C^2\left(\overline{B_r}\right)$ and $u = 0 $ on $\partial B_r$ then for $q > -1$
	\begin{equation}
		\label{E:Lan99}
		\lim\limits_{\delta \to 0} \delta^q \int_{B_r \setminus E_\delta} |\nabla u|^2 = 0.
	\end{equation}

	To deal with the boundary integral in~\cref{E:2.7:Lu} we note that equations~\cref{E:2.7:Lu,E:2.7:E_delta,E:2.7:E_delta:bound} remain valid for any enlarged ball $B_{\alpha r}$ for $1 \leq \alpha \leq 5/4$.
	We write~\cref{E:2.7:Lu} for every $B_{\alpha r}$ and then average in $\alpha$ over the interval $[1,5/4]$.
	The last term in~\cref{E:2.7:Lu} then turns into a solid integral over $B_{5r/4} \setminus B_r$.
	Therefore,
	\begin{equation*}
		\begin{split}
			\lambda' \int_{B_r \cap E_\delta} |u|^{p-2} |\nabla u|^2
			&\leq \sup\limits_{\alpha \in [1,5/4]} \left|\int_{B_{\alpha r}} \rho^2_\delta \left(|u|\right)u u_t \right|
			+ \sup\limits_{\alpha \in [1,5/4]} \left|\int_{B_{\alpha r}} \rho^2_\delta \left(|u|\right)u B \cdot \nabla u \right| \\
			&\quad + \left| r^{-1} \int_{B_{5\alpha r/4}\setminus B_r} \rho^2_\delta\left(|u|\right)u \nu \cdot A \nabla u \right|
			+ o(1) \\
			&= I + II + III + o(1),
		\end{split}
	\end{equation*}
	where $o(1)$ contains the integral over $B_{\alpha r} \setminus E_\delta$, which tends to $0$ as $\delta \to 0$.
	We bound $II$ and $III$ as~\cite{DP16}
	\begin{equation*}
		II + III \leq C_\epsilon r^{-2}\int_{B_{5r/4}} |u|^p
		+ \epsilon r^{p-2} \int_{B_{5r/4}} |\nabla u|^p
		+o(1).
	\end{equation*}

	Now we turn to $I$ and use the same idea as the proof of~\cref{E:Lan99} in~\cite[(3.3)]{Lan99} to show $I$ converges as expected.
	By splitting the integral with the set $E_\delta$, using the fact $\delta^{p-2} \leq |u|^{p-2}$ on $B_{\alpha r} \setminus E_\delta$ (since $p < 2$), and the smoothness of $u$, which implies $|u|^{p-2}uu_t \in L^1(B_{\alpha r})$, we obtain
	\begin{equation*}
		\begin{split}
			\int_{B_{\alpha r}} \rho^2_\delta \left(|u|\right)u u_t
			&= \int_{B_{\alpha r} \cap E_\delta} |u|^{p-2} u u_t + \delta^{p-2} \int_{B_{\alpha r} \setminus E_\delta} u u_t \\
			&\leq \int_{B_{\alpha r} \cap E_\delta} |u|^{p-2} u u_t + \int_{B_{\alpha r} \setminus E_\delta} |u|^{p-2} u u_t \\
			&\leq \int_{B_{\alpha r}} |u|^{p-1} |u_t|
			< \infty.
		\end{split}
	\end{equation*}
	Therefore by the dominated convergence theorem
	\begin{equation}
		\int_{B_{\alpha r}} \rho^2_\delta \left(|u|\right)u u_t
		\to \int_{B_{\alpha r}} |u|^{p-2} u u_t.
	\end{equation}

	We change from working with balls to integrating over parabolic cubes $Q_{\alpha r}$ and denote by $Q_{\alpha r}|_{s}$ the cube $Q_{\alpha r}$ restrict to the hypersurface $\{t=s\}$.
	Using the fundamental theorem of calculus we obtain in the limit that
	\begin{equation}
		\label{E:2.7:Lp-restricted}
		\begin{split}
			\int_{B_{\alpha r}} |u|^{p-2} u u_t
			&\sim \int_{B_{\alpha r}} \frac{\partial}{\partial t} \left(|u|^p \right) \dt \dX \\
			&\leq \int_{Q_{\alpha r}} \frac{\partial}{\partial t} \left(|u|^p \right) \dt \dX
			= \int_{t_0 - (\alpha r)^2}^{t_0 + (\alpha r)^2} \frac{\mathrm{d}}{\mathrm{d} t} \int_{Q_{\alpha r}|_{s}} |u|^p \dX \ds \\
			&\leq \|u\|^p_{L^p_X \left( Q_{\alpha r} \left|_{t_0 + (\alpha r)^2} \right.\right)} + \|u\|^p_{L^p_X \left( Q_{\alpha r} \left|_{t_0 - (\alpha r)^2}\right.\right)}.
		\end{split}
	\end{equation}
	Observe that~\cref{E:2.7:Lp-restricted} holds for all time restricted cubes $Q_{\alpha r}| _{t_0 \pm (\alpha r)^2}$ with $\alpha \in [1,1.1]$.
	Once again we average over these cubes to show
	\begin{equation*}
		\begin{split}
			\int_{B_{\alpha r}} |u|^{p-2} u u_t
			&\lesssim \frac{1}{r^2} \int_{Q_{1.1\alpha r}} |u|^p \dX \dt.
		\end{split}
	\end{equation*}
	Since $Q_{1.1\alpha r}\subset B_{2r}$, in the limit as $\delta \to 0$
	\begin{equation*}
		I \lesssim \frac{1}{r^2} \int_{B_{2r}} |u|^p \dX \dt.
	\end{equation*}

	Therefore grouping the estimates together we have the following bound
	\begin{equation}
		\lambda' \int_{B_r \cap E_\delta} |u|^{p-2} |\nabla u|^2
		\lesssim C_\epsilon r^{-2}\int_{B_{2r}} |u|^p
		+ \epsilon r^{p-2} \int_{B_{5r/4}} |\nabla u|^p + o(1).
	\end{equation}
	We let $\delta \to 0$ and proceed as~\cite{DP16} to obtain~\cref{E:2.7:square,E:2.7} for smooth $A$ and $B$.
	Finally, since no constants depend on the smoothness of $A$ or $B$, we can remove the smoothness assumption by the same argument as in~\cite{HL01}.
	We suppose $A$ is just elliptic and bounded, and $B$ satisfies~\cref{E:B} then we approximate $A$ and $B$ by smooth matrices and vectors respectively.
	For each smooth approximation we have~\cref{E:2.7:square,E:2.7} and then passing to the limit we obtain analogous estimates for $W^{1,2}_{\loc}$ solutions $u$ of $\L u = u_t$ with the constants having the same dependence as before.
\end{proof}

It follows that the $p$-adapted square function $S_{p,a}$ is well defined.
\cite{DH16} also considered an area function and established~\cite[Lemma 5.2]{DH16} that this area function can be controlled by the usual square function. 
The case $1<p<2$ is significantly more complicated so for this reason we focus only on non-negative solutions $u$.

We fix a boundary point $(Y,s) \in \partial\Omega$ and consider $A_{p,a}(Y,s)$.
Clearly, the non-tangential cone $\Gamma_a(Y,s)$ can be covered by non-overlapping collection of Whitney cubes $\{Q_i\}$ with the following properties:
\begin{equation}%
	\label{Wh}
	\Gamma_a(Y,s)\subset\bigcup_i Q_i \subset \Gamma_{2a}(Y,s),
	\quad r_i:=\diam(Q_i) \sim \dist(Q_i,\partial\Omega),
	\quad 4Q_i\subset\Omega,
\end{equation}
and the cubes $\{2Q_i\}$ having only finite overlap.
It follows that
\begin{align}%
	\label{eq-AAAA}
	\left[A_{p,a}(Y,s)\right]^p
	 & \lesssim \sum_i (r_i)^{2-n}\int_{Q_i}|u_t|^2u^{p-2} \dX\dt \\
	 & \lesssim  \sum_i (r_i)^{2-n}\int_{Q_i}|\nabla^2 u|^2u^{p-2} + \left(|\nabla A|^2 + |B|^2\right)|\nabla u|^2u^{p-2}\dX\dt. \notag
\end{align}
We need the following estimate on each $Q_i$.

\begin{lemma}
	Assume the ellipticity condition~\eqref{E:elliptic} and that the coefficients $A$ and $B$ of~\eqref{E:pde} satisfy the conditions
	\[
		|\nabla A(X,t)| \leq K/\delta(X,t) \quad\text{and}\quad |B(X,t)| \leq K/\delta(X,t),
	\]
	for some uniform constant $K > 0$.
	Then for all non-negative solutions $u$ of~\eqref{E:pde} and any parabolic cube $Q$ such that $4Q \subset \Omega$ we have the following estimate
	\begin{equation}
		\label{est-repl}
		\int_{Q}|\nabla^2 u|^2u^{p-2}\dX\dt \lesssim r^{-2}\int_{2Q}|\nabla u|^2u^{p-2}\dX\dt,
	\end{equation}
	where $r=\diam(Q)$.
\end{lemma}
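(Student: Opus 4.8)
We outline the plan. As in the proof of \cref{L:2.7} we may assume at the outset that $A$, $B$ and hence $u$ are smooth, proving \cref{est-repl} with a constant independent of the smoothness and then removing the assumption by approximation; the negative‑exponent weights below are made sense of by the truncation $\rho_\delta$ of \cref{L:2.7} (for the auxiliary inequality, by replacing $u$ by $u+\delta$ and letting $\delta\downarrow 0$), and the resulting integrals over $\{u\le\delta\}$ are controlled in the limit using~\cref{E:Lan99}, exactly as in \cref{L:2.7}. Throughout, $\varphi\in C_0^\infty(2Q)$ is a parabolic cut‑off with $\varphi\equiv 1$ on $Q$, $|\nabla\varphi|\lesssim r^{-1}$ and $|\partial_t\varphi|\lesssim r^{-2}$; since $4Q\subset\Omega$ we have $\delta(X,t)\sim r$ on $2Q$, hence $|\nabla A|+|B|\lesssim r^{-1}$ there.

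The decisive ingredient is the auxiliary ``$p$-adapted'' gradient inequality
\begin{equation}
	\label{aux-uv}
	\int_Q u^{p-4}|\nabla u|^4\dX\dt\ \lesssim\ r^{-2}\int_{2Q}u^{p-2}|\nabla u|^2\dX\dt,
\end{equation}
which I would establish through the logarithmic change of variable $v=\log u$. A direct computation gives $v_t=\di(A\nabla v)+A\nabla v\cdot\nabla v+B\cdot\nabla v$, so $v_t-\di(A\nabla v)-B\cdot\nabla v=A\nabla v\cdot\nabla v\ge\lambda|\nabla v|^2$. I would test this identity against $-\varphi^2 u^p|\nabla v|^2$ and integrate by parts, and then integrate by parts once more in the term $\int\varphi^2 u^p\,A\nabla v\cdot\nabla(|\nabla v|^2)$: in it the emerging second‑order trace $\sum_{ij}a_{ij}\partial_i\partial_j v$ is replaced, via the equation, by $v_t$ plus first‑order terms of size $\lesssim r^{-1}$. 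Two copies of $\int\varphi^2 u^p v_t|\nabla v|^2$ then cancel, the terms containing $A\nabla v\cdot\nabla v$ combine by ellipticity into a good term $\gtrsim\int\varphi^2 u^p|\nabla v|^4$ on the left, and every remaining term is either absorbable, $\lesssim\epsilon\int\varphi^2 u^p|\nabla v|^4$, or bounded by $r^{-2}\int_{2Q}u^p|\nabla v|^2$ (the factor $r^{-1}$ always coming from $|\nabla A|+|B|\lesssim r^{-1}$). Since $u^p|\nabla v|^4=u^{p-4}|\nabla u|^4$ and $u^p|\nabla v|^2=u^{p-2}|\nabla u|^2$, this is~\cref{aux-uv}. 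Here the non‑negativity of $u$ is essential, both so that $\log u$ makes sense and so that the weights $u^{p-2},u^{p-4}$ are monotone under $u\mapsto u+\delta$.

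Granting~\cref{aux-uv}, the Hessian bound~\cref{est-repl} follows from a weighted Caccioppoli argument. Differentiating $\L u=u_t$ in the spatial direction $x_k$ and rearranging so as not to differentiate $B$, one sees that $\partial_k u$ solves $(\partial_k u)_t=\di\big(A\nabla\partial_k u+(\partial_k A)\nabla u\big)+\partial_k(B\cdot\nabla u)$. I would test this against $\varphi^2 u^{p-2}\partial_k u$, integrate by parts (no boundary terms, as $\varphi$ is supported in $2Q\subset\Omega$), and sum over $k=1,\dots,n$. Ellipticity produces the good term $\gtrsim\int\varphi^2 u^{p-2}|\nabla^2 u|^2$; the time term contributes $\int\varphi\,\partial_t\varphi\,u^{p-2}|\nabla u|^2+\tfrac{p-2}{2}\int\varphi^2 u^{p-3}u_t|\nabla u|^2$, and after substituting $u_t=\di(A\nabla u)+B\cdot\nabla u$ and one further integration by parts, every term occurring is — by Young's inequality and $|\nabla A|+|B|\lesssim r^{-1}$ on $2Q$ — of one of the forms $\epsilon\int\varphi^2 u^{p-2}|\nabla^2 u|^2$, $\int\varphi^2 u^{p-4}|\nabla u|^4$, or $C_\epsilon r^{-2}\int_{2Q}u^{p-2}|\nabla u|^2$; the same is true of the cut‑off cross terms and of the terms carrying $(\partial_k A)\nabla u$ and $\partial_k(B\cdot\nabla u)$. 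Using~\cref{aux-uv} to replace each middle term by $r^{-2}\int_{2Q}u^{p-2}|\nabla u|^2$ and then taking the finitely many $\epsilon$'s small enough to absorb the first type into the good term gives $\int\varphi^2 u^{p-2}|\nabla^2 u|^2\lesssim r^{-2}\int_{2Q}u^{p-2}|\nabla u|^2$, which is~\cref{est-repl}. The one genuinely hard point is~\cref{aux-uv} — without it the terms $\int\varphi^2 u^{p-3}|\nabla u|^2|\nabla^2 u|$ generated by the weight derivative do not close under Young's inequality; this is where the restriction to non‑negative $u$ is used and where the argument becomes technical. The non‑symmetry of $A$ requires no special care, as all error terms are estimated in absolute value.
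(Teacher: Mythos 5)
Your overall architecture is genuinely different from the paper's: you split the argument into (i) a standalone auxiliary inequality
\[
\int_Q u^{p-4}|\nabla u|^4 \lesssim r^{-2}\int_{2Q}u^{p-2}|\nabla u|^2
\]
proved via $v=\log u$, and (ii) a weighted Caccioppoli estimate for $\nabla^2 u$ in which the cross terms $\int \varphi^2 u^{p-3}|\nabla u|^2|\nabla^2 u|$ are closed by Young's inequality using (i). Step (ii) is fine as a plan, and you correctly identify that without (i) the $p<2$ weight derivatives do not close. The paper, by contrast, never proves the fourth--power inequality separately: it tests the differentiated equation for $w_k=\partial_k u$ against $w_k u^{p-2}\zeta^2$, substitutes $u_t$ from the equation inside one term, and then \emph{completes the square} (using symmetry of $A$, which it may assume since $A$ is differentiable) so that both $\int|\nabla(w_k\zeta)u^{p/2-1}|^2$ and $\int|(\nabla u)w_ku^{p/2-2}\zeta|^2$ appear simultaneously on the left with positive coefficients; the crucial arithmetic is that the cross term has factor $2(p-2)$, yielding the residual coefficient $\tfrac{p-1}{3-p}>0$ for all $1<p<2$. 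In that sense the paper gets your (aux-uv) \emph{for free as a byproduct} of the very same Caccioppoli computation.

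The gap in your proposal is in step (i). The log--substitution $v_t=\di(A\nabla v)+A\nabla v\cdot\nabla v+B\cdot\nabla v$ and testing against $-\varphi^2 e^{pv}|\nabla v|^2$ is a reasonable start, and the first integration by parts does produce $(p-1)\int\varphi^2 e^{pv}(A\nabla v\cdot\nabla v)|\nabla v|^2$ on the good side together with the unresolved term $\int\varphi^2 e^{pv}A\nabla v\cdot\nabla(|\nabla v|^2)$. But your ``integrate by parts once more and substitute the equation'' on that term is exactly the inverse of the first integration by parts: it regenerates $\di(A\nabla v)|\nabla v|^2$, which after inserting the equation produces $-\int\varphi^2 e^{pv}v_t|\nabla v|^2$, $+\int\varphi^2 e^{pv}(A\nabla v\cdot\nabla v)|\nabla v|^2$, plus $B$-- and cut-off terms that are identical to the ones already present. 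If you carry this through carefully you will find that every term cancels pairwise (the $v_t$ terms, the $\nabla\varphi$ terms, the $B$ terms, and---most importantly---the $(p-1)$ and $(1-p)$ multiples of $\int\varphi^2 e^{pv}(A\nabla v\cdot\nabla v)|\nabla v|^2$), leaving the tautology $(p-1)\int\ldots = (p-1)\int\ldots$. In other words, the two integrations by parts undo one another and the equation substitution closes the loop; there is no surviving positive multiple of $\int\varphi^2 e^{pv}|\nabla v|^4$ to bound. Thus (aux-uv) is not established by the route you describe, and since your step (ii) explicitly relies on it, the whole argument does not yet close. To repair it you would either have to find a genuinely non-circular derivation of (aux-uv) (for example a Bochner-type manipulation involving the Hessian, which would then entangle it with the very quantity you are trying to bound), or abandon the two-step structure and reproduce the paper's one-shot completion of the square, where the $u^{p-4}|\nabla u|^4$ term and the Hessian term emerge together as positive-definite pieces of the same identity.
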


\begin{proof}
	Since we assume differentiability of the matrix $A$ in the spatial variables we may also assume that $A$ is symmetric.
	Let us denote by $W=(w_k)$, where $w_k=\partial_k u$ for $k=0,1,\dots,n-1$.
	Differentiating~\eqref{E:pde} we obtain the following PDE for each $w_k$
	\begin{equation}
		\label{eq-v}
		(w_k)_t-\di (A \nabla w_k) = \di ((\partial_k A)W)+\partial_k(B \cdot W).
	\end{equation}

	We multiply~\eqref{eq-v} by $w_k u^{p-2}\zeta^2$, integrate over $2Q$ and integrate by parts.
	Here $0\le\zeta\le 1$ is a smooth cut-off function equal to 1 on $Q$, vanishing outside $2Q$ and satisfying $r|\nabla\zeta|+r^2|\zeta_t| \le C$ for some $C>0$ independent of $Q$.
	This gives
	\begin{align}
		\label{eq:PDEwk}
		 & \int_{2Q}(w_k)_tw_ku^{p-2}\zeta^2\dX\dt+\int_{2Q}a_{ij}(\partial_j w_k)\partial_i(w_ku^{p-2}\zeta^2)\dX\dt \notag \\
		 & = -\int_{2Q}(\partial_ka_{ij})w_j\partial_i(w_ku^{p-2}\zeta^2) \dX\dt
		- \int_{2Q}b_i w_i\partial_k(w_ku^{p-2}\zeta^2)\dX\dt.
	\end{align}
	We rearrange and group similar terms together
	\begin{equation}
		\label{eq:rearg}
		\begin{split}
			& \frac12\int_{2Q} \left[(w_ku^{p/2-1}\zeta)^2\right]_t \dX\dt - \frac{p-2}2\int_{2Q} w_k^2u^{p-3}u_t\zeta^2 \dX\dt \\
			& \quad +\int_{2Q}A\left(\nabla(w_k\zeta)u^{p/2-1}\right) \cdot \left(\nabla(w_k\zeta)u^{p/2-1}\right) \dX\dt  \\
			& \quad +(p-2)\int_{2Q} A\left(\nabla(w_k\zeta)u^{p/2-1}\right) \cdot \left((\nabla u)w_ku^{p/2-2}\zeta \right) \dX\dt  \\
			& =\int_{2Q} |w_k|^2u^{p-2}\zeta\zeta_t\dX\dt + \int_{2Q} |w_k|^2u^{p-2} A\nabla\zeta \cdot \nabla\zeta \dX\dt  \\
			& \quad - \int_{2Q} b_iw_i\partial_k(w_k\zeta)u^{p-2}\zeta\dX\dt \\
			& \quad - (p-2)\int_{2Q} b_iw_i \left((\partial_k u)w_ku^{p/2-2}\zeta\right) u^{p/2-1}\zeta \dX\dt  \\
			& \quad - \int_{2Q} b_iw_iw_ku^{p-2}\zeta\zeta_k\dX\dt-\int_{2Q}(\partial_k a_{ij})w_jw_ku^{p-2}\zeta\zeta_i\dX\dt  \\
			& \quad - \int_{2Q} (\partial_k a_{ij}) w_j(\partial_i w_k\zeta)u^{p-2}\zeta\dX\dt  \\
			& \quad - (p-2)\int_{2Q}(\partial_k a_{ij}) w_j\left((\partial_iu) w_ku^{p/2-2}\zeta\right) u^{p/2-1}\zeta\dX\dt.
		\end{split}
	\end{equation}
	All the terms after the equal sign are ``error'' terms since they either contain a derivative of $\zeta$, or coefficients $\nabla A$ or $B$.
	These will be handled using the Cauchy-Schwartz inequality and the estimates for $|\nabla A|, |B|\le K/r$.
	The four main terms are on the left hand side of~\eqref{eq:rearg}.
	The term that needs further work is the second term and we use the PDE~\eqref{E:pde} for $u_t$.
	This gives
	\begin{equation}%
		\label{eq:rearg2}
		\begin{split}
			-\frac{p-2}2\int_{2Q}w_k^2u^{p-3}u_t\zeta^2\dX\dt
			& = -\frac{p-2}2\int_{2Q}w_k^2u^{p-3}\di(A\nabla u)\zeta^2\dX\dt \\
			&\quad- \frac{p-2}2\int_{2Q}w_k^2u^{p-3}B\cdot W\zeta^2\dX\dt.
		\end{split}
	\end{equation}
	Again the second term will be an ``error'' term.
	For the first term we observe the equality
	\[
		u^{p-3}\di(A\nabla u) = \di(A(\nabla u)u^{p-3}) - (p-3)A((\nabla u)u^{p/2-2})\cdot((\nabla u)u^{p/2-2}).
	\]
	It follows (by integrating by parts)
	\begin{equation}
		\label{eq:rearg3}
		\begin{split}
			& - \frac{p-2}2\int_{2Q}w_k^2u^{p-3}\di(A\nabla u)\zeta^2\dX\dt  \\
			& = (p-2)\int_{2Q} A(\nabla(w_k\zeta)u^{p/2-1}) \cdot ((\nabla u)w_ku^{p/2-2}\zeta)\dX\dt \\
			& \quad +\frac{(2-p)(3-p)}2 \int_{2Q}A((\nabla u)w_ku^{p/2-2}\zeta)\cdot ((\nabla u)w_ku^{p/2-2}\zeta)\dX\dt .
		\end{split}
	\end{equation}
	We now group all main terms together; these are the first, second and fourth terms on the left-hand side of~\eqref{eq:rearg} and the terms of~\eqref{eq:rearg3}.
	This gives
	\begin{equation}%
		\label{eq:rearg4}
		\begin{split}
			& \text{LHS of~\eqref{eq:rearg}}
			= \frac12 \int_{2Q} \left[\left(w_ku^{p/2-1}\zeta\right)^2\right]_t \dX\dt \\
			& \quad + \int_{2Q} A\left(\nabla(w_k\zeta)u^{p/2-1}\right) \cdot \left(\nabla(w_k\zeta)u^{p/2-1}\right) \dX\dt \\
			& \quad + 2(p-2)\int_{2Q} A\left(\nabla(w_k\zeta)u^{p/2-1}\right) \cdot \left((\nabla u)w_ku^{p/2-2}\zeta\right) \dX\dt \\
			& \quad + \frac{(2-p)(3-p)}2 \int_{2Q} A\left((\nabla u)w_ku^{p/2-2}\zeta\right) \cdot \left((\nabla u)w_ku^{p/2-2}\zeta\right) \dX\dt \\
			& = \frac12\int_{2Q} \left[\left(w_ku^{p/2-1}\zeta\right)^2\right]_t \dX\dt \\
			& \quad + \left(1-\frac{2(2-p)}{3-p}\right) \int_{2Q} A\left(\nabla(w_k\zeta)u^{p/2-1}\right) \cdot \left(\nabla(w_k\zeta)u^{p/2-1}\right) \dX\dt \\
			& \quad + \int_{2Q} A\left(\textstyle\sqrt{\frac{2(2-p)}{3-p}} \left[\nabla(w_k\zeta)u^{p/2-1}\right]
			- \sqrt{\frac{(2-p)(3-p)}2} \left[(\nabla u)w_ku^{p/2-2}\zeta\right]\right)  \\
			& \qquad \cdot \left(\textstyle\sqrt{\frac{2(2-p)}{3-p}} \left[\nabla(w_k\zeta)u^{p/2-1}\right]
			- \sqrt{\frac{(2-p)(3-p)}2} \left[(\nabla u)w_ku^{p/2-2}\zeta\right]\right) \dX\dt \\
			& \geq \frac12 \int_{2Q} \left[(w_ku^{p/2-1}\zeta)^2\right]_t \dX\dt
			+ \frac{(p-1)\lambda}{3-p} \int_{2Q} \left| \nabla(w_k \zeta) u^{p/2-1} \right|^2 \dX\dt.
		\end{split}
	\end{equation}
	Here we have first completed the square (using symmetry of $A$), and then used the ellipticity of the matrix $A$.
	The important point is that for all $1<p<2$ the coefficient $\frac{(p-1)\lambda}{3-p}$ is positive.

	We also note that we could have completed the square differently and obtained instead of~\eqref{eq:rearg4} the estimate
	\begin{equation}
		\label{eq:rearg5}
		\begin{split}
			\text{LHS of~\eqref{eq:rearg}} &\ge
			\frac12 \int_{2Q} \left[(w_ku^{p/2-1}\zeta)^2\right]_t \dX\dt \\
			&\quad + \frac{(p-1)(2-p)\lambda}{2}\int_{2Q} \left|(\nabla u)w_ku^{p/2-2}\zeta\right|^2 \dX\dt.
		\end{split}
	\end{equation}
	It follows that we could average~\eqref{eq:rearg4} and~\eqref{eq:rearg5} and have both
	\[
		\int_{2Q} \left|\nabla(w_k\zeta)u^{p/2-1}\right|^2 \dX\dt
		\quad\text{and}\quad
		\int_{2Q} \left|(\nabla u)w_ku^{p/2-2}\zeta\right|^2 \dX\dt
	\]
	in the estimate with small positive constants.

	Now we briefly mention how all the error terms of~\eqref{eq:rearg},~\eqref{eq:rearg2} and~\eqref{eq:rearg4} can be handled.
	Some can be immediately estimated from above by
	\[
		r^{-2}\int_{2Q}|W|^2u^{p-2}\dX\dt,
	\]
	where the scaling factor $r^{-2}$ comes from the estimates on $\nabla\zeta$, $\zeta_t$, $|\nabla A|$ and $|B|$.
	For other terms (for example the first term of fifth line of~\eqref{eq:rearg} or the second term of the same line) we use Cauchy-Schwarz.
	One of the terms in the product will be
	\[
		\left(r^{-2}\int_{2Q}|W|^2u^{p-2}\dX\dt\right)^{1/2} \hspace{-1em},
	\]
	while the other term is one of
	\[
		\left(\int_{2Q}\left|\nabla(w_k\zeta)u^{p/2-1}\right|^2\dX\dt\right)^{1/2}
		\text{ or}\quad
		\left(\int_{2Q}\left|(\nabla u)w_ku^{p/2-2}\zeta\right|^2\dX\dt\right)^{1/2} \hspace{-1em}.
	\]
	It follows using the $\epsilon$-Cauchy-Schwarz inequality that we can hide these on the left-hand side of~\eqref{eq:rearg}.
	Finally, we put everything together by summing over all $k$ and recalling that $W=\nabla u$.
	This gives for some constant $\epsilon = \epsilon(p,\lambda,n)>0$ with $\epsilon \to 0$ as $p \to 1$.
	\begin{align}
		 & \sup_{\tau}\int_{Q\cap\{t=\tau\}}|\nabla u|^2u^{p-2}\dX
		+ \epsilon\int_{Q}|\nabla^2 u|^2u^{p-2}\dX\dt
		+ \epsilon\int_{Q}|\nabla u|^4u^{p-4}\dX\dt \notag         \\
		 & \le Cr^{-2}\int_{2Q}|\nabla u|^2u^{p-2}\dX\dt.
	\end{align}
	In particular~\eqref{est-repl} holds.
\end{proof}
After using~\eqref{est-repl} in~\eqref{eq-AAAA} we can conclude the following.

\begin{lemma}%
	\label{L:A<S}
	Let $u$ be a non-negative solution of~\eqref{E:pde} with matrix $A$ satisfying the ellipticity hypothesis and the coefficients satisfying the bound $|\nabla A|,|B|\le K/\delta$.
	Then given $a > 0$ there exists a constant $C = (\Lambda,\lambda, a, K,p, n)$ such that
	\begin{equation}
		\label{E:A<S:local}
		A_{p,a}(u)(X,t) \leq C S_{p,2a}(u)(X,t).
	\end{equation}
	From this we have the global estimate
	\begin{equation}
		\label{E:A<S:global}
		\|A_{p,a}(u)\|_{L^p(\partial\Omega)}^p \leq C_2 \|S_{p,a}(u)\|_{L^p(\partial\Omega)}^p.
	\end{equation}
\end{lemma}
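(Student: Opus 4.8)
The plan is to combine the two facts already at hand --- the pointwise decomposition~\eqref{eq-AAAA} of $[A_{p,a}(Y,s)]^p$ over the Whitney cubes $\{Q_i\}$ of~\eqref{Wh}, and the reverse-type estimate~\eqref{est-repl} for second derivatives --- and then to reassemble the resulting sum into $[S_{p,2a}(u)(Y,s)]^p$ using the bounded overlap of the doubled cubes $\{2Q_i\}$. Fix $(Y,s)\in\partial\Omega$. On each $Q_i$ (and on its concentric double $2Q_i$, which still lies well inside $\Omega$ by~\eqref{Wh}) one has $\delta(X,t)\sim r_i:=\diam(Q_i)$ uniformly, so inside $Q_i$ the weight $\delta^{2-n}$ may be replaced by $r_i^{2-n}$ up to a fixed constant. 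Applying~\eqref{est-repl} to the $|\nabla^2 u|^2 u^{p-2}$ term and the hypothesis $|\nabla A|,|B|\le K/\delta\sim K/r_i$ to the lower order term, the bound~\eqref{eq-AAAA} turns into
\[
	[A_{p,a}(Y,s)]^p \lesssim \sum_i r_i^{2-n}\, r_i^{-2}\int_{2Q_i}|\nabla u|^2 u^{p-2}\dX\dt
	= \sum_i r_i^{-n}\int_{2Q_i}|\nabla u|^2 u^{p-2}\dX\dt,
\]
with a constant depending on $\lambda,\Lambda,n,p,K$ and degenerating as $p\to1^+$, a feature inherited from~\eqref{est-repl}.

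Next I would undo the scale comparison in the other direction: on $2Q_i$ we again have $r_i^{-n}\sim\delta(X,t)^{-n}$, so each summand is comparable to $\int_{2Q_i}|\nabla u|^2 u^{p-2}\delta^{-n}\dX\dt$. Since the family $\{2Q_i\}$ has bounded overlap and $\bigcup_i 2Q_i\subset\Gamma_{2a}(Y,s)$ (shrinking the aperture used in the Whitney construction if necessary), the sum collapses to give
\[
	[A_{p,a}(Y,s)]^p \lesssim \int_{\Gamma_{2a}(Y,s)}|\nabla u|^2 u^{p-2}\delta(X,t)^{-n}\dX\dt = [S_{p,2a}(u)(Y,s)]^p,
\]
which is~\eqref{E:A<S:local} after taking $p$-th roots. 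Note that because $u\ge0$ solves~\eqref{E:pde}, the Harnack inequality~\cref{L:Harnack} forces $u>0$ throughout the interior unless $u\equiv0$ (a trivial case), so every negative power of $u$ above is finite on the cones; and, exactly as for~\eqref{est-repl}, the computation is run first for smooth $A,B$ and then passed to the limit, which is legitimate since no constant depends on the smoothness.

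For the global estimate~\eqref{E:A<S:global} I would raise~\eqref{E:A<S:local} to the power $p$, integrate over $\partial\Omega$ against $\sigma$ to get $\|A_{p,a}(u)\|_{L^p(\partial\Omega)}^p\lesssim\|S_{p,2a}(u)\|_{L^p(\partial\Omega)}^p$, and then remove the change of aperture: the Fubini computation behind~\eqref{E:Sp:norm} shows that $\|S_{p,b}(u)\|_{L^p(\partial\Omega,\dsig)}^p$ is comparable, with constants depending only on $b$, $n$ and the character of $\Omega$, to $\int_\Omega|\nabla u|^2 u^{p-2}\delta(X,t)\dX\dt$, an expression not involving $b$; hence $\|S_{p,2a}(u)\|_{L^p}\sim\|S_{p,a}(u)\|_{L^p}$ and~\eqref{E:A<S:global} follows.

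The genuinely delicate part of this circle of ideas has already been carried out in~\eqref{est-repl}; what is left is essentially bookkeeping, and the only places demanding care are the uniformity of the comparisons $\delta\sim r_i$ on $Q_i$ and $2Q_i$, the bounded-overlap summation of $\{2Q_i\}$, and the containment of $\bigcup_i 2Q_i$ in a cone of controlled aperture. Once these are in place one reads off that the dependence of $C$ on $(\Lambda,\lambda,a,K,p,n)$ is exactly as claimed, the sole source of blow-up being the constant from~\eqref{est-repl} as $p\to1^+$.
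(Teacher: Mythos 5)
Your proposal matches the paper's argument in all essentials: decompose over the Whitney cubes of~\eqref{Wh}, apply~\eqref{eq-AAAA}, substitute~\eqref{est-repl} for the second-derivative term using $|\nabla A|,|B|\lesssim K/r_i$ for the lower-order term, reassemble the resulting sum into $[S_{p,2a}(u)(Y,s)]^p$ using $\delta\sim r_i$ on $2Q_i$ together with the bounded overlap and the containment $\bigcup_i 2Q_i\subset\Gamma_{2a}(Y,s)$, and then pass to the global estimate~\eqref{E:A<S:global} by integrating and using the aperture-insensitivity implicit in~\eqref{E:Sp:norm}.

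The one place where you deviate is the justification of the negative powers of $u$. You appeal to the parabolic Harnack inequality~\cref{L:Harnack} to claim that a non-negative solution of~\eqref{E:pde} is strictly positive throughout the interior unless it vanishes identically. In the parabolic setting this is false: the Harnack inequality propagates zeros only \emph{backward} in time, so a non-negative solution can be identically zero on $\Omega\cap\{t\le T\}$ and strictly positive for $t>T$ (take continuous boundary data $f\ge 0$ vanishing for $t\le T$ and nontrivial afterwards). Thus the nodal set of $u$ may have positive measure inside the cone, and the manipulations behind~\eqref{est-repl} --- multiplying by $w_k u^{p-2}\zeta^2$ and integrating by parts --- are not directly justified there. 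The paper circumvents this by running the computation for $v_\epsilon=u+\epsilon$, which is still a solution and is uniformly bounded below by $\epsilon$, and then sending $\epsilon\to 0^+$ via Fatou, with the convention that $|\nabla u|^2u^{p-2}$ (and its second-derivative analogue in $A_{p,a}$) is read as $0$ where both $u$ and the relevant gradient vanish. Substituting this $v_\epsilon$ device for your Harnack appeal closes the gap; the rest of your argument goes through as written and gives the claimed dependence of the constant, including the blow-up as $p\to1^+$ inherited from~\eqref{est-repl}.
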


As far as the proof goes, the calculations above clearly work for solutions $u$ with uniform bound $u\ge\epsilon>0$.
Hence considering $v_\epsilon=u+\epsilon$ and then taking the limit $\epsilon\to 0+$ using Fatou's lemma yields~\eqref{E:A<S:local} for all non-negative $u$, where we have used the convention that $|\nabla u|^2u^{p-2}=0$ whenever $u=0$ and $\nabla u=0$ with a similar convention for the second gradient in $A_{p,a}$.

\section{Bounding the \texorpdfstring{$p$}{p}-adapted square function by the non-tangential maximum function}

We slightly abuse notation and only work on a Carleson region $T(\Delta_r)$ in the upper half space $U$ even though we formulate the following lemmas on any admissible domain $\Omega$.
The equivalence of these formulations via the pullback map $\rho$ is discussed in~\cref{S:pullback} and~\cite{DH16}, and hence we omit the details.
We start with a local bound of the $p$-adapted square function by the non-tangential maximal function.

\begin{lemma}%
	\label{L:S<N:local}
	Let $\Omega$ be an admissible domain from \cref{D:domain} with character $(\ell,\eta, N, d)$.
	Let $1< p < 2$ and $u$ be a non-negative solution of~\cref{E:pde}, with the Carleson conditions~\cref{E:1:carl,E:1:bound} on the coefficients $A$ and $B$.
	Then there exists a constant $C=C(\lambda, \Lambda, N, C_0)$ such that for any solution $u$ with boundary data $f$ on any ball $\Delta_r \subset \partial\Omega$ with $r \leq \min\{d/4, d/(4C_0)\}$ we have
	\begin{equation}
		\label{E:S<N:local}
		\int_{T(\Delta_r)} |\nabla u|^2 |u|^{p-2} x_0 \dx_0 \dx \dt
		\leq C(1 + \|\mu\|)(1 + \ell^2) \int_{\Delta_{2r}} \left(N^{2r}\right)^2(u) \dx\dt.
	\end{equation}
\end{lemma}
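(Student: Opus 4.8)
The plan is to prove the Carleson-region bound \cref{E:S<N:local} by the classical integration-by-parts (Rellich–Payne–Weinberger) technique adapted to the $p$-adapted setting, using a vector field argument together with the maximum principle and the Carleson hypotheses on $A,B$. First I would fix the Carleson region $T(\Delta_r)$ in $U$ and choose a suitable cutoff $\zeta$ which is $1$ on $T(\Delta_{r})$, supported in $T(\Delta_{2r})$, and satisfies $x_0|\nabla\zeta| + x_0^2|\zeta_t| \lesssim 1$; the factor $x_0$ in the integrand of \cref{E:S<N:local} is exactly what lets such a cutoff be constructed compatibly with the geometry of cones. The core identity comes from multiplying the PDE $u_t = \di(A\nabla u) + B\cdot\nabla u$ by a test function of the form $u^{p-1}x_0\zeta^2$ (using $u\ge 0$, or first $v_\epsilon = u + \epsilon$ and Fatou at the end as in \cref{L:A<S}) and integrating by parts over $U$. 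The ellipticity \cref{E:elliptic} produces the good term $\int |\nabla u|^2 u^{p-2} x_0 \zeta^2$ on one side, exactly the left-hand side of \cref{E:S<N:local} up to the cutoff.

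The right-hand side is then assembled from the error terms. There are three kinds: (i) terms where the derivative lands on $\zeta$ or on the weight $x_0$ — these are controlled by $\int_{T(\Delta_{2r})\setminus T(\Delta_r)} |\nabla u|\, u^{p-1}\,dX\,dt$ and, after an $\epsilon$-Young inequality, by $\epsilon \int |\nabla u|^2 u^{p-2} x_0\zeta^2$ (hidden on the left) plus $C\int_{\Delta_{2r}} (N^{2r}u)^2\, dx\,dt$, using $|u|\le N^{2r}(u)$ pointwise in the cone and the fact that the annular region has the right measure; (ii) the drift term $\int B\cdot\nabla u\, u^{p-1} x_0\zeta^2$, which by \cref{E:1:bound} obeys $x_0|B|\le \|\mu\|_C^{1/2}$, so Cauchy–Schwarz gives $\|\mu\|_C^{1/2}\big(\int|\nabla u|^2u^{p-2}x_0\zeta^2\big)^{1/2}\big(\int u^{p-2}x_0\zeta^2\big)^{1/2}$ — the first factor hides on the left (this is where smallness of $\|\mu\|$ is used, though the statement only needs boundedness with the constant depending on $\|\mu\|$), the second is again bounded by $\int_{\Delta_{2r}}(N^{2r}u)^2$; (iii) the terms involving $\nabla A$ (from moving a derivative off $A\nabla u$ when $A$ is not symmetric, or, if one symmetrizes first, from the antisymmetric part), handled identically using $x_0|\nabla A|\le\|\mu\|_C^{1/2}$ from \cref{E:1:bound}. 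One also gets a term of the form $\int u^p\partial_t(x_0\zeta^2)$ from the time derivative; since $\partial_t(x_0\zeta^2) = x_0\,\partial_t(\zeta^2)$ has no bad contribution from $x_0$ (the weight is time-independent), this is again an annular error term bounded by $\int_{\Delta_{2r}}(N^{2r}u)^2$ after using $x_0^2|\zeta_t|\lesssim 1$ on the support. The powers of $\ell$ in \cref{E:S<N:local} enter through the comparison of the flat cones/cubes in $U$ with the actual cone geometry and the change of aperture, invoking \cref{L:N:apertures} to pass between apertures $a$ and $2a$ freely.

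The main obstacle is the negative power $u^{p-2}$ when $1<p<2$: a priori the integrand need not be locally integrable, and the integration by parts with test function $u^{p-1}x_0\zeta^2$ is not justified for a general weak solution. This is handled exactly as in \cref{T:improved-reg} and \cref{L:2.7}: one first works with $v_\epsilon = u+\epsilon$, for which $v_\epsilon \ge \epsilon > 0$ and all expressions are finite and the computations are legitimate (after a further approximation of $A,B$ by smooth coefficients as in \cite{HL01}, with constants independent of the smoothing), derives \cref{E:S<N:local} with $u$ replaced by $v_\epsilon$ and with the right-hand side involving $N^{2r}(v_\epsilon)\le N^{2r}(u)+\epsilon$, and then lets $\epsilon\to 0^+$ using Fatou's lemma on the left and dominated convergence on the annular error terms (where $u$ is bounded away from $0$ is not needed because there the power of $u$ is non-negative). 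Finiteness of the limiting left-hand side is guaranteed by \cref{T:improved-reg}. A secondary technical point is keeping track of which $\epsilon$-small constants can be absorbed on the left — one must ensure the coefficient $\lambda$ from ellipticity dominates the sum of the $\epsilon$-terms and the $\|\mu\|^{1/2}$-terms, which is why the final constant is allowed to depend on $\|\mu\|$ and on $\ell$; no smallness of $\|\mu\|$ is actually required for this particular lemma, only for the later global bounds.
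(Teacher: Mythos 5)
Your overall strategy -- a Rellich-type integration by parts against a test function of the form $u^{p-1}x_0\zeta^2$, Carleson embedding for the coefficient terms, absorption of the square function by $\epsilon$-Young, and the regularisation $v_\epsilon=u+\epsilon$ plus \cref{T:improved-reg} to justify the computation -- is the same family of argument the paper uses. But your bookkeeping of the error terms hides the two places where the proof actually has content, and as written the plan fails there. First, when the spatial derivative falls on the weight $x_0$ you obtain $-\int_U a_{0j}(\partial_j u)\,u^{p-1}\zeta^2\,\dX\dt$ with \emph{no} $x_0$ weight; this is not supported in an annulus and cannot be absorbed by $\epsilon$-Young, since the complementary factor would be $\int u^p x_0^{-1}$, which is not controlled by $\|N(u)\|_p^p$ (the $x_0$-integral diverges logarithmically). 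The paper's identity~\eqref{E:S<N:IBP} is built precisely to defuse this: the quadratic form is normalised by $a_{00}$, so that the $j=0$ piece of this term is the exact derivative $-\tfrac1p\partial_0(|u|^p)\zeta^2$ and integrates to the boundary terms $\int u^p(0)-\int u^p(r)$, while the $j\neq0$ pieces are integrated by parts a second time against $\partial_0 x_0=1$ to restore the weight, with an exact cancellation against the top boundary term $I$.

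Second, the $a_{00}$-normalisation that rescues the previous term destroys your claim that the time derivative only contributes $\int u^p\,x_0\,\partial_t(\zeta^2)$: the term is now $\int a_{00}^{-1}|u|^{p-2}u\,u_t\,\zeta^2 x_0$, which is no longer an exact $t$-derivative. The paper integrates it by parts in $x_0$ (via $2x_0=\partial_0 x_0^2$) and is forced to confront
\[
\left(\int_0^r\!\!\int |\nabla A|^2|u|^p x_0\zeta^2\right)^{1/2}\left(\int_0^r\!\!\int |u_t|^2|u|^{p-2}x_0^3\zeta^2\right)^{1/2},
\]
i.e.\ the $p$-adapted area function $A_{p,a}(u)$. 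Bounding this by the square function is \cref{L:A<S}, which for $1<p<2$ requires $u\ge0$ and the second-derivative Caccioppoli estimate~\eqref{est-repl}; the introduction flags this as the main new difficulty of the $1<p<2$ case. Your proposal never invokes the area function at all, and these two issues cannot be avoided simultaneously: without the $a_{00}$-normalisation the weight-derivative term is fatal, and with it the area function is unavoidable. (Your treatment of the genuine annulus terms, the drift term, and the limiting argument via $v_\epsilon$ and Fatou is consistent with the paper.)
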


In addition we have the following global result.
\begin{lemma}%
	\label{L:S<N:global}
	Let $\Omega$ be an admissible domain with smooth boundary $\partial\Omega$.
	Let $1< p < 2$ and $u$ be a weak non-negative solution of~\cref{E:pde} satisfying~\cref{E:pullback:B:carl,E:pullback:B:bound,E:pullback:A:carl,E:pullback:A:bound} with Dirichlet boundary data $f \in L^p(\partial\Omega)$.
	Then there exists positive constants $C_1$ and $C_2$ independent of $u$ such that for small $r_0 > 0$ we have
	\begin{equation}
		\label{E:S<N:global}
		\begin{split}
			&\frac{C_1}{2} \int_0^{r_0/2}\int_{\partial\Omega} |\nabla u|^2 |u|^{p-2} x_0 \dx\dt\dx_0
			+ \frac{2}{r_0} \int_0^{r_0}\int_{\partial\Omega} u^p(x_0,x,t) \dx\dt\dx_0 \\
			&\leq \int_{\partial\Omega} u^p(r_0,x,t) \dx\dt + \int_{\partial\Omega} u^p(0,x,t) \dx\dt \\
			&\quad+ C_2\left(\|\mu_1\|_{C, 2r} + \|\mu_2\|_{C, 2r} + \|\mu_2\|^{1/2}_{C, 2r}\right) \int_{\partial\Omega} \left(N^{2r}(u)\right)^p \dx\dt.
		\end{split}
	\end{equation}
\end{lemma}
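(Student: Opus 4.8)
The plan is to prove \cref{E:S<N:global} by a single weighted integration-by-parts identity in the slab $\{(X,t):0<x_0<r_0\}$ of $U$, where $x_0\sim\delta$. As in the proof of \cref{L:2.7} one may first take $A$ and $B$ smooth — so that $u$ is smooth, $A$ may be assumed symmetric, and $u^{p-1}$ is an admissible test function — prove the inequality with constants independent of the smoothness, and remove that assumption at the end by approximation; and as in the argument following \cref{L:A<S} one may assume $u\ge\epsilon>0$ (work with $u+\epsilon$, then use non-negativity and Fatou), so that negative powers of $u$ cause no trouble.

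The crux is the choice of test function. I would multiply $u_t-\di(A\nabla u)-B\cdot\nabla u=0$ by $u^{p-1}h$, where $h=h(X,t)\ge 0$ solves the adjoint equation $\di(A^{T}\nabla h)=-c$ in the slab, with a constant $c\sim 2p/r_0$ and with $h=0$ on \emph{both} faces $\{x_0=0\}$ and $\{x_0=r_0\}$; this is a truncated ``proper generalised distance'' in the sense of \cref{R:domain:smooth} (and for smooth symmetric $A$ one has $A^{T}=A$). The maximum principle gives $h\ge 0$, the Hopf lemma and comparison with flat barriers give $h\sim x_0$ near $\{x_0=0\}$, and $h\lesssim x_0$ throughout the slab. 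Integrating over the slab: the time term is $\int u^{p-1}h\,u_t=\tfrac1p\int h\,\partial_t(u^p)=-\tfrac1p\int(\partial_t h)u^p$ after an integration by parts in $t$ (the endpoint contributions vanishing because $u\to 0$ uniformly as $t\to\pm\infty$); integrating the second-order term by parts, the conormal boundary term $\int u^{p-1}h\,(A\nabla u\cdot\nu)$ drops out since $h=0$ on the faces, while the ``cross'' term obeys
\[
-\int u^{p-1}(A^{T}\nabla h\cdot\nabla u)=-\tfrac1p\int(A^{T}\nabla h)\cdot\nabla(u^p)
=-\tfrac1p\int_{\{x_0=0\}\cup\{x_0=r_0\}}\!\!u^p\,(A^{T}\nabla h\cdot\nu)-\tfrac cp\int u^p,
\]
the key point being that, since $\di(A^{T}\nabla h)$ is \emph{constant}, this produces no interior term involving $\nabla A$. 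Collecting everything yields the identity
\[
(p-1)\int h\,u^{p-2}(A\nabla u\cdot\nabla u)+\tfrac cp\int_{\{0<x_0<r_0\}}\!\!u^p
=\tfrac1p\int(\partial_t h)u^p-\tfrac1p\int_{\{x_0=0\}\cup\{x_0=r_0\}}\!\!u^p\,(A^{T}\nabla h\cdot\nu)+\int u^{p-1}hB\cdot\nabla u.
\]

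The left side, by ellipticity and $h\gtrsim x_0$ on $\{x_0<r_0/2\}$, dominates a constant times $\int_{0<x_0<r_0/2}x_0|\nabla u|^2u^{p-2}$ together with $\tfrac1{r_0}\int_{\{0<x_0<r_0\}}u^p$: exactly the left side of \cref{E:S<N:global}. On the right, the face integral is $\lesssim\int u^p(r_0,\cdot)\dx\dt+\int u^p(0,\cdot)\dx\dt$ since $|A^{T}\nabla h\cdot\nu|\sim 1$ on the two faces; the drift term splits by an $\epsilon$-Cauchy--Schwarz into $\epsilon\int h\,u^{p-2}|\nabla u|^2$ (absorbed on the left for $\epsilon$ small) and $C_\epsilon\int h|B^v|^2u^p\le C_\epsilon\int x_0|B^v|^2u^p$, which by \cref{E:pullback:B:carl} and the Carleson-measure / non-tangential-maximal-function estimate $\int_U F\,\dmu\lesssim\|\mu\|_{C,2r}\int_{\partial U}\widetilde N(F)$ (with $F=u^p$, $\widetilde N(u^p)=(N^{2r}(u))^p$) is $\lesssim\|\mu_1\|_{C,2r}\int_{\partial U}(N^{2r}(u))^p\dx\dt$. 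The remaining term $\tfrac1p\int(\partial_t h)u^p$ is where $\partial_t A$ enters: differentiating the equation for $h$ in $t$, $\partial_t h$ solves the adjoint equation with forcing $-\di((\partial_t A^{T})\nabla h)$ and zero boundary data, so (via a Hardy inequality and the energy estimate) $x_0^{-1}\partial_t h$ is controlled by $\partial_t A^v$ at the correct parabolic scale; pairing with $x_0u^p$ and using the Carleson bound for $x_0^{3}|\partial_t A^v|^2$ from \cref{E:pullback:A:carl}, together with the pointwise bound \cref{E:pullback:A:bound} where needed, gives a contribution $\lesssim\big(\|\mu_2\|_{C,2r}+\|\mu_2\|_{C,2r}^{1/2}\big)\int_{\partial U}(N^{2r}(u))^p\dx\dt$. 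Combining, choosing $\epsilon$ small, and removing the smoothness hypothesis by approximation gives \cref{E:S<N:global}.

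I expect the main obstacle to be the term $\tfrac1p\int(\partial_t h)u^p$, i.e.\ controlling the time-dependence of $A$ through the $A$-adapted distance $h$: since $|\partial_t A^v|$ is only $O(x_0^{-2})$ near the boundary this cannot be done pointwise, and must instead go through energy and Hardy estimates for $h$ combined with a dyadic decomposition in $x_0$ in which the weight $x_0^{3}$ of \cref{E:pullback:A:carl} supplies summability over scales. Secondary technical points are the construction of $h$ in the (spatially bounded, time-unbounded) slab with the stated size and Lipschitz bounds, and — as in \cref{L:2.7,L:A<S} — the reductions to smooth coefficients and to strictly positive $u$ forced by the range $1<p<2$.
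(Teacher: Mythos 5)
Your approach is genuinely different from the paper's: you replace the explicit weight $x_0$ and the matrix ratio $a_{ij}/a_{00}$ by an $A$-adapted weight $h$ solving $\di(A^T\nabla h)=-c$ in the slab with zero boundary data on both faces. This elegantly kills the interior $\nabla A$ error terms (your ``no interior term involving $\nabla A$'' observation is correct), and the algebra of your identity checks out, including the signs of the face contributions via Hopf's lemma. The paper instead integrates by parts directly with the weight $x_0$, accumulates many error terms $J_1,\dots,J_4$ involving $\nabla A$, $\partial_t A$, $B$, and controls them by the Carleson condition together with the $p$-adapted area function $\int|u_t|^2|u|^{p-2}x_0^3$, which is in turn dominated by the $p$-adapted square function via \cref{L:A<S}; a partition-of-unity argument then globalises the local estimate, and a final integration in $r$ produces the $\frac{2}{r_0}\int_0^{r_0}\int u^p$ term (your $\frac{c}{p}\int u^p$ arises directly).

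However, there is a genuine gap in your control of the term $\frac1p\int(\partial_t h)u^p$. You correctly identify it as the main obstacle, but the sketched resolution does not close. The function $\partial_t h$ solves $\di(A^T\nabla\partial_t h)=-\di((\partial_t A^T)\nabla h)$ with zero boundary data, and the pointwise bound \cref{E:pullback:A:bound} gives only $|\partial_t A^v|\lesssim\|\mu_2\|_C^{1/2}x_0^{-2}$, so $(\partial_t A^T)\nabla h\notin L^2$: the naive energy estimate for $\partial_t h$ diverges, and no pointwise bound $|\partial_t h|\lesssim x_0^2|\partial_t A^v|$ is available since the forcing is itself nonsmooth. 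The claim that ``$x_0^{-1}\partial_t h$ is controlled by $\partial_t A^v$ at the correct parabolic scale'' is exactly the estimate that needs to be proven, and neither the weighted energy identity (testing against $\partial_t h\,x_0$ produces cross terms of the same bad order) nor a Hardy inequality obviously gives it, because the weight $x_0^3$ in \cref{E:pullback:A:carl} does not match the weight appearing in the natural energy pairing. By contrast, the paper's treatment of the time direction lands on a quantity involving $u_t$ (not $\partial_t h$), namely the $p$-adapted area function, and establishes the needed bound $A_{p,a}(u)\lesssim S_{p,2a}(u)$ separately in \cref{L:A<S} through a Cacciopoli-type estimate for $\nabla^2 u$ weighted by $u^{p-2}$; your approach would need an analogous, currently missing, lemma for $\partial_t h$.

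A secondary point: the paper's proof deliberately proves the local estimate \cref{L:S<N:local} and the global one together, because the local version is needed elsewhere; your slab argument gives the global inequality directly but does not recover the local statement without further localisation of $h$, which would reintroduce boundary terms on the lateral faces that your choice of $h$ does not annihilate.
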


\begin{proof}[Proof of \cref{L:S<N:local,L:S<N:global}]
	Let $Q_r(y,s)$ be a parabolic cube on the boundary with $r < d$ and let $\zeta$ be a smooth cut off function independent of the $x_0$ variable.
	As long as there is no ambiguity we suppress the argument of $Q_r$ and extensively use the Einstein summation convention.
	Let $\zeta$ be supported in $Q_{2r}$, equal 1 in $Q_r$ and satisfy the estimate $r|\nabla \zeta| + r^2 |\zeta_t| \leq C$ for some constant $C$ and.

	We start by estimating
	\begin{equation}
		\int_0^r\int_{Q_{2r}} |u|^{p-2} \frac{a_{ij}}{a_{00}} (\partial_i u) (\partial_j u) \zeta^2 x_0 \dx\dt\dx_0,
	\end{equation}
	where by ellipticity we have
	\begin{equation*}
		\frac{\lambda}{\Lambda} \int_0^r\int_{Q_r} |\nabla u|^2 |u|^{p-2} x_0 \dx\dt\dx_0
		\leq \int_0^r\int_{Q_{2r}} |u|^{p-2} \frac{a_{ij}}{a_{00}} (\partial_i u) (\partial_j u) \zeta^2 x_0 \dx\dt\dx_0.
	\end{equation*}
	Now we integrate by parts whilst noting that $\nu = (1,0,0, \dots ,0)$ since the domain is $\{x_0 > 0\}$.
	\begin{equation}
		\label{E:S<N:IBP}
		\begin{split}
			&\int_0^r\int_{Q_{2r}} |u|^{p-2} \frac{a_{ij}}{a_{00}} (\partial_i u) (\partial_j u) \zeta^2 x_0 \dx\dt\dx_0  \\
			&= \frac{1}{p} \int_{Q_{2r}} \frac{a_{0j}}{a_{00}} \partial_j \left(|u(r,x,t)|^p\right) r \zeta^2 \dx\dt \\
			&\quad - \int_0^r\int_{Q_{2r}}\frac{1}{a_{00}} |u|^{p-2}  u \partial_i(a_{ij}\partial_j u) \zeta^2 x_0 \dx\dt\dx_0  \\
			&\quad - \int_0^r\int_{Q_{2r}}\partial_i\left(\frac{1}{a_{00}}\right)  |u|^{p-2}  u a_{ij}\partial_j u \zeta^2 x_0 \dx\dt\dx_0 \\
			&\quad - 2 \int_0^r\int_{Q_{2r}} \frac{a_{ij}}{a_{00}} |u|^{p-2}u (\partial_j u) \zeta \partial_i\zeta x_0 \dx\dt\dx_0 \\
			&\quad -\int_0^r\int_{Q_{2r}} \frac{a_{0j}}{a_{00}} |u|^{p-2}u (\partial_j u) \zeta^2 \dx\dt\dx_0 \\
			&\quad -\int_0^r\int_{Q_{2r}} \frac{a_{ij}}{a_{00}} \partial_i\left(|u|^{p-2}\right)  u (\partial_j u) \zeta^2 \dx\dt\dx_0 \\
			&= I + II + III + IV + V + VI
		\end{split}
	\end{equation}
	Our strategy is to further estimate all these terms and then group similar terms together.
	First consider $II$, we use that $u$ is a solution to~\cref{E:pde}
	\begin{equation*}
		\begin{split}
			II &= - \int_0^r\int_{Q_{2r}}\frac{1}{a_{00}} |u|^{p-2}  u u_t \zeta^2 x_0 \dx\dt\dx_0
			+ \int_0^r\int_{Q_{2r}}\frac{1}{a_{00}} |u|^{p-2}  u b_i \partial_i u \zeta^2 x_0 \dx\dt\dx_0 \\
			&= II_1 + II_2.
		\end{split}
	\end{equation*}
	Using the identity $2x_0 = \partial_0 x_0^2$ we integrate by parts in $x_0$ to obtain
	\begin{equation*}
		\begin{split}
			II_1 &= - \frac{1}{2}\int_0^r\int_{Q_{2r}} \frac{1}{a_{00}} |u|^{p-2}  u u_t \zeta^2 \partial_0 x_0^2 \dx\dt\dx_0 \\
			&= - \frac{1}{2} \int_{Q_{2r}} \frac{1}{a_{00}} |u(r,x,t)|^{p-2}  u(r,x,t) u_t(r,x,t) \zeta^2 r^2 \dx\dt \\
			&\quad+ \frac{1}{2}\int_0^r\int_{Q_{2r}} \partial_0 \left(\frac{1}{a_{00}}\right) |u|^{p-2}  u u_t \zeta^2 x_0^2 \dx\dt\dx_0 \\
			&\quad+ \frac{p-1}{2}\int_0^r\int_{Q_{2r}} \frac{1}{a_{00}} |u|^{p-2}  \partial_0 u u_t \zeta^2 x_0^2 \dx\dt\dx_0\\
			&\quad+ \frac{1}{2}\int_0^r\int_{Q_{2r}} \frac{1}{a_{00}} |u|^{p-2}  u \partial_0\partial_tu \zeta^2 x_0^2 \dx\dt\dx_0 \\
			&= II_{11} + II_{12} + II_{13} + II_{14}.
		\end{split}
	\end{equation*}
	Consider the boundary term $II_{11}$ and we integrate by parts in $t$
	\begin{equation*}
		\begin{split}
			II_{11} &= - \frac{1}{4} \int_{Q_{2r}} \frac{1}{a_{00}} |u(r,x,t)|^{p-2}  \partial_t \left(u^2(r,x,t)\right) \zeta^2 r^2 \dx\dt \\
			&= \frac{1}{4} \int_{Q_{2r}} \partial_t \left(\frac{1}{a_{00}}\right) |u(r,x,t)|^{p-2}  u^2(r,x,t) \zeta^2 r^2 \dx\dt \\
			&\quad+ \frac{1}{2} \int_{Q_{2r}} \frac{1}{a_{00}} |u(r,x,t)|^{p-2}  u^2(r,x,t) \zeta \zeta_t r^2 \dx\dt \\
			&\quad+ \frac{p-2}{4} \int_{Q_{2r}} \frac{1}{a_{00}} |u(r,x,t)|^{p-2}  u(r,x,t) u_t(r,x,t) \zeta^2 r^2 \dx\dt \\
			&= II_{111} + II_{112} + II_{113}.
		\end{split}
	\end{equation*}
	Since $p < 2$, so $p-2 < 0$, we can absorb $II_{113}$ into $II_{11}$ and save $II_{12}$ to bound later on.

	Considering $II_{14}$, we swap the order of differentiation on $\partial_0\partial_t u$ and integrate by parts in $t$ to show
	\begin{equation*}
		\begin{split}
			II_{14} &= \frac{1}{2}\int_0^r\int_{Q_{2r}} \frac{1}{a_{00}} |u|^{p-2}  u \partial_t\partial_0u \zeta^2 x_0^2 \dx\dt\dx_0 \\
			&= -\frac{1}{2}\int_0^r\int_{Q_{2r}} \partial_t\left(\frac{1}{a_{00}}\right)  |u|^{p-2}  u \partial_0u \zeta^2 x_0^2 \dx\dt\dx_0 \\
			&\quad - \frac{p-1}{2}\int_0^r\int_{Q_{2r}} \frac{1}{a_{00}} |u|^{p-2}  u_t \partial_0u \zeta^2 x_0^2 \dx\dt\dx_0\\
			&\quad- \int_0^r\int_{Q_{2r}} \frac{1}{a_{00}} |u|^{p-2}  u \partial_0u \zeta \zeta_t x_0^2 \dx\dt\dx_0 \\
			&= II_{141} + II_{142} + II_{143}.
		\end{split}
	\end{equation*}
	Observe that $II_{142} = -II_{13}$ so these terms cancel.
	We bound $II_{141}$ by
	\begin{equation*}
		\begin{split}
			II_{141} &= \frac{1}{2}\int_0^r\int_{Q_{2r}} \frac{\partial_t a_{00}}{a^2_{00}}  |u|^{p-2}  u \partial_0u \zeta^2 x_0^2 \dx\dt\dx_0 \\
			&\lesssim \left(\int_0^r\int_{Q_{2r}} |A_t|^2 |u|^p x^3_0 \zeta^2 \dx\dt\dx_0\right)^{1/2} \left(\int_0^r\int_{Q_{2r}} |\nabla u|^2|u|^{p-2} x_0 \zeta^2 \dx\dt\dx_0\right)^{1/2} \hspace{-1em}.
		\end{split}
	\end{equation*}
	Two parts of $II_1$ we have left to bound are $II_{112}$ and $II_{143}$.
	Both of these integrals involve $\zeta \zeta_t$ and therefore if $\zeta$ is a partition of unity when we sum over that partition these terms sum to 0.

	The terms $II_2$ and $III$ are simply dealt with by
	\begin{equation*}
		\begin{split}
			II_2 \lesssim \left(\int_0^r\int_{Q_{2r}} |B|^2 |u|^p x_0 \zeta^2 \dx\dt\dx_0\right)^{1/2} \left(\int_0^r\int_{Q_{2r}} |\nabla u|^2|u|^{p-2} x_0 \zeta^2 \dx\dt\dx_0\right)^{1/2}
		\end{split}
	\end{equation*}
	and
	\begin{equation*}
		\begin{split}
			III \lesssim \left(\int_0^r\int_{Q_{2r}} |\nabla A|^2 |u|^p x_0 \zeta^2 \dx\dt\dx_0\right)^{1/2} \left(\int_0^r\int_{Q_{2r}} |\nabla u|^2|u|^{p-2} x_0 \zeta^2 \dx\dt\dx_0\right)^{1/2} \hspace{-1em}.
		\end{split}
	\end{equation*}

	The integral in the term $IV$ contains the terms $\zeta \partial_i\zeta$ and as before if $\zeta$ is a partition of unity then after summing this term cancels out.
	Therefore the terms that we have yet to estimate are $I$, $V$ and $VI$.

	We consider $V$ in the two cases $j=0$ and $j\neq 0$ separately.
	Since $\zeta$ is independent of $x_0$ by the fundamental theorem of calculus
	\begin{equation*}
		\begin{split}
			V_{\{j=0\}} &= -\int_0^r\int_{Q_{2r}} |u|^{p-2}u (\partial_0 u) \zeta^2 \dx\dt\dx_0
			= - \frac{1}{p}\int_0^r\int_{Q_{2r}} \partial_0\left(|u|^p \zeta^2\right) \dx\dt\dx_0  \\
			&= \frac{1}{p} \int_{Q_{2r}} |u(0,x,t)|^p \zeta^2 \dx\dt - \frac{1}{p} \int_{Q_{2r}} |u(r,x,t)|^p \zeta^2 \dx\dt.
		\end{split}
	\end{equation*}
	For the $j \neq 0$ case we use that $\partial_0 x_0 = 1$ and integrate this case by parts in $x_0$
	\begin{equation*}
		\begin{split}
			V_{\{j \neq 0\}} &= - \frac{1}{p}\int_0^r\int_{Q_{2r}} \frac{a_{0j}}{a_{00}} \partial_j\left(|u|^p\right) \zeta^2 \dx\dt\dx_0 \\
			&= - \frac{1}{p}\int_0^r\int_{Q_{2r}} \frac{a_{0j}}{a_{00}} \partial_j\left(|u|^p\right) \zeta^2 \partial_0 x_0 \dx\dt\dx_0 \\
			&= - \frac{1}{p} \int_{Q_{2r}} \frac{a_{0j}}{a_{00}} \partial_j\left(|u(r,x,t)|^p\right) \zeta^2 r \dx\dt
			+ \frac{1}{p}\int_0^r\int_{Q_{2r}} \frac{a_{0j}}{a_{00}} \partial_j\partial_0\left(|u|^p\right) \zeta^2 x_0 \dx\dt\dx_0 \\
			&\quad + \frac{1}{p}\int_0^r\int_{Q_{2r}} \partial_0\left(\frac{a_{0j}}{a_{00}}\right) \partial_j\left(|u|^p\right) \zeta^2 x_0 \dx\dt\dx_0 \\
			&= V_1 + V_2 + V_3.
		\end{split}
	\end{equation*}
	The term $V_1 = -I_{\{j \neq 0\}}$ so they cancel out.
	For $V_2$ we integrate by parts in $x_j$
	\begin{equation*}
		\begin{split}
			V_2 &= -\sum_{j \neq 0}\frac{1}{p} \int_{Q_{2r}} \frac{a_{0j}}{a_{00}} \partial_0\left(|u(r,x,t)|^p\right) \zeta^2 r \dx\dt \\
			&\quad - \frac{1}{p}\int_0^r\int_{Q_{2r}} \partial_j\left(\frac{a_{0j}}{a_{00}}\right) \partial_0\left(|u|^p\right) \zeta^2 x_0 \dx\dt\dx_0 \\
			&\quad - \frac{2}{p}\int_0^r\int_{Q_{2r}} \frac{a_{0j}}{a_{00}} \partial_0\left(|u|^p\right) \zeta\partial_j\zeta x_0 \dx\dt\dx_0 \\
			&= V_{21} + V_{22} + V_{23}.
		\end{split}
	\end{equation*}
	$V_{22}$ and $V_3$ are of the same type and can be estimated as $III$ by
	\begin{equation*}
		\begin{split}
			&\left|\int_0^r\int_{Q_{2r}} \nabla\left(\frac{a_{0j}}{a_{00}}\right) \nabla\left(|u|^p\right) \zeta^2 x_0 \dx\dt\dx_0 \right|
			\lesssim \int_0^r\int_{Q_{2r}} |u|^{p-1}|\nabla u| |\nabla A| \zeta^2 x_0 \dx\dt\dx_0 \\
			&\lesssim  \left(\int_0^r\int_{Q_{2r}} |\nabla A|^2 |u|^p  \zeta^2 x_0 \dx\dt\dx_0\right)^{1/2} \left(\int_0^r\int_{Q_{2r}} |\nabla u|^2|u|^{p-2} \zeta^2 x_0 \dx\dt\dx_0\right)^{1/2}\hspace{-1em}.
		\end{split}
	\end{equation*}

	The final term from~\cref{E:S<N:IBP} to estimate is $VI$
	\begin{equation*}
		\begin{split}
			VI &= -\int_0^r\int_{Q_{2r}} \frac{a_{ij}}{a_{00}} \partial_i\left(|u|^{p-2}\right)  u (\partial_j u) \zeta^2 \dx\dt\dx_0 \\
			&= (2-p) \int_0^r\int_{Q_{2r}} \frac{a_{ij}}{a_{00}} |u|^{p-2} (\partial_i u) (\partial_j u) \zeta^2 \dx\dt\dx_0
		\end{split}
	\end{equation*}
	and since $2-p < 1$ we can hide $VI$ in the left hand side of~\cref{E:S<N:IBP}.

	We are now at the stage where we can group together all the similar terms and estimate them.
	There are 4 different types of terms:
	\begin{align*}
		J_1 & = I_{\{ j = 0 \}} + II_{111} + V_{\{ j = 0 \}} + V_{21} \\
		J_2 & = II_{12} \\
		J_3 & = II_{141} + II_2 + III + \sum_{j \neq 0} V_{22} + \sum_{j \neq 0} V_3 \\
		J_4 & = II_{112} + II_{143} + IV + \sum_{j \neq 0} V_{23}.
	\end{align*}

	We shall use the following standard result multiple times to deal with terms containing $|\nabla A|^2$, $|A_t|$ or $|B|$; a reference for this is~\cite[p.~59]{Ste93}.
	Let $\mu$ be a Carleson measure and $U$ the upper half space then for any function $u$ we have
	\begin{equation}
		\label{E:carl:estimate}
		\int_U |u|^p \dmu \leq \|\mu\|_C \|N(u)\|^p_{L^p(\R^n)},
	\end{equation}
	with a local version holding on Carleson boxes as well.

	First we consider $J_1$, which consists of boundary terms at $(0,x,t)$ and $(r,x,t)$.
	\begin{equation*}
		\label{E:J1}
		\begin{split}
			J_1 &= \frac{1}{p} \int_{Q_{2r}} \partial_0 \left(|u(r,x,t)|^p \right) \zeta^2 r \dx\dt
			-\frac{1}{4} \int_{Q_{2r}}  \frac{\partial_t a_{00}}{a_{00}^2} |u(r,x,t)|^{p-2}  u^2(r,x,t) \zeta^2 r^2 \dx\dt \\
			&\quad + \frac{1}{p} \int_{Q_{2r}} |u(0,x,t)|^p \zeta^2 \dx\dt
			- \frac{1}{p} \int_{Q_{2r}} |u(r,x,t)|^p \zeta^2 \dx\dt \\
			&\quad -\sum_{j \neq 0}\frac{1}{p} \int_{Q_{2r}} \frac{a_{0j}}{a_{00}} \partial_0\left(|u(r,x,t)|^p\right)\zeta^2 r \dx\dt.
		\end{split}
	\end{equation*}
	The second term in $J_1$, originating from $II_{111}$, has the bound
	\begin{equation*}
		\begin{split}
			II_{111} &= -\frac{1}{4} \int_{Q_{2r}}  \frac{\partial_t a_{00}}{a_{00}^2} |u(r,x,t)|^{p-2}  u^2(r,x,t) \zeta^2 r^2 \dx\dt \\
			&\leq \frac{1}{4\lambda^2} \int_{Q_{2r}} |A_t| |u(r,x,t)|^{p} \zeta^2 r^2 \dx\dt
			\leq \frac{\|\mu_2\|^{1/2}_{C, 2r}}{\lambda^2} \|N^r(u)\|^{p}_{L^p(Q_{2r})}.
		\end{split}
	\end{equation*}

	\begin{equation*}
		\label{E:J2}
		\begin{split}
			&J_2 = \frac{1}{2}\int_0^r\int_{Q_{2r}} \partial_0 \left(\frac{1}{a_{00}}\right) |u|^{p-2}  u u_t \zeta^2 x_0^2 \dx\dt\dx_0 \\
			&\leq \frac{1}{2 \lambda^2} \left(\int_0^r\int_{Q_{2r}} |\nabla A|^2 |u|^p x_0 \zeta^2 \dx\dt\dx_0 \right)^{1/2}
			\left(\int_0^r\int_{Q_{2r}} |u_t|^2 |u|^{p-2} x_0^3 \zeta^2 \dx\dt\dx_0 \right)^{1/2} \\
			&\leq \frac{1}{\lambda^2} \left( \|\mu_2\|_{C, 2r} \|N^r(u)\|^{p}_{L^p(Q_{2r})} \right)^{1/2}
			\left(\int_0^r\int_{Q_{2r}} |u_t|^2 |u|^{p-2} x_0^3 \zeta^2 \dx\dt\dx_0 \right)^{1/2}\hspace{-1em}.
		\end{split}
	\end{equation*}
	With a constant $C_3 = C_3(\lambda, \Lambda, n)$ we can bound $J_3$ by
	\begin{equation*}
		\label{E:J3}
		\begin{split}
			& J_3
			\leq C_3 \left(\int_0^r\int_{Q_{2r}} \left( x_0|\nabla A|^2 + x_0 |B|^2 + x_0^3 |A_t|^2 \right)  |u|^p \zeta^2 \dx\dt\dx_0 \right)^{1/2}\\
			&\qquad\times \left(\int_0^r\int_{Q_{2r}} |\nabla u|^2 |u|^{p-2} x_0 \zeta^2 \dx\dt\dx_0 \right)^{1/2} \\
			&\leq C_3 \left( \left(\|\mu_1\|_{C, 2r} + \|\mu_2\|_{C, 2r} \right) \|N^r(u)\|^{p}_{L^p(Q_{2r})} \right)^{1/2}
			\left(\int_0^r\int_{Q_{2r}} |\nabla u|^2 |u|^{p-2} x_0 \zeta^2 \dx\dt\dx_0 \right)^{1/2}\hspace{-1em}.
		\end{split}
	\end{equation*}
	Finally, $J_4$ consists of terms of the type $\zeta \partial_t \zeta$ or $\zeta \partial_i \zeta$.
	Later we take $\zeta$ to be a partition of unity and so when we sum up over the partition all the terms in $J_4$ sum to 0.

	Therefore after all these calculations
	\begin{equation}
		\label{E:J:bound}
		\begin{split}
			&\int_0^r\int_{Q_{2r}} |u|^{p-2} \frac{a_{ij}}{a_{00}} (\partial_i u) (\partial_j u) \zeta^2 x_0 \dx\dt\dx_0
			= J_1 + J_2 + J_3 + J_4 \\
			&\leq \frac{n\Lambda}{\lambda} \int_{Q_{2r}} \partial_0 \left(|u(r,x,t)|^p \right) \zeta^2 r \dx\dt
			+ \int_{Q_{2r}} |u(0,x,t)|^p \zeta^2 \dx\dt \\
			& - \int_{Q_{2r}} |u(r,x,t)|^p \zeta^2 \dx\dt
			+ \frac{\|\mu_2\|^{1/2}_{C, 2r}}{\lambda^2} \|N^r(u)\|^{p}_{L^p(Q_{2r})} \\
			& + \frac{1}{\lambda^2} \left( \|\mu_2\|_{C, 2r} \|N^r(u)\|^{p}_{L^p(Q_{2r})} \right)^{1/2}
			\left(\int_0^r\int_{Q_{2r}} |u_t|^2 |u|^{p-2} x_0^3 \zeta^2 \dx\dt\dx_0 \right)^{1/2} \\
			& + C_3 \left( \left(\|\mu_1\|_{C, 2r} + \|\mu_2\|_{C, 2r} \right) \|N^r(u)\|^{p}_{L^p(Q_{2r})} \right)^{1/2} \\
			&\qquad\times \left(\int_0^r\int_{Q_{2r}} |\nabla u|^2 |u|^{p-2} x_0 \zeta^2 \dx\dt\dx_0 \right)^{1/2}
			+ J_4.
		\end{split}
	\end{equation}

	By assuming that $\Omega$ is smooth as well as an admissible domain (\cref{D:domain}) there exists a collar neighbourhood $V$ of $\partial\Omega$ in $\R^{n+1}$ such that $\Omega \cap V$ can be globally parametrised by $(0,r) \times \partial\Omega$ for some small $r > 0$, see \cref{R:domain:smooth} and~\cite{DH16} for details.
	Using \cref{D:domain}, there is a collection of charts covering $\partial\Omega$ with bounded overlap, say by $M$.
	We consider a partition of unity of these charts $\zeta_j$, with $\zeta_j$ having the same definition, support and estimates as $\zeta$ before, and $\sum_j \zeta_j = 1$ everywhere.
	Therefore, when we sum~\cref{E:J:bound} over this partition of unity the term on the left hand side is bounded below by
	\[
		\frac{1}{\Lambda} \int_0^r\int_{\partial \Omega} |u|^{p-2} (A \nabla u \cdot \nabla u) x_0 \dx\dt\dx_0,
	\]
	which is comparable to the truncated $p$-adapted square function $\|S^r_p(u)\|_{L^p(\partial\Omega)}^p$.
	Therefore, remembering that after summing $J_4 = 0$, for any $\epsilon > 0$ we have
	\begin{equation}
		\label{E:S<N:longBound}
		\begin{split}
			&\frac{\lambda}{\Lambda} \|S^r_p(u)\|_{L^p(\partial\Omega)}^p
			\sim \frac{\lambda}{\Lambda} \int_0^r\int_{\partial \Omega} |u|^{p-2} |\nabla u|^2 x_0 \dx\dt\dx_0 \\
			&\leq \frac{n\Lambda}{\lambda} \int_{\partial\Omega} \partial_0 \left(|u(r,x,t)|^p\right) r \dx\dt
			+ \int_{\partial\Omega} |u(0,x,t)|^p \dx\dt - \int_{\partial\Omega} |u(r,x,t)|^p  \dx\dt  \\
			&\quad + \frac{M \|\mu_2\|^{1/2}_{C, 2r}}{\lambda^2} \|N^r(u)\|^{p}_{L^p(\partial\Omega)}
			+ \frac{\|\mu_2\|_{C, 2r} }{4 \epsilon \lambda^2} \|N^r(u)\|^{p}_{L^p(\partial\Omega)} \\
			&\quad + \epsilon \int_0^r\int_{\partial\Omega} |u_t|^2 |u|^{p-2} x_0^3 \zeta^2 \dx\dt\dx_0
			+ C_3 \frac{\|\mu_1\|_{C, 2r} + \|\mu_2\|_{C, 2r}}{4\epsilon} \|N^r(u)\|^{p}_{L^p(\partial\Omega)} \\
			&\quad + \epsilon \int_0^r\int_{\partial\Omega} |\nabla u|^2 |u|^{p-2} x_0 \zeta^2 \dx\dt\dx_0.
		\end{split}
	\end{equation}

	By applying \cref{L:A<S} to the $p$-adapted area function in~\cref{E:S<N:longBound} we see that the $p$-adapted square function on the right hand side of~\cref{E:S<N:longBound} is always multiplied by $\epsilon$.
	By choosing $\epsilon$ small enough we can absorb this $p$-adapted square function into the left hand side yielding
	\begin{equation}
		\label{E:S<N:shortBound}
		\begin{split}
			C_1\|S^r_p(u)\|_{L^p(\partial\Omega)}^p
			&\leq \int_{\partial\Omega} \partial_0 \left(|u(r,x,t)|^p\right) r \dx\dt \\
			&+ \int_{\partial\Omega} |u(0,x,t)|^p \dx\dt
			- \int_{\partial\Omega} |u(r,x,t)|^p \dx\dt \\
			&+ C_2\left(\|\mu_1\|_{C, 2r} + \|\mu_2\|_{C, 2r} + \|\mu_2\|^{1/2}_{C, 2r}\right) \|N^r(u)\|^{p}_{L^p(\partial\Omega)}.
		\end{split}
	\end{equation}
	We integrate~\cref{E:S<N:shortBound} in the $r$ variable, average over $[0,r_0]$ and use the identity $(\partial_0 |u|^p)x_0 = \partial_0(|u|^p x_0) - |u|^p$ to give
	\begin{equation}
		\begin{split}
			&C_1\int_0^{r_0} \int_{\partial\Omega} \left(x_0 - \frac{x_0^2}{r_0}\right) |\nabla u|^2 |u|^{p-2} \dx\dt\dx_0 \\
			&\quad+ \frac{2}{r_0} \int_0^{r_0} \int_{\partial\Omega} |u(x_0,x,t)|^p \dx\dt\dx_0 \\
			&\leq \int_{\partial\Omega} |u(r_0,x,t)|^p \dx\dt
			+ \int_{\partial\Omega} |u(0,x,t)|^p \dx\dt \\
			&\quad + C_2\left(\|\mu_1\|_{C, 2r} + \|\mu_2\|_{C, 2r} + \|\mu_2\|^{1/2}_{C, 2r}\right) \|N^r(u)\|^{p}_{L^p(\partial\Omega)}.
		\end{split}
	\end{equation}
	Finally truncating the first integral on the left hand side to $[0, r_0/2]$ gives
	\begin{equation}
		\begin{split}
			&\frac{C_1}{2}\int_0^{r_0/2} \int_{\partial\Omega} |\nabla u|^2 |u|^{p-2}x_0 \dx\dt\dx_0
			+ \frac{2}{r_0} \int_0^{r_0} \int_{\partial\Omega} |u(x_0,x,t)|^p \dx\dt\dx_0 \\
			&\leq \int_{\partial\Omega} |u(r_0,x,t)|^p \dx\dt
			+ \int_{\partial\Omega} |u(0,x,t)|^p \dx\dt \\
			&\quad + C_2\left(\|\mu_1\|_{C, 2r} + \|\mu_2\|_{C, 2r} + \|\mu_2\|^{1/2}_{C, 2r}\right) \|N^r(u)\|^{p}_{L^p(\partial\Omega)}.
		\end{split}
	\end{equation}

	The local estimate for~\cref{L:S<N:local} is obtained (exactly as in~\cite{DH16}) if we do not sum over all the coordinate patches but instead use the estimates obtained for a single boundary cube $Q_r$ in~\cref{E:J:bound}.
\end{proof}

We just need to control the first integral on the right hand side of~\cref{E:S<N:global} to achieve our goal of controlling the $p$-adapted square function.
Thankfully this has already been done for us in the proof of~\cite[Cor.~5.3]{DH16} which we encapsulate below.
\begin{lemma}%
	\label{L:S<N:helper}
	Let $\Omega$ be as in \cref{L:S<N:global} and $u$ be a non-negative solution to~\cref{E:pde}.
	For a small $r_0 > 0$ depending on the geometry the domain $\Omega$ there exists a constant $C$ such that for $\epsilon = \|\mu_1\|_{C, 2r} + \|\mu_2\|_{C, 2r} + \|\mu_2\|^{1/2}_{C, 2r}$
	\begin{equation*}
		\label{E:S<N:helper}
		\int_{\partial\Omega} u(r_0,x,t)^p \dx\dt
		\leq \frac{2}{r_0} \int_0^{r_0} \int_{\partial\Omega} u(x_0,x,t)^p \dx\dt\dx_0
		+ C \epsilon \|N^{r_0}(u)\|^{p}_{L^p(\partial\Omega)}.
	\end{equation*}
\end{lemma}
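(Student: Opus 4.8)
The plan is to prove \cref{E:S<N:helper} by a soft one-variable (``ODE'') argument on the height profile of $u^p$, which avoids re-running the square-function estimate. Working in the pulled-back half-space picture, set
\[
	w(r)=\int_{\partial\Omega}u(r,x,t)^p\dx\dt,\qquad g(r)=\int_0^{r}w(s)\ds=\int_0^{r}\int_{\partial\Omega}u^p\dx\dt\dx_0 .
\]
I would first record two facts. (a) $g(r)<\infty$ for every small $r>0$: the point $(x_0,x,t)$ with $0<x_0<r$ lies in the truncated cone $\Gamma_a^{r}(0,x,t)$, so $u(x_0,x,t)\le N^{r}(u)(x,t)$ and hence $g(r)\le r\,\|N^{r}(u)\|_{L^p(\partial\Omega)}^p$. (b) $w$ is continuous on $(0,r_*]$ for some small $r_*$, whence $g\in C^1$ with $g'=w$, and $g(r)/r\to\|f\|_{L^p(\partial\Omega)}^p$ as $r\to0^+$; the continuity of $w$ follows from interior Hölder continuity of the non-negative solution $u$ (\cref{L:int_Holder}) together with dominated convergence with majorant $N^{r_*}(u)^p\in L^1$, and the boundary limit uses that $u$ takes its boundary data (the $L^p$ case by the density discussed in \cref{R:max}).

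If $f\equiv0$ then $u\equiv0$ and there is nothing to prove, so assume $\|f\|_{L^p(\partial\Omega)}^p>0$; then $h(r):=g(r)/r^2\to+\infty$ as $r\to0^+$. Suppose for contradiction that \cref{E:S<N:helper} failed for every $r$ in some interval $(0,r_1)$, i.e.\ $w(r)>\tfrac{2}{r}g(r)+C\epsilon\|N^{r}(u)\|_{L^p(\partial\Omega)}^p\ge\tfrac{2}{r}g(r)$ there. Because
\[
	h'(r)=\frac{g'(r)}{r^2}-\frac{2g(r)}{r^3}=\frac{1}{r^2}\Bigl(w(r)-\frac{2}{r}g(r)\Bigr),
\]
this forces $h'>0$ on $(0,r_1)$, so $h$ is increasing there and therefore bounded above near $0$ by $h(r_1)<\infty$ --- contradicting $h(r)\to+\infty$. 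Hence there exists $r_0\in(0,r_1)$ with $w(r_0)\le\tfrac{2}{r_0}g(r_0)+C\epsilon\|N^{r_0}(u)\|_{L^p(\partial\Omega)}^p$, which is exactly \cref{E:S<N:helper}. Here $r_1$ may be taken as small as the geometry of $\Omega$ requires, and the argument in fact yields the inequality with $C=0$; the term $C\epsilon\|N^{r_0}(u)\|_{L^p(\partial\Omega)}^p$ is retained only to mesh with the shape of \cref{E:S<N:global}, with which this lemma is combined.

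The part needing care is not the manipulation above but the regularity it rests on: that $w$ is an honest continuous function of the height and $g$ differentiable, which is precisely why the statement is restricted to non-negative $u$ (the standing hypothesis in this part of the paper, under which $u^p$ and the associated square/area functions behave well) and why the scale $r_0$ that emerges is allowed to depend on the solution --- harmless in the sequel, exactly as in \cite[Cor.~5.3]{DH16}, since $r_0$ is only used to be fed back into \cref{E:S<N:global}. An alternative, closer to that reference, is to start from the identity
\[
	\int_{\partial\Omega}u^p(r_0,x,t)\dx\dt=\frac{1}{r_0}\int_0^{r_0}\!\int_{\partial\Omega}u^p\dx\dt\dx_0+\frac{p}{r_0}\int_0^{r_0}\!\int_{\partial\Omega}x_0\,u^{p-1}\partial_0u\dx\dt\dx_0,
\]
to bound the last term by Cauchy--Schwarz and Young (using $\int_0^{r_0}\!\int_{\partial\Omega}u^p x_0\le r_0\int_0^{r_0}\!\int_{\partial\Omega}u^p$ and $\int_0^{r_0}\!\int_{\partial\Omega}|\nabla u|^2|u|^{p-2}x_0\sim\|S^{r_0}_{p}(u)\|_{L^p(\partial\Omega)}^p$, c.f.\ \cref{E:Sp:norm}), and then to absorb the resulting $p$-adapted square function via \cref{E:S<N:shortBound}; the delicate point there is matching truncation heights so that the absorption is justified.
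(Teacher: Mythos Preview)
Your one-variable ODE argument is clean and correct as far as it goes, but it yields an $r_0$ that depends on the particular solution $u$, whereas the lemma as stated (and as the paper uses it) requires $r_0$ to depend only on the geometry of $\Omega$. You flag this and assert it is ``harmless in the sequel'', but it is not: in the proof of \cref{T:1} the truncated estimate $\|N^{r}(u)\|_{L^p}^p\le 2C\|f\|_{L^p}^p$ is upgraded to the untruncated one by iterating \cref{E:1:N4r<Nr} until the cones have height comparable to the spatial diameter $D\sim d$. The number of iterations is $\sim\log(D/r_0)$, so the final constant is $M^{\,c\log(D/r_0)}$; if $r_0=r_0(u)$ can be arbitrarily small this is not uniform in $u$, and the definition of $L^p$ solvability (\cref{D:Dp}) requires a uniform constant. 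Your contradiction argument does show that the set $\{r:w(r)\le\tfrac{2}{r}g(r)\}$ meets every interval $(0,r_1)$, but gives no lower bound on where it meets it.

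The paper itself offers no proof and simply cites \cite[Cor.~5.3]{DH16}. Your alternative sketch --- the fundamental-theorem-of-calculus identity followed by Cauchy--Schwarz and absorption of the resulting $p$-adapted square function via \cref{E:S<N:shortBound} --- is the approach taken there, and it does produce a geometry-dependent $r_0$ (any $r_0$ small enough for \cref{L:S<N:global} to apply). The ``matching truncation heights'' issue you flag is real but routine: \cref{E:S<N:shortBound} holds for every $r\le r_0$ before the averaging step, so one chooses the truncations consistently there, or else accepts a fixed factor-of-two mismatch and compensates with a single application of \cref{E:1:N4r<Nr}. That second route is what you should carry through if you want a self-contained proof of the lemma as stated.
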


Combining \cref{L:S<N:global,L:S<N:helper} gives us the desired result.
\begin{corollary}%
	\label{C:S<N}
	Let $\Omega$ be as in \cref{L:S<N:global} and $u$ be a non-negative solution to~\cref{E:pde}.
	For a small $r_0 > 0$ depending on the geometry the domain $\Omega$ there exists constants $C_1, C_2 > 0$ such that for $\epsilon = \|\mu_1\|_{C, 2r} + \|\mu_2\|_{C, 2r} + \|\mu_2\|^{1/2}_{C, 2r}$
	\begin{equation}
		\label{E:S<N}
		\begin{split}
			\|S_p^{r_0/2}(u)\|^p_{L^p(\partial\Omega)}
			&\sim \int_0^{r_0/2} \int_{\partial\Omega} |\nabla u|^2 |u|^{p-2}x_0 \dx\dt\dx_0 \\
			&\leq C_1 \int_{\partial\Omega} |u(0,x,t)|^p \dx\dt
			+ C_2 \epsilon \|N^{r_0}(u)\|^{p}_{L^p(\partial\Omega)}.
		\end{split}
	\end{equation}
\end{corollary}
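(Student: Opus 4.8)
This is a short corollary: the statement follows from \cref{L:S<N:global,L:S<N:helper} by an absorption argument, and I do not expect any serious obstacle. The key observation is that the quantity
\[
	\frac{2}{r_0}\int_0^{r_0}\int_{\partial\Omega} u^p(x_0,x,t)\,\dx\dt\dx_0
\]
appears on the \emph{left}-hand side of the inequality in \cref{L:S<N:global}, while the first term on the \emph{right}-hand side of that inequality, namely $\int_{\partial\Omega} u^p(r_0,x,t)\,\dx\dt$, is exactly the quantity estimated in \cref{L:S<N:helper} by $\frac{2}{r_0}\int_0^{r_0}\int_{\partial\Omega} u^p\,\dx\dt\dx_0 + C\epsilon\|N^{r_0}(u)\|_{L^p(\partial\Omega)}^p$.

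The plan is therefore as follows. First I would substitute the bound of \cref{L:S<N:helper} for $\int_{\partial\Omega} u^p(r_0,x,t)\,\dx\dt$ into the right-hand side of \cref{L:S<N:global}. The term $\frac{2}{r_0}\int_0^{r_0}\int_{\partial\Omega} u^p$ then occurs with the same coefficient on both sides and cancels, leaving
\[
	\frac{C_1}{2}\int_0^{r_0/2}\int_{\partial\Omega} |\nabla u|^2|u|^{p-2}x_0\,\dx\dt\dx_0
	\le \int_{\partial\Omega} u^p(0,x,t)\,\dx\dt + (C_2+C)\,\epsilon\,\|N^{r_0}(u)\|_{L^p(\partial\Omega)}^p,
\]
where $\epsilon = \|\mu_1\|_{C,2r} + \|\mu_2\|_{C,2r} + \|\mu_2\|^{1/2}_{C,2r}$; here I have used monotonicity of the truncated non-tangential maximal function in its height (or \cref{L:N:apertures}) to pass from $N^{2r}(u)$ to $N^{r_0}(u)$, which is legitimate once $r_0$ is chosen with $2r\le r_0$. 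Dividing through, renaming the constants, and invoking \cref{E:Sp:norm} together with the ellipticity of $A$ to identify $\int_0^{r_0/2}\int_{\partial\Omega}|\nabla u|^2|u|^{p-2}x_0$ with $\|S_p^{r_0/2}(u)\|_{L^p(\partial\Omega)}^p$ up to constants --- exactly as at the end of the proof of \cref{L:S<N:global} --- yields the claimed estimate.

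The only point needing attention is the bookkeeping of scales, since \cref{L:S<N:global} is stated with Carleson norms at scale $2r$ and maximal function $N^{2r}$ while the corollary is phrased in terms of $r_0$. I would fix $r_0$ small, depending only on the character $(\ell,\eta,N,d)$ of $\Omega$, so that $2r\le r_0\le d$ and both the smooth collar neighbourhood of \cref{R:domain:smooth} and the finitely-overlapping chart cover from \cref{D:domain} are available; with this choice both lemmas apply to the non-negative solution $u$ with a common set of parameters and the substitution above is valid. No further analytic input is required.
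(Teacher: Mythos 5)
Your proposal is correct and matches the paper's (implicit) argument: the paper simply states that combining \cref{L:S<N:global,L:S<N:helper} gives the result, and your substitution-then-cancellation of the term $\frac{2}{r_0}\int_0^{r_0}\int_{\partial\Omega}u^p$ is exactly the intended mechanism. One tiny nit: passing from $N^{2r}$ to $N^{r_0}$ uses monotonicity in the truncation height (trivial since $\Gamma^{2r}\subset\Gamma^{r_0}$ when $2r\le r_0$) rather than \cref{L:N:apertures}, which is about aperture; you mention both but only the first is needed.
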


\section{Bounding the non-tangential maximum function by the \texorpdfstring{$p$}{p}-adapted square function}

Our goal in this section has been vastly simplified due to~\cite{Riv03} proving a local good-$\lambda$ inequality.
We use this to bound the non-tangential maximum function by the $p$-adapted square function.
We first bound the non-tangential maximum function by the usual $L^2$ based square function $S_2(u)$ but a simple argument from~\cite[(3.41)]{DPP07} shows that for $1 < p < 2$ and any $\epsilon >0$ we have
\begin{equation}
	\label{E:S2<Sp}
	\|S_2^r(u)\|_{L^p(\partial\Omega)} \leq C_\epsilon \|S_p^r(u)\|_{L^p(\partial\Omega)} + \epsilon \|N^r(u)\|_{L^p(\partial\Omega)},
\end{equation}
with a local version of this statement holding as well.

The good--$\lambda$ inequality from~\cite[p.~508]{Riv03} is expressed in the following lemma.
\begin{lemma}%
	\label{L:N<S:goodlambda}
	Let $v$ be a solution to~\cref{E:pullback} and $v(X,t) = 0$ for some point $(X,t) \in Q_r$.
	Let $E = \{ (0, x,t) \in Q_r : S_{2,a}(v) \leq \lambda \}$ and $q > 2$ then
	\begin{equation}
		\begin{split}
			\label{E:N<S:goodlambda}
			\left| \{ (0, x,t) \in Q_r : N_{a}(v) > \lambda \}\right|
			&\lesssim  |\{ (0, x,t) \in Q_r : S_{2,a}(v) > \lambda \}| \\
			&\quad+ \frac{1}{\lambda^q} \int_E S_{2,a}(v)^q \dx\dt.
		\end{split}
	\end{equation}
\end{lemma}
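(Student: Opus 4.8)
The plan is to adapt the stopping-time argument of~\cite{Riv03}, a parabolic analogue of the classical Dahlberg-type good--$\lambda$ inequality, and to check that every interior estimate it uses survives in the present variable-coefficient setting with constants depending only on $\lambda$, $\Lambda$, $n$, $a$ and the Carleson bounds~\cref{E:pullback:A:bound,E:pullback:B:bound}. Fix $\lambda>0$ and put $O_\lambda=\{(0,x,t)\in Q_r : N_a(v)>\alpha_0\lambda\}$ for a large dimensional constant $\alpha_0$ to be fixed at the end. If $O_\lambda=\emptyset$ there is nothing to prove; if $O_\lambda$ exhausts $Q_r\cap\{x_0=0\}$ one argues directly from the vanishing point $(X,t)\in Q_r$ together with the Harnack and H\"older estimates~\cref{L:Harnack,L:int_Holder}. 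Otherwise I would run a Whitney (Calder\'on--Zygmund) decomposition of $O_\lambda$ into parabolic boundary cubes $\{Q_j\}$ with $\diam Q_j\sim\dist\big(Q_j,(Q_r\cap\{x_0=0\})\setminus O_\lambda\big)$ and with the dilates $\{2Q_j\}$ boundedly overlapping, attaching to each $Q_j$ a contact point $\hat P_j\notin O_\lambda$, so that $|v|\le\alpha_0\lambda$ on all of $\Gamma_a(\hat P_j)$.

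The core step is a pointwise bound on each $Q_j$. For $P=(0,x,t)\in Q_j$, the part of $\Gamma_a(P)$ lying above height $c\diam Q_j$ is contained in an enlarged cone at $\hat P_j$ (by the geometry and an aperture comparison as in~\cref{L:N:apertures}), so $|v|\le C\alpha_0\lambda$ there; for the part of the cone within parabolic distance $\sim\diam Q_j$ of the boundary I would telescope $v$ along a chain of Whitney balls inside $\Gamma_{2a}(P)$ running down to the boundary point $P$, writing the value of $v$ at a low point as its value at height $\sim\diam Q_j$ plus a sum of oscillations over the chain. Each oscillation is controlled by interior regularity: the Cacciopoli inequality~\cref{L:Caccio} and interior H\"older continuity~\cref{L:int_Holder} bound $\osc_{B_k}v$ by a radius-weighted $L^2$ mean of $|\nabla v|$ over $2B_k$, while the genuine \emph{time} oscillation is handled by inserting the equation $v_t=\di(A^v\nabla v)+B^v\cdot\nabla v$ to trade one time derivative for two spatial ones and then applying Cacciopoli again, the drift being absorbed through~\cref{E:pullback:B:bound}. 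Summing over the chain and using the contact point to bound the comparison value, I expect to arrive at an estimate of the shape
\[
	N_a(v)(P)\ \le\ C\alpha_0\lambda\ +\ C\,\mathcal M\!\big(S_{2,a}(v)\,\mathbf 1_{Q_{Cr}}\big)(P),
\]
valid for a.e.\ $P\in Q_j$, where $\mathcal M$ is the localised parabolic Hardy--Littlewood maximal operator on $\{x_0=0\}$ and the minor aperture adjustments for the square function are standard and absorbed into the constants.

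To deduce~\cref{E:N<S:goodlambda} from this, I would split $S_{2,a}(v)=S_{2,a}(v)\mathbf 1_{\{S_{2,a}(v)\le\lambda\}}+S_{2,a}(v)\mathbf 1_{\{S_{2,a}(v)>\lambda\}}$. The contribution of the second piece lands, up to a fixed dilation, inside $\{S_{2,a}(v)>\lambda\}$, which is the first term on the right of~\cref{E:N<S:goodlambda}; for the first piece the weak-$(q,q)$ bound for $\mathcal M$ (available because $q>2>1$) gives
\[
	\Big|\Big\{P\in Q_r : \mathcal M\!\big(S_{2,a}(v)\mathbf 1_{\{S_{2,a}(v)\le\lambda\}}\big)(P)>c\lambda\Big\}\Big|\ \lesssim\ \lambda^{-q}\int_{\{S_{2,a}(v)\le\lambda\}\cap Q_r} S_{2,a}(v)^q,
\]
which is precisely the $L^q$ tail. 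Choosing $\alpha_0$ large enough that $C\alpha_0\lambda$ never reaches the threshold $\alpha_0\lambda$ defining $O_\lambda$, and summing the bounds over the boundedly overlapping Whitney cubes $Q_j$, then yields~\cref{E:N<S:goodlambda} after relabelling $\alpha_0\lambda\mapsto\lambda$.

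I expect the parabolic oscillation estimate to be the main obstacle: unlike the elliptic case, $v$ genuinely varies in time whereas $S_{2,a}(v)$ only records the spatial gradient, so the time oscillation must be routed through the PDE and a second Cacciopoli inequality, and one has to verify that the resulting constants remain uniform under the Carleson smallness hypotheses on $A^v$ and $B^v$. A secondary point is the bookkeeping around $q>2$: this exponent is needed only to make the maximal-function tail summable and to ensure that, after multiplying~\cref{E:N<S:goodlambda} by $\lambda^{p-1}$ and integrating for $1<p<q$, one recovers $\|N_a(v)\|_{L^p}\lesssim\|S_{2,a}(v)\|_{L^p}$. Both ingredients are carried out in detail in~\cite{Riv03}, so the work here is essentially verification rather than a new argument.
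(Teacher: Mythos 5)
The paper itself offers no proof of this lemma: it is stated purely as a citation of Rivera-Noriega~\cite[p.~508]{Riv03}, and the subsequent text only remarks that passing from the good-$\lambda$ inequality to the local $L^p$ estimate (\cref{L:N<S:local}) is ``standard in the spirit of~\cite{FS72}''. Your sketch is therefore doing work the paper deliberately delegates, and its main ingredients---Whitney decomposition of $\{N_a(v)>\alpha_0\lambda\}$, a contact point outside the level set combined with aperture comparison (\cref{L:N:apertures}) to handle the top of the cone, interior Cacciopoli and H\"older estimates for oscillations, and routing the time derivative through the PDE---are the right shape for such an argument. Since you ultimately defer the details to~\cite{Riv03}, your proposal is compatible with what the paper asserts.

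Nonetheless, the pivotal intermediate step you write down,
\[
	N_a(v)(P)\ \le\ C\alpha_0\lambda\ +\ C\,\mathcal M\!\big(S_{2,a}(v)\,\mathbf 1_{Q_{Cr}}\big)(P),
\]
is stronger than a bare Whitney-chain telescope can deliver. Summing $\osc_{B_k}v\lesssim r_k\big(\fint_{2B_k}|\nabla v|^2\big)^{1/2}$ over a chain descending through infinitely many dyadic scales gives $\sum_k (\text{contrib}_k)^{1/2}$, whereas $S_{2,a}(v)(P)\sim\big(\sum_k\text{contrib}_k\big)^{1/2}$; without an extra geometric decay (coming, say, from the Harnack chain comparison with the contact point, or a subdivision into a finite number of levels as in~\cite{Riv03}) the first sum is not majorised by the second. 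Similarly, trading $v_t$ for $\di(A^v\nabla v)+B^v\cdot\nabla v$ produces an area-function quantity, and one still needs an input like \cref{L:A<S} (or an equivalent second Cacciopoli estimate) to leave only $S_{2,a}(v)$ on the right-hand side. These are precisely the two places where the argument of~\cite{Riv03} is more delicate than a pointwise maximal-function majorisation; your outline flags them implicitly, but as written the pointwise bound would not follow without that additional work.
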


If $p \geq 2$ then the following lemma is immediate from~\cite[Lemma 6.1]{DH16}, which is an adaptation of~\cite[Theorem 1.3 and Proposition 5.3]{Riv03}.
\begin{lemma}%
	\label{L:N<S:local}
	Let $v$ be a solution to~\cref{E:pullback} in $U$ and the coefficients of~\cref{E:pullback} satisfy the Carleson estimates~\cref{E:pullback:B:carl,E:pullback:B:bound,E:pullback:A:carl,E:pullback:A:bound} on all parabolic balls of size $\leq r_0$.
	Then there exists a constant $C$ such that for any $r \in (0, r_0/8)$
	\begin{equation}
		\begin{split}
			\label{E:N<S:local}
			\int_{Q_r} N_{a/12}(v)^p \dx\dt
			&\leq C \left( \int_{Q_{2r}} A_{2,a}(v)^p \dx\dt + \int_{Q_{2r}} S_{2,a}(v)^p \dx\dt \right) \\
			&\quad+ r^{n+1} |v(A_{\Delta_r})|^p,
		\end{split}
	\end{equation}
	where $A_{\Delta_r}$ is a corkscrew point of the boundary ball $\Delta_r$.
	That is a point $2r^2$ later in time than the centre of $\Delta_r$ and at a distance comparable to $r$ from the boundary and $r$ from the centre of the ball $\Delta_r$.
\end{lemma}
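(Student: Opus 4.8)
For $p\ge 2$ the estimate is precisely \cite[Lemma 6.1]{DH16}, itself an adaptation of \cite[Theorem 1.3 and Proposition 5.3]{Riv03}, so the plan is to treat only the range $1<p<2$, running the good--$\lambda$ machinery with the parabolic good--$\lambda$ inequality of \cref{L:N<S:goodlambda}. Since that inequality is itself independent of $p$, the restriction $p<2$ (as opposed to $p\ge 2$) will enter only through the convergence of a single $\lambda$--integral.

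First I would normalise the solution. As \cref{E:pullback} has no zeroth order term, constants solve it, and $S_{2,a}$, $A_{2,a}$ depend only on $\nabla v$, respectively $v_t$; hence replacing $v$ by $\tilde v=v-v(A_{\Delta_r})$ leaves the right--hand side of \cref{E:N<S:local} unchanged, while $N_{a/12}(v)\le N_{a/12}(\tilde v)+|v(A_{\Delta_r})|$ produces the term $r^{n+1}|v(A_{\Delta_r})|^p$ after integration over $Q_r$. By the corkscrew geometry $\tilde v$ vanishes at $A_{\Delta_r}$, a point lying inside a cube comparable to $Q_{2r}$, so \cref{L:N<S:goodlambda} can be applied to $\tilde v$ there; the aperture reduction from $a$ to $a/12$ is exactly what keeps the cones $\Gamma_{a/12}$ based at points of $Q_r$ inside the family of cones $\Gamma_a$ based at points of $Q_{2r}$, which is what makes the localisation $Q_r\mapsto Q_{2r}$ on the right work.

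Next I would integrate the good--$\lambda$ bound \cref{E:N<S:goodlambda}, applied to $\tilde v$ on (a cube comparable to) $Q_{2r}$, against $p\lambda^{p-1}\,\mathrm{d}\lambda$. The first term on the right reproduces $\int S_{2,a}(\tilde v)^p$; in the second term Fubini turns $\int_0^\infty p\lambda^{p-1-q}\int_{\{S_{2,a}(\tilde v)\le\lambda\}}S_{2,a}(\tilde v)^q$ into $\tfrac{p}{q-p}\int S_{2,a}(\tilde v)^p$, the inner $\lambda$--integral converging precisely because $q>2>p$, hence $q>p$. Undoing the normalisation gives the $S_{2,a}$ part of \cref{E:N<S:local}. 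The remaining ingredient is the genuinely parabolic one: \cref{L:N<S:goodlambda} controls the oscillation of $v$ along spatial directions inside the cone, and the complementary control along the time direction — absent in the elliptic theory — is exactly what the area function $A_{2,a}$ measures and is incorporated following \cite[\S5--6]{DH16} and \cite[Prop.\ 5.3]{Riv03}. Collecting the two contributions yields \cref{E:N<S:local}.

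I expect the main obstacle to be bookkeeping rather than a new idea: keeping track of the parabolic area--function term through the Whitney/covering and aperture--change steps while preserving the sharp localisation from $Q_r$ to $Q_{2r}$, and making sure the corkscrew point $A_{\Delta_r}$ falls in the correct enlarged cube. The $\lambda$--integration, and the passage from $p\ge2$ to $1<p<2$, cause no real difficulty, since the only place the value of $p$ is used is the convergence of the tail integral above, which holds verbatim for every $p<q$.
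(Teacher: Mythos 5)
Your proposal is correct and follows essentially the same route as the paper's own (very terse) proof: apply the good--$\lambda$ inequality of \cref{L:N<S:goodlambda} to a solution vanishing at a point of the cube, pass to the local $L^p$ bound by the standard Fefferman--Stein integration in $\lambda$, and remove the vanishing assumption by subtracting $v(A_{\Delta_r})$ at the cost of the corkscrew term, exactly as in~\cite{Riv03,DH16}. Your write-up is in fact more detailed than the paper's, and correctly identifies the only $p$-dependence (convergence of the tail integral, needing $q>p$) and the harmless role of the $A_{2,a}$ term given the stated form of the good--$\lambda$ inequality.
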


\begin{proof}
	We first assume that $v(X,t) = 0$ for some $(X,t) \in Q_r$ and then we have the good-$\lambda$ inequality~\cref{E:N<S:goodlambda}.
	The passage from this good-$\lambda$ inequality to a local $L^p$ estimate is standard in the spirit of~\cite{FS72}.
	We remove the assumption $v(X,t) = 0$ for the cost of adding the $r^{n+1} |v(A_{\Delta_r})|^p$ term in the same way as~\cite{Riv03,DH16}.
\end{proof}

From this local estimate we can obtain the following global $L^p$ estimate by the same proof as the global $L^2$ estimate from~\cite[Theorem 6.3]{DH16}.
\begin{theorem}%
	\label{T:N<S:global}
	Let $u$ be a solution to~\cref{E:pde} and the coefficients of~\cref{E:pde} satisfy the Carleson estimates~\cref{E:pde:B:carl,E:pde:A:carl} then
	\begin{equation}
		\label{E:N<S2:global}
		\|N^r(u)\|_{L^p(\partial\Omega)} \lesssim \|S_2^r(u)\|_{L^p(\partial\Omega)} + \|u\|_{L^p(\partial\Omega)}
	\end{equation}
	and by~\cref{E:S2<Sp}
	\begin{equation}
		\label{E:N<S:global}
		\|N^r(u)\|_{L^p(\partial\Omega)} \lesssim \|S_p^r(u)\|_{L^p(\partial\Omega)} + \|u\|_{L^p(\partial\Omega)}.
	\end{equation}
\end{theorem}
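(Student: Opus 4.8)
The plan is to globalise the local bound \cref{L:N<S:local} by a covering argument, following the proof of the $L^2$ statement in~\cite[Theorem 6.3]{DH16}. Throughout we work on the upper half space $U$ (the transfer back to $\Omega$ through the pullback $\rho$ being routine, c.f.~\cref{S:pullback}). First I would tile $\partial U$ by a family of parabolic boundary cubes $\{Q_r^j\}$ of a fixed small radius $r\in(0,r_0/8)$, with the enlarged cubes $\{Q_{2r}^j\}$ having bounded overlap, and apply \cref{L:N<S:local} on each $Q_r^j$ to the solution $v=u\circ\rho$. Summing in $j$, the left-hand sides add up to $\|N_{a/12}(v)\|_{L^p(\partial U)}^p$, while bounded overlap turns $\sum_j\int_{Q_{2r}^j}\bigl(A_{2,a}(v)^p+S_{2,a}(v)^p\bigr)$ into a constant times $\|A_{2,a}(v)\|_{L^p(\partial U)}^p+\|S_{2,a}(v)\|_{L^p(\partial U)}^p$. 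Dominating the area function by the square function (via \cref{L:A<S} for $1<p<2$, which is where non-negativity of $v$ is used, and via \cref{L:Caccio} for $p\ge 2$) and adjusting apertures with \cref{L:N:apertures}, we are left with the contribution of the corkscrew terms $\sum_j r^{n+1}\bigl|v(A_{\Delta_r^j})\bigr|^p$.

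The corkscrew sum is the crux. Since $|v|^p$ is a non-negative subsolution of~\cref{E:pullback} for $p\ge 1$ and each $A_{\Delta_r^j}$ lies at parabolic distance $\sim r$ from $\partial U$, interior (local boundedness) estimates give $\bigl|v(A_{\Delta_r^j})\bigr|^p\lesssim\fint_{B_{cr}(A_{\Delta_r^j})}|v|^p$, hence $r^{n+1}\bigl|v(A_{\Delta_r^j})\bigr|^p\lesssim r^{-1}\int_{B_{cr}(A_{\Delta_r^j})}|v|^p$. The balls $B_{cr}(A_{\Delta_r^j})$ sit at height $\sim r$ and overlap boundedly, so summing in $j$ gives a quantity $\lesssim \sup_{x_0\sim r}\int_{\partial U}|v(x_0,x,t)|^p\dx\dt$. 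It then remains to show that, for $x_0\sim r$, this $L^p$ mass of $v$ is controlled by $\|f\|_{L^p(\partial U)}^p$ up to an absorbable error. This near-monotonicity in $x_0$ is exactly what the integration-by-parts identities underlying \cref{C:S<N} supply (see also \cref{L:S<N:global,L:S<N:helper}): up to a term of the form $C\bigl(\|\mu_1\|_{C,2r}+\|\mu_2\|_{C,2r}+\|\mu_2\|_{C,2r}^{1/2}\bigr)\|N^{r}(v)\|_{L^p(\partial U)}^p$ one has $\int_{\partial U}|v(r,x,t)|^p\dx\dt\lesssim\int_{\partial U}|v(0,x,t)|^p\dx\dt=\|f\|_{L^p(\partial U)}^p$.

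Collecting everything yields
\[
	\|N^{r}(v)\|_{L^p(\partial U)}^p\le\|N_{a/12}(v)\|_{L^p(\partial U)}^p\le C\bigl(\|S_2^{r}(v)\|_{L^p(\partial U)}^p+\|f\|_{L^p(\partial U)}^p\bigr)+\theta\,\|N^{r}(v)\|_{L^p(\partial U)}^p,
\]
where $\theta$ is controlled by the Carleson norms $\|\mu_1\|_{C,2r}+\|\mu_2\|_{C,2r}$ and hence can be made $<1$ under the smallness hypothesis in force (c.f.~\cref{P:ext}). To absorb the last term legitimately one runs the whole scheme first with $N^{r}$ replaced by a truncated maximal function $N^{\varrho}$, $\varrho<r$, which is a priori finite because $f\in L^p$ and by interior estimates; one obtains the inequality with constants independent of $\varrho$, absorbs, and lets $\varrho\uparrow r$ by monotone convergence. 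This gives~\cref{E:N<S2:global}; transferring back to $\Omega$ yields the first displayed inequality of the theorem. Finally,~\cref{E:N<S:global} follows by inserting~\cref{E:S2<Sp} into~\cref{E:N<S2:global} to replace $\|S_2^{r}(u)\|_{L^p}$ by $C_\epsilon\|S_p^{r}(u)\|_{L^p}+\epsilon\|N^{r}(u)\|_{L^p}$ and absorbing $\epsilon\|N^{r}(u)\|_{L^p}$ once more. The hard part is the corkscrew estimate together with extracting from \cref{C:S<N} the near-monotonicity of $x_0\mapsto\int_{\partial U}|v(x_0,\cdot)|^p$ with an error small enough to absorb; the rest is a routine covering-and-iteration argument.
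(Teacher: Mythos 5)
Your overall structure is exactly the paper's intended argument: the paper proves this theorem in one sentence by deferring to the covering/summation proof of the global $L^2$ estimate in~\cite[Theorem 6.3]{DH16}, and your tiling of $\partial U$, application of \cref{L:N<S:local} on each cube, bounded-overlap summation, aperture adjustment, area-to-square-function domination and final absorption are precisely that scheme. So in spirit this is the same proof.

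One step in your reconstruction does not work as literally stated: the claim that the ``near-monotonicity'' $\int_{\partial U}|v(r,\cdot)|^p\lesssim\|f\|_{L^p}^p+C(\|\mu_1\|_{C,2r}+\|\mu_2\|_{C,2r}+\|\mu_2\|_{C,2r}^{1/2})\|N^{r}(v)\|_{L^p}^p$ follows from \cref{L:S<N:global} together with \cref{L:S<N:helper}. Those two lemmas point in opposite directions --- \cref{L:S<N:helper} bounds the top slice by the solid average plus an absorbable error, while \cref{E:S<N:global} bounds the solid average by the top slice plus $\|f\|_{L^p}^p$ plus an absorbable error --- so combining them makes the top-slice (equivalently the solid-average) terms cancel; that cancellation is exactly how \cref{C:S<N} is obtained, and it yields no upper bound on $\int_{\partial U}|v(r,\cdot)|^p$ itself. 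To control the corkscrew sum you should instead go back to the un-averaged identity \cref{E:S<N:shortBound}: after bounding $\sum_j r^{n+1}|v(A_{\Delta_r^j})|^p$ by a slab average $r^{-1}\int_{\{r/2<x_0<2r\}}|v|^p$, average \cref{E:S<N:shortBound} over heights $r\in[r_0,2r_0]$ and estimate the remaining term $r\int\partial_0(|v(r,\cdot)|^p)$ by Cauchy--Schwarz against $\|S_p^{2r_0}(v)\|_{L^p}^p$ and $\|N^{2r_0}(v)\|_{L^p}^p$; this bounds the slab average by $\|f\|_{L^p}^p$ plus a square-function term (harmless, as it already appears on the right of the target estimate) plus an absorbable multiple of $\|N\|_{L^p}^p$. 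Two smaller points: the area function appearing in \cref{L:N<S:local} is $A_{2,a}$, not $A_{p,a}$, so the relevant domination is the $p=2$ one from~\cite[Lemma~5.2]{DH16} (no non-negativity needed there); and since \cref{L:S<N:global,L:S<N:helper} are stated for non-negative solutions, for $1<p<2$ the whole argument should be run for $u^{\pm}$ separately, consistent with how the theorem is invoked in the proof of \cref{T:1}.
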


\section{Proof of \Cref{T:1}}

We only consider the case $1 < p < 2$ and use interpolation to obtain solvability for $p \geq 2$.
First assume either stronger Carleson condition of~\cref{E:pde:A:carl}, or~\cref{E:1:carl,E:1:bound} holds.
Therefore the Carleson conditions on the pullback coefficients~\cref{E:pullback:B:carl,E:pullback:B:bound,E:pullback:A:carl,E:pullback:A:bound} hold.

Without loss of generality, by \cref{R:domain:smooth}, we may assume that our domain is smooth.
Consider $f^+=\max\{0,f\}$ and $f^-=\max\{0,-f\}$ where $f\in C_0(\partial\Omega)$ and denote the corresponding solutions with these boundary data $u^+$ and $u^-$ respectively.
Hence we may apply the \cref{C:S<N} separately to $u^+$ and $u^-$. By the maximum principle these two solutions are non-negative.
It follows that for any such non-negative $u$ we have
\[
	\|S_p^{r}(u)\|^p_{L^p (\partial\Omega)} \leq C \|f\|^p_{L^p (\partial\Omega)} + C (\|\mu\|^{1/2}_{C}+\|\mu\|_{C}) \|N^{2r}(u)\|^p_{L^p(\partial\Omega)}
\]
and \cref{T:N<S:global} gives
\[
	\|N^{r}(u)\|^{p}_{L^{p}(\partial\Omega)} \leq C\|f\|^{p}_{L^p (\partial\Omega)} + C \|S_p^{2r}(u)\|^{p}_{L^p(\partial\Omega)},
\]
here $\|\mu\|_{C}$ is the Carleson norm of~\cref{E:1:carl} on Carleson regions of size $\le r_0$.
As noted earlier, if for example $\Omega$ is of VMO type then size of $\mu$ appearing in this estimate will only depend on the Carleson norm of coefficients on $\Omega$, provided we only consider small Carleson regions.
Hence we can choose $r_0$ small enough (depending on the domain $\Omega$) such that the Carleson norm after the pullback is say only twice the original Carleson norm of the coefficients over all balls of size $\le r_0$.

Since we are assuming $\|\mu\|_{C}$ is small, clearly we also have $\|\mu\|_{C}\le C\|\mu\|^{1/2}_{C}$.
By rearranging these two inequalities and combining estimates for $u^+$ and $u^-$, we obtain, for $0<r\leq r_0/8$,
\[
	\|N^{r}(u)\|^{p}_{L^{p}(\partial\Omega)} \leq C\|f\|^{p}_{L^p (\partial\Omega)} + C \|\mu\|^{1/2}_{C} \|N^{4r}(u)\|^{p}_{L^{p}(\partial\Omega)}.
\]
By a simple geometric argument in~\cite{DH16} involving cones of different apertures, \cref{L:Harnack,L:N:apertures} show there exists a constant $M$ such that
\begin{equation}
	\label{E:1:N4r<Nr}
	\|N^{4r}(u)\|^{p}_{L^{p}(\partial\Omega)} \leq M \|N^{r}(u)\|^{p}_{L^{p}(\partial\Omega)}.
\end{equation}
It follows that if $CM \|\mu\|^{1/2}_{C} <1/2$ by combining the last two inequalities we obtain
\[
	\|N^{r}(u)\|^{p}_{L^{p}(\partial\Omega)} \leq 2C\|f\|^{p}_{L^p (\partial\Omega)},
\]
which is the desired estimate (for truncated version of non-tangential maximum function).
The result with the non-truncated version of the non-tangential maximum function $N(u)$ follows as our domain is bounded in space and hence~\cref{E:1:N4r<Nr} can be iterated finitely many times until the non-tangential cones have sufficient height to cover the whole domain.\vglue1mm

Finally, we comment on how the Carleson condition~\cref{E:pde:A:carl} can be relaxed to the weaker condition~\cref{E:T1:carl:osc}.
The idea is the same as~\cite[Theorem 3.1]{DH16}.
As shown there, if the operator $\mathcal L$ satisfies the weaker condition~\cref{E:T1:carl:osc}, then it is possible (via mollification of coefficients) to find another operator ${\mathcal L}_1$ which is a small perturbation of the operator $\mathcal L$ and ${\mathcal L}_1$ satisfies~\cref{E:pde:A:carl}.
The solvability of the $L^p$ Dirichlet problem in the range $1<p<2$ for ${\mathcal L}_1$ follows by our previous arguments.
However, as $\mathcal L$ is a small perturbation of the operator ${\mathcal L}_1$ we have by the perturbation argument of~\cite{Swe98} $L^p$ solvability of $\mathcal L$  as well.

Finally, for larger values of $p$ we use the maximum principle and interpolation to obtain solvability results in the full range $1 < p < \infty$. \qed\

\section{Appendix --- proofs of results from section 2}

\begin{proof}[Proof of \cref{T:equiv}]
	We begin by proving the equivalence of \cref{I:D,I:av} using ideas from~\cite{Str80} and write $F=\D\phi$ where $F$ is a tempered distribution.
	Let
	\begin{equation*}
		\varphi^k = \chi_{\tilde{Q}_1(0,0)} - \chi_{\tilde{Q}_1(e_k)}
	\end{equation*}
	then for $1 \leq k \leq n-1$
	\begin{equation}
		\label{E:equiv:phi:k}
		\begin{split}
			\widehat{\varphi^k}(\xi, \tau) &= \frac{2\sin^2(\xi_k / 2)}{\xi_k} \frac{1 - e^{-i\tau}}{i\tau} \prod^{n-1}_{j \neq k} \frac{1 - e^{-i\xi_j}}{i\xi_j}, \\
			\widehat{\varphi^n}(\xi, \tau) &= \frac{2\sin^2(\tau / 2)}{\tau} \prod^{n-1}_{j = 1} \frac{1 - e^{-i\xi_j}}{i\xi_j},
		\end{split}
	\end{equation}
	with~$\widehat{\varphi^k}(\xi, \tau) \sim \xi_k$ for small $\xi_k$ and $1 \leq k \leq n-1$.
	We let
	\begin{equation*}
		\widehat{\psi^u} = \frac{e^{i(\xi,0)\cdot u} - 1}{\|(\xi,\tau)\|}
	\end{equation*}
	and denote by $\psi^u_\rho(x,t)$  the usual parabolic dilation by $\rho$, that is
	\[
		\psi^u_\rho(x,t) = \rho^{-(n+1)}\psi^u(x/\rho, t/\rho^2).
	\]
	It is worth noting that $(\varphi^k * \psi^u)_\rho = \varphi^k_\rho * \psi^u_\rho$.
	Therefore we may rewrite \cref{I:av:grad}, by \cref{R:av:grad}, as
	\begin{equation}
		\sup_{Q_r} \sum_{k=1}^{n-1} \frac{1}{|Q_r|} \int_{Q_r} \int_{u \in \mathbb{S}^{n-1}} \int_0^r
		\left(\psi^u_\rho * \varphi^k_\rho * F \right)^2 \frac{\drho}{\rho}\du\dx\dt
		\sim B_{\ref{I:av:grad}}.
	\end{equation}
	Similarly if we let
	\begin{equation}
		\label{E:equiv:phi:n}
		\widehat{\psi_n^u} = \frac{e^{i(0,\tau)\cdot u} - 1}{\|(\xi,\tau)\|}
	\end{equation}
	then we may rewrite \cref{I:av:Dt} as
	\begin{equation}{}
		\sup_{Q_r} \frac{1}{|Q_r|} \int_{Q_r} \int_{u \in \mathbb{S}^{n-1}} \int_0^r
		\left(\psi^u_{n,\rho} * F \right)^2 \frac{\drho}{\rho}\du\dx\dt
		\sim B_{\ref{I:av:Dt}}.
	\end{equation}
	The functions $\varphi^k * \psi^u$ and $\psi^u_n$ all satisfy the following conditions for some $\epsilon_i > 0$
	\begin{equation}
		\label{E:equiv:estimates}
		\begin{split}
			\int \psi \dx\dt &= 0, \\
			|\psi(x,t)| &\lesssim \|(x,t)\|^{-n-1-\epsilon_1} \text{ for } \|(x,t)\| \geq a > 0, \\
			|\widehat{\psi}(\xi,\tau)| &\lesssim \|(\xi,\tau)\|^{\epsilon_2} \text{ for } \|(\xi,\tau)\| \leq 1, \\
			|\widehat{\psi}(\xi,\tau)| &\lesssim \|(\xi,\tau)\|^{-\epsilon_3} \text{ for } \|(\xi,\tau)\| \geq 1.
		\end{split}
	\end{equation}
	Therefore if $\D\phi = F \in \BMO(\R^n)$ then $B_{\ref{I:av:grad}} \lesssim \|\D\phi\|_*^2$ and $B_{\ref{I:av:Dt}} \lesssim \|\D\phi\|_*^2$ by~\cite[Theorem 2.1]{Str80}; this shows \cref{I:D} implies \cref{I:av}.

	For the converse we proceed via an analogue of the proof of~\cite[Theorem 2.6]{Str80}.
	Consider
	\begin{equation*}
		\hat{\theta}(\xi,\tau) = \|(\xi,\tau)\| \hat{\zeta}(\xi,\tau),
	\end{equation*}
	where $\zeta \in C^\infty_0(\R)$.
	Let $H^1_{00}$ be the dense subclass of continuous $H^1$ functions $g$ such that $g$ and all its derivatives decay rapidly, see~\cite[p.~225]{Ste70}.
	Via an analogue of~\cite[Theorem 3]{FS72}, \cite[Lemma 2.7]{Str80} by assuming \cref{I:av:grad,I:av:Dt} if $g \in H^1_{00}(\R^n)$ then for each $1 \leq k \leq n-1$
	\begin{equation}
		\label{E:equiv:duality:k}
		\left|\int_{\mathbb{S}^{n-1}} \int_0^\infty \iint\limits_{\R^{n-1} \times \R}
		\psi^u_\rho * \varphi^k_\rho * F(x,t) \theta_\rho * g(x,t) \dx\dt\frac{\drho}{\rho}\du \right| \lesssim B^{1/2}_{\ref{I:av:grad}} \|g\|_{H^1},
	\end{equation}
	and
	\begin{equation}
		\label{E:equiv:duality:t}
		\left|\int_{\mathbb{S}^{n-1}} \int_0^\infty \iint\limits_{\R^{n-1} \times \R}
		\psi^u_{n,\rho} * F(x,t) \theta_\rho * g(x,t) \dx\dt\frac{\drho}{\rho}\du \right| \lesssim B^{1/2}_{\ref{I:av:Dt}} \|g\|_{H^1}.
	\end{equation}

	For $1 \leq k \leq n-1$ let
	\begin{equation}
		\label{E:equiv:m}
		\begin{split}
			m_k(\xi,\tau) &= \int_{\mathbb{S}^{n-1}} \int_0^\infty
			\overline{\hat{\psi}^u \left(-\rho\xi,-\rho^2 \tau\right) \hat{\varphi}^k \left(-\rho\xi,-\rho^2 \tau\right)}
			\|(\xi,\tau)\| \zeta(\rho\|(\xi,\tau)\|)
			\drho\du, \\
			m_n(\xi,\tau) &= \int_{\mathbb{S}^{n-1}} \int_0^\infty
			\overline{\hat{\psi}_n^u \left(-\rho\xi,-\rho^2 \tau\right)}
			\|(\xi,\tau)\| \zeta(\rho\|(\xi,\tau)\|)
			\drho\du.
		\end{split}
	\end{equation}
	All of these functions $m_i$ are homogeneous of degree zero, smooth away from the origin and the associated Fourier multipliers $M_k$, for $1 \leq k \leq n$, are Caldor\'on-Zygmund operators that preserve the class $H^1_{00}$ and are bounded on $H^1$.

	The non-degeneracy condition from~\cite{CT75} on the family of functions $\{m_k\}_{k=1}^n$ holds --- that is the property that $\sum_k |m_k(r \xi, r^2\tau)|^2$ does not vanish identically in $r$ for $(\xi,\tau) \neq (0,0)$.
	Therefore by~\cite{CT75,CT77} we can find smooth homogeneous of degree zero functions $u_{k,j}(\xi,\tau)$ and positive numbers $r_j$ such that for all $(\xi,\tau) \neq (0,0)$
	\begin{equation}
		\label{E:equiv:mu=1}
		\sum_{k=1}^{n}\sum_{j=1}^{j_0} m_{k,r_j}(\xi,\tau) u_{k,j}(\xi,\tau) = 1,
	\end{equation}
	where $m_{k,r_j}$ are as $m_k$ but with $r_j\rho$ replacing $\rho$ in the arguments of $\hat{\psi}^u$, $\hat{\varphi}^k$ and $\hat{\psi}_2^u$ in \cref{E:equiv:m} (but not $\zeta$).

	Let $M_{k,j}$ and $U_{k,j}$ be the associated Fourier multiplier operators to their respective multipliers $m_{k,r_j}$ and $u_{k,j}$ then $\sum\sum M_{k,j} U_{k,j} g  = g$ for all $g \in H^1_{00}$.
	By~\cite[Theorem 3]{FS72}, \cite[Lemma 2.7]{Str80} there exists $h_{k,j} \in \BMO(\R^n)$ such that $\|h_{k,j}\|_*^2 \lesssim B_{\ref{I:av:grad}}$ or $B_{\ref{I:av:Dt}}$, and $(h_{k,j},g) = (F, M_{k,j}g)$ for all $g \in H^1_{00}$.
	If we replace $g$ by $U_{j,k}g \in H^1_{00}$ in the previous identity and sum over $j$ and $k$ we obtain $(h,g) = (F,g)$ for all $g \in H^1_{00}$ where $h = \sum_{k,j} U^*_{k,j}h_{k,j}$; furthermore by the $\BMO$ condition on $h_{k,j}$, $\|h\|_*^2 \lesssim B_{\ref{I:av:grad}} + B_{\ref{I:av:Dt}}$.
	The identity~\cref{E:equiv:mu=1} does not need to hold at the origin therefore $\hat{h} - \hat{F}$ may be supported at the origin and hence $F = h + p$ where $p$ is a polynomial.
	Due to the assumption $\phi\in \Lip(1,1/2)$ clearly $F$ must be a tempered distribution. Hence as in~\cite{Str80} we may conclude $F = h \in \BMO(\R^n)$.  This implies equivalence of \cref{I:D,I:av}.

	Similarly we may prove the equivalence of \cref{I:Dt:riv,I:diff} to \cref{I:D}. The changes needed are outlined below.
	\paragraph{We first look at \cref{I:diff} $\iff$ \cref{I:D}}
	In this instance we replace the convolutions $\varphi^k * \psi^u$ by
	\begin{equation*}
		\hat{\psi}^u_1(\xi,\tau) = \frac{e^{i(\xi,0)\cdot u} - 2 - e^{-i(\xi,0)\cdot u}}{\|(\xi,\tau)\|},
	\end{equation*}
	which corresponds to \cref{I:diff:grad}, and we keep the convolution $\psi^u_n$ as it is in~\cref{E:equiv:phi:n}.
	The same proof then goes through to give that \cref{I:diff} holds if and only if \cref{I:D} holds with equivalent norms, as in~\cref{E:equiv:norms}.

	\paragraph{\Cref{I:Dt:riv} $\iff$ \cref{I:D}}
	This case is stated in~\cite[Proposition 3.2]{Riv03}. Again the proof proceeds as above with  one convolution
	\begin{equation*}
		\hat{\psi}^u(\xi,\tau) = \frac{e^{i(\xi,\tau)\cdot u} - 2 - e^{-i(\xi,\tau)\cdot u}}{\|(\xi,\tau)\|}. \qedhere
	\end{equation*}
\end{proof}

\begin{proof}[Proof of \cref{T:ext}] Without loss of generality we only consider the case $\eta<1$. When $\eta\ge 1$ the existence of a extension with $\|\D\Phi\|_{*}\lesssim \eta + \ell$ requires a much simpler argument.

	By~\eqref{E:ext:1} there exists $f \in C_\delta$ such that $\|\nabla\phi - f\|_{*, Q_{8d}} \leq 2 \eta$ and a scale $0<r_0 = r_0(\delta) \leq d$ such that
	\[
		\|f\|_{*,Q_{8d},r_0} \leq 2 \eta.
	\]

	Let $d'=\eta\min(r_0,r_1)/2$ and consider some $r\le d'$ and $Q_r\subset Q_{4d}$. Find a natural number $k$ such that $R=2^kr$ and $R\eta/2 < r \leq R\eta$. By our choice of $d'$ the cube $Q_{2R}$ which is an enlargement of $Q_r$ by a factor $2^{k+1}$ is still contained in the original cube $Q_{8d}$.

	It follows that
	\begin{equation*}
		\centering
		\begin{gathered}
			\|\nabla\phi \|_{*,Q_{2R}} \lesssim \eta
			\quad \text{and} \quad \\
			\sup_{\substack{Q_s = J_s \times I_s \\ Q_s \subset Q_{2R}}} \frac{1}{|Q_s|}\int_{Q_s}\int_{I_s}
			\frac{|\phi(x,t) - \phi(x,\tau)|^2}{|t-\tau|^2} \dtau\dt\dx \leq \eta^2.
		\end{gathered}
	\end{equation*}

	Without loss of generality we may now assume that the cube $Q_{2R}$ is centred at the origin $(0,0)$ and that $\phi(0,0) = 0$, since the BMO norm is invariant under translation and ignores constants.
	We first define $\tilde\phi$ as	an extension in time via reflection and tiling of the cube $Q_r$:
	\begin{equation}%
		\label{E:BMO:ext:time}
		\tphi(x, t) =
		\begin{cases}
			\phi(x, t)        & t \in [-r^2,r^2] + 4kr^2, \\
			\phi(x, 2r^2 - t) & t \in [r^2, 3r^2]  + 4kr^2, \,k \in \Z.
		\end{cases}
	\end{equation}
	See \cref{F:ext} on \cpageref{F:ext} for an illustration of this. Clearly $\tilde\phi$ coincides with $\phi$ on $Q_r$.

	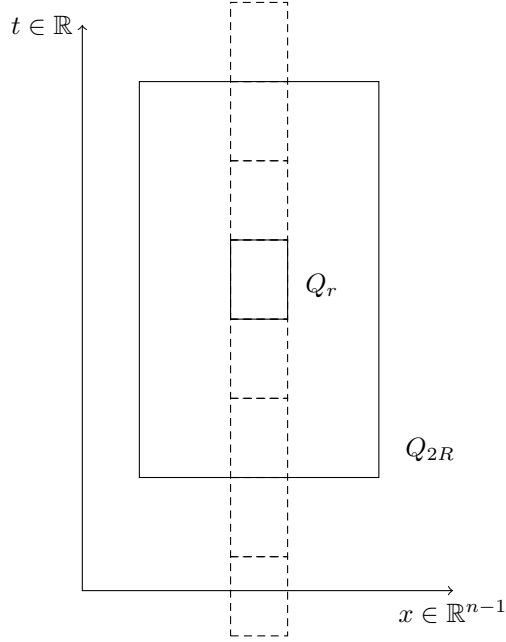
\begin{figure}
		\centering
		\begin{tikzpicture}[scale=0.75]
			\draw  (1.1,2.8) rectangle ++(1,1.4);
			\node at (2.7,3.4) {$Q_r$};

			\foreach \x in {-2.8,-1.4,...,8}
			\draw [densely dashed] (1.1, \x) rectangle ++(1, 1.4);

			\draw (-0.5,0) rectangle (3.7,7);
			\node at (4.6,0.5) {$Q_{2R}$};

			\draw[<->] (5,-2) node[below] {$x \in \R^{n-1}$} -- (-1.5,-2) -- (-1.5,8) node[left] {$t \in \R$};
		\end{tikzpicture}
		\caption{The reflection and tiling of the cube $Q_r \subset Q_{2R}$ defined in~\cref{E:BMO:ext:time}.}%
		\label{F:ext}
	\end{figure}

	It follows that  $\tilde\phi$ is a function $\tphi : \{|x|_\infty < 2R\} \times \R \to \R$ and $(\nabla\tphi)_{Q_{r}} = (\nabla\phi)_{Q_{r}}$. Consider a cut off function $\rho$ such that
	\[
		\rho(x) =
		\begin{cases}
			1 & \text{ if } |x|_\infty < r \\
			0 & \text{ if } |x|_\infty > 2R,
		\end{cases}
	\]
	and $|\nabla\rho| \lesssim 1/R \lesssim \eta/r$.
	Finally define
	\begin{equation}
		\label{E:BMO:ext}
		\Phi = \tphi\rho + (1-\rho)(x \cdot (\nabla\tphi)_{Q_{r}}).
	\end{equation}
	Clearly $\Phi$	 is well defined on ${\mathbb R}^{n-1}\times\mathbb R$ as $\rho=0$ outside the support of $\tilde\phi$. We claim that $\Phi$ satisfies $(i)$-$(iv)$ of \cref{T:ext} which we establish in a sequence of lemmas below. Observe also that from our definition of $\Phi$ we have
	\begin{equation}
		\label{E:BMO:ext:nabla}
		\nabla\Phi = \left(\nabla\tphi - (\nabla\tphi)_{Q_{r}}\right) \rho + \nabla\rho (\tphi - x \cdot (\nabla\tphi)_{Q_{r}}) + (\nabla\tphi)_{Q_{r}}.
	\end{equation}
\end{proof}

We start with  couple of lemmas that allows us to reduce our claim to the dyadic case;
this is to make the geometry easier to handle.

\begin{lemma}[{\cite[Lemma 2.3]{Jon80}, c.f.~\cite[Theorem 2.8]{Str80}}]%
	\label{L:BMO:dyadic}
	Let $f$ be defined on $\R^n$ and
	\begin{equation}
		\label{E:BMO:dyadic}
		\sup_Q \frac{1}{|Q|} \int_Q |f - f_Q| \leq c(\eta),
	\end{equation}
	where the supremum is taken over all dyadic cubes $Q \subset \R^n$.
	Further, assume that
	\begin{equation}
		\label{E:BMO:adj}
		\sup_{Q_1, Q_2} |f_{Q_1} - f_{Q_2}| \leq c(\eta),
	\end{equation}
	where the supremum is taken over all dyadic cubes $Q_1$, $Q_2$ of equal edge length with a touching edge.
	Then
	\[
		\|f\|_* \lesssim c(\eta).
	\]
\end{lemma}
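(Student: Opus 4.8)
The plan is to run the classical dyadic-transference covering argument, upgrading the dyadic bound \cref{E:BMO:dyadic} together with the seam bound \cref{E:BMO:adj} to a bound over arbitrary cubes. Fix any cube $Q \subset \R^n$ of side length $\ell$ and choose the integer $m$ with $2^m \le \ell < 2^{m+1}$. In each coordinate direction the projection of $Q$ is an interval of length $\ell < 2^{m+1}$, so it meets at most two of the dyadic intervals of length $2^{m+1}$ that partition that axis. Hence $Q$ is contained in the union of a family $\mathcal{R} = \{R_1,\dots,R_M\}$ of at most $M \le 2^n$ dyadic cubes of common side length $2^{m+1}$, and $\mathcal{R}$ is a "brick": the product over the $n$ coordinates of sets of at most two consecutive dyadic intervals of length $2^{m+1}$. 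In particular $|R_j| = 2^{n(m+1)} \le (2\ell)^n = 2^n |Q|$ for every $j$.

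Next I would use the seam hypothesis \cref{E:BMO:adj} to compare all the averages $f_{R_j}$. Inside the brick $\mathcal{R}$ any two members are joined by a chain of length at most $n$ in which consecutive cubes have the same side length and share a full face (one changes one coordinate index at a time); face-adjacent dyadic cubes of equal size certainly have a touching edge, so \cref{E:BMO:adj} applies to each link and yields $|f_{R_i} - f_{R_j}| \le n\,c(\eta)$ for all $i,j$. Fixing one cube, say $R_1$, and writing $c := f_{R_1}$, I would then estimate, using $Q \subset \bigcup_j R_j$, the dyadic bound $\int_{R_j}|f - f_{R_j}| \le |R_j|\,c(\eta)$, the comparisons $|f_{R_j} - c| \le n\,c(\eta)$, the volume bound $|R_j| \le 2^n|Q|$, and $M \le 2^n$:
\begin{equation*}
	\frac{1}{|Q|}\int_Q |f - c|
	\le \frac{1}{|Q|}\sum_{j=1}^M \int_{R_j}\bigl(|f - f_{R_j}| + |f_{R_j} - c|\bigr)
	\le \frac{1}{|Q|}\sum_{j=1}^M |R_j|\,(1+n)\,c(\eta)
	\le 2^{2n}(1+n)\,c(\eta).
\end{equation*}
Finally, since $|f_Q - c| = \bigl|\fint_Q (f - c)\bigr| \le \fint_Q |f - c|$ gives $\int_Q |f - f_Q| \le 2\int_Q |f - c|$, we get $\frac{1}{|Q|}\int_Q |f - f_Q| \le 2^{2n+1}(1+n)\,c(\eta)$, and taking the supremum over all cubes $Q$ yields $\|f\|_* \lesssim_n c(\eta)$.

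The argument is routine; the only point requiring a little care is the combinatorial claim that the dyadic cubes of side $2^{m+1}$ meeting $Q$ form a connected brick in which any two are joined by a short chain of face-adjacent cubes of equal size — this keeps the number of invocations of \cref{E:BMO:adj} bounded (by $n$) and the resulting constant dimensional. One could instead simply invoke \cite[Lemma 2.3]{Jon80} (or \cite[Theorem 2.8]{Str80}) verbatim, but the self-contained version above costs essentially nothing.
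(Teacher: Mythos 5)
Your argument is correct and is essentially the standard covering-and-chaining proof behind Jones's Lemma 2.3; the paper does not prove this lemma but imports it by citation, so your self-contained version is a faithful reconstruction of the cited argument rather than a genuinely different route. The one point worth keeping in mind is that the lemma is invoked here for the \emph{parabolic} $\BMO$ norm of~\cref{E:BMO}, where the dyadic grid is anisotropic (temporal side equal to the square of the spatial side); your covering of an arbitrary cube by a bounded brick of dyadic cubes of comparable volume, and the chain of face-adjacent equal-size cubes handled by~\cref{E:BMO:adj}, carry over verbatim with only the dimensional constants changing.
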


Below $l(Q_s) = s$ denotes the radius of a parabolic cube.

\begin{lemma}[{\cite[Lemma 2.1 and pp.\ 44-45]{Jon80}}]%
	\label{L:BMO:dist}
	Let $f \in BMO(Q)$ and $Q_0 \subset Q_1 \subset Q$ then
	\begin{equation}
		\label{E:BMO:dist}
		|f_{Q_0} - f_{Q_1}| \lesssim \log\left(2 + \frac{l(Q_1)}{l(Q_0)}\right) \|f\|_{*, Q}.
	\end{equation}
	Furthermore, the same proof in~\cite{Jon80} gives the following slightly stronger result
	\begin{equation}
		\label{E:BMO:dist2}
		\frac{1}{|Q_0|}\int_{Q_0} |f - f_{Q_1}|
		\lesssim \log\left( 2 + \frac{l(Q_1)}{l(Q_0)} \right) \|f\|_{*, Q}.
	\end{equation}
	If $Q_0, Q_1 \subset Q$ and $l(Q_0) \leq l(Q_1)$ but they are not necessarily nested then
	\begin{equation}
		\label{E:BMO:dist:general}
		|f_{Q_0} - f_{Q_1}| \lesssim \left(\log\left(2 + \frac{l(Q_1)}{l(Q_0)}\right) + \log\left[2 + \frac{\dist(Q_0, Q_1)}{l(Q_1)}\right]\right)\|f\|_{*, Q}.
	\end{equation}
	If the cubes $Q_0, Q_1$ and $Q$ are dyadic then we may replace $\BMO$ by dyadic~$\BMO$.
\end{lemma}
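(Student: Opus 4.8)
The plan is to follow the classical John--Nirenberg telescoping argument of~\cite{Jon80}, transcribed to parabolic cubes. Everything rests on one elementary observation: if $P' \subset P'' \subset Q$ are parabolic cubes with $l(P'') \le 2\,l(P')$, then, since the parabolic volume scales like $l^{n+1}$,
\[
|f_{P'} - f_{P''}| \le \frac{1}{|P'|}\int_{P'} |f - f_{P''}| \le \frac{|P''|}{|P'|}\cdot\frac{1}{|P''|}\int_{P''} |f - f_{P''}| \le 2^{n+1}\,\|f\|_{*,Q},
\]
so comparing the averages over two nested parabolic cubes of comparable radius costs only a fixed multiple of $\|f\|_{*,Q}$.

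To prove~\cref{E:BMO:dist} I would set $N = \lceil \log_2\!\big(l(Q_1)/l(Q_0)\big)\rceil$ and insert a chain of parabolic cubes $Q_0 = R_0 \subset R_1 \subset \cdots \subset R_N = Q_1$ with radii $l(R_j) = l(Q_0)\big(l(Q_1)/l(Q_0)\big)^{j/N}$, so that consecutive radii lie within a factor $2$. Such a chain is built recursively from the top by sliding: given $R_{j+1}$ with $Q_0 \subseteq R_{j+1}$, a parabolic cube of radius $l(R_j) \in [\,l(Q_0),\,l(R_{j+1})\,]$ can be positioned, one coordinate at a time, so as to contain $Q_0$ and sit inside $R_{j+1}$ (in each spatial direction the defining interval of $Q_0$ has length $2\,l(Q_0) \le 2\,l(R_j) \le 2\,l(R_{j+1})$, and similarly in the time direction with the quadratic lengths). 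Since $R_0$ has the radius of $Q_0$ and contains it, $R_0 = Q_0$. Telescoping $|f_{Q_0} - f_{Q_1}| \le \sum_{j} |f_{R_j} - f_{R_{j+1}}|$ and applying the observation to each consecutive pair gives $|f_{Q_0} - f_{Q_1}| \lesssim N\,\|f\|_{*,Q} \lesssim \log\!\big(2 + l(Q_1)/l(Q_0)\big)\|f\|_{*,Q}$. Estimate~\cref{E:BMO:dist2} is then immediate from the triangle inequality and the definition of $\|f\|_{*,Q}$, via $\frac{1}{|Q_0|}\int_{Q_0}|f - f_{Q_1}| \le \frac{1}{|Q_0|}\int_{Q_0}|f - f_{Q_0}| + |f_{Q_0} - f_{Q_1}| \le \|f\|_{*,Q} + |f_{Q_0} - f_{Q_1}|$.

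For~\cref{E:BMO:dist:general} I would take $\tilde Q$ to be the smallest parabolic cube containing $Q_0 \cup Q_1$; the same sliding argument (using $Q_0, Q_1 \subset Q$) lets one choose $\tilde Q \subset Q$, and $l(Q_0) \le l(Q_1)$ forces $l(\tilde Q) \lesssim l(Q_1) + \dist(Q_0, Q_1)$. Applying~\cref{E:BMO:dist} to $Q_0 \subset \tilde Q$ and to $Q_1 \subset \tilde Q$ and adding,
\[
|f_{Q_0} - f_{Q_1}| \lesssim \Big(\log\big(2 + l(\tilde Q)/l(Q_0)\big) + \log\big(2 + l(\tilde Q)/l(Q_1)\big)\Big)\|f\|_{*,Q};
\]
substituting $l(\tilde Q)/l(Q_0) \lesssim \big(l(Q_1)/l(Q_0)\big)\big(1 + \dist(Q_0,Q_1)/l(Q_1)\big)$ and using $\log(ab) \le \log a + \log b$ for $a,b \ge 1$ produces the stated bound. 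When $Q_0, Q_1, Q$ are dyadic one runs the identical scheme with dyadic ancestors, which are automatically nested with consecutive radii in the ratio $2$, and with the dyadic $\BMO$ norm throughout.

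The only step that requires genuine care is this chain construction in the parabolic, non-isotropic geometry: keeping every $R_j$ inside $Q$ while controlling the ratio of consecutive radii, and the analogous containment $\tilde Q \subset Q$ in~\cref{E:BMO:dist:general}. I expect this to be pure bookkeeping rather than an analytic difficulty; once it is in place, all three displayed estimates follow by telescoping and the triangle inequality as above.
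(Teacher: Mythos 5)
Your telescoping argument for~\cref{E:BMO:dist} and~\cref{E:BMO:dist2}, and the reduction of~\cref{E:BMO:dist:general} to the nested case through a common cube $\tilde Q$ with $l(\tilde Q)\lesssim l(Q_1)+\dist(Q_0,Q_1)$, is the classical Jones argument and is correct for the real $\BMO$ norm; the paper itself merely cites~\cite{Jon80}, so you are supplying the proof it omits. The sliding construction, the $2^{n+1}$ per-link cost from the parabolic volume $|Q_r|\sim r^{n+1}$, and the log-splitting $\log(2+l(\tilde Q)/l(Q_0))\lesssim\log(2+l(Q_1)/l(Q_0))+\log(2+\dist(Q_0,Q_1)/l(Q_1))$ are all fine.

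The gap is the last sentence, where you assert that ``the identical scheme with dyadic ancestors'' yields the dyadic versions. For the nested estimates~\cref{E:BMO:dist} and~\cref{E:BMO:dist2} this is true: the dyadic ancestors of $Q_0$ between $Q_0$ and $Q_1$ form the required doubling chain. But for~\cref{E:BMO:dist:general} the scheme does not reproduce the stated bound: two dyadic cubes $Q_0,Q_1\subset Q$ that are metrically close can have smallest common dyadic ancestor of arbitrarily larger scale, so one cannot choose the intermediate $\tilde Q$ to be dyadic while keeping $l(\tilde Q)\lesssim l(Q_1)+\dist(Q_0,Q_1)$, and the chain only gives $\log\bigl(2+l(\tilde Q)/l(Q_0)\bigr)\|f\|_{\dbmo,\,Q}$. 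Worse, the dyadic form of~\cref{E:BMO:dist:general} is simply false: in one dimension take $Q=[0,1)$, $Q_0=[\tfrac12-2^{-m},\tfrac12)$, $Q_1=[\tfrac12,\tfrac12+2^{-m})$, and set $f=j$ on $[\tfrac12-2^{-j},\tfrac12-2^{-j-1})$ for $1\le j\le m-1$, $f=m$ on $Q_0$, $f=0$ on $[\tfrac12,1)$. The only dyadic subintervals of $Q$ on which $f$ is non-constant are $J_j=[\tfrac12-2^{-j},\tfrac12)$ and $Q$ itself, each with mean oscillation $\approx 1$, so $\|f\|_{\dbmo,\,Q}\sim 1$; yet $|f_{Q_0}-f_{Q_1}|=m$ while the right-hand side of~\cref{E:BMO:dist:general} stays bounded. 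So the final ``dyadic'' sentence can only refer to the nested estimates~\cref{E:BMO:dist} and~\cref{E:BMO:dist2}, and your proof should not claim it covers~\cref{E:BMO:dist:general} (this is arguably an ambiguity in the paper's statement as well, but it must be noticed rather than glossed over).
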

There is a typo at the top of~\cite[p.~45]{Jon80}.
It should read $l(Q_k) \leq l(Q_j)$ (it currently reads the converse).

\begin{claim}%
	\label{C:BMO:ext}
	Let $\tphi$ be defined as in \cref{E:BMO:ext:time}, $\|\nabla\phi\|_{*,Q_{2R}} \lesssim \eta$, and let $Q$ be dyadic with $r \leq l(Q) \leq 2R$ then
	\begin{equation}
		\label{E:C:BMO:ext}
		\frac{1}{|Q|} \int_{Q} |\nabla\tphi - \nabla\tphi_{Q_r}| \lesssim_\epsilon \eta^{1-\epsilon}.
	\end{equation}
\end{claim}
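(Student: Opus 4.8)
The plan is to use that $\tphi$ is produced from $\phi|_{Q_r}$ by a reflection in the time variable that is measure preserving and does not touch the spatial gradient: this lets us first replace $Q$ by a single ``fat'' box built over the time slab $[-r^2,r^2]$, and then control that box by the logarithmic comparison of $\BMO$ means in \cref{L:BMO:dist}, the only loss being a factor $\log(l(Q)/r)$ which, since $r>R\eta/2$ forces $l(Q)/r<4/\eta$, is absorbed into $\eta^{1-\epsilon}$. Throughout write $c_0:=(\nabla\tphi)_{Q_r}=(\nabla\phi)_{Q_r}$ (equal because $\tphi=\phi$ on $Q_r$), put $\rho:=l(Q)$, and write $Q=J\times I$ with $J$ a spatial cube of side $2\rho$ and $I$ a time interval of length $2\rho^2$; since $\tphi$ is defined on $\{|x|_\infty<2R\}\times\R$ and is $4r^2$‑periodic in time we may take $J\subset\{|x|_\infty<2R\}$.

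First I would carry out the reflection reduction. The reflection nodes of $\tphi$ in time are the points $(2m+1)r^2$, $m\in\Z$, and they cut $\R$ into intervals $P_m=[(2m-1)r^2,(2m+1)r^2]$ of length $2r^2$; on $J\times P_m$ one has $\tphi(x,t)=\phi(x,\sigma_m(t))$ with $\sigma_m(t)=\pm(t-2mr^2)$ an affine isometry of $P_m$ onto $[-r^2,r^2]$. Differentiating in $x$ and substituting $s=\sigma_m(t)$ gives $\int_{J\times P_m}|\nabla\tphi-c_0|=\int_{\widetilde Q}|\nabla\phi-c_0|$ for every $m$, where $\widetilde Q:=J\times[-r^2,r^2]\subset Q_{2R}$. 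Since $I$ meets at most $\rho^2/r^2+2\le 3\rho^2/r^2$ of the $P_m$, summing over those $m$, bounding each partial piece $P_m\cap I$ by all of $P_m$, and using $|\widetilde Q|=2r^2|J|$, $|Q|=2\rho^2|J|$ yields
\begin{equation*}
	\frac1{|Q|}\int_Q|\nabla\tphi-c_0|\ \le\ \frac{3\rho^2/r^2}{2\rho^2|J|}\int_{\widetilde Q}|\nabla\phi-c_0|\ =\ \frac3{|\widetilde Q|}\int_{\widetilde Q}|\nabla\phi-c_0|.
\end{equation*}

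Next I would estimate the fat box. Cover $J$ by a finitely overlapping family of $N\lesssim(\rho/r)^{n-1}$ spatial cubes $J^{(i)}\subset J$ of side exactly $2r$; then $Q^{(i)}:=J^{(i)}\times[-r^2,r^2]$ are genuine parabolic cubes of radius $r$ with $Q^{(i)}\subset\widetilde Q\subset Q_{2R}$ and $\sum_i|Q^{(i)}|\lesssim|\widetilde Q|$. Splitting into a local oscillation and a mean difference,
\begin{equation*}
	\frac1{|\widetilde Q|}\int_{\widetilde Q}|\nabla\phi-c_0|\ \le\ \frac1{|\widetilde Q|}\sum_i\Bigl(\int_{Q^{(i)}}\bigl|\nabla\phi-(\nabla\phi)_{Q^{(i)}}\bigr|+|Q^{(i)}|\,\bigl|(\nabla\phi)_{Q^{(i)}}-c_0\bigr|\Bigr),
\end{equation*}
the first sum is $\le\sum_i|Q^{(i)}|\,\|\nabla\phi\|_{*,Q_{2R}}\lesssim|\widetilde Q|\,\eta$, while for the second \cref{L:BMO:dist} in the non‑nested form \cref{E:BMO:dist:general}, applied to $Q^{(i)}$ and $Q_r$ (both of radius $r$, both inside $Q_{2R}$, with $\dist(Q^{(i)},Q_r)\lesssim R$), gives $|(\nabla\phi)_{Q^{(i)}}-c_0|\lesssim\log(2+R/r)\,\|\nabla\phi\|_{*,Q_{2R}}\lesssim\log(2+R/r)\,\eta$. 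Hence $\tfrac1{|Q|}\int_Q|\nabla\tphi-c_0|\lesssim(1+\log(2+R/r))\,\eta$, and since $R/r<2/\eta$ we have $\log(2+R/r)\lesssim\log(1/\eta)$; as $\eta\log(1/\eta)\le C_\epsilon\eta^{1-\epsilon}$ for every $\epsilon>0$, \cref{E:C:BMO:ext} follows with constants independent of $Q$ and $\eta$.

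The main obstacle is the reflection step: one must notice that composing with the time–fold $\sigma_m$ is measure preserving and commutes with $\nabla=\nabla_x$, so that the (very rough) behaviour of $\partial_t\phi$ never enters, and that the number of fundamental time–intervals meeting $I$ is comparable to $\rho^2/r^2$, so that summing over them costs only a bounded factor and no information is lost. Everything after that is the standard Jones/John–Nirenberg logarithmic comparison of $\BMO$ averages, and the need to pass from $\eta$ to $\eta^{1-\epsilon}$ is dictated precisely by that logarithm together with $\rho/r\lesssim1/\eta$.
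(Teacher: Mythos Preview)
Your proof is correct and follows essentially the same approach as the paper's: exploit the reflection/tiling structure of $\tphi$ to reduce the integral over $Q$ to one over a single time slab $J\times[-r^2,r^2]$, cover that slab by parabolic $r$-cubes, and apply the logarithmic comparison of $\BMO$ means from \cref{L:BMO:dist} together with $R/r\lesssim 1/\eta$ to absorb the $\log$ into $\eta^{1-\epsilon}$. The only cosmetic differences are that the paper uses the dyadic hypothesis to get an exact partition into $2^{N(n-1)}$ dyadic translates of $Q_r$ (with the factor $2^{2N}$ accounting for the time copies), whereas you use a finitely overlapping cover and the non-nested form \cref{E:BMO:dist:general}; both yield the same bound.
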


\begin{proof}[Proof of claim.]
	Let $N \in \N$ be such that $l(Q) = 2^N l(Q_r)$.
	Let $\{Q^i\}$ be the $2^{N(n-1)}$ dyadic cubes that are translations of $Q_r$ and partition $Q \cap \{|t| \leq r^2 \}$. Then by \cref{L:BMO:dist}
	\begin{align*}
		\frac{1}{|Q|} \int_{Q} |\nabla\tphi - \nabla\tphi_{Q_r}|
		 & = \sum_i \frac{2^{2N} |Q^i|}{|Q|} \frac{1}{|Q^i|}\int_{Q^i} |\nabla\tphi - \nabla\tphi_{Q_r}| \\
		 & \leq \sum_i \frac{2^{2N} |Q^i|}{|Q|} \left(\frac{1}{|Q^i|}\int_{Q^i} |\nabla\phi - \nabla\phi_{Q^i}| + |\nabla\phi_{Q^i} - \nabla\phi_{Q_r}| \right) \\
		 & \lesssim \left(\eta + \eta \log(2 + R/r) \right)
		\lesssim \eta + \eta\log(1 + 1/\eta)
		\lesssim_\epsilon \eta^{1-\epsilon}. \qedhere
	\end{align*}
\end{proof}

\begin{lemma}[{\cite{Ste76}}]%
	\label{L:St76}
	Let $g, h \in L^1_{\loc}$ then
	\begin{equation}
		\frac{1}{|Q|} \int_Q |gh - (gh)_Q|
		\leq \frac{2}{|Q|} \int_Q |g(h-h_Q)| + \frac{|h_Q|}{|Q|} \int_Q |g-g_Q|.
	\end{equation}
\end{lemma}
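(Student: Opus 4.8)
The plan is to prove the inequality by re-centring the product $gh$ at the constant $g_Q h_Q$ rather than at $(gh)_Q$, and then to observe that the resulting ``constant error'' is in fact controlled by a term that already appears on the right-hand side. First I would write the exact decomposition
\[
	gh - (gh)_Q = g\,(h - h_Q) + h_Q\,(g - g_Q) + \bigl(g_Q h_Q - (gh)_Q\bigr),
\]
which holds identically: expanding the three terms on the right, the cross terms $-g h_Q + h_Q g$ and $-h_Q g_Q + g_Q h_Q$ cancel in pairs, leaving $gh - (gh)_Q$. Taking absolute values, integrating over $Q$ and dividing by $|Q|$, the triangle inequality yields
\[
	\frac{1}{|Q|}\int_Q |gh - (gh)_Q|
	\le \frac{1}{|Q|}\int_Q |g\,(h - h_Q)| + \frac{|h_Q|}{|Q|}\int_Q |g - g_Q| + \bigl|g_Q h_Q - (gh)_Q\bigr|.
\]

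The only point requiring an argument is the last, constant, term. Here the key observation is the identity $g_Q h_Q - gh = h_Q(g_Q - g) - g(h - h_Q)$ (again the cross terms cancel). Integrating it over $Q$ and using $\int_Q (g_Q - g) = 0$ gives
\[
	g_Q h_Q - (gh)_Q = \frac{1}{|Q|}\int_Q \bigl(g_Q h_Q - gh\bigr) = -\frac{1}{|Q|}\int_Q g\,(h - h_Q),
\]
so that $\bigl|g_Q h_Q - (gh)_Q\bigr| \le \frac{1}{|Q|}\int_Q |g\,(h - h_Q)|$. Substituting this into the previous display produces the factor $2$ in front of $\frac{1}{|Q|}\int_Q |g(h-h_Q)|$ while leaving the coefficient $1$ on $\frac{|h_Q|}{|Q|}\int_Q |g - g_Q|$, which is exactly the asserted inequality.

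For the record, the statement is understood with the usual convention that there is nothing to prove when the right-hand side is infinite; when it is finite, the bound $\int_Q |gh| \le \int_Q |g(h - h_Q)| + |h_Q| \int_Q |g|$ shows $gh \in L^1(Q)$, so every quantity above is finite and the manipulations are legitimate. I do not expect a genuine obstacle here: the whole content is the bookkeeping choice of centring at $g_Q h_Q$ together with the recognition that the leftover constant equals $-\frac{1}{|Q|}\int_Q g(h-h_Q)$ \emph{exactly} — this is precisely what prevents the naive splitting (which would otherwise put a coefficient $2$ on \emph{both} terms) from being wasteful on the second term.
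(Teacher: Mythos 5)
Your proof is correct and follows essentially the same route as the paper: the identical decomposition $gh - (gh)_Q = g(h-h_Q) + h_Q(g-g_Q) + (g_Qh_Q - (gh)_Q)$, followed by the observation that the constant term equals $-\frac{1}{|Q|}\int_Q g(h-h_Q)$ and is therefore absorbed into the first term with coefficient $2$. Nothing to add.
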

\begin{proof}
	This small reduction is from~\cite[p.~582]{Ste76}.
	First observe
	\[
		gh - (gh)_Q = g(h - h_Q) + h_Q(g- g_Q) + g_Q h_Q - (gh)_Q
	\]
	and
	\[
		|g_Q h_Q - (gh)_Q| = \left|\frac{1}{|Q|}\int_Q g h_Q - \frac{1}{|Q|}\int_Q gh\right|
		\leq \frac{1}{|Q|}\int_Q |g(h-h_Q)|
	\]
	hence
	\begin{align}
		\label{E:BMO:reduction1}
		\left| \frac{1}{|Q|} \int_Q |gh - (gh)_Q| - \frac{|h_Q|}{|Q|} \int_Q |g-g_Q| \right|
		 & \leq 2\frac{1}{|Q|} \int_Q |g(h-h_Q)|.
	\end{align}
\end{proof}

We can now prove \cref{I:ext:nabla} of \cref{T:ext}.
\begin{lemma}%
	\label{L:BMO:ext:nabla}
	Let $\Phi : \R^n \to \R$ be defined as in \cref{E:BMO:ext} with $\|\nabla\phi\|_{*,Q_{2R}} \lesssim \eta$ then $\nabla\Phi \in \BMO(\R^n)$ and for all $0 < \epsilon < 1$
	\begin{equation}
		\label{E:BMO:ext:nabla:bound}
		\|\nabla\Phi\|_*
		\lesssim_\epsilon \eta^{1-\epsilon} + \eta\ell.
	\end{equation}
\end{lemma}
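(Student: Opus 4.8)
The plan is to invoke \cref{L:BMO:dyadic}, reducing the estimate to a bound on $\frac{1}{|Q|}\int_{Q}|\nabla\Phi-(\nabla\Phi)_{Q}|$ over all \emph{dyadic} parabolic cubes $Q$, together with the adjacency condition~\cref{E:BMO:adj} (which drops out of the same estimates applied to a bounded dilate of $Q_{1}\cup Q_{2}$). I then estimate the three pieces of $\nabla\Phi$ in~\cref{E:BMO:ext:nabla} separately: writing $\nabla\Phi=(a)+(b)+(c)$ with $(a)=\big(\nabla\tphi-(\nabla\tphi)_{Q_{r}}\big)\rho$, $(b)=\nabla\rho\,\big(\tphi-x\cdot(\nabla\tphi)_{Q_{r}}\big)$ and $(c)=(\nabla\tphi)_{Q_{r}}$, the term $(c)$ is constant and contributes nothing, and since $\phi$ is $\Lip(1,1/2)$ with constant $\ell$ we have the a.e.\ bounds $|\nabla\tphi|\le\ell$ and $|(\nabla\tphi)_{Q_{r}}|\le\ell$, while $|\nabla\rho|\lesssim 1/R$ (and, choosing $\rho$ smooth, $|\nabla^{2}\rho|\lesssim 1/R^{2}$) is supported in the annulus $\{r\le|x|_{\infty}\le 2R\}$.

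For $(a)$ I apply \cref{L:St76} to the pair $\big(\rho,\ \nabla\tphi-(\nabla\tphi)_{Q_{r}}\big)$: using $0\le\rho\le1$ this bounds $\frac{1}{|Q|}\int_{Q}|(a)-(a)_{Q}|$ by $\frac{2}{|Q|}\int_{Q}|\nabla\tphi-(\nabla\tphi)_{Q_{r}}|$ plus $2\ell$ times the mean oscillation of $\rho$ over $Q$. The latter is $\lesssim l(Q)/R\le\eta$ once $l(Q)\le r$ (recall $r\le R\eta$), so it contributes $\lesssim\eta\ell$; for $l(Q)\gtrsim R$ one instead uses that $(a)$ is supported in the bounded slab $\{|x|_{\infty}\le 2R\}$, is $4r^{2}$-periodic in time and bounded by $2\ell$, whence $\frac{1}{|Q|}\int_{Q}|(a)|$ carries a gain $(R/l(Q))^{n-1}$ that absorbs it. The first term is the mean oscillation of $\nabla\tphi$ relative to the fixed constant $(\nabla\tphi)_{Q_{r}}$, which is $\lesssim_{\epsilon}\eta^{1-\epsilon}$ for $r\le l(Q)\le 2R$ by \cref{C:BMO:ext} and $\lesssim\eta$ for $l(Q)<r$ (then $Q\subset\{\rho\equiv1\}$, $(a)=\nabla\tphi-(\nabla\tphi)_{Q_{r}}$, and one uses $\|\nabla\phi\|_{*,Q_{2R}}\lesssim\eta$ together with the fact that the time reflection~\cref{E:BMO:ext:time} does not increase mean oscillation — a cube meeting a reflection line splits into pieces on which $\nabla\tphi$ is a reflected copy of $\nabla\phi$). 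Hence $\frac{1}{|Q|}\int_{Q}|(a)-(a)_{Q}|\lesssim_{\epsilon}\eta^{1-\epsilon}+\eta\ell$.

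The main work is the term $(b)=\nabla\rho\,g$, $g:=\tphi-x\cdot(\nabla\tphi)_{Q_{r}}$. The key estimate is that on the slab $S:=\{|x|_{\infty}\le 2R\}\times\{|t|\le 2r^{2}\}$ the oscillation of $g$ over any parabolic cube $Q\subset S$ satisfies $\frac{1}{|Q|}\int_{Q}|g-g_{Q}|\lesssim_{\epsilon}l(Q)\,\eta^{1-\epsilon}+\eta\ell R$ and $|g_{Q}|\lesssim_{\epsilon}\eta^{1-\epsilon}(1+\ell)R$. To see this, split $g(x,t)-g(x',t')=\big[g(x,t)-g(x',t)\big]+\big[g(x',t)-g(x',t')\big]$: the temporal difference equals $\tphi(x',t)-\tphi(x',t')$, which by $\Lip(1,1/2)$ and the $4r^{2}$-periodicity of $\tphi$ in $t$ is $\le\sqrt{2}\,\ell r\le\sqrt{2}\,\eta\ell R$ uniformly; the spatial difference equals $\int_{0}^{1}\big(\nabla\tphi(x'+\theta(x-x'),t)-(\nabla\tphi)_{Q_{r}}\big)\cdot(x-x')\,d\theta$ by absolute continuity of $\tphi$ on lines, and its $Q$-average is $\lesssim l(Q)\,\frac{1}{|2Q|}\int_{2Q}|\nabla\tphi-(\nabla\tphi)_{Q_{r}}|\lesssim_{\epsilon}l(Q)\,\eta^{1-\epsilon}$ by \cref{C:BMO:ext} (and, when $2Q$ exceeds scale $2R$, by combining $\|\nabla\phi\|_{*,Q_{2R}}\lesssim\eta$ with the logarithmic drift bound~\cref{E:BMO:dist}, using $R/r\lesssim 1/\eta$); finally $|g_{Q}|=|g_{Q}-g(0,0)|$ follows by telescoping cubes shrinking to the origin, where the geometric sum of spatial oscillations is dominated by its top scale $\sim R\eta^{1-\epsilon}$ and the bounded temporal oscillations contribute $\lesssim\eta\ell R\log(1/\eta)\lesssim_{\epsilon}\eta^{1-\epsilon}\ell R$. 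Granting this, \cref{L:St76} applied to $(\nabla\rho,g)$ controls $\frac{1}{|Q|}\int_{Q}|(b)-(b)_{Q}|$ by $\frac{C}{R}\,\frac{1}{|Q|}\int_{Q}|g-g_{Q}|+|g_{Q}|\cdot\big(\text{mean osc of }\nabla\rho\big)\lesssim_{\epsilon}\eta^{1-\epsilon}\tfrac{l(Q)}{R}+\eta\ell+|g_{Q}|\tfrac{l(Q)}{R^{2}}$, which for $l(Q)\le R$ is $\lesssim_{\epsilon}\eta^{1-\epsilon}+\eta\ell$; for $l(Q)>R$ one bounds $\frac{1}{|Q|}\int_{Q}|(b)|$ directly, using the compact spatial support and time-periodicity of $(b)$ and $\frac{1}{|S|}\int_{S}|g|\lesssim_{\epsilon}\eta^{1-\epsilon}(1+\ell)R$, obtaining $\lesssim_{\epsilon}\eta^{1-\epsilon}(1+\ell)(R/l(Q))^{n-1}$, which is again absorbed. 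Summing the bounds for $(a)$ and $(b)$ over all dyadic cubes and invoking \cref{L:BMO:dyadic} gives $\|\nabla\Phi\|_{*}\lesssim_{\epsilon}\eta^{1-\epsilon}+\eta\ell$.

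The main obstacle is precisely the estimate on $g=\tphi-x\cdot(\nabla\tphi)_{Q_{r}}$ in the annulus: one must reconcile the good $\BMO$ control on $\nabla\phi$ — which only governs spatial differences, and only up to a harmless logarithmic drift — with the merely $\Lip(1,1/2)$ control in the time variable, exploiting the $4r^{2}$-periodicity of the extension so that the temporal oscillation of $\tphi$ remains of order $\ell r=\eta\ell R$; and one must carry the $\ell$-bookkeeping at the right order across the three size regimes $l(Q)\le r$, $r<l(Q)\le R$, and $l(Q)>R$, the last of which forces one to use the compact spatial support of $\nabla\rho$ rather than a pointwise bound.
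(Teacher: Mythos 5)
Your architecture is essentially the paper's: the same splitting of $\nabla\Phi$ from \cref{E:BMO:ext:nabla} into $(\nabla\tphi-(\nabla\tphi)_{Q_r})\rho$, $\nabla\rho\,(\tphi-x\cdot(\nabla\tphi)_{Q_r})$ and a constant; reduction to dyadic cubes via \cref{L:BMO:dyadic} together with \cref{L:St76}; a case analysis over the scales $l(Q)\lesssim r$, $r\lesssim l(Q)\leq 2R$, $l(Q)\geq 2R$; and, for the hard annulus term, the conversion of differences of $\tphi$ into spatial line averages of $\nabla\tphi-(\nabla\tphi)_{Q_r}$ fed into \cref{C:BMO:ext}. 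Two steps, however, do not close as written.

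First, for $l(Q)\gtrsim R$ you bound $\frac{1}{|Q|}\int_Q|(a)|$ by $\|(a)\|_{L^\infty}\leq 2\ell$ times the density $(R/l(Q))^{n-1}$ of the support. For the dyadic cubes with $l(Q)=2R$ or $4R$ this gain is $O(1)$ and the resulting bound is $O(\ell)$, which is not absorbed into $\eta^{1-\epsilon}+\eta\ell$. One must keep the averaged quantity instead of the pointwise one: the support of $(a)$ inside $Q$ is covered by $\sim (l(Q)/R)^2$ time-translates of $Q_{2R}$, on each of which \cref{C:BMO:ext} gives average $\lesssim_\epsilon\eta^{1-\epsilon}$, and the counting $2^{2N}/2^{N(n+1)}$ then yields $\lesssim_\epsilon\eta^{1-\epsilon}(R/l(Q))^{n-1}$. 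This is exactly how the paper treats the large-cube regime.

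Second, your estimate $|g_Q|\lesssim_\epsilon\eta^{1-\epsilon}(1+\ell)R$ is produced by telescoping to the origin through $\sim\log(1/\eta)$ scales, each contributing a temporal oscillation of size $\eta\ell R$; the accumulated $\eta\ell R\log(1/\eta)$ turns $\eta\ell$ into $\eta^{1-\epsilon}\ell$, and the term $|g_Q|\,l(Q)/R^2$ then gives $\eta^{1-\epsilon}\ell$, which is \emph{not} dominated by $\eta^{1-\epsilon}+\eta\ell$ (take $\eta$ small with $\ell$ large). As written you therefore prove the lemma only with $\eta^{1-\epsilon}(1+\ell)$ on the right --- still adequate for every application in the paper, where $\ell$ is fixed and $\eta\to 0$, but weaker than \cref{E:BMO:ext:nabla:bound}. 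The paper avoids the loss by writing $\tphi(x,t)=\tphi(rx/|x|,t)+x\cdot\int_{r/|x|}^{1}\nabla\tphi(\lambda x,t)\,\mathrm{d}\lambda$, so the $\Lip(1,1/2)$ bound is invoked only at scale $r$, costing $\ell r=\eta\ell R$ exactly once; your version can be repaired by splitting $g$ into the time-independent part $\tphi(x,0)-x\cdot(\nabla\tphi)_{Q_r}$ (telescoped with purely spatial oscillations) plus $\tphi(x,t)-\tphi(x,0)$, which is uniformly $O(\ell r)$ by the periodic reflection \cref{E:BMO:ext:time}.
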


\begin{proof}
	Recall
	$\nabla\Phi = \left(\nabla\tphi - (\nabla\tphi)_{Q_{r}}\right) \rho + \nabla\rho (\tphi - x \cdot (\nabla\tphi)_{Q_{r}}) + (\nabla\tphi)_{Q_{r}}$;
	we can ignore the constant  term as  the BMO norm doesn't see it.
	Let $\psi = \nabla\tphi - (\nabla\tphi)_{Q_{r}}$ and $\theta = \tphi - x \cdot (\nabla\tphi)_{Q_{r}}$. We want to bound $\| \rho\psi \|_*$ and $\| \nabla\rho \theta \|_*$.
	We first tackle the term $\| \rho\psi \|_*$.

	\begin{steps}
		\item \cref{E:BMO:adj} holds: $\sup_{Q_1, Q_2} |(\rho\psi)_{Q_1} - (\rho\psi)_{Q_2}| \leq c(\eta)$ for $Q_1, Q_2$ dyadic cubes of equal side length and with a touching edge.

		Since $\tphi$ is the extension in the time direction by reflection and tiling (c.f. \cref{E:BMO:ext:time}), and $Q_1$, $Q_2$ and $Q_{r}$ are all dyadic cubes we may assume that if $l(Q_1) \leq r$ then $Q_1, Q_2 \subset \{|t| < r^2\}$, and if $l(Q_1) > r$ then $\{|t| < r^2\} \subset Q_1$.

		If $Q_1, Q_2 \subset Q_{2R}$ then $|(\rho\psi)_{Q_1} - (\rho\psi)_{Q_2}| \lesssim \|\rho\psi\|_{\dbmo,\, Q_{2R}}$.
		Therefore, if we show~\cref{E:BMO:dyadic} for $f=\rho\psi$ then by \cref{L:BMO:dist,L:St76} clearly
		\[
			|(\rho\psi)_{Q_1} - (\rho\psi)_{Q_2}| \lesssim \|\rho\psi\|_{\dbmo, \,Q_{2R}} \leq \|\psi\|_{\dbmo, \,Q_{2R}} \leq \|\nabla\tphi\|_{\dbmo, \,Q_{2R}} \lesssim \eta.
		\]

		Now look at the other cases: $Q_1 \subset Q_{2R}$ and $Q_2 \cap Q_{2R} = \emptyset$, or $Q_{2R} \subset Q_1$ and $Q_2 \cap Q_{2R} = \emptyset$.
		In both cases we wish to control $|(\rho\psi)_{Q_1}|$.

		\begin{steps}
			\item Case $Q_1 \subset Q_{2R}$, $Q_2 \cap Q_{2R} = \emptyset$ and $l(Q_1) \lesssim \frac{R\eta}{\ell}$.%
			\label{S:BMO:ext:diff:small}

			$Q_1$ is small here and touches the boundary of $Q_{2R}$.
			This means that $\|\rho\|_{L^\infty(Q_1)} \lesssim \frac{l(Q_1)}{R}$ since $\rho$ is 0 outside $Q_{2R}$.
			Therefore we just apply the trivial bound
			\[
				|(\rho\psi)_{Q_1}| \leq \|\rho\|_{L^\infty(Q_1)} \|\psi\|_{L^\infty(Q_1)} \lesssim \frac{l(Q_1)}{R} \ell
				\lesssim \eta.
			\]

			\item Case $Q_1 \subset Q_{2R}$, $Q_2 \cap Q_{2R} = \emptyset$ and $\frac{R\eta}{\ell} \lesssim l(Q_1) \leq 2R$.%
			\label{S:BMO:ext:diff:mid}

			Since $Q_1 \subset Q_{2R}$ we have $\frac{R\eta}{\ell} \lesssim l(Q_1) \leq 2R$.
			$Q_1$ is dyadic so there exists $N \in \Z$ such that $l(Q_1) = 2^N l(Q_r)$.
			\begin{steps}
				\item $N \leq 0$:

				This means that $l(Q_1) \leq l(Q_r)$ and so by the reflection and tiling in time, \cref{E:BMO:ext:time},  we may assume $Q_1 \subset \{|t| \leq r^2\}$ and by \cref{L:BMO:dist}
				\begin{align*}
					|(\rho\psi)_{Q_1}|
					 & \leq |\psi|_{Q_1}
					= \frac{1}{|Q_1|}\int_{Q_1} |\nabla\phi - \nabla\phi_{Q_{r}}|
					\leq \frac{1}{|Q_1|}\int_{Q_1} |\nabla\phi - \nabla\phi_{Q_{1}}| + |\nabla\phi_{Q_1} - \nabla\phi_{Q_{r}}| \\
					 & \lesssim \eta + \eta \log \left( 1 + \ell \right) + \eta\log(1+1/\eta)
					\lesssim_\epsilon \eta^{1-\epsilon} + \eta \log \left( 1 + \ell \right) .
				\end{align*}

				\item $N > 0$:

				By \Cref{C:BMO:ext} we obtain
				\begin{align*}
					|(\rho\psi)_{Q_1}|
					\leq |\psi|_{Q_1}
					= \frac{1}{|Q_1|}\int_{Q_1} |\nabla\tphi - \nabla\tphi_{Q_{r}}|
					\lesssim_\epsilon \eta^{1-\epsilon}.
				\end{align*}

			\end{steps}
			\item  Case $Q_{2R} \subset Q_1$, $Q_2 \cap Q_{2R} = \emptyset$ so $l(Q_1) \geq 2R$.%
			\label{S:BMO:ext:diff:large}

			Let $N$ satisfy $l(Q_1) = 2^N l(Q_{2R})$, the number of dyadic generations separating $Q_1$ and $Q_{2R}$.
			Then $Q_1$ overlaps $Q_{2R}$ (and its dyadic translates in time) exactly $2^{2N}$ times.
			Therefore by \cref{C:BMO:ext},
			\[
				|(\rho\psi)_{Q_1}|
				\leq |\psi|_{Q_1}
				\leq \frac{2^{2N}}{|Q_1|}\int_{Q_{2R}} |\nabla\tphi - \nabla\tphi_{Q_{r}}|
				\leq \frac{2^{2N}}{2^{N(n+1)}} \frac{1}{|Q_{2R}|}\int_{Q_{2R}} |\nabla\tphi - \nabla\tphi_{Q_{r}}|
				\lesssim_\epsilon \eta^{1-\epsilon}.
			\]
			Hence, modulo the unproved statement $\|\rho\psi\|_{\dbmo, \, Q_{2R}} \lesssim \eta$ we have shown
			\[
				|(\rho\psi)_{Q_1} - (\rho\psi)_{Q_2}| \lesssim_\epsilon \eta^{1-\epsilon} + \eta\log(1+\ell).
			\]
		\end{steps}

		\item \cref{E:BMO:dyadic} holds, that is: $\|\rho\psi\|_{\dbmo} \lesssim c(\eta)$.

		To apply \cref{L:St76} we need to control two terms
		\[
			\sup_{Q \text{ dyadic}} \|\rho\|_{L^\infty(Q)} \frac{1}{|Q|}\int_Q |\psi - \psi_Q|
		\]
		and
		\[
			\sup_{Q \text{ dyadic}} \frac{|\psi_Q|}{|Q|} \int_Q |\rho-\rho_Q|.
		\]

		\begin{steps}
			\item Estimating $\displaystyle \sup\limits_{Q \text{ dyadic}} \|\rho\|_{L^\infty(Q)} \frac{1}{|Q|}\int_Q |\psi - \psi_Q|$.

			In all the following cases we bound $ \|\rho\|_{L^\infty(Q)}  \leq 1$.

			\begin{steps}
				\item Case $l(Q) \leq r$.

				As before, by the reflection and tiling in time, we may assume $Q \subset \{|t| \leq r^2\}$ and so $\nabla\tphi = \nabla\phi$ on $Q$.
				Hence
				\begin{align*}
					\frac{1}{|Q|}\int_Q |\psi - \psi_Q|
					= \frac{1}{|Q|}\int_Q |\nabla\tphi - (\nabla\tphi)_{Q}|
					= \frac{1}{|Q|}\int_Q |\nabla\phi - (\nabla\phi)_{Q}|
					\lesssim \eta.
				\end{align*}

				\item Case $r < l(Q) \leq 2R$.

				Applying \cref{C:BMO:ext} gives
				\[
					\frac{1}{|Q|}\int_Q |\psi - \psi_Q| \leq |\psi|_Q \lesssim_\epsilon \eta^{1-\epsilon}.
				\]

				\item Case $2R < l(Q)$.

				From \cref{S:BMO:ext:diff:large} it follows that
				\[
					\frac{1}{|Q|}\int_Q |\psi - \psi_Q| \leq |\psi|_Q \lesssim_\epsilon \eta^{1-\epsilon}.
				\]
			\end{steps}

			\item Estimating $\displaystyle \sup\limits_{Q \text{ dyadic}} \frac{|\psi_Q|}{|Q|} \int_Q |\rho-\rho_Q|$.

			We have the following three cases to consider.
			\begin{steps}
				\item Case $Q \subset Q_{2R}$, $l(Q) \leq r$ and $Q \subset \{|t| \leq r^2\}$.%
				\label{S:BMO:ext:IVT}

				Because the cube $Q$ might not be touching the boundary we can't follow \cref{S:BMO:ext:diff:small} and bound $\frac{1}{|Q|} \int_Q |\rho-\rho_Q|$ by $\|\rho\|_{L^\infty(Q)}$, which here is likely be 1.
				However, we can use the mean value theorem and get a better bound.
				By the intermediate value theorem there exists $(z,\tau) \in Q$ such that $\rho(z) = \rho_Q$ and using that $\rho$ is independent of time and $|\nabla\rho| \lesssim 1/R$ we have
				\[
					|\rho(x) - \rho_Q| = |\rho(x) - \rho(z)| \leq |\nabla\rho| l(Q)
					\lesssim \frac{l(Q)}{R} \leq \frac{l(Q)}{r}.
				\]

				Then applying \cref{L:BMO:dist} gives
				\begin{align*}
					\frac{|\psi_Q|}{|Q|} \int_Q |\rho-\rho_Q|
					 & \lesssim \frac{l(Q)}{r} \left| \frac{1}{|Q|} \int_Q \nabla\tphi - \nabla\tphi_{Q_r} \right|
					\leq \frac{l(Q)}{r} \frac{1}{|Q|} \int_Q |\nabla\phi - \nabla\phi_{Q_r}| \\
					 & \lesssim \frac{l(Q)}{r} \log\left( 2 + \frac{r}{l(Q)} \right) \eta
					\lesssim\eta.
				\end{align*}

				\item Case $Q \subset Q_{2R}$ and $r < l(Q) \leq 2R$.

				This case is a straightforward application of \cref{C:BMO:ext}
				\[
					\frac{|\psi_Q|}{|Q|} \int_Q |\rho-\rho_Q|
					\leq |\psi_Q|
					\lesssim_\epsilon \eta^{1-\epsilon}.
				\]

				\item Case $Q_{2R} \subset Q$ so $l(Q) > 2R$.

				This follows similarly to \cref{S:BMO:ext:diff:large}; let $N$ be defined as there and
				\begin{align*}
					\frac{|\psi_Q|}{|Q|} \int_Q |\rho-\rho_Q|
					 & \leq \frac{1}{|Q|} \left| \int_Q \nabla\phi - \nabla\phi_{Q_{2R}} \right| \\
					 & \leq \frac{2^{2N}}{2^{N(n+1)}} \| \nabla \phi \|_{*, Q_{2R}}
					\leq \eta.
				\end{align*}
			\end{steps}
		\end{steps}
		Therefore by \cref{L:BMO:dyadic}, $\|\rho\psi\|_* \lesssim_\epsilon \eta^{1-\epsilon} + \eta\log(1+\ell)$.\vglue1mm

		It remains to tackle the harder piece $\nabla\rho\theta = \nabla\rho(\tphi - x\cdot\nabla\tphi_{Q_r})$.
		Recall that $\supp(\nabla\rho) = \{r \leq |x|_\infty \leq 2R\}$.

		\item \cref{E:BMO:adj} holds; that is: $\sup_{Q_1, Q_2} |(\nabla\rho\theta)_{Q_1} - (\nabla\rho\theta)_{Q_2}| \leq c(\eta)$ where $Q_1, Q_2$ are dyadic with a touching edge and $l(Q_1)=l(Q_2)$.

		There are two different cases to consider:
		\begin{caselist}
			\item\label{I:ext:nabla:S3:1} 	$Q_1 \cap \supp(\nabla\rho) \neq \emptyset$ and $Q_2 \cap \supp(\nabla\rho) \neq \emptyset$
			\item\label{I:ext:nabla:S3:2} 	$Q_1 \cap \supp(\nabla\rho) \neq \emptyset$ and $Q_2 \cap \supp(\nabla\rho) = \emptyset$
		\end{caselist}
		Again \cref{I:ext:nabla:S3:1} is controlled by $\|\nabla\rho\theta\|_{\dbmo, \, Q_{2R}}$ by \cref{L:BMO:dist} so we only have to deal with \cref{I:ext:nabla:S3:2} and bound $\sup\limits_{Q_1 \text{ dyadic}} |(\nabla\rho\theta)_{Q_1}|$.
		\begin{steps}
			\item Case $Q_1 \subset Q_{2R}$ and $l(Q_1) \lesssim \frac{R\eta}{\ell}$.%
			\label{s:BMO:ext:easy}

			In this case $Q_1$ touches the boundary of the support of $\nabla\rho$ so we have the estimate $\|\nabla\rho\|_{L^{\infty}(Q_1)} \lesssim \frac{l(Q_1)}{R^2}$ since $|\nabla^2 \rho| \lesssim 1/R^2$.
			Also $\phi(0,0) = 0$ and $\phi \in \Lip(1,1/2)$ so $\|\tphi(x,t)\|_{L^{\infty}(Q_1)} \leq \|\phi(x,t)\|_{L^{\infty}(Q_{2R})} \lesssim \ell R$.
			Finally $\| x\cdot \nabla\tphi_{Q_{r}}\|_{L^{\infty}(Q_{2R})} \lesssim \ell R $.
			Therefore
			\begin{align*}
				|(\nabla\rho\theta)_{Q_1}|
				\leq \|\nabla\rho\|_{L^{\infty}(Q_1)} |\theta|_{Q_1}
				 & \lesssim \frac{l(Q_1)}{R^2} \frac{1}{|Q_1|} \int_{Q_1} |\tphi(x,t) - x\cdot\nabla\tphi_{Q_r}|\dx\dt \\
				 & \lesssim \frac{l(Q_1)}{R^2} \ell R
				\lesssim \eta.
			\end{align*}

			\item Case $Q_1 \subset Q_{2R}$ and $\frac{R\eta}{\ell} \lesssim l(Q_1) \leq 2R$.%
			\label{s:BMO:ext:hard}

			By the fundamental theorem of calculus we may write
			\[
				\tphi(x,t) - \tphi\left(r\frac{x}{|x|}, t\right) = x \cdot \int_{r/|x|}^1 \nabla\tphi(\lambda x,t) \dlambda.
			\]
			Therefore
			\begin{align*}
				|(\nabla\rho \theta)_{Q_1}|
				 & \leq |\nabla\rho| |\theta|_{Q_1} \\
				 & = |\nabla\rho| \left| \tphi\left(r\frac{x}{|x|}, t \right) + x\cdot \int_{r/|x|}^1 \left(\nabla\tphi(\lambda x,t) - \nabla\tphi_{Q_r} \right)\dlambda + x\cdot \frac{r}{|x|} \nabla\tphi_{Q_r} \right|_{Q_1} \\
				 & \lesssim \frac{1}{R} \left\|\tphi\left(r\frac{x}{|x|}, t\right)\right\|_{L^\infty(Q_1)} \\
				 & \qquad+ \frac{R}{R} \frac{1}{|Q_1|} \int_{Q_1} \left(\int_{r/|x|}^1 |\nabla\tphi(\lambda x,t) - \nabla\tphi_{Q_r}| \dlambda\right) \dx\dt
				+ \frac{\eta R\ell}{R}.
			\end{align*}
			Since $\tphi$ defined by \cref{E:BMO:ext:time} is tiled and reflected in time on cubes of scale $r$, and $(rx/|x|, 0) \in Q_r$ we control the first term above by
			\[
				\frac{1}{R} \left\|\tphi\left(r\frac{x}{|x|}, t\right) - 0\right\|_{L^\infty(Q_1)}
				\leq \frac{1}{R} \|\phi - \phi(0,0)\|_{L^\infty(Q_r)}
				\lesssim \frac{\ell r}{R}
				\lesssim \ell \eta.
			\]
			Recall that $r \sim \eta R$, $\frac{R\eta}{\ell} \lesssim l(Q_1) \leq 2R$ and $r \leq |x|_\infty \leq 2R$ so $\eta/2 \leq \lambda \leq 1$.
			We apply Fubini to the second term
			\begin{align*}
				\frac{1}{|Q_1|} & \int_{Q_1} \left(\int_{r/|x|}^1 |\nabla\tphi(\lambda x,t) - \nabla\tphi_{Q_r}| \dlambda\right) \dx\dt \\
				                & \leq \frac{1}{|Q_1|} \int_{\eta/2}^1 \int_{Q_1} |\nabla\tphi(\lambda x,t) - \nabla\tphi_{Q_r}| \dx\dt\dlambda.
			\end{align*}

			Let $\tilde{Q}_1$ be the set formed by $Q_1$ under the transformation $(x,t) \mapsto (\lambda x,t)$.
			We may further cover $\tilde{Q}_1$ by $\sim \lambda^{-2}$ translations of $\lambda Q_1$ with $|\lambda Q_1|/|\tilde{Q}_1| \lesssim \lambda^2$.
			Therefore a similar proof to \cref{C:BMO:ext}, using \cref{L:BMO:dist}, gives
			\begin{align*}
				 & \frac{1}{|Q_1|} \int_{Q_1} |\nabla\tphi(\lambda x,t) - \nabla\tphi_{Q_r}| \dx\dt
				= \frac{1}{|\tilde{Q}_1|} \int_{\tilde{Q}_1} |\nabla\tphi - \nabla\tphi_{Q_r}| \\
				 & \lesssim \lambda^{-2} \frac{\lambda^2}{|sQ_1|} \int_{sQ_1} |\nabla\tphi - \nabla\tphi_{Q_r}|
				\lesssim \eta \log\left(2 + \frac{r}{sl(Q_1)}\right)
				\lesssim \eta\log \left(1 + \frac{\ell}{\eta^2}\right)\\
				 & \lesssim_\epsilon \eta^{1-\epsilon} + \eta\log(1 + \ell)
			\end{align*}
			and hence after harmlessly integrating in $\lambda$ we can control the second term by
			\[
				\int_{\eta/2}^1 \eta\log \left(1 + \frac{\ell}{\eta^2}\right) \dlambda
				\lesssim_\epsilon \eta^{1-\epsilon} + \eta\log(1 + \ell).
			\]

			\item Case $l(Q_1) \geq 2R$.

			As before in \cref{S:BMO:ext:diff:large}, $|(\nabla\rho\theta)_{Q_1}| \leq |(\nabla\rho\theta)_{Q_{2R}}|$, which can be further controlled by cubes that tile $\supp(\nabla\rho)$.
			Therefore, this case is bounded as in \cref{s:BMO:ext:hard}.
		\end{steps}\vglue1mm

		\item \cref{E:BMO:dyadic} holds; that is: $\|\nabla\rho\theta\|_{\dbmo} \lesssim c(\eta)$

		Here we have 3 cases to consider:
		\begin{caselist}
			\item\label{I:ext:nabla:S4:1}	$Q \subset Q_{2R}$
			\item\label{I:ext:nabla:S4:2}	$Q \subset \R^n \setminus \supp(\nabla\rho)$
			\item\label{I:ext:nabla:S4:3}	$Q_{2R} \subset Q$
		\end{caselist}

		\Cref{I:ext:nabla:S4:2} is obvious.
		\Cref{I:ext:nabla:S4:3} reduces down to \cref{I:ext:nabla:S4:1} by \cref{S:BMO:ext:diff:large}, the reflection and tiling of $\tphi$, and the $\supp(\nabla\rho)$.

		\Cref{I:ext:nabla:S4:1}: Using \cref{L:St76} this reduces down to showing that
		\begin{enumerate}
			\item[(a)]%
			      $\displaystyle \frac{|\theta_Q|}{|Q|} \int_Q |\nabla\rho - (\nabla\rho)_Q| \lesssim c(\eta)$
			\item[(b)]%
			      $\displaystyle \frac{1}{|Q|} \int_Q |\nabla\rho(\theta - \theta_Q)| \lesssim c(\eta)$
		\end{enumerate}
		for $Q$ dyadic and $Q \subset Q_{2R}$.

		\begin{steps}
			\item  (a) holds for $Q$ dyadic and $Q \subset Q_{2R}$.
			\begin{steps}
				\item Case $Q \subset Q_{2R}$ and $l(Q) \lesssim \frac{R\eta}{\ell}$.

				By the naive bounds in \cref{s:BMO:ext:easy} $|\theta|_Q \lesssim \ell R$.
				If we use the mean value theorem for $\nabla\rho$ similar to \cref{S:BMO:ext:IVT} then
				\[
					\frac{1}{|Q|} \int_Q |\nabla\rho - (\nabla\rho)_Q|
					\lesssim |\nabla^2 \rho| l(Q) \lesssim \frac{l(Q)}{R^2}.
				\]
				Therefore
				\[
					\frac{|\theta_Q|}{|Q|} \int_Q |\nabla\rho - (\nabla\rho)_Q|
					\lesssim \ell R \frac{l(Q)}{R^2}
					\lesssim \eta.
				\]

				\item Case $Q \subset Q_{2R}$ and $\frac{R\eta}{\ell} \lesssim l(Q) \leq 2R$.

				Here we apply the same technique as \cref{s:BMO:ext:hard}
				\[
					\frac{|\theta_Q|}{|Q|} \int_Q |\nabla\rho - (\nabla\rho)_Q|
					\leq |\theta|_Q |\nabla\rho|
					\lesssim_\epsilon \eta^{1-\epsilon} + \eta\log(1+\ell).
				\]
			\end{steps}

			\item  (b) holds for $Q$ dyadic and $Q \subset Q_{2R}$.

			\begin{align*}
				\frac{1}{|Q|} \int_Q |\nabla\rho(\theta - \theta_Q)|
				\lesssim \frac{1}{R} \frac{1}{|Q|} \int_Q |\theta - \theta_Q|.
			\end{align*}
			We split this into the now usual cases.
			\begin{steps}
				\item Case $l(Q) \lesssim \frac{R\eta}{\ell}$.

				By the intermediate and mean value theorems $|\tphi -\tphi_Q| \lesssim l(Q)\ell$ and $|x - x_Q| \lesssim l(Q)$ so
				\begin{align*}
					\frac{1}{R} \frac{1}{|Q|} \int_Q |\theta - \theta_Q|
					 & = \frac{1}{R} \frac{1}{|Q|} \int_Q |\tphi - \tphi_Q - x\cdot\nabla\tphi_{Q_r} + (x\cdot\nabla\tphi_{Q_r})_Q| \\
					 & \lesssim \frac{1}{R} l(Q) \ell
					\lesssim \eta.
				\end{align*}

				\item Case $\frac{R\eta}{\ell} \lesssim l(Q) < 2R$.

				\begin{align*}
					\frac{1}{R} \frac{1}{|Q|} \int_Q |\theta - \theta_Q|
					\lesssim \frac{1}{R} |\theta|_Q
				\end{align*}
				then applying the result from \cref{s:BMO:ext:hard} gives
				\[
					\frac{1}{|Q|} \int_Q |\nabla\rho(\theta - \theta_Q)| \lesssim_\epsilon \eta^{1-\epsilon} + \eta\log(1+\ell).
				\]
			\end{steps}
		\end{steps}
	\end{steps}

	Therefore by \cref{L:BMO:dyadic} we have shown $\nabla\Phi \in \BMO(\R^n)$ and the bound \cref{E:BMO:ext:nabla:bound} holds.
\end{proof}

To finish proving \cref{T:ext} we need to establish \cref{I:ext:Dt}.
\begin{lemma}%
	\label{L:BMO:ext:Dt}

	Let $\Phi : \R^{n-1} \times \R \to \R$ be defined in \cref{E:BMO:ext} with
	\begin{equation}
		\label{E:BMO:ext:Dt:assumption}
		\sup_{\substack{Q_s = J_s \times I_s, \\ Q_s \subset Q_{8d}, \, s \leq r_1}} \frac{1}{|Q_s|}\int_{Q_s}\int_{I_s}
		\frac{|\phi(x,t) - \phi(x,\tau)|^2}{|t-\tau|^2} \dtau\dt\dx \leq \eta^2
	\end{equation}
	then $\Phi$ satisfies
	\begin{equation}
		\sup_{Q_s = J_s \times I_s}  \frac{1}{|Q_s|}\int_{Q_s}\int_{I_s}
		\frac{|\Phi(x,t) - \Phi(x,\tau)|^2}{|t-\tau|^2} \dtau\dt\dx \lesssim \eta^2.
	\end{equation}
\end{lemma}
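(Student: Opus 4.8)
The plan is to reduce the whole statement to the single function $\tphi$, and then to use that for each fixed $x$ the curve $t\mapsto\tphi(x,t)$ is obtained from $\phi(x,\cdot)\big|_{[-r^2,r^2]}$ by folding and periodisation, an operation that can only \emph{decrease} a half‑derivative‑type Gagliardo integral. First, since in \cref{E:BMO:ext} the cut‑off $\rho=\rho(x)$ and the linear piece $x\cdot(\nabla\tphi)_{Q_r}$ do not depend on $t$, for $|x|_\infty<2R$ one has $\Phi(x,t)-\Phi(x,\tau)=\rho(x)\bigl(\tphi(x,t)-\tphi(x,\tau)\bigr)$, while $\Phi(x,\cdot)$ is constant for $|x|_\infty\ge2R$. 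As $0\le\rho\le1$, and (crucially) no derivative of $\rho$ enters, it suffices to bound
\[
\frac1{|Q_s|}\int_{J_s\cap\{|x|_\infty<2R\}}\int_{I_s}\int_{I_s}\frac{|\tphi(x,t)-\tphi(x,\tau)|^2}{|t-\tau|^2}\,\dtau\,\dt\,\dx
\]
by $C\eta^2$ uniformly over parabolic cubes $Q_s=J_s\times I_s$, with a constant independent of $\ell$.

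The key observation is that, by \cref{E:BMO:ext:time}, $\tphi(x,t)=\phi(x,\pi(t))$ where $\pi\colon\R\to D:=[-r^2,r^2]$ is the folding onto a fundamental domain of the group generated by the reflections in the points $(2k+1)r^2$, $k\in\Z$. This map $\pi$ is $1$‑Lipschitz, and on an interval of length $\lesssim r^2$ it is finite‑to‑one with uniformly bounded multiplicity. Hence, pointwise, $|t-\tau|\ge|\pi(t)-\pi(\tau)|$ gives
\[
\frac{|\tphi(x,t)-\tphi(x,\tau)|^2}{|t-\tau|^2}\ \le\ \frac{|\phi(x,\pi(t))-\phi(x,\pi(\tau))|^2}{|\pi(t)-\pi(\tau)|^2}
\]
(read as $0$ when $\pi(t)=\pi(\tau)$), and pushing Lebesgue measure forward by $\pi$ yields, for any interval $W$ with $|W|\lesssim r^2$,
\[
\int_W\int_W\frac{|\phi(x,\pi(t))-\phi(x,\pi(\tau))|^2}{|\pi(t)-\pi(\tau)|^2}\,\dtau\,\dt\ \lesssim\ \int_D\int_D\frac{|\phi(x,t)-\phi(x,\tau)|^2}{|t-\tau|^2}\,\dtau\,\dt .
\]

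Now fix $x$ and set $T=4r^2$. Splitting $\int_{I_s}\int_{I_s}$ into $\{|t-\tau|\le T\}$ and $\{|t-\tau|>T\}$: for the near part one covers $I_s$ by $O(1+|I_s|/T)$ intervals of length $\sim T$ and applies the two displays above; for the far part one uses $\int_{\{|t-\tau|>T\}}|t-\tau|^{-2}\,\dtau\le 2/T$ to reduce to $\tfrac1T\int_{I_s}|\tphi(x,t)-\fint_P\tphi(x,\cdot)|^2\,\dt$ over a single period $P$, and then combines periodicity, the Poincaré inequality $\int_P|g-\fint_P g|^2\lesssim|P|\iint_{P\times P}\tfrac{|g(t)-g(\tau)|^2}{|t-\tau|^2}$, and again the folding estimates. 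Either way,
\[
\int_{I_s}\int_{I_s}\frac{|\tphi(x,t)-\tphi(x,\tau)|^2}{|t-\tau|^2}\,\dtau\,\dt\ \lesssim\ \max\!\Bigl(1,\tfrac{|I_s|}{r^2}\Bigr)\int_D\int_D\frac{|\phi(x,t)-\phi(x,\tau)|^2}{|t-\tau|^2}\,\dtau\,\dt ,
\]
except that when $|I_s|\lesssim r^2$ one should \emph{not} route through full periods but reduce directly to the $\phi$‑Gagliardo integral over a subinterval of $D$ of length $\lesssim|I_s|$, to avoid a wasteful covering.

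It remains to integrate in $x$ over $J':=J_s\cap\{|x|_\infty<2R\}$ and divide by $|Q_s|$. Since $D$ is the time‑slice $I_r$, one covers $J'\times D$ (respectively $J'\times(\text{subinterval})$) by finitely overlapping parabolic cubes of radius $r$ (respectively $\sim\sqrt{|I_s|}$) contained in $Q_{2R}\subset Q_{8d}$ — this is where $r\le r_1$ is used — and invokes the hypothesis \cref{E:BMO:ext:Dt:assumption}, so that the $x$‑integral of the right‑hand side above is $\lesssim N\eta^2 r^{n+1}$ (respectively $\lesssim N\eta^2 s^{n+1}$), where $N$ is the number of covering cubes, $N\lesssim(1+s/r)^{n-1}$ in general and $N\lesssim(R/r)^{n-1}$ once $s\gtrsim R$. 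Feeding in $|Q_s|\sim s^{n+1}$, $|I_s|\sim s^2$, $r\sim\eta R$, one checks in the three regimes $s\lesssim r$, $r\lesssim s\lesssim R$ and $s\gtrsim R$ that the window count $\max(1,|I_s|/T)$, the multiplicity $N$ and $|Q_s|^{-1}$ always balance out to give $\lesssim\eta^2$. This final bookkeeping — making the combinatorial factors cancel across scales, and in particular remembering to treat short time‑intervals $|I_s|\lesssim r^2$ via a subinterval of $D$ rather than by tiling full periods — is the only genuinely fiddly part; everything preceding it is soft once the folding map $\pi$ and its two elementary properties are in hand.
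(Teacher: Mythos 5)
Your proposal is correct and follows essentially the same route as the paper: the folding map $\pi$ with $|t-\tau|\ge|\pi(t)-\pi(\tau)|$ is exactly the paper's reflection inequality $|t-\tau|\ge|t-\tau'|$, and your near/far split with the Poincar\'e step for $|t-\tau|>T$ is the same mechanism as the paper's sum over period pairs $I_i\times I_k$ with the $|i-k|^{-2}$ decay, both yielding the factor $\max(1,|I_s|/r^2)$ times the Gagliardo integral over one fundamental domain (or a subinterval thereof when $|I_s|\lesssim r^2$). The only substantive difference is that you carry out explicitly the final spatial covering of $J_s\times I_r$ by parabolic cubes of radius $r$ and verify the balance of factors in the three regimes of $s$, a bookkeeping step the paper performs only pointwise in $x$ and otherwise leaves implicit.
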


\begin{proof}
	Trivially since $\Phi$ is defined globally
	\begin{equation*}
		\begin{split}
			&\sup_{Q_s = J_s \times I_s}  \frac{1}{|Q_s|}\int_{Q_s}\int_{I_s}
			\frac{|\Phi(x,t) - \Phi(x,\tau)|^2}{|t-\tau|^2} \dtau\dt\dx  \\
			&\leq \sup_{Q_s = J_s \times I_s}  \frac{1}{|Q_s|}\int_{Q_s}\int_{I_s}
			\frac{|\tphi(x,t) - \tphi(x,\tau)|^2}{|t-\tau|^2} \dtau\dt\dx ,
		\end{split}
	\end{equation*}
	where we interpret the value of $\tphi$ where it is undefined as $0$, i.e.\ $\tphi(x,t) = 0$ when $(x,t) \not\in \supp(\tphi)$.
	It remains to establish
	\begin{equation}
		\label{E:BMO:ext:Dt:bdd}
		\begin{split}
			& \sup_{I_s}  \frac{1}{|I_s|}\int_{I_s}\int_{I_s}
			\frac{|\tphi(x,t) - \tphi(x,\tau)|^2}{|t-\tau|^2} \dtau\dt\dx \\
			&\lesssim  \sup_{I_s \subset I_r}  \frac{1}{|I_s|}\int_{I_s}\int_{I_s}
			\frac{|\phi(x,t) - \phi(x,\tau)|^2}{|t-\tau|^2} \dtau\dt\dx
		\end{split}
	\end{equation}
	pointwise in $x$, where $Q_r = J_r \times I_r$ and is used to define $\Phi$ in~\cref{E:BMO:ext}.
	To simplify our notation we drop the dependance on the spatial variables in $\tphi$ and $\phi$. We also set $A:=I_s$.
	Recall from~\cref{E:BMO:ext:time} that
	\begin{equation*}
		\tilde\phi(t) =
		\begin{cases}
			\phi(t)        & t \in [-r^2,r^2] + 4kr^2, \\
			\phi(2r^2 - t) & t \in [r^2, 3r^2]  + 4kr^2,
		\end{cases}
	\end{equation*}
	for $k \in \Z$.
	Let $I_k = [-r^2,r^2] + 4kr^2$ and $J_k = [r^2,3r^2] + 4kr^2$ be intervals in time for $k \in \Z$.
	We partition $A$ into disjoint pieces $A = \cup_i I_i \cup_j J_j \cup A_1 \cup A_2$, where $A_1$ and $A_2$ are pieces that don't contain either $I_i$ or $J_j$.

	If $A=A_1 \cup A_2$ we may as well assume (by translation an reflection) that $A_1 = [a,r^2]$, $A_2 = [r^2, b]$. Let $\tau', b'$ and $A'_2$ be the images of $\tau, b$ and $A_2$ respectively under the map $\tau \mapsto 2r^2-\tau$.
	Without loss of generality we only consider the case $|A_1| > |A_2|$. Since $|t-\tau| = |t-r^2| + |\tau' - r^2| \geq |t-\tau'|$ we have for $t \in A_1$, $\tau \in A_2$
	\begin{align*}
		 & \int_{A_1}\int_{A_2} \frac{|\tilde\phi(t) - \tilde\phi(\tau)|^2}{|t-\tau|^2} \dtau\dt
		= \int_a^{r^2}\int_{b'}^{r^2} \frac{|\phi(t) - \phi(\tau')|^2}{|t - (2t^2 - \tau')|^2} \dtau'\dt \\
		 & \leq \int_a^{r^2}\int_{b'}^{r^2} \frac{|\phi(t) - \phi(\tau')|^2}{|t - \tau'|^2} \dtau' \dt
		\leq \int_{A_1}\int_{A_1} \frac{|\phi(t) - \phi(\tau')|^2}{|t - \tau'|^2} \dtau' \dt.
	\end{align*}
	Therefore
	\begin{align*}
		 & \frac{1}{|A|} \int_A \int_A \frac{|\tilde\phi(t) - \tilde\phi(\tau)|^2}{|t-\tau|^2} \dtau\dt \\
		 & = \frac{1}{|A|} \left(\int_{A_1}\int_{A_1} + 2\int_{A_1}\int_{A_2} + \int_{A_2}\int_{A_2}\right) \frac{|\phi(t) - \phi(\tau')|^2}{|t - \tau'|^2} \dtau' \dt
		\lesssim \eta^2.
	\end{align*}

	In the general case when  $A = \cup_{i \in \mathcal{I}} I_i \cup_{j \in \mathcal J} J_j \cup A_1 \cup A_2$ we write the double integral over $A$ in terms of integrals
	\[
		\sum_{i,k \in \mathcal I} \int_{I_i} \int_{I_k} \frac{|\tilde\phi(t) - \tilde\phi(\tau)|^2}{|t-\tau|^2} \dtau\dt
		\text{, } \quad
		\sum_{i\in \mathcal I, j \in \mathcal J} \int_{I_i} \int_{J_j} \frac{|\tilde\phi(t) - \tilde\phi(\tau)|^2}{|t-\tau|^2} \dtau\dt
	\]
	and integrals that involve sets $A_1$ or $A_2$ or both (those are handled similar to the earlier calculation).

	Dealing with the first case, if $i \neq k$, $t \in I_i$ and $\tau \in I_k$ then $|t-\tau| \sim r^2|i-k|$;
	if $i = k$ then $|t-\tau|=|t'-\tau'|$.
	Therefore
	\begin{align*}
		 & \sum_{i,k \in \mathcal I} \int_{I_i} \int_{I_k} \frac{|\tilde\phi(t) - \tilde\phi(\tau)|^2}{|t-\tau|^2} \dtau\dt \\
		 & \sim \sum_{i \in \mathcal I} \int_{I_0} \int_{I_0} \frac{|\phi(t) - \phi(\tau)|^2}{|t-\tau|^2} \dtau\dt
		+ \sum_{\substack{i,k \in \mathcal I \\ i \neq k}} \frac{1}{r^4|i-k|^2} \int_{I_0} \int_{I_0} |\phi(t) - \phi(\tau)|^2 \dtau\dt \\
		 & \leq \sum_{i \in \mathcal I} \int_{I_0} \int_{I_0} \frac{|\phi(t) - \phi(\tau)|^2}{|t-\tau|^2} \dtau\dt
		+ \sum_{\substack{i,k \in \mathcal I \\ i \neq k}} \frac{1}{|i-k|^2} \int_{I_0} \int_{I_0} \frac{|\phi(t) - \phi(\tau)|^2}{|t-\tau|^2} \dtau\dt \\
		 & \lesssim |\mathcal I| \int_{I_0} \int_{I_0} \frac{|\phi(t) - \phi(\tau)|^2}{|t-\tau|^2} \dtau\dt.
	\end{align*}
	In the second case
	\begin{align*}
		 & \sum_{i\in \mathcal I, j \in \mathcal J} \int_{I_i} \int_{J_j} \frac{|\tilde\phi(t) - \tilde\phi(\tau)|^2}{|t-\tau|^2} \dtau\dt \\
		 & \lesssim \sum_{\substack{i \in \mathcal I, j \in \mathcal J \\ |i-j| \leq 1}} \int_{I_0} \int_{I_0} \frac{|\phi(t) - \phi(\tau)|^2}{|t-\tau|^2}
		+ \sum_{\substack{i \in \mathcal I, j \in \mathcal J \\ |i-j| \geq 2}} \frac{1}{r^4(|i-j|-1)^2} \int_{I_0} \int_{I_0} |\phi(t) - \phi(\tau)|^2 \dtau\dt \\
		 & \lesssim (|\mathcal I| + |\mathcal J|) \int_{I_0} \int_{I_0} \frac{|\phi(t) - \phi(\tau)|^2}{|t-\tau|^2} \dtau\dt.
	\end{align*}
	Since $|A| \sim (|\mathcal I| + |\mathcal J|)|I_0|$ and $I_0$ is one of the time intervals considered in the supremum of~\cref{E:BMO:ext:Dt:bdd}
	\[
		\frac{1}{|A|} \int_A \int_A \frac{|\phi(t) - \phi(\tau)|^2}{|t-\tau|^2} \dtau\dt
		\sim \frac{1}{|I_0|} \int_{I_0} \int_{I_0} \frac{|\phi(t) - \phi(\tau)|^2}{|t-\tau|^2} \dtau\dt \lesssim \eta^2.\qedhere
	\]
\end{proof}



\end{document}